\definecolor{citecol}{rgb}{0.07,0.07,0.05}
\definecolor{urlcol}{rgb}{0.06,0.04,0.09}
\definecolor{linkcol}{rgb}{0.01,0.03,0.08}
 \numberwithin{equation}{subsection}
\theoremstyle{plain}
\newtheorem{theorem}{Theorem}[section]
\newtheorem{lemma}[theorem]{Lemma}
\newtheorem{proposition}[theorem]{Proposition}
\theoremstyle{definition}
\newtheorem{definition}[theorem]{Definition}
\newtheorem{remark}[theorem]{Remark}
\newtheorem*{acknowledgement}{Acknowledgement}
\theoremstyle{remark}
\newcommand{\td}[1]{\tilde{#1}}
\newcommand{\tb}[1]{\textbf{#1}}
\newcommand{\tx}[1]{\text{#1}}
\newcommand{\BA}{{\mathbb A}}
\newcommand{\BB}{{\mathbb B}}
\newcommand{\BC}{{\mathbb C}}
\newcommand{\BF}{{\mathbb F}}
\newcommand{\BG}{{\mathbb G}}
\newcommand{\BL}{{\mathbb L}}
\newcommand{\BM}{{\mathbb M}}
\newcommand{\BN}{{\mathbb N}}
\newcommand{\BP}{{\mathbb P}}
\newcommand{\BQ}{{\mathbb Q}}
\newcommand{\BR}{{\mathbb R}}
\newcommand{\BV}{{\mathbb V}}
\newcommand{\BX}{{\mathbb X}}
\newcommand{\BY}{{\mathbb Y}}
\newcommand{\BZ}{{\mathbb Z}}
\newcommand{\CA}{{\mathcal A}}
\newcommand{\CB}{{\mathcal B}}
\newcommand{\CC}{{\mathcal C}}
\newcommand{\CF}{{\mathcal F}}
\newcommand{\CH}{{\mathcal H}}
\newcommand{\CL}{{\mathcal L}}
\newcommand{\CM}{{\mathcal M}}
\newcommand{\CN}{{\mathcal N}}
\newcommand{\CP}{{\mathcal P}}
\newcommand{\CR}{{\mathcal R}}
\newcommand{\CV}{{\mathcal V}}
\newcommand{\CY}{{\mathcal Y}}
\newcommand{\CZ}{{\mathcal Z}}
\newcommand{\FF}{{\mathfrak F}}
\newcommand{\FK}{{\mathfrak K}}
\newcommand{\Fa}{{\mathfrak a}}
\newcommand{\Fl}{{\mathfrak l}}
\DeclareMathOperator{\Gal}{Gal}
\DeclareMathOperator{\Lie}{Lie}
\DeclareMathOperator{\Charpol}{Charpol}
\DeclareMathOperator{\End}{End}
\DeclareMathOperator{\Spec}{Spec}
\DeclareMathOperator{\Spf}{Spf}
\DeclareMathOperator{\length}{length}
\DeclareMathOperator{\Ker}{Ker}
\DeclareMathOperator{\pr}{pr}
\DeclareMathOperator{\Frac}{Frac}
\DeclareMathOperator{\Hom}{Hom}
\DeclareMathOperator{\Sch}{Sch}
\DeclareMathOperator{\Aut}{Aut}
\DeclareMathOperator{\Res}{Res}
\DeclareMathOperator{\inv}{inv}
\DeclareMathOperator{\Nm}{Nm}
\DeclareMathOperator{\sig}{sig}
\DeclareMathOperator{\inc}{inc}
\DeclareMathOperator{\Sh}{Sh}
\author{Sungyoon Cho}
\address[Sungyoon Cho]{Department of Mathematics, Northwestern University}
\email{sungyoon@math.northwestern.edu}
\title[Supersingular locus]{The basic locus of the unitary Shimura variety with parahoric level structure, and special cycles}
\date{\today}
\begin{document}

\begin{abstract}
In this paper, we study the basic locus in the fiber at $p$ of a certain unitary Shimura variety with a certain parahoric level structure. The basic locus $\widehat{\CM^{ss}}$ is uniformized by a formal scheme $\CN$ which is called Rapoport-Zink space. We show that the irreducible components of the induced reduced subscheme $\CN_{red}$ of $\CN$ are Deligne-Lusztig varieties and their intersection behavior is controlled by a certain Bruhat-Tits building. Also, we define special cycles in $\CN$ and study their intersection multiplicities.
\end{abstract}

\maketitle
\tableofcontents{}

\section{Introduction}
This paper is a contribution to the theory of integral models of certain Shimura varieties. In particular, we will give a concrete description of their basic loci. These problems have important applications to Kudla's program which relates arithmetic intersection numbers of special cycles on integral models of certain Shimura varieties to Eisenstein series (see \cite{KR2}, \cite{KR3}), and Arithmetic Gan-Gross-Prasad conjecture (see \cite{Zha}, \cite{RSZ1}, \cite{RSZ2}, \cite{RSZ3}). In this paper, we study the basic locus of the special fiber of a certain unitary Shimura variety at an inert prime with parahoric level structure. Let $(\tilde{G},h_{\tilde{G}})$ be a Shimura datum and let $K_{\tilde{G}}$ be an open compact subgroup in $\tilde{G}(\BA_{f})$. We refer to Section \ref{sec:section4} for the precise definition. This Shimura variety has a moduli interpretation $M_{K_{\tilde{G}}}(\tilde{G})$ as a moduli space of abelian varieties with additional structure. This Shimura variety is a variant of the Shimura variety which appears in \cite{GGP} and its integral model $\CM_{K_{\tilde{G}}}(\tilde{G})$ is defined in \cite{RSZ2}.  The basic locus of the special fiber of $\CM_{K_{\tilde{G}}}(\tilde{G})$ can be studied using the uniformization theorem of Rapoport and Zink, \cite[Theorem 6.30]{RZ} (more precisely, see Theorem \ref{theorem}). Therefore, we can study the corresponding Rapoport-Zink space and use its explicit description to study the basic locus of the special fiber of the Shimura variety.

We will now describe our main results in more detail. First, let us consider the Rapoport-Zink spaces which are local analogues of Shimura varieties.

\subsection{The local result : relative Rapoport-Zink spaces}\label{subsec:section1.1}
Let $F$ be a finite extension of $\BQ_p$, and let $E$ be a quadratic unramified extension of $F$ with ring of integers $O_E$ and residue field $\BF_{q^2}$. We fix a uniformizer $\pi$. Let $\breve{E}$ be the completion of a maximal unramified extension of $E$. Fix integers $n$ and $0 \leq h,r \leq n$. Here, $h$ is related to a certain self-dual lattice chain, and $r$ is related to the determinant condition. We define a moduli space $\CN^h_{E/F}(r,n-r)$ over $\Spf O_E$ of quasi-isogenies of strict formal $O_F$-modules with additional structure (see Section \ref{sec:section2} for its definition). If $h=0$, $r=1$, $F=\BQ_p$, and $E=\BQ_{p^2}$, then this moduli space coincides with the Rapoport-Zink space that is studied by Vollaard and Wedhorn (\cite{VW}). This case corresponds to the hyperspecial level structure case. In their paper, they proved that the irreducible components of the induced reduced scheme of $\CN^0_{\BQ_{p^2}/\BQ_p}(1,n-1)$ are Deligne-Lusztig varieties, and their intersection behavior is controlled by a certain Bruhat-Tits building. Howard and Pappas studied the moduli space $\CN^0_{\BQ_{p^2}/\BQ_p}(2,2)$ in \cite{HP} (also, see Remark \ref{remark2.20}). When $h$ is not equal to 0, we have a parahoric level structure. When $h=1$, $n=2$, the moduli space $\CN^1_{E/F}(1,1)$ is studied in \cite{KR1}. In this paper, Kudla and Rapoport proved that the moduli space is represented by a Drinfeld $p$-adic half-plane. Furthermore, they studied $\CN^1_{\BQ_{p^2}/\BQ_p}(1,n-1)$ in their unpublished notes \cite{KR4}. They showed that its reduced scheme has two kinds of Bruhat-Tits strata: One consists of projective spaces and the other consists of Deligne-Lusztig varieties. Our result is the generalization of theirs to arbitrary $h$ and $F$.

The cases that $E$ is a ramified extension of $F$ are also studied in literature. For example, we refer to \cite{RTW}, \cite{Wu} (also, see \cite{RSZ1}, \cite{RSZ2}, \cite{RSZ3} for their connection to Arithmetic Gan-Gross-Prasad conjecture).

We now state our main result in local situation. Let $(\BX,i_{\BX},\lambda_{\BX})$ be a framing object of $\CN_{E/F}^h(1,n-1)$: $\BX$ is a supersingular strict formal $O_F$-module of $F$-height $2n$ over $\BF_{q^2}$; $i_{\BX}$ is an $O_E$-action on $\BX$, and $\lambda_{\BX}$ is a polarization. We note that the integer $h$ is related to this polarization. For this triple, there is an associated hermitian $E$-vector space $N_{k,0}^{\tau}$. An $O_E$-lattice $\Lambda$ in $N_{k,0}^{\tau}$ is called a vertex lattice of type $t(\Lambda)$, if $\pi^{i+1}\Lambda^{\vee} \subset \Lambda \subset \pi^i\Lambda^{\vee}$ for some $i$ and the dimension of $\Lambda/ \pi^{i+1}\Lambda^{\vee}$ is $t(\Lambda)$ as $\BF_{q^2}$-vector space. Here, $\Lambda^{\vee}$ is the dual lattice of $\Lambda$. For each $i=0,1$, we denote by $\CL_i$ the set of vertex lattices. We also define the following sets of vertex lattices:
\begin{equation*}
\CL_0^+ := \lbrace O_E\text{-lattices } \Lambda \text{ }  \vert \text{ } \pi \Lambda^{\vee} \subset \Lambda \subset \Lambda^{\vee}, t(\Lambda) \geq h+1 \rbrace;
\end{equation*}
\begin{equation*}
\CL_0^- := \lbrace O_E\text{-lattices } \Lambda \text{ }  \vert \text{ } \pi \Lambda^{\vee} \subset \Lambda \subset \Lambda^{\vee}, t(\Lambda) \leq h-1 \rbrace;
\end{equation*}
\begin{equation*}
\CL_1^+ := \lbrace O_E\text{-lattices } \Lambda \text{ }  \vert \text{ } \pi^2 \Lambda^{\vee} \subset \Lambda \subset \pi\Lambda^{\vee}, t(\Lambda) \geq n-h+1 \rbrace;
\end{equation*}
\begin{equation*}
\CL_1^- := \lbrace O_E\text{-lattices } \Lambda \text{ }  \vert \text{ } \pi^2 \Lambda^{\vee} \subset \Lambda \subset \pi\Lambda^{\vee}, t(\Lambda) \leq n-h-1 \rbrace.
\end{equation*} 
Note that there is a bijection between $\CL_1^+$ and $\CL_0^-$ via the map sending $\Lambda \in \CL_1^+$ to $\pi \Lambda^{\vee} \in \CL_0^-$. In this way, the union $\CL_0^+ \sqcup \CL_1^+$ can be identified with $\CL_0^+ \sqcup \CL_0^-$ and then this can be identified with the set of vertices of a certain Bruhat-Tits building. For each vertex lattices $\Lambda$ in $\CL_0^+ \sqcup \CL_1^+$, we define a projective subscheme $\CN_{\Lambda}$ of the reduced subscheme of $\CN_{E/F}^h(1,n-1)_{O_{\breve{E}}}$. For $i=0,1$ and $\Lambda \in \CL_i^+$, we define the set $\CL_{\Lambda}^+:=\lbrace \Lambda' \in \CL_i^+ \vert \Lambda' \subsetneq \Lambda \rbrace$. We define the subscheme $\CN_{\Lambda}^0:=\CN_{\Lambda} \backslash \bigcup_{\Lambda' \in \CL_{\Lambda}^+} \CN_{\Lambda'}$.
The schemes $\CN_{\Lambda}$, $\CN_{\Lambda}^0$ have the following properties (see Theorem \ref{theorem 4.11} and Section \ref{subsec:subsection3.8}).
\begin{theorem}\label{thmintro1}The following properties of $\CN_{E/F}^h(1,n-1)$ hold.
	\begin{enumerate}
	\item  For $\Lambda \in \CL_0^+$ (resp. $\Lambda \in \CL_1^+)$, $\CN_{\Lambda}$ is isomorphic to a Deligne-Lusztig variety and it is projective, smooth, and geometrically irreducible of dimension $\frac{1}{2}(t(\Lambda)-h-1)+h$ (resp. $\frac{1}{2}(t(\Lambda)-(n-h+1))+n-h$).
	
	\item  For $i=0,1$, consider $\Lambda \in \CL_i^+$. Then $\CN_{\Lambda}^0$ is open and dense in $\CN_{\Lambda}$ and we have a stratification $(\CN_{\Lambda}^0)_{\Lambda \in \CL_i^+, i=0,1}$ of $\CN_{E/F}^h(1,n-1)_{O_{\breve{E}}}$ which is called the Bruhat-Tits stratification. The closed subschemes $\CN_{\Lambda}$ of $\CN_{E/F}^h(1,n-1)_{O_{\breve{E}}}$ are called the closed Bruhat-Tits strata.
	
	\item  For $i=0,1$, consider two vertex lattices $\Lambda' \subset \Lambda$ in $\CL_i^+$. Then we have $\CN_{\Lambda'} \subset \CN_{\Lambda}$.
	
	\item  For $i=0,1$, consider two vertex lattices $\Lambda', \Lambda$ in $\CL_i^+$. Then two closed Bruhat-Tits strata $\CN_{\Lambda}$, $\CN_{\Lambda'}$ have nonempty intersection if and only if $\Lambda \cap \Lambda' \in \CL_i^+$, and in this case $\CN_{\Lambda} \cap \CN_{\Lambda'}=\CN_{\Lambda \cap \Lambda'}$.
	
	\item  For vertex lattices $\Lambda_0 \in \CL_0^+$, $\Lambda_1 \in \CL_1^+$, two closed Bruhat-Tits strata  $\CN_{\Lambda}$, $\CN_{\Lambda'}$ have nonempty intersection if and only if $\pi \Lambda_1^{\vee} \subset \Lambda_0$.
\end{enumerate}
\end{theorem}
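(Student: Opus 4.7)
My approach is to work throughout with the Dieudonn\'e-theoretic (or relative-display) model of $\CN_{E/F}^h(1,n-1)(\bar k)$. A $\bar k$-point corresponds to a pair of $W(\bar k)$-lattices $A_0, A_1$ in the rational Dieudonn\'e module $N$ forming a chain $\pi A_0 \subset A_1 \subset A_0$, stable under $V$ and $V^{-1}\pi$, satisfying the Kottwitz determinant condition for signature $(1,n-1)$, and compatible with the polarization $\lambda_\BX$; the integer $h$ enters as the normalization forcing $A_0^\vee$ to lie in a prescribed position relative to $A_0$. The two endpoints of the chain give rise to two hermitian $\BF_{q^2}$-spaces whose $O_E$-vertex lattices form $\CL_0$ and $\CL_1$ respectively; the inequalities in the definitions of $\CL_i^\pm$ isolate exactly those vertex lattices large enough for the Kottwitz condition to have solutions.

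\textbf{Step 1: Identification with Deligne--Lusztig varieties.} For $\Lambda \in \CL_0^+$ I would define $\CN_\Lambda$ as the closed subfunctor cut out by the lattice sandwich $\pi\Lambda^\vee \otimes_{O_E} W(\bar k) \subset A_0 \subset \Lambda \otimes_{O_E} W(\bar k)$, and analogously on $A_1$ for $\Lambda \in \CL_1^+$ using the pair $(\pi^2\Lambda^\vee, \pi\Lambda^\vee)$. In the $\CL_0^+$ case, $A_0$ is then determined by the subspace $U := A_0/(\pi\Lambda^\vee \otimes W(\bar k))$ of the finite-dimensional $\bar k$-vector space $V_\Lambda := (\Lambda/\pi\Lambda^\vee)_{\bar k}$, which carries an induced nondegenerate hermitian form and a $q$-Frobenius $\sigma$. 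Unwinding the $V$-stability, Kottwitz, and chain-compatibility conditions translates into: $U$ is isotropic of a fixed dimension, $U$ is in relative position one with $\sigma(U)$, and $U$ has a prescribed incidence (governed by $h$) with a distinguished $\sigma$-stable isotropic flag inside $V_\Lambda$ arising from $A_1$. This realizes $\CN_\Lambda$ as a closed Deligne--Lusztig variety $X_w$ for a Coxeter-type element of length $\frac{1}{2}(t(\Lambda)-h-1)$ in the Weyl group of $U(V_\Lambda)$, fibred over a Schubert-type variety of dimension $h$. Standard facts on Coxeter-type Deligne--Lusztig varieties then yield projectivity, smoothness, geometric irreducibility, and the dimension formula $\frac{1}{2}(t(\Lambda)-h-1)+h$; the $\CL_1^+$ case is symmetric with $h$ replaced by $n-h$.

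\textbf{Step 2: Stratification and intersection combinatorics.} Part (3) is immediate from the lattice description: $\Lambda' \subset \Lambda$ shrinks the sandwich. For (4), any $\bar k$-point of $\CN_\Lambda \cap \CN_{\Lambda'}$ provides a lattice $A_i$ sitting inside $\pi(\Lambda \cap \Lambda')^\vee \otimes W(\bar k) \subset A_i \subset (\Lambda \cap \Lambda') \otimes W(\bar k)$; a dimension count of the reduction then forces $\Lambda \cap \Lambda' \in \CL_i^+$, whence $\CN_{\Lambda \cap \Lambda'}$ is evidently contained in both strata and the equality follows. For (5), simultaneous membership in $\CN_{\Lambda_0} \cap \CN_{\Lambda_1}$ forces $A_1 \subset A_0 \subset \Lambda_0 \otimes W(\bar k)$ and $\pi\Lambda_1^\vee \otimes W(\bar k) \subset A_1$, whence $\pi\Lambda_1^\vee \subset \Lambda_0$; the converse is a direct chain construction. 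For (2), I would associate to each $\bar k$-point $(A_0,A_1)$ the unique smallest vertex lattice $\Lambda$ of each parity with the appropriate lattice landing in the corresponding sandwich; this gives the stratification by the $\CN_\Lambda^0$, and openness/density of $\CN_\Lambda^0$ in $\CN_\Lambda$ follows from the DL description together with the fact that the missing loci $\CN_{\Lambda'}$ for $\Lambda' \subsetneq \Lambda$ correspond to strictly smaller Schubert subvarieties.

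\textbf{Main obstacle.} The genuinely difficult step is the clean translation in step 1: decomposing the $V$-stability plus Kottwitz condition on the lattice pair $(A_0,A_1)$ into simultaneous conditions on a single subspace $U \subset V_\Lambda \otimes \bar k$ and verifying that the cut-out locus is honestly a Deligne--Lusztig variety for a Coxeter-type element, rather than some larger closed subvariety. The parahoric setting introduces the additional subtlety that the two ends of the chain carry different Jordan types of hermitian reduction (governed by $h$ and $n-h$), so the two families $\CL_0^+$ and $\CL_1^+$ give asymmetric dimension formulas; matching the ``side'' of the chain with the ``side'' of $\CL_i^+$ correctly is the combinatorial crux. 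Once this identification is in place, the scheme-theoretic assertions in (2)--(5) reduce to lattice-combinatorial checks of the kind familiar from Vollaard--Wedhorn and Kudla--Rapoport.
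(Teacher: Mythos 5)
Your high-level strategy matches the paper's (Dieudonn\'e-theoretic parametrization of points, attach vertex lattices, cut out closed subschemes by isogeny conditions, compare with Deligne--Lusztig varieties), but there is a genuine gap at the heart of Step 2: you assert that one can associate to each $\bar k$-point ``the unique smallest vertex lattice $\Lambda$ of each parity with the appropriate lattice landing in the corresponding sandwich,'' but the existence of a $\tau$-invariant vertex lattice attached to a point is precisely the hard combinatorial lemma of the paper (Lemma~\ref{lemma3.2}), and it is not automatic. The paper passes first from the graded pair $(M_0,M_1)$ to a pair $(A,B)$ of lattices living in the \emph{single} space $N_{k,0}$ equipped with the $\sigma$-sesquilinear form $\{\cdot,\cdot\}$ (Proposition~\ref{proposition2.4}); only with that rewriting does the candidate lattice $\Lambda_A=T_cA$ or $\Lambda_B=T_dB$ (the $\tau$-hull) make sense, and even then one must show that \emph{at least one} of $\Lambda_A$, $\Lambda_B$ is a vertex lattice sitting in the prescribed sandwich. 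This requires the index computations of Lemma~\ref{lemma3.3} (showing the $\tau$-hull grows by index $1$ at each step) followed by the six-case analysis of Lemma~\ref{lemma3.2}, and it is nontrivial exactly because a point need not produce a vertex lattice of each parity — only one of the two alternatives is guaranteed. Your plan quietly assumes this and so the stratification in your Step 2 has no foundation.

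Two further points where your sketch diverges from or falls short of what is actually needed. First, identifying $\CN_\Lambda$ with a Deligne--Lusztig variety on $\bar k$-points is not enough; one must show the closed formal subscheme $\CN_\Lambda$ (cut out by the isogeny conditions $\rho_{\Lambda^\pm}$) is isomorphic \emph{as a scheme} to $Y_\Lambda$. The paper does this by combining universal bijectivity on algebraically closed points with a windows-theoretic description of $\CN_\Lambda(k)$ for an arbitrary field $k$ (Proposition~\ref{proposition4.10}), then invokes normality of the DL variety and Zariski's main theorem (Theorem~\ref{theorem 4.11}); your proposal does not address this. Second, your description of the DL variety as a ``Coxeter-type element of length $\frac{1}{2}(t(\Lambda)-h-1)$ fibred over a Schubert-type variety of dimension $h$'' inverts the actual breakdown: in the paper $Y_\Lambda=\overline{X_{I_\Lambda}(w_\Lambda)}$ is directly a generalized DL variety in a two-step flag variety (both $\overline A$ and $\overline B$ vary), with $l(w_\Lambda)=h$ and the remaining $\frac{1}{2}(t(\Lambda)-h-1)$ contributed by the parabolic terms in the dimension formula of Proposition~\ref{proposition4.4}. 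The total dimension you state is correct, but the structure is a flag-variety DL variety, not a fibration over a Schubert variety, and the length accounting is swapped.
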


We also have the following properties of $\CN_{E/F}^h(1,n-1)_{O_{\breve{E}}}$.
\begin{theorem}\label{thmintro2} The following assertions hold.
\begin{enumerate} 
		\item In case $h \neq 0,n$, the formal scheme $\CN_{E/F}^h(1,n-1)_{O_{\breve{E}}}$ has semistable reduction. If $h=0,n$, $\CN_{E/F}^h(1,n-1)_{O_{\breve{E}}}$ is formally smooth over $\Spf O_{\breve{E}}$. In particular, it is regular for all $h$.
		
		\item There exists a Rapoport-Zink space $\CN^h_{E/\BQ_p}(1,n-1)_{O_{\breve{E}}}$ of PEL type that is isomorphic to $\CN^h_{E/F}(1,n-1)_{O_{\breve{E}}}$.
	\end{enumerate}
\end{theorem}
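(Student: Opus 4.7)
The plan is to address the two parts separately, using the local model machinery for (1) and a relative-to-absolute comparison of moduli problems for (2).

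For part (1), my approach is through the local model diagram. The formal scheme $\CN_{E/F}^h(1,n-1)_{O_{\breve{E}}}$ admits the usual local model $M^{\mathrm{loc}}$ via an étale-locally smooth and surjective correspondence, so regularity, flatness and semistability can all be read off from $M^{\mathrm{loc}}$. For the signature $(1,n-1)$ and the parahoric determined by $h$, the local model parametrizes, on the standard self-dual chain of $O_E$-lattices encoded by $h$, a compatible filtration satisfying the Kottwitz rank condition. First I would write this local model down on an explicit affine chart, choosing a basis in which the hermitian pairings and the lattice inclusions become the standard ones. Second, for $h=0$ or $h=n$ the lattice chain reduces to a single self-dual lattice and the signature-$(1,n-1)$ condition cuts out a smooth closed subscheme of the relevant Grassmannian over $\Spec O_E$, exactly as in the Vollaard--Wedhorn situation \cite{VW}, giving formal smoothness. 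Third, for $1\le h\le n-1$ I would identify the non-smooth points of $M^{\mathrm{loc}}$ and show that étale-locally at each such point the local model has the shape $\Spec O_E[x_1,\dots,x_n]/(x_1x_2-\pi)$, the standard semistable singularity; this is done by diagonalizing the pair of hermitian lattices at the relevant vertex, restricting the Kottwitz condition to the two-dimensional "non-étale" direction, and observing that the remaining $(n-2)$ coordinates deform freely.

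For part (2), the idea is that for the signature $(1,n-1)$ the "strict $O_F$-action" condition is equivalent to a PEL-type Kottwitz condition once the full $O_F$-action is bundled into the PEL datum. Concretely, I would define $\CN^h_{E/\BQ_p}(1,n-1)_{O_{\breve{E}}}$ as the PEL Rapoport--Zink space classifying quasi-isogenies of $p$-divisible groups of height $2n[F:\BQ_p]$ equipped with an $O_E$-action and a polarization of the type prescribed by $h$, subject to the Kottwitz condition that on $\Lie$ the $O_F\otimes_{\BZ_p}O_{\breve{E}}$-action has signature $(1,n-1)$ at the distinguished embedding $F\hookrightarrow\breve{E}$ and signature $(0,n)$ at each of the other embeddings. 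The candidate comparison morphism sends a point $(X,i_X,\lambda_X,\rho_X)$ of $\CN^h_{E/F}(1,n-1)$ to itself, now viewed as a $p$-divisible group with $O_E$-action via the forgetful map $O_F\hookrightarrow\End(X)$. Strictness of the $O_F$-action precisely decomposes $\Lie(X)$ according to the idempotents of $O_F\otimes O_{\breve{E}}$ in the pattern demanded by the chosen Kottwitz condition, and conversely the Kottwitz condition forces the $O_F$-action on $\Lie(X)$ to factor through the structural morphism. Hence the morphism is bijective on points; to promote this to an isomorphism of formal schemes one compares the two deformation theories via Grothendieck--Messing, showing that admissible filtrations on the universal extension are the same on both sides.

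The principal obstacle will be the deformation-theoretic comparison in part (2): one must track the $O_F\otimes O_{\breve{E}}$-module structure on the Lie algebra of the universal vector extension and verify that the strict condition on a lift of $X$ corresponds exactly to the Kottwitz condition on the corresponding PEL lift. Once this is done on the level of tangent spaces, formal smoothness / semistability from part (1) upgrades the bijection on geometric points to an isomorphism of formal schemes. In part (1) the main bookkeeping difficulty is controlling the local model simultaneously at both types of vertices ($\CL_0^+$ and $\CL_1^+$) and at their mutual intersections, but the Bruhat--Tits combinatorics established in Theorem \ref{thmintro1} already lists the cases to be checked, so the analysis reduces to a finite list of explicit affine charts.
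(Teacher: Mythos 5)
Your proposal inverts the logical order used in the paper and, more importantly, has substantive gaps in both parts.

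For part (1), you propose to apply local model theory directly to $\CN_{E/F}^h(1,n-1)_{O_{\breve E}}$. But the local model machinery of \cite{RZ} (Definition 3.27) and \cite{PRS} is built for moduli problems of PEL type — $p$-divisible groups (over $\BZ_p$) with $O_B$-action and polarization with the Kottwitz condition stated over $\BZ_p$. The relative space $\CN_{E/F}^h$ classifies \emph{strict formal $O_F$-modules}, and its determinant condition is a condition on the $O_F$-relative Lie algebra (equivalently on $\BM/\CV\BM$ for the $O_F$-window/Dieudonné module). For $F\neq\BQ_p$ this does not sit inside the PEL local model framework without first translating the moduli problem. The paper therefore proves part (2) first (Proposition \ref{proposition5.10}), and only then deduces part (1) (Proposition \ref{proposition5.12}) by transporting the regularity question to the PEL side $\CN^h_{E/\BQ_p}(1,n-1)_{O_{\breve E}}$, where the Görtz $GL_n$ local model \cite[4.4.5]{Gor2} applies. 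A direct local model argument is available only when $F=\BQ_p$ (Section \ref{subsec:section3.7}). Your proposal conflates the two and would not produce a local model diagram for the relative space.

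For part (2), the description of the comparison morphism as "sends a point $(X,i_X,\lambda_X,\rho_X)$ to itself" and the claim that "the Kottwitz condition forces the $O_F$-action on $\Lie(X)$ to factor through the structural morphism" are both incorrect when $F/\BQ_p$ is ramified. When $O_F\otimes_{\BZ_p}O_{\breve E}$ is not étale, the characteristic polynomial condition alone does \emph{not} determine the $O_F$-module structure on $\Lie X$; this is exactly why the paper's definition of the signature condition for $\CN^h_{E/\BQ_p}$ includes the additional condition \eqref{eq5.02} (that $O_E$ acts on $\Lie_{\psi_0}(X)$ through the structure morphism) on top of the Kottwitz condition \eqref{eq5.01}, and why the paper explicitly notes that \eqref{eq5.01} implies \eqref{eq5.02} only when $E$ is unramified over $\BQ_p$. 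The actual comparison $c_{\BQ_p,F}$ is constructed following Mihatsch \cite{Mih} through the stack-level isomorphisms $\CC_{\BQ_p,F^u}$ (the unramified step, handled by idempotent decomposition as you suggest) and $\CC_{F^u,F}$ (the ramified step, which is \emph{not} an idempotent decomposition) in the framework of Ahsendorf–Cheng–Zink $O$-displays \cite{ACZ}. The key technical input — which your proposal omits entirely — is Lemma \ref{lemma5.7}, which verifies that the polarization kernel condition $|\Ker\lambda_X|=q^{2h}$ is equivalent to the condition $P_\psi\overset{h}{\subset}P_\psi^\vee$ on each graded piece of the display. Without the Eisenstein/banal signature formalism, the ramified step of the comparison, and the polarization bookkeeping, the claimed "bijection on points" is not established, and the Grothendieck–Messing upgrade you propose has nothing to upgrade.
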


\begin{remark}
	In case $F$ is unramified over $\BQ_p$, the above statements in Theorem \ref{thmintro1} and Theorem \ref{thmintro2} hold without base change to $O_{\breve{E}}$.
\end{remark}

We now describe \S\ref{sec:section2}-\ref{sec:section3} in more detail. In Section \ref{sec:section2}, we study the $k$-points of $\CN^h_{E/F}(1,n-1)$ by using the relative Dieudonne theory, where $k$ is an algebraic closure of the residue field of $E$. In Section \ref{sec:section3}, we define a subscheme $\CN_{\Lambda}$ for each vertex lattice $\Lambda$ and prove that this is isomorphic to a Deligne-Lusztig variety. Furthermore, we prove the regularity of $\CN^h_{\BQ_{p^2}/\BQ_p}(1,n-1)$ via the theory of local model. Also, we prove that there is a stratification of $\CN_{E/F}^h(1,n-1)$ so called Bruhat-Tits stratification. Finally, we relate $\CN_{E/F}^h(1,n-1)$ to a certain PEL-type Rapoport-Zink space as Mihatsch did in \cite{Mih}. By using this result, we prove the regularity of $\CN_{E/F}^h(1,n-1)$.

\subsection{The global result: non-archimedean uniformization}\label{subsec:section1.2}
In the global situation, we write $F$ for a CM field, $F^+$ for its totally real subfield of index $2$, and $\Phi$ for a CM type. We fix an embedding $\tau_1^- \in \Phi$ and an embedding $\tilde{v}:\bar{\BQ} \rightarrow \bar{\BQ}_p$. These two determine places $v_0$ of $F^+$ and $w_0$ of $F$. We assume further that $v_0$ is unramified over $p$ and inert in $F$. We denote by $S_p$ the set of places of $F^+$ over $p$.
We will define three Shimura data: $(G,h_G), (Z,h_Z), (\tilde{G},h_{\tilde{G}})$. The first Shimura datum is associated to a unitary group $\Res_{F^+/\BQ}U(V)$ for a hermitian space $V$. This Shimura variety is of abelian type and appears in \cite{GGP}. The second Shimura datum is associated to a torus $Z$. The third Shimura datum is the product of the first two Shimura data, and is our main interest. This Shimura variety is studied in \cite{RSZ2}, and the authors formulate a moduli problem $M_{K_{\tilde{G}}}(\tilde{G})$ of abelian varieties with additional structure. Here, $K_{\tilde{G}}$ is a certain open compact subgroup of $\tilde{G}(\BA_f)$. We should note that an integer $0 \leq h \leq n$ also appears in global situation, and this is closely related to $K_{\tilde{G}}$. In particular, if $h=0$, $K_{\tilde{G}}$ gives a hyperspecial level structure, and if $h \neq 0$, $K_{\tilde{G}}$ gives a parahoric level structure. This $h$ is also closely related to the $h$ in local situation. The moduli problem $M_{K_{\tilde{G}}}(\tilde{G})$ gives a model over a reflex field $E$ of the Shimura variety $\Sh_{K_{\tilde{G}}}(\tilde{G})$. We write $u$ for the place of $E$ that is determined by $\tilde{v}$. In \cite{RSZ2}, the authors define global integral models of $M_{K_{\tilde{G}}}(\tilde{G})$ over $\Spec O_E$ and semi-global integral models over $\Spec O_{E,(u)}$ in case $h=0$, and in case $h=1$, $F^+_{v_0}=\BQ_p$. In our paper, we construct semi-global integral models $\CM_{K_{\tilde{G}}}(\tilde{G})$ over $\Spec O_{E,(u)}$ for arbitrary $h$.

Now we can formulate the following proposition.

\begin{proposition}(Proposition \ref{proposition4.4.1}, Proposition \ref{proposition4.4.2})
	We can formulate a moduli problem that is representable by a Deligne-Mumford stack $\CM_{K_{\tilde{G}}}(\tilde{G})$ flat over $\Spec O_{E,(u)}$. For $K^p_{G}$ small enough, $\CM_{K_{\tilde{G}}}(\tilde{G})$ is relatively representable over $\CM_0^{\Fa,W}$. The generic fiber $\CM_{K_{\tilde{G}}}(\tilde{G}) \times_{\Spec O_{E,(u)}} \Spec E$ is canonically isomorphic to $M_{K_{\tilde{G}}}(\tilde{G})$ and $M_{K_{\tilde{G}}}(\tilde{G})$ is naturally isomorphic to the canonical model of $\Sh_{K_{\tilde{G}}}(\tilde{G})$. Furthermore, if $h=0, n$, then $\CM_{K_{\tilde{G}}}(\tilde{G})$ is smooth over $\Spec O_{E,(u)}$. If $h \neq 0,n$, then $\CM_{K_{\tilde{G}}}(\tilde{G})$ has semistable reduction over $\Spec O_{E,(u)}$ provided that $E_{u}$ is unramified over $\BQ_p$.
\end{proposition}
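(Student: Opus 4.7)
The plan is to adapt the construction of \cite{RSZ2} to the parahoric case and extract the local-geometric assertions from the Rapoport-Zink-space results of Sections \ref{sec:section2}--\ref{sec:section3}. First, I define $\CM_{K_{\tilde{G}}}(\tilde{G})$ as the moduli functor on $O_{E,(u)}$-schemes parametrizing tuples $(A_0,\iota_0,\lambda_0;A,\iota,\bar\lambda,\eta^p)$ of the usual PEL type, with the novelty that at the distinguished place $v_0$ the $p$-divisible group $A[v_0^\infty]$ carries a chain of isogenies whose successive quotients are controlled by the integer $h$, in exact parallel to the lattice chain that defines $\CN_{E/F}^h(1,n-1)$. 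Away from $v_0$, standard self-dual conditions at $p$ and the usual prime-to-$p$ level structure $K^p_G \times K_Z$ are imposed, as in the hyperspecial construction of \cite{RSZ2}.

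To prove representability, I consider the forgetful morphism to the torus datum, $\CM_{K_{\tilde{G}}}(\tilde{G}) \to \CM_0^{\Fa,W}$. For $K^p_G$ sufficiently small, the fibres of this morphism are rigidified PEL moduli problems, so the morphism is relatively representable by a quasi-projective scheme and $\CM_{K_{\tilde{G}}}(\tilde{G})$ is in particular a Deligne-Mumford stack. The generic fibre identification reduces to the observation that in characteristic zero the chain isogenies at $v_0$ collapse to isomorphisms, giving $\CM_{K_{\tilde{G}}}(\tilde{G}) \times_{O_{E,(u)}} E \simeq M_{K_{\tilde{G}}}(\tilde{G})$; the identification of the latter with the canonical model of $\Sh_{K_{\tilde{G}}}(\tilde{G})$ is already established in \cite{RSZ2}.

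Flatness and the local geometric assertions come from the local model diagram. At places of $F^+$ above $p$ other than $v_0$, the local model is smooth (these places are split in $F$ or admit self-dual lattice data), so they contribute nothing to the singular geometry. At $v_0$, the key step is to identify the local model with the one underlying $\CN_{E/F}^h(1,n-1)_{O_{\breve{E}}}$: a Mihatsch-type argument (cf.\ \cite{Mih}) translates the strict $O_F$-module framework into a PEL datum over $\Spf O_E$, and the Grothendieck-Messing / local-model formalism then supplies an \'etale-local isomorphism of formal neighbourhoods between $\CM_{K_{\tilde{G}}}(\tilde{G})$ along the basic locus and the corresponding formal neighbourhoods in $\CN_{E/F}^h(1,n-1)_{O_{\breve{E}}}$. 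Flatness, formal smoothness when $h=0,n$, and semistable reduction when $h \neq 0,n$ then follow at once from Theorem \ref{thmintro2}(1), provided $E_u/\BQ_p$ is unramified so that no extra ramification is introduced by base change to $O_{E,(u)}$.

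The main obstacle is the local-model comparison in the last step: carrying out the bookkeeping of polarizations and lattice chains for arbitrary $h$, and transferring between the $O_F$-strict and $O_E$-linear PEL frameworks developed in Section \ref{sec:section3}, is delicate because the parahoric chain structure at $v_0$ must match precisely the relative position condition $\pi^{i+1}\Lambda^\vee \subset \Lambda \subset \pi^i\Lambda^\vee$ appearing in the definition of vertex lattices. Once this comparison is in hand, every property claimed in the proposition follows directly from the local theorems already proved.
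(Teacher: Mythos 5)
Your overall strategy is close to the paper's, but there is a genuine conceptual confusion in the step where flatness and semistable reduction are established, and it would make the argument fail if carried out literally. You write that the ``Grothendieck-Messing / local-model formalism then supplies an \'etale-local isomorphism of formal neighbourhoods between $\CM_{K_{\tilde{G}}}(\tilde{G})$ \emph{along the basic locus} and the corresponding formal neighbourhoods in $\CN_{E/F}^h(1,n-1)_{O_{\breve{E}}}$.'' This is a conflation of two different comparisons. The identification of the completion of $\CM_{K_{\tilde{G}}}(\tilde{G})$ along the basic locus with (a quotient of) the Rapoport--Zink space $\CN$ is the Rapoport--Zink uniformization theorem --- that is Theorem \ref{theorem}, proved separately in Section \ref{subsec:section4.3}, and it says nothing about the geometry of the moduli space away from the basic locus. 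To control flatness and semistable reduction of the \emph{whole} special fibre you need the local model diagram, which is an entirely different mechanism: it gives a smooth-local (not just formal, and not just over the basic locus) comparison between the integral model and a local model $M$, which is an explicitly described finite-type scheme over $\Spec O_{E_u}$, not the Rapoport--Zink formal scheme. The paper's actual argument decomposes $M$ as $\prod_{v\in S_p} M_v$, observes that for $v\neq v_0$ the Kottwitz signature is banal so $M_v = \Spec O_{E_{r\vert v}}$, and then identifies $M_{v_0}$ with the local model already analysed in the proof of Proposition \ref{proposition5.12} (the local model of the PEL-type space $\CN_{E/\BQ_p}^h(1,n-1)$, not of the relative $O_F$-strict space $\CN_{E/F}^h$). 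Proposition \ref{proposition5.12} in turn reduces this to G\"ortz's standard local model for $\mathrm{GL}_n$, which has semistable reduction; and the hypothesis that $E_u/\BQ_p$ is unramified enters because semistable reduction is stable under unramified base change. Your final sentence invoking Theorem \ref{thmintro2}(1) similarly short-circuits the argument: that theorem is about $\CN$, not about $M_{v_0}$, so invoking it directly again confuses the uniformization and local-model stories.

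There are also two smaller inaccuracies worth flagging. First, you define the parahoric structure at $v_0$ via a ``chain of isogenies whose successive quotients are controlled by $h$,'' whereas the paper (following \cite{RSZ2}) encodes the level structure purely through the polarization condition $\Ker\lambda_{v}\subset A[i(\pi_v)]$ with prescribed rank $\vert\Lambda_v^{\perp}/\Lambda_v\vert$; these formulations are equivalent in standard PEL theory, but if you use the isogeny-chain formulation you then need to verify that the resulting moduli problem is flat and satisfies the same local model description, which the paper avoids by making the kernel condition primitive. Second, at the places $v\neq v_0$ you attribute smoothness to these places being ``split in $F$ or admitting self-dual lattice data''; the actual reason in the paper is that the Kottwitz signature $(0,n)$ at all embeddings above such $v$ makes $M_v$ a banal local model, hence a point, independent of whether $v$ splits or the lattice is self-dual. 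The role of $v_0$ being unramified in $F^+/\BQ$ is also not explained: it is needed so that condition \eqref{eq5.02} is automatic from the Kottwitz condition, letting one match $M_{v_0}$ with the Proposition \ref{proposition5.12} local model.
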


Now we will state the non-archimedean uniformization theorem of Rapoport and Zink in our situation. By this theorem, we can relate the basic locus of $\CM_{K_{\tilde{G}}}(\tilde{G})$ and the Rapoport-Zink space $\CN_{F_{w_0}/F^+_{v_0}}^h(1,n-1)$. In order to simplify notation, we write $\CM$ for $\CM_{K_{\tilde{G}}}(\tilde{G})$ and $\CN$ for $\CN_{F_{w_0}/F^+_{v_0}}^h(1,n-1)$. Let $\breve{E}_u$ be the completion of a maximal unramified extension of $E_u$, and let $k$ be the residue field of $O_{\breve{E}_u}$. Let $\widehat{\CM^{ss}}$ be the completion of $\CM_{O_{\breve{E}_u}}$ along the basic locus of $\CM_{O_{\breve{E}_u}} \otimes k$. Then we have the following non-archimedean uniformization theorem.

\begin{theorem}(Theorem \ref{theorem})
	There is a non-archimedean uniformization isomorphism
	\begin{equation*}
	\Theta:I(\BQ) \backslash  \CN' \times \tilde{G}(\BA^p_f) /K^p_{\tilde{G}} \overset{\sqcup\Theta_W}{\simeq } \widehat{\CM^{ss}},
	\end{equation*}
	where
	\begin{equation*}
	\CN' \simeq (Z(\BQ_p)/K_{Z,p}) \times \CN_{O_{\breve{E}_u}} \times \prod_{v \in S_p \backslash \lbrace v_0 \rbrace} U(V)(F^+_v)/K_{G,v}.
	\end{equation*}
	
\end{theorem}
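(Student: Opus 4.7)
The plan is to deduce this from the general Rapoport--Zink uniformization theorem, \cite[Theorem 6.30]{RZ}, applied to the integral model $\CM$, with the PEL realization of $\CN$ provided by Theorem \ref{thmintro2}(2) as the key input. The RZ theorem produces, for each basic isogeny class in $\CM_{O_{\breve{E}_u}} \otimes k$, a uniformization of the corresponding completion by a double coset space of the form $I(\BQ) \backslash \CN'' \times \tilde{G}(\BA^p_f)/K^p_{\tilde{G}}$, in which $\CN''$ is the product of local Rapoport--Zink spaces at the places of $F^+$ above $p$. The remaining work is to identify $\CN''$ with the product $\CN'$ appearing in the statement and to organize the basic isogeny classes into the index set $W$.

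First I would fix a basic point $(A_0, \iota_0, \lambda_0, \bar{\eta}_0^p)$ of $\CM_{O_{\breve{E}_u}} \otimes k$ in the chosen isogeny class and define the $\BQ$-algebraic group $I$ whose $R$-points, for a $\BQ$-algebra $R$, are the self-quasi-isogenies of $A_0$ that commute with $\iota_0$ and scale $\lambda_0$ by a similitude factor. A Kottwitz-style computation together with the anisotropy at $v_0$ exhibits $I$ as an inner form of $\tilde{G}$ which agrees with $\tilde{G}$ at every place away from $v_0$ and is the relevant non-quasi-split unitary similitude group at $v_0$; the diagonal action of $I(\BQ)$ on the right-hand side of the asserted isomorphism is the one induced by its action on the framing object and on the prime-to-$p$ Tate module of $A_0$.

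Next I would factor $\CN''$ place by place using $\tilde{G} = G \times Z$ and the decomposition $A_0[p^\infty] = \prod_{v \in S_p} A_0[v^\infty]$ along places of $F^+$ above $p$. The torus datum $Z$ contributes the discrete factor $Z(\BQ_p)/K_{Z,p}$. At the distinguished place $v_0$ the local factor is the PEL Rapoport--Zink space $\CN^h_{E/\BQ_p}(1,n-1)_{O_{\breve{E}_u}}$, which by Theorem \ref{thmintro2}(2) is canonically isomorphic to $\CN_{O_{\breve{E}_u}}$; the passage between the PEL and relative pictures, following the method of \cite{Mih}, uses that the $O_F$-action $\iota_0$ equips $A_0[v_0^\infty]$ with the structure of a strict formal $O_{F^+_{v_0}}$-module compatible with the polarization. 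At each remaining place $v \in S_p \setminus \{v_0\}$, the unitary factor $U(V)(F^+_v)$ is compact so the $p$-divisible group at $v$ is of $\mu$-ordinary type and the corresponding local Rapoport--Zink space is étale, identified with $U(V)(F^+_v)/K_{G,v}$ via the set of quasi-isogenies modulo $K_{G,v}$.

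I expect the main obstacle to be the compatibility at $v_0$: one must verify that the uniformization produced from the PEL-type formal scheme descends, under the isomorphism of Theorem \ref{thmintro2}(2), to an isomorphism phrased in terms of our relative Rapoport--Zink space $\CN$, so that the $I(\BQ)$-action on both sides matches. This is essentially Mihatsch's argument transported to the present setting, relying on the moduli-theoretic compatibility built into the construction of $\CM_{K_{\tilde{G}}}(\tilde{G})$. Once this compatibility is in hand, ranging over the set of basic isogeny classes (which is finite by Honda--Tate theory applied to $I$) yields the decomposition $\sqcup \Theta_W$ and assembles the full isomorphism $\Theta$ of the statement.
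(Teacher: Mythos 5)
Your overall strategy matches the paper's: cite Rapoport--Zink's Theorem 6.30, decompose the product $\CN'$ of local Rapoport--Zink spaces place by place along $S_p$ (the paper offloads this to \cite[Lemma 8.16]{RSZ2}), and identify the factor at $v_0$ with the relative space $\CN_{O_{\breve{E}_u}}$ via Proposition \ref{proposition5.10} (your Theorem \ref{thmintro2}(2)). The paper's proof also writes down the inverse map $\Theta^{-1}$ explicitly as a reader's aid, but the mathematical content is the same as what you outline.

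One correction: your reading of the superscript $\sqcup\Theta_W$ is off. You interpret it as a disjoint union over basic isogeny classes indexed via Honda--Tate applied to $I$; in fact, in this paper $W$ denotes the similarity class in $\CR^{\Fa}_{\Phi}(F)/_\sim$ that is fixed once and for all in Section \ref{subsec:section4.1} before $\CM_{K_{\tilde{G}}}(\tilde{G})$ is even defined, and the basic locus $\CM^{ss}$ is a single isogeny class covered by one choice of framing object $(\bm{A}_0,\ldots,\bm{\bar\eta})$. The decoration $\sqcup\Theta_W$ in the introductory statement is essentially vestigial notation (the restatement in Section \ref{subsec:section4.3} drops it); there is no Honda--Tate indexing step, and introducing one would actually break the uniqueness built into the fixed-$W$ moduli problem. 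Apart from this, and from the minor imprecision that $I$ differs from $\tilde{G}$ not only at $v_0$ but also at the archimedean places (where $V'$ is made negative definite), your argument is sound and follows the paper's route.
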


Here, $I$ is an inner twist of $\tilde{G}$. We refer to Section \ref{subsec:section4.3} for all notation above and its detail.

\subsection{Special cycles}\label{subsec:section1.3}
In this subsection, we use the notation in Section \ref{subsec:section1.1}. In \cite{KR4}, Kudla and Rapoport defined the special cycles $\CZ(x)$ in $\CN^1_{\BQ_{p^2}/\BQ_p}(1,n-1)$ and computed its reduced scheme as in their another paper \cite{KR2}. By following their work, we define special cycles $\CZ(x)$ and another special cycles $\CY(y)$ in $\CN^h_{E/F}(1,n-1)_{O_{\breve{E}}}$. We also study their reduced schemes and arithmetic intersection numbers in some cases.

Let $k$ be the residue field of $O_{\breve{E}}$, and let $(\overline{\BY},i_{\overline{\BY}},\lambda_{\overline{\BY}})$ (resp. $(\BX,i_{\BX},\lambda_{\BX})$) be the framing object of $\CN^0_{E/F}(0,1)_{O_{\breve{E}}}$ (resp. $\CN^h_{E/F}(1,n-1)_{O_{\breve{E}}}$). The space of special homomorphisms $\BV$ is the $E$-vector space
\begin{equation*}
\BV:=\Hom_{O_E}(\overline{\BY}, \BX) \otimes_{\BZ} \BQ,
\end{equation*}
with a $E$-valued hermitian form $h$ such that for all $x,y \in \BV$,
\begin{equation*}
h(x,y):=\lambda_{\overline{\BY}}^{-1} \circ y^{\vee} \circ \lambda_{\BX} \circ x \in \End_{O_E}(\overline{\BY}) \otimes \BQ \overset{i_{\overline{\BY}}^{-1}}{\simeq} E.
\end{equation*}
For each $x \in \BV$, we define the special cycle $\CZ(x)$ as follows.
For each $O_{\breve{E}}$-scheme $S$ such that $\pi$ is locally nilpotent, $\CZ(x)(S)$ is the subfunctor of collections $(\overline{Y},i_{\overline{Y}},\lambda_{\overline{Y}},\rho_{\overline{Y}},X,i_X,\lambda_X,\rho_X)$ such that
\begin{equation*}
\overline{Y} \times_S \overline{S} \xrightarrow{\rho_{\overline{Y}}} \overline{\BY} \times_k \overline{S} \xrightarrow{x} \BX \times_k \overline{S} \xrightarrow{\rho_X^{-1}} X \times_S \overline{S}
\end{equation*}
extends to a homomorphism from $\overline{Y}$ to $X$.

For each $y \in \BV$, we define the special cycle $\CY(y)$ in a similar way, but here we use the isomorphism $\CN^h_{E/F}(1,n-1)_{O_{\breve{E}}} \simeq \CN^{n-h}_{E/F}(1,n-1)_{O_{\breve{E}}}$ to define the cycle. We refer to Definition \ref{definition5.1.4} for the precise definition. All of these cycles are relative divisors in $\CN^h_{E/F}(1,n-1)_{O_{\breve{E}}}$. Therefore we can consider the arithmetic intersections of these cycles as in \cite{KR2}.

We prove the following theorem.

\begin{theorem}(Theorem \ref{theorem5.5.12})
		Let $\lbrace x_1, \dots ,x_{n-h}, y_1, \dots, y_h\rbrace$ be an orthogonal basis of $\BV$. Assume that
	\begin{equation*}
	\begin{array}{ll}
	val(h(x_i,x_i))=0  &\text{ for all } 3 \leq i \leq n-h,\\
	val(h(y_j,y_j))=-1 &\text{ for all } 1 \leq j \leq h,\\
	\end{array}
	\end{equation*}
	and write $a:=val(h(x_1,x_1))$, $b:=val(h(x_2,x_2))$. We assume that $a \leq b$ and $a \not\equiv b \mod 2$. Then we have
	\begin{equation*}
	\chi(O_{\CY(y_1)} \otimes^{\BL}_{O_{\CN}} \dots \otimes^{\BL}_{O_{\CN}} O_{\CZ(x_h)})=\dfrac{1}{2} \sum_{l=0}^{a} q^l(a+b+1-2l).
	\end{equation*}
	
	More generally, consider another basis $[\td{\tb{x}},\td{\tb{y}}]:=[ \td{x}_1, \dots ,\tilde{x}_{n-h}, \tilde{y}_1, \dots, \tilde{y}_h]$ of $\BV$ such that $\td{\tb{x}}=\td{x} g_1, \td{\tb{y}}=\td{y} g_2$ for $g_1 \in \tx{GL}_{n-h}(O_E)$ and $g_2 \in \tx{GL}_h(O_E)$. Then we have
	\begin{equation*}
	\chi(O_{\CY(\td{y}_1)} \otimes^{\BL}_{O_{\CN}} \dots \otimes^{\BL}_{O_{\CN}} O_{\CZ(\td{x}_h)})=\dfrac{1}{2} \sum_{l=0}^{a} q^l(a+b+1-2l).
	\end{equation*}
\end{theorem}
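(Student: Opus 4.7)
Set $\CN := \CN_{E/F}^{h}(1,n-1)_{O_{\breve{E}}}$. The strategy is to reduce the full $n$-fold intersection to a two-fold intersection on a smaller hyperspecial RZ space, perform that intersection explicitly via the Bruhat-Tits stratification of Theorem \ref{thmintro1}, and then bootstrap to arbitrary bases. The reduction proceeds in two stages. First, use the isomorphism $\CN_{E/F}^{h}(1,n-1)_{O_{\breve{E}}} \simeq \CN_{E/F}^{n-h}(1,n-1)_{O_{\breve{E}}}$ of Theorem \ref{thmintro2}(2) to convert each $\CY(y_j)$ into a $\CZ$-cycle on the dual side. Since $\val h(y_j,y_j)=-1$ for every $j$, an inductive argument on $h$ (mimicking the ``splitting'' arguments of \cite{KR4}) should identify $\CY(y_1)\cap\cdots\cap\CY(y_h)$ with a regular closed subscheme isomorphic to the hyperspecial RZ space $\CN_{E/F}^{0}(1,n-h-1)_{O_{\breve{E}}}$, inside which $x_1,\ldots,x_{n-h}$ are special homomorphisms for an orthogonal hermitian subspace. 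Second, since $\val h(x_i,x_i)=0$ for $i=3,\ldots,n-h$, each $\CZ(x_i)$ is a smooth divisor, and successive intersection cuts out an embedded copy of $\CN_{E/F}^{0}(1,1)_{O_{\breve{E}}}$. The projection formula, justified at each stage by the regularity guaranteed by Theorem \ref{thmintro2}(1) together with transversality of each cutting divisor, then reduces the global Euler characteristic to
\begin{equation*}
\chi_{\CN_{E/F}^{0}(1,1)_{O_{\breve{E}}}}\bigl(O_{\CZ(x_1)} \otimes^{\BL} O_{\CZ(x_2)}\bigr).
\end{equation*}

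For the rank-two computation, recall from Theorem \ref{thmintro1} that the reduced special fiber of $\CN_{E/F}^{0}(1,1)_{O_{\breve{E}}}$ is a tree-like union of projective lines indexed by vertex lattices in the Bruhat-Tits tree of the relevant $\tx{SU}(2)$. Following the Dieudonn\'e-theoretic analysis of \cite{KR2}, a special homomorphism $x$ with $\val h(x,x)=c$ imposes on each stratum $\CN_\Lambda \simeq \BP^1$ a linear condition whose vanishing locus has a computable multiplicity depending on the combinatorial distance from $\Lambda$ to the preferred lattice of $x$. Computing these multiplicities stratum-by-stratum for $x_1$ (valuation $a$) and $x_2$ (valuation $b$), summing the local intersection contributions over the finite subtree on which both multiplicities are positive, and applying Serre's Tor-formula should yield the formula $\tfrac{1}{2}\sum_{l=0}^{a} q^l(a+b+1-2l)$. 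The parity hypothesis $a \not\equiv b \pmod 2$ is precisely what forces the geodesic joining the preferred lattices of $x_1$ and $x_2$ to be centered on an edge rather than a vertex, which accounts for the factor $\tfrac{1}{2}$.

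For the general-basis statement, observe that $g_1 \in \tx{GL}_{n-h}(O_E)$ and $g_2 \in \tx{GL}_h(O_E)$ preserve the $O_E$-lattices spanned by $(x_1,\ldots,x_{n-h})$ and $(y_1,\ldots,y_h)$ together with their restricted hermitian forms. Hence the reduction step goes through verbatim for the tilded basis, depositing us in the same $\CN_{E/F}^{0}(1,1)_{O_{\breve{E}}}$ with cycles $\CZ(\td{x}_1),\CZ(\td{x}_2)$; a Smith-normal-form argument combined with the invariance $\CZ(ux)=\CZ(x)$ for $u \in O_E^\times$ (immediate from the moduli definition) reduces to the previous case. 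I expect the main obstacle to be the first reduction step: one must verify that $\CY(y_1)\cap\cdots\cap\CY(y_h)$ is a regular closed subscheme isomorphic to the advertised hyperspecial RZ space, and that the residual cycles meet it properly so the projection formula applies with no excess Tor contributions. This compatibility across the change-of-level $h \mapsto 0$ requires careful tracking of the polarization and the lattice-chain structure under the Dieudonn\'e theory of Section \ref{sec:section2}, and is the most delicate point of the argument.
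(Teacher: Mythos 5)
Your reduction strategy is essentially the paper's: the key tool is Remark \ref{remark5.5.9}, built from Propositions \ref{proposition5.6} and \ref{proposition5.5.8}, which peels off one unit-norm $\CZ(x_i)$ or $\CY(y_j)$ at a time while keeping control of the derived tensor product. Repeated application lands you in $\CN^0_{E/F}(1,1)_{O_{\breve{E}}}$ with the two residual divisors $\CZ(x_1'), \CZ(x_2')$, exactly as you describe. The concern you flag as ``the most delicate point'' (regularity of the iterated intersection, absence of excess Tor) is precisely what the lemma preceding Remark \ref{remark5.5.9} handles, using that each cutting divisor has unit norm, so the paper does address it.

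Where your proposal genuinely diverges, and where the gap lies, is the rank-two base case. The paper does not rederive the intersection number in $\CN^0(1,1)$; it simply cites \cite[Theorem 4.13]{Liu}. Your alternative route---computing multiplicities ``stratum-by-stratum'' over the tree of $\BP^1$'s and ``summing over the finite subtree''---misdescribes the geometry: by Proposition \ref{proposition5.5.5}, the support of $\CZ(x_1')\cap\CZ(x_2')$ in $\CN^0(1,1)(k)$ is a single closed point (the two conditions $x_1',x_2' \in \pi B^\vee$ pin down $B$ since $x_1',x_2'$ span the rank-two hermitian space), so the Euler characteristic is the length of a local deformation ring at that point, not a sum of contributions distributed over strata. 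Your explanation of the factor $\tfrac12$ in terms of a geodesic ``centered on an edge rather than a vertex'' is speculative and not how the formula arises in \cite{Liu}; the parity hypothesis $a\not\equiv b \pmod 2$ enters as a hypothesis under which Liu's length computation closes. The phrase ``should yield'' signals that this step is not actually carried out. For the general-basis statement, your Smith-normal-form plus $\CZ(ux)=\CZ(x)$ argument is plausible but indirect; the paper instead observes from the $k$-point description in Proposition \ref{proposition5.5.5} that the support of the intersection is a single point and is unchanged under $\td{\tb{x}}=\tb{x}g_1$, $\td{\tb{y}}=\tb{y}g_2$, and then invokes the deformation-ring length argument of \cite[Section 5]{KR2} (and \cite{Liu} for general $F$), which is manifestly invariant under such a change of basis. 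In summary: your reduction is sound and matches the paper, but you should cite Liu for the $n=2$ computation rather than attempt to reconstruct it, and for the general-basis case you should argue via the single-point support rather than Smith normal form.
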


In this case, the reduced scheme of the intersection has dimension $0$. Therefore we can use the deformation theory as in \cite{KR2} for $F=\BQ_p$ and \cite{Liu} in general.

We have one more case that seems to be realistic, but we do not include it in this paper. See Remark \ref{remark5.5.16}. Also, we believe that the similar conjecture to \cite[Conjecture 1.3]{KR2} can be formulated in our case.

\begin{acknowledgement}
I want to heartily thank my advisor Yifeng Liu for suggesting this problem, for his advice, and for many helpful discussions and comments. I would like to thank Stephen S. Kudla and Michael Rapoport for their very helpful unpublished notes. 

\end{acknowledgement}

\bigskip
\section{The moduli space $\mathcal{N}$ of strict formal $O_F$-modules}\label{sec:section2}
In this section, we will define the moduli problem $\mathcal{N}$ and study its structure.
\subsection{The moduli space $\CN_{E/F}^h(r,n-r)$}\label{subsec:section2.1}
We fix a prime $p>2$. Let $F$ be a finite extension of $\BQ_p$, with ring of integers $O_F$, and residue field $\BF_q$. We fix a uniformizer $\pi$.  Let $E$ be a quadratic unramified extension of $F$, with ring of integers $O_E$ and residue field $\BF_{q^2}$. Let $\breve{E}$ be the completion of a maximal unramified extension of $E$. Denote by $^*$ the nontrivial Galois automorphism of $E$ over $F$. We recall the definition of strict formal $O_F$-module from \cite{RZ2}.

\begin{definition}
	Let $S$ be a scheme such that $p$ is locally nilpotent in $O_S$. A \textit{formal $O_F$-module} over a scheme $S$ is a formal $p$-divisible group $X$ over $S$ with an $O_F$-action
	\begin{equation*}
	i:O_F \rightarrow \End X.
	\end{equation*}
	
	Let $X$ be a formal $O_F$-module over an $O_F$-scheme $S$. We call $X$ a \textit{strict} formal $O_F$-module if $O_F$ acts on $\Lie X$ via the structure morphism $O_F \rightarrow O_S$. A strict formal $O_F$-module $X$ is called \textit{supersingular} if all slopes of $X$ as a strict $O_F$-module are $1/2$.
\end{definition}

Let $h$ be an integer with $0 \leq h \leq n$. We fix a triple $(\mathbb{X}, i_{\BX}, \lambda_{\BX})$ consisting of the following data:

(1) $\mathbb{X}$ is a supersingular strict formal $O_F$-module of $F$-height $2n$ over $\BF_{q^2}$; 

(2) $i_{\BX}:O_E \rightarrow \End \BX$ is an $O_E$-action on $\BX$ that extends the $O_F$-action on $\BX$;

(3) $\lambda_{\BX}$ is a polarization
\begin{equation*}
\lambda_{\BX} :\BX \rightarrow \BX^{\vee},
\end{equation*}
such that the corresponding Rosati involution induces the involution $^*$ on $O_E$.

We also assume that $(\BX,i_{\BX},\lambda_{\BX})$ satisfies the following conditions.

(a) For all $a \in O_E$, the action $i_{\BX}$ satisfies
\begin{equation*}
\Charpol(i_{\BX}(a) \vert \Lie \BX) = (T-a)^r(T-a^*)^{n-r}.
\end{equation*}

Here, we view $(T-a)^r(T-a^*)^{n-r}$ as an element of $O_S[T]$ via the structure morphism.
We call this condition the determinant condition of signature $(r,n-r)$.

(b) We assume that $\Ker \lambda_{\BX} \subset \BX[\pi]$ and its order is $q^{2h}$.

Now, we can define our moduli problem.

Let (Nilp) be the category of $O_E$-schemes $S$ such that $\pi$ is locally nilpotent on $S$. 
Let $\CN_{E/F}^h(r,n-r)$ be the set-valued functor on (Nilp) which sends a scheme $S \in$ (Nilp) to the set of isomorphism classes of tuples $(X,i_X,\lambda_X,\rho_X)$.

Here $X$ is a (supersingular) formal $O_F$-module of $F$-height $2n$ over $S$ and $i_X$ is an $O_E$-action on $X$ satisfying the determinant condition of signature $(r,n-r)$
\begin{equation*}
\Charpol(i_X(a)\vert \Lie X)=(T-a)^r(T-a^*)^{n-r}, \quad \forall a \in E.
\end{equation*}
Here we view $(T-a)^r(T-a^*)^{n-r}$ as an element of $O_S[T]$ via the structure morphism $O_E \rightarrow O_S$.

 Furthermore, $\rho_X$ is an $O_E$-linear quasi-isogeny \begin{equation*}
\rho_X:X_{\overline{S}} \rightarrow \mathbb{X}\times_{\BF_{q^2}}\overline{S},
\end{equation*}
of height 0, where $\overline{S}=S\times_{O_E}\BF_{q^2}$ and $X_{\overline{S}}$ is the base change $X\times_S\overline{S}$.

 Finally, $\lambda_X:X \rightarrow X^{\vee}
$ is a polarization
  such that its Rosati involution induces the involution $^*$ on $O_E$, and the following diagram commutes up to a constant in $O_F^{\times}$

\begin{center}
\begin{tikzcd}
X_{\overline{S}} \arrow{r}{\lambda_{X_{\overline{S}}}}
\arrow{d}{\rho_X}
&X^{\vee}_{\overline{S}} \\
\mathbb{X}_{\overline{S}}
 \arrow{r}{\lambda_{\BX_{\overline{S}}}}  &\mathbb{X}_{\overline{S}}^{\vee}
 \arrow{u}{\rho_X^{\vee}}.
\end{tikzcd}
\end{center}

Two quadruples $(X,i_X,\lambda_X,\rho_X)$ and $(X',i_{X'},\lambda_{X'},\rho_{X'})$ are isomorphic if there exists an $O_E$-linear isomorphism $\alpha:X \rightarrow X'$ such that $\rho_{X'} \circ (\alpha \times_S \overline{S})=\rho_X$ and $\alpha^{\vee} \circ \lambda_{X'} \circ \alpha$ differs locally on $S$ from $\lambda_X$ by a scalar in $O_F^{\times}$. \\

The functor $\CN_{E/F}^h(r,n-r) \otimes O_{\breve{E}}$ is representable by a formal scheme over $\Spf O_{\breve{E}}$ which is locally formally of finite type. This is explained in \cite{Mih}. Indeed, we can use \cite[Theorem 2.16]{RZ}, and the fact that the condition that the $O_F$-action on $X$ lifts from $\BX$, and the condition that the lifted action is strict are closed conditions.

Furthermore, when $F$ is unramified extension of $\BQ_p$, we will fix a decent $(\BX,i_{\BX},\lambda_{\BX})$ in Remark \ref{remark5.11}. Then $\CN_{E/F}^h(r,n-r)$ is representable by a formal scheme over $\Spf O_E$ which is locally formally of finite type. For the moment assume that we fix this triple $(\BX,i_{\BX},\lambda_{\BX})$ so that $\CN_{E/F}^h(r,n-r)$ is representable by a formal scheme over $\Spf O_E$ which is locally formally of finite type, where $F$ is unramified over $\BQ_p$.

From now on, we will restrict ourselves to the case $r=1$. Note that the case $(r=1, h=0, F=\BQ_p)$ is studied in \cite{VW}. For simplicity, denote by $\CN$ the moduli problem $\CN_{E/F}^h(1,n-1)$.
\bigskip

\subsection{Description of the points of $\mathcal{N}$}\label{subsec:section2.2}

Let $k$ be a fixed algebraic closure of $O_E/\pi O_E=\BF_{q^2}$. In this subsection, we will study the set $\CN(k)$. For this, we need to use relative Dieudonne theory in the sense of \cite[Proposition 3.56]{RZ}. We use the following notation.

Let $\breve{F}$ be the completion of a maximal unramified extension of $F$ containing $E$ and $O_{\breve{F}}$ its ring of integers. Let $F^u$ be the maximal unramified extension of $\BQ_p$ in $F$ and $O_{F^u}$ its ring of integers. Let $L$ be a perfect field with $O_F/\pi O_F=\BF_q$-algebra structure $\alpha_0: \BF_q \rightarrow L$. Then, we get a map $O_{F^u} \rightarrow W(L)$ induced from $\alpha_0: \BF_q \hookrightarrow L$. We define $W_{O_F}(L)=O_F \otimes_{O_{F^u},\alpha_0}W(L)$. This is the ring of relative Witt vectors of $L$. In particular $W_{O_F}(k)=O_{\breve{F}}.$

Let $\sigma$ be the Frobenius element in $\Gal(\breve{F}/F)$.\\

We recall from \cite[Proposition 3.56]{RZ} (or \cite[Notation]{KR1}) the definition of the relative Dieudonne module. Let $X$ be a formal $O_F$-module of $F$-height $2n$ over $k$. Let $(\tilde{M}, \tilde{\CV})$ be the (absolute) Dieudonne module of $X$. Consider the decomposition
\begin{equation*}
O_F \otimes_{\BZ_p}W(k)=\prod_{\alpha:\BF_{q} \rightarrow k} O_F \otimes_{O_{F^u},\alpha} W(k).
\end{equation*}
Here, $\alpha$ runs over the set of $\BF_p$-embeddings $\alpha:\BF_{q} \rightarrow k$. Via this decomposition, the action of $O_F$ on $\tilde{M}$ induces the decomposition
\begin{equation*}
\tilde{M}=\bigoplus_{\alpha:\BF_{q} \rightarrow k} \tilde{M}^{\alpha}.
\end{equation*}
We define the \textit{relative Dieudonne module} of $X$ as
\begin{equation*}
(M^{\alpha_0},\CV=\tilde{\CV}^f),
\end{equation*}
where $f=\vert F^u : \BQ_p \vert=\vert \BF_{q}:\BF_p \vert$.

Now, let $(\mathbb{M},\CV)$ be the relative Dieudonne module of $\mathbb{X}$, and let $N=\mathbb{M} \otimes_{\BZ}\BQ$ be its relative Dieudonne crystal. Denote by $N_k=\mathbb{M} \otimes_{E}\breve{F}$ its base change. The $O_E$-action $i_{\BX}$ on $\BX$ induces an $E$-action on $N_k$. Let $\CF$ be the Frobenius of $\BM$. The polarization $\lambda_{\BX}$ of $\BX$ induces a nondegenerate $\breve{F}$-bilinear alternating form on $N_k$
\begin{equation*}
	\langle\cdot, \cdot\rangle:N_k \times N_k \rightarrow \breve{F},
\end{equation*}
such that for all $x,y \in N_k, a \in E$, it satisfies
\begin{equation}\label{eq1}
	\langle \CF x,y \rangle=\langle x,\CV y \rangle^{\sigma} ,
\end{equation}
\begin{equation}\label{eq2}
	\langle ax,y \rangle =\langle x,a^{*}y \rangle.
\end{equation}

Since we have the decomposition $E \otimes_F \breve{F} \simeq \breve{F} \times \breve{F}$, the $E$-action $i$ on $N_k$ induces $\BZ/2\BZ$-grading
\begin{equation*}
	N_k=N_{k,0} \oplus N_{k,1}.
	\end{equation*}

Note that by \eqref{eq1}, \eqref{eq2}, each $N_{k,i}$ is totally isotropic with respect to $\langle \cdot, \cdot \rangle$. Also, for $i=0,1$, we have that $\CF:N_{k,i} \rightarrow N_{k,i+1}$, $\CV:N_{k,i} \rightarrow N_{k,i+1}$ are homogeneous of degree 1 with respect to the decomposition.

For an $O_{\breve{F}}$-lattice $M=M_0 \oplus M_1$, we define the dual lattice $M_i^{\perp}$ of $M_i$ as
\begin{equation*}
	M_i^{\perp}=\lbrace x \in N_{k,i+1} \vert \langle x, M_i \rangle \subset O_{\breve{F}} \rbrace.
\end{equation*}

For $O_{\breve{F}}$-lattices $M_i \subset M'_i \subset N_{k,i}$, we denote by $[M'_i:M_i]$ the index of $M_i$ in $M'_i$, i.e. the length of the $O_{\breve{F}}$-module $M'_i/M_i$. If $[M'_i:M_i]=t$, we write $M_i \overset{t}{\subset} M'_i$.\\

By the relative Dieudonne theory, we have the following proposition.

\begin{proposition}\label{proposition2.2} There is a bijection between the set $\CN(k)$ and the set of $O_{\breve{F}}$-lattices $M$ in $N_k$ such that
	
	$\bullet$ $M$ is stable under $\CF$, $\CV$, and $O_E$-action;
	
	$\bullet$ $\Charpol_k(a,M/\CV M)=(T-a)(T-a^{*})^{n-1}$ for all $ a\in O_E$;
	
	$\bullet$ $M_0 \overset{h}{\subset} M_1^{\perp} \overset{n-h}{\subset} \pi^{-1} M_0$, $M_1 \overset{h}{\subset} M_0^{\perp} \overset{n-h}{\subset} \pi^{-1} M_1$.
\end{proposition}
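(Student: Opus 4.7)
The plan is to apply the relative Dieudonn\'e functor of Rapoport--Zink cited in the excerpt (Proposition 3.56 of \cite{RZ}), which gives an anti-equivalence between strict formal $O_F$-modules over $k$ and suitable pairs $(M,\CV)$ with $M$ a finite free $O_{\breve F}$-module equipped with a $\sigma^{-1}$-linear Verschiebung. Given $(X,i_X,\lambda_X,\rho_X) \in \CN(k)$, I would take $M$ to be the image in $N_k$ of the relative Dieudonn\'e module of $X$ under the rational identification induced by $\rho_X$. Since $\rho_X$ is $O_E$-linear of height $0$, the result is an $O_{\breve F}$-lattice in $N_k$ that is stable under $\CF$, $\CV$, and the $O_E$-action; conversely the triple $(X,i_X,\rho_X)$ is reconstructed from $M$ up to unique isomorphism. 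This yields the first bullet, together with the $\BZ/2\BZ$-grading $M = M_0 \oplus M_1$ induced by the splitting $E \otimes_F \breve F \simeq \breve F \times \breve F$.

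Next I would translate the remaining structure. Under the canonical identification $\Lie X \cong M/\CV M$, the determinant condition of signature $(1,n-1)$ becomes exactly the stated characteristic polynomial identity, giving the second bullet. For the polarization, the form $\langle\cdot,\cdot\rangle$ on $N_k$ is built from $\lambda_{\BX}$ precisely so that, via $\rho_X$, the polarization $\lambda_X$ corresponds, up to the $O_F^\times$ scalar allowed by the definition of $\CN$, to the restriction of $\langle\cdot,\cdot\rangle$ to $M$. The assumptions $\Ker \lambda_X \subset X[\pi]$ and $|\Ker \lambda_X| = q^{2h}$ then translate to $M \subset M^\vee \subset \pi^{-1} M$ with $O_{\breve F}$-colength $[M^\vee : M] = 2h$, where $M^\vee$ denotes the dual of $M$ with respect to the form.

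To pass to the graded form of the sandwich, I would use the $O_E$-equivariance relation \eqref{eq2}: since $N_{k,0}$ and $N_{k,1}$ are the two eigenspaces for $E \otimes_F \breve F \simeq \breve F \times \breve F$, each is isotropic and $\langle\cdot,\cdot\rangle$ pairs them nondegenerately. Consequently the dual of $M_i \subset N_{k,i}$ lies in $N_{k,i+1}$, which is exactly $M_i^\perp$, and $M^\vee = M_1^\perp \oplus M_0^\perp$. The global sandwich then refines to the two chains in the third bullet. The individual indices $h$ and $n-h$ are forced because the form induces perfect pairings $M_1^\perp/M_0 \times M_0^\perp/M_1 \to \breve F / O_{\breve F}$ and $\pi^{-1}M_0/M_1^\perp \times \pi^{-1}M_1/M_0^\perp \to \breve F / O_{\breve F}$, so the two colengths in each chain agree; combined with $[\pi^{-1}M_i : M_i] = n$ (each graded piece has rank $n$), this pins them down.

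The main obstacle is the calibration of the polarization against the $O_F^\times$ scalar ambiguity built into the definition of $\CN$: one must verify that the framing form $\langle\cdot,\cdot\rangle$ can be normalized so that every admissible $\lambda_X$ is faithfully recorded by the lattice $M$, and that two distinct lattices cannot arise from isomorphic data. Once this normalization is fixed, both directions of the bijection are routine consequences of the Dieudonn\'e dictionary together with the compatibilities \eqref{eq1} and \eqref{eq2}.
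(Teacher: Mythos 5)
Your proposal is correct and is exactly the argument the paper has in mind; the paper states Proposition~\ref{proposition2.2} without proof, remarking only that it follows ``by the relative Dieudonn\'e theory'' in the sense of \cite[Proposition 3.56]{RZ}. The translation you carry out is the standard dictionary: stability under $\CF$, $\CV$ and $O_E$ from functoriality, the characteristic polynomial condition from $\Lie X \cong M/\CV M$, the chain $M \subset M^\perp \subset \pi^{-1}M$ with $[M^\perp:M]=2h$ from $\Ker\lambda_X\subset X[\pi]$ and $|\Ker\lambda_X|=q^{2h}$, and the refinement into the two graded chains from the isotropy of $N_{k,0}$, $N_{k,1}$ forced by \eqref{eq2}, with the index split pinned down by the induced perfect pairing $M_1^\perp/M_0 \times M_0^\perp/M_1 \to \breve F/O_{\breve F}$ together with $[\pi^{-1}M_i:M_i]=n$.

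One small caveat: the ``main obstacle'' you flag regarding the $O_F^\times$ ambiguity is in fact a non-issue and does not require any extra normalization of the framing form. In the forward direction, rescaling $\lambda_X$ by a unit of $O_F$ rescales the induced form on $M$ by that same unit, which leaves $M^\perp$ and hence all index conditions unchanged. In the reverse direction, if two polarizations $\lambda_X$ and $\lambda_X'$ on the same $(X,i_X,\rho_X)$ both satisfy the commutativity-up-to-$O_F^\times$ condition against $\lambda_{\BX}$, they already differ by an $O_F^\times$-scalar, and the definition of isomorphism in $\CN$ explicitly identifies two quadruples whose polarizations differ by such a scalar. So the lattice $M$ determines the isomorphism class on the nose. (Also, a purely terminological point: the relative Dieudonn\'e functor of \cite{RZ} is covariant, so ``equivalence'' rather than ``anti-equivalence''; this does not affect the argument.)
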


We will use the following lemma in the next subsection.

\begin{lemma}\label{lemma2.3}(\cite[Lemma 1.5]{Vol}) Let $M=M_0 \oplus M_1$ be an $O_E$-invariant lattice in $N_k$. Assume that $M$ is invariant under $\CF$ and $\CV$. Then $M$ satisfies the determinant condition of signature $(r,n-r)$ if and only if
	\begin{equation*}
	\begin{split}
		\pi M_0 \overset{n-r}{\subset} \CF M_1 \overset{r}{\subset} M_0,\\
		\pi M_1 \overset{r}{\subset} \CF M_0 \overset{n-r}{\subset} M_1.
	\end{split}
	\end{equation*}
\end{lemma}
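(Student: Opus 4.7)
My plan is to reduce the determinant condition, which is a statement about the $O_E$-action on $\Lie X = M/\CV M$, to the stated index conditions involving $\CF$, by using the $\BZ/2\BZ$-grading together with the standard identities $\CF\CV = \CV\CF = \pi$.

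First I would unpack the Lie algebra. Since $\CV$ has degree $1$ with respect to the grading, $\CV M = \CV M_0 \oplus \CV M_1 \subset M_1 \oplus M_0$, so
\begin{equation*}
\Lie X = M/\CV M = (M_0/\CV M_1) \oplus (M_1/\CV M_0).
\end{equation*}
The isomorphism $O_E \otimes_{O_F} O_{\breve F} \simeq O_{\breve F} \times O_{\breve F}$ provides two embeddings $\varphi_0, \varphi_1 : O_E \to O_{\breve F}$, and the decomposition $M = M_0 \oplus M_1$ is precisely the eigenspace decomposition under the $O_E$-action, so $a \in O_E$ acts on $M_i/\CV M_{i+1}$ by the scalar $\varphi_i(a)$. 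After arranging the indexing so that the structure map $O_E \to O_S$ corresponds to $\varphi_0$, the determinant condition $\Charpol(a\mid \Lie X) = (T-a)^{r}(T-a^{*})^{n-r}$ becomes equivalent to the two dimension equalities
\begin{equation*}
[M_0 : \CV M_1] = r, \qquad [M_1 : \CV M_0] = n-r.
\end{equation*}

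Next I would transport these index conditions across $\CF$. Since $\CF : N_{k,i} \to N_{k,i+1}$ is a $\sigma$-semilinear bijection of $\breve F$-vector spaces, it preserves lengths of quotients of $O_{\breve F}$-lattices. Applying $\CF$ to $\CV M_1 \subset M_0$ and using $\CF\CV = \pi$ gives $\pi M_1 \subset \CF M_0$ with $[\CF M_0 : \pi M_1] = [M_0 : \CV M_1] = r$; since $[M_1 : \pi M_1] = n$, the complementary index $[M_1 : \CF M_0]$ equals $n-r$, producing the chain
\begin{equation*}
\pi M_1 \overset{r}{\subset} \CF M_0 \overset{n-r}{\subset} M_1.
\end{equation*}
The symmetric argument applied to $\CV M_0 \subset M_1$ yields $\pi M_0 \overset{n-r}{\subset} \CF M_1 \overset{r}{\subset} M_0$. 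This establishes the forward implication.

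The converse is immediate by reversing each step: the $\CF$-index chains force the $\CV$-index chains via $\CF\CV = \pi$ and the length-preservation of $\CF$, and the $\CV$-index chains recover the charpol factorization on each graded piece of $\Lie X$. The main potential pitfall — and the only step needing care — is keeping track of which embedding $\varphi_i$ matches the structure map so that $(T-a)^r$ (rather than $(T-a^{*})^r$) is attached to the $M_0/\CV M_1$ piece; once that indexing convention is fixed consistently with the convention used in Proposition \ref{proposition2.2}, the rest is a bookkeeping exercise in lattice indices.
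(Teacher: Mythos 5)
Your proof is correct; the paper itself gives no argument for this lemma and simply cites \cite[Lemma 1.5]{Vol}, and your reconstruction is the standard one used there. You correctly translate the determinant condition into the two eigenspace-dimension equalities $[M_0:\CV M_1]=r$, $[M_1:\CV M_0]=n-r$, and then transport them through the $\sigma$-semilinear bijection $\CF$ together with $\CF\CV=\pi$ (and back via $\CF^{-1}$ for the converse), with the indexing caveat you flag being exactly the only thing one must check against the convention fixed in Proposition \ref{proposition2.2}.
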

\begin{proof}
	See \cite[Lemma 1.5]{Vol}.
\end{proof}

\bigskip

\subsection{Description of the points of $\mathcal{N}$ II}\label{subsec:section2.3}
In this subsection, we will describe the set $\mathcal{N}(k)$ as the set of lattices in $N_{k,0}$. We use the following notation.

 Let $\tau$ be the $\sigma^2$-linear operator $\CV^{-1}\CF$ on $N_k$, and let $N_{k,0}^{\tau}$ be the set of $\tau$-invariant elements in $N_{k,0}$. Then $N_{k,0}^{\tau}$ is an $E$-vector space. Note that for every $\tau$-invariant lattice $A$ in $N_{k,0}$, there exists a $\tau$-invariant basis of $A$ (see \cite[1.10]{Vol}). Therefore, we have $N_{k,0}=N_{k,0}^{\tau} \otimes_E \breve{F}$.

We define $\lbrace x, y \rbrace :=\langle x, \CF y \rangle$. This is a nondegenerate form on $N_{k,0}$ which is linear in the first variable, and $\sigma$-linear in the second variable.

Also, this form $\lbrace \cdot, \cdot \rbrace$ satisfies the following properties (see \cite[1.11]{Vol}):
\begin{equation*}
\lbrace x,y \rbrace=-\lbrace y, \tau^{-1}(x)\rbrace^{\sigma},
\end{equation*}
\begin{equation*}
\lbrace \tau(x),\tau(y) \rbrace=\lbrace x, y\rbrace^{\sigma^2}.
\end{equation*}

For an $O_{\breve{F}}$-lattice $A$ in $N_{k,0}$, we define $A^{\vee}$ the dual lattice of $A$ with respect to the form $\lbrace \cdot, \cdot \rbrace$ as
\begin{equation*}
A^{\vee}=\lbrace x \in N_{k,0} \vert \lbrace x, A \rbrace \subset O_{\breve{F}} \rbrace.
\end{equation*}

For an $O_{\breve{F}}$-lattice $A \subset N_{k,0}$, we have
\begin{equation*}
(A^{\vee})^{\vee}=\tau(A),
\end{equation*}
\begin{equation*}
\tau(A^{\vee})=\tau(A)^{\vee}.
\end{equation*}

We can now state the following description of $\mathcal{N}(k)$.

\begin{proposition}\label{proposition2.4}
	There is a bijection between $\mathcal{N}(k)$ and the set
	
	\begin{displaymath}
	\left\{\begin{array}{c} 
	O_{\breve{F}}$-lattices $ A \overset{h}{\subset} B \subset N_{k,0}
	\end{array}
	\middle|
	\begin{array}{c}
	\pi B^{\vee} \overset{1}{\subset} A \overset{n-1}{\subset} B^{\vee},\\
	\pi A^{\vee} \overset{1}{\subset} B \overset{n-1}{\subset} A^{\vee},\\
	\pi B \subset A \subset B.
	
	\end{array}
	\right\}
	\end{displaymath}
	
\end{proposition}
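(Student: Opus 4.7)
The plan is to exhibit mutually inverse maps. In the forward direction, given a Dieudonné lattice $M = M_0 \oplus M_1$ as in Proposition \ref{proposition2.2}, I set
\[A := M_0, \qquad B := M_1^\perp,\]
where $\perp$ denotes the orthogonal complement in $N_{k,0}$ taken with respect to $\langle\cdot,\cdot\rangle$. In the reverse direction, given a pair $(A,B)$ as in the statement, I set $M_0 := A$ and $M_1 := B^\perp \subset N_{k,1}$. Biduality $(L^\perp)^\perp = L$ makes these mutually inverse.

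The central computation I would carry out first is the identity
\[C^\vee \;=\; \CV^{-1}(C^\perp) \;=\; \pi^{-1}\CF(C^\perp)\]
valid for any $O_{\breve F}$-lattice $C \subset N_{k,0}$, which follows directly from the adjointness $\langle \CF u, v\rangle = \langle u, \CV v\rangle^\sigma$ together with $\CF\CV = \pi$. Specializing to $B = M_1^\perp$ this gives $\pi B^\vee = \CF M_1$, and specializing to $A$ gives $\pi A^\vee = \CF M_0^\perp$; taking perpendiculars in the latter also yields $\CF M_0 = (A^\vee)^\perp$.

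With this dictionary in place, every condition translates mechanically. The parahoric chain $M_0 \overset{h}{\subset} M_1^\perp \overset{n-h}{\subset} \pi^{-1}M_0$ becomes $A \overset{h}{\subset} B$ together with $\pi B \subset A$. The determinant condition, which by Lemma \ref{lemma2.3} for signature $(1,n-1)$ reads $\pi M_0 \overset{n-1}{\subset} \CF M_1 \overset{1}{\subset} M_0$, becomes $\pi B^\vee \overset{1}{\subset} A \overset{n-1}{\subset} B^\vee$ after the substitution $\CF M_1 = \pi B^\vee$; the twin chain $\pi M_1 \overset{1}{\subset} \CF M_0 \overset{n-1}{\subset} M_1$ becomes $\pi A^\vee \overset{1}{\subset} B \overset{n-1}{\subset} A^\vee$ upon taking perpendiculars in $N_{k,1}$ and using $\CF M_0 = (A^\vee)^\perp$, $M_1 = B^\perp$. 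The second parahoric chain $M_1 \overset{h}{\subset} M_0^\perp \overset{n-h}{\subset} \pi^{-1}M_1$ is the perpendicular of the first and is therefore automatic.

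Conversely, given $(A,B)$ as in the statement, $M := A \oplus B^\perp$ is automatically $O_E$-stable thanks to the eigenspace decomposition $N_k = N_{k,0} \oplus N_{k,1}$, and the four stability conditions translate as: $B \subset A^\vee$ yields $\CF M_0 \subset M_1$; $A \subset B^\vee$ yields $\CV M_0 \subset M_1$; $\pi B^\vee \subset A$ yields $\CF M_1 \subset M_0$; and $\pi A^\vee \subset B$ yields $\CV M_1 \subset M_0$. I expect the most delicate step to be bookkeeping the two avatars of each of $\CF, \CV$ (as maps $N_{k,i} \to N_{k,i+1}$) and the two dualities $\perp$ and $\vee$; once the identity $\pi B^\vee = \CF(B^\perp)$ is cleanly set up, the rest is a sequence of additive length manipulations $[L_1:L_3] = [L_1:L_2] + [L_2:L_3]$.
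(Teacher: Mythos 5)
Your proposal is correct and takes essentially the same route as the paper: same dictionary $A = M_0$, $B = M_1^\perp$, and the key identity $\pi B^\vee = \CF(B^\perp)$ (your $C^\vee = \CV^{-1}(C^\perp)$ specialized to $C = B$) is exactly the paper's equation (2.3.3), derived from the same adjointness of $\CF$ and $\CV$ against $\langle\cdot,\cdot\rangle$. If anything you are slightly more systematic than the paper in stating the dualization identity once and for all and in spelling out the converse, but there is no difference of method.
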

\begin{proof}
	For $M=M_0 \oplus M_1 \in \CN(k)$, let $A=M_0$, $B=M_1^{\perp}$. Then, by Proposition \ref{proposition2.2}, we have $\pi B \subset A \overset{h}{\subset} B$. Now, we will show the following equality.
	\begin{equation}\label{eq2.3.3}
	\pi(M_1^{\perp})^{\vee}=\CF M_1.
	\end{equation}
	
Indeed, we have
\begin{equation*}
(M_1^{\perp})^{\vee}=\lbrace y \in N_{k,0} \vert \lbrace y, M_1^{\perp} \rbrace \subset O_{\breve{F}} \rbrace
\end{equation*}
\begin{equation*}
=\lbrace y \in N_{k,0} \vert \langle y, \CF M_1^{\perp} \rangle \subset O_{\breve{F}} \rbrace
\end{equation*}
\begin{equation*}
=\lbrace y \in N_{k,0} \vert \langle \CF M_1^{\perp}, y  \rangle \subset O_{\breve{F}} \rbrace
\end{equation*}
\begin{equation*}
=\lbrace y \in N_{k,0} \vert \langle M_1^{\perp}, \CV y  \rangle \subset O_{\breve{F}} \rbrace
\end{equation*}
\begin{equation*}
=\CV^{-1}((M_1^{\perp})^{\perp})=\CV^{-1}M_1.
\end{equation*}

Therefore, by multiplying $\pi$, we get the equality \eqref{eq2.3.3}.

By Lemma \ref{lemma2.3} and \eqref{eq2.3.3}, we have $\pi B^{\vee} \overset{1}{\subset} A \overset{n-1}{\subset} B^{\vee}$.

Similarly, we have $\CV M_1 \overset{1}{\subset} M_0 \Longleftrightarrow M_1 \overset{1}{\subset} \CV^{-1}M_0 \Longleftrightarrow \CF M_1 \subset \CV^{-1}\CF(M_0) \Longleftrightarrow \pi(M_1^{\perp})^{\vee} \overset{1}{\subset} \tau(M_0) \Longleftrightarrow \pi M_0^{\vee} \overset{1}{\subset} M_1^{\perp}$. Therefore, we have $\pi A^{\vee} \overset{1}{\subset} B \overset{n-1}{\subset} A^{\vee}$.

Conversely, if we have $O_{\breve{F}}$-lattices $A,B$ satisfying the above conditions, then one can easily show that $A \oplus B^{\perp}$ is an element in $\CN(k)$.
\end{proof}

From now on, we identify $\CN(k)$ with the set defined in the Proposition \ref{proposition2.4}.

\bigskip

\subsection{The sets $R_{\Lambda}$, $S_{\Lambda}$ indexed by vertex lattices $\Lambda$.}\label{subsec:section2.4}

In this section, we will define the sets $R_{\Lambda}$ and $S_{\Lambda}$ indexed by the lattices $\Lambda$ which are called vertex lattices. First, we start with the definition of the vertex lattices.

\begin{definition}
	Let $\CL_i$ be the set of all lattices $\Lambda$ in $N_{k,0}^{\tau}$ (hence, $\tau$-invariant) satisfying $\pi^{i+1}\Lambda^{\vee} \subset \Lambda \subset \pi^i\Lambda^{\vee}$. An element in $\CL_i$ is called a \textit{vertex lattice}. We say that a vertex lattice $\Lambda \in \CL_i$ is \textit{of type $t$} if $\pi^{i+1}\Lambda^{\vee} \overset{t}{\subset} \Lambda$. We denote by $t(\Lambda)$ the type of the vertex lattice $\Lambda$.
	
\end{definition}

\begin{remark}
For $A \overset{h}{\subset} B$ a pair in $\CN(k)$, we define 
\begin{equation*}
T_iA:=A+\tau(A)+\cdots+\tau^{i-1}(A),
\end{equation*}
\begin{equation*}
T_iB:=B+\tau(B)+\cdots+\tau^{i-1}(B).
\end{equation*} Then, by \cite[Proposition 2.17]{RZ}, there exist positive integers $c,d$ such that $T_c(A)$ and $T_d(B)$ are $\tau$-invariant.
\end{remark}

Now, we will show the following lemma.

\begin{lemma}\label{lemma3.2} Let $A \overset{h}{\subset} B$ be a pair in $\CN(k)$. Let $c, d$ be the smallest positive integers such that $T_cA, T_dB$ are $\tau$-invariant, and write $\Lambda_A:=T_c(A)$, $\Lambda_B:=T_d(B)$. Then, at least one of the following assertions holds.

(1) $\Lambda_B$ is a vertex lattice in $\CL_0$, and
\begin{displaymath}
\begin{array}{cccccccc}
&\pi A^{\vee} &\overset{1}{\subset} &B &\subset& \Lambda_B &\subset &\Lambda_B^{\vee} \\
&\cup & &\cup & &&&\\
\pi \Lambda_B^{\vee} \subset&\pi B^{\vee}&\overset{1}{\subset}&A & &  &&
\end{array}
\end{displaymath}

(2) $\Lambda_A$ is a vertex lattice in $\CL_1$, and
\begin{displaymath}
\begin{array}{cccccccc}
 &\pi B^{\vee} &\overset{1}{\subset} &A &\subset& \Lambda_A &\subset &\pi \Lambda_A^{\vee} \\
 &\cup & &\cup & &&&\\
\pi^2 \Lambda_A^{\vee} \subset&\pi^2 A^{\vee}&\overset{1}{\subset}& \pi B& &  &&
\end{array}
\end{displaymath}

\end{lemma}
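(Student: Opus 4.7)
The plan is to reduce the lemma to a single dichotomy about $\tau$-invariant lattices and then establish that dichotomy.

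\textbf{Step 1: Reducing the inclusions.} The first step is to verify that for both (1) and (2), every displayed inclusion other than ``$\Lambda_B\subset\Lambda_B^\vee$'' (in (1)) and ``$\Lambda_A\subset\pi\Lambda_A^\vee$'' (in (2)) is automatic from Proposition \ref{proposition2.4} and the definitions of $\Lambda_A, \Lambda_B$: the horizontal inclusions come from Proposition \ref{proposition2.4}; $B\subset\Lambda_B$ and $A\subset\Lambda_A$ are definitional and give, upon taking duals, $\pi\Lambda_B^\vee\subset\pi B^\vee$ and $\pi^2\Lambda_A^\vee\subset\pi^2 A^\vee$; and the two leftmost vertical inclusions $\pi\Lambda_B^\vee\subset\Lambda_B$ (for $\Lambda_B\in\CL_0$) and $\pi^2\Lambda_A^\vee\subset\Lambda_A$ (for $\Lambda_A\in\CL_1$) follow by composition: $\pi\Lambda_B^\vee\subset\pi B^\vee\subset A\subset B\subset\Lambda_B$ and $\pi^2\Lambda_A^\vee\subset\pi^2 A^\vee\subset\pi B\subset A\subset\Lambda_A$ (the second using $\pi A^\vee\subset B$). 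Hence (1) holds if and only if $\Lambda_B\subset\Lambda_B^\vee$, (2) if and only if $\Lambda_A\subset\pi\Lambda_A^\vee$, and the lemma reduces to showing that at least one of these holds.

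\textbf{Step 2: Unwinding by $\tau$-orbits.} Using $\tau(L^\vee)=(\tau L)^\vee$ together with $\Lambda_B=\sum_{i\geq 0}\tau^i(B)$, $\Lambda_B^\vee=\bigcap_{i\geq 0}\tau^i(B^\vee)$, and the Galois identity $\{\tau x,\tau y\}=\{x,y\}^{\sigma^2}$, the inclusion $\Lambda_B\subset\Lambda_B^\vee$ is equivalent to $\{B,\tau^k(B)\}\subset O_{\breve{F}}$ for every $k\in\BZ$; similarly, $\Lambda_A\subset\pi\Lambda_A^\vee$ is equivalent to $\{A,\tau^k(A)\}\subset\pi O_{\breve{F}}$ for every $k$. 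Note that $\{B,B\}\subset\pi^{-1}O_{\breve{F}}$ already follows from $\pi B\subset A\subset B^\vee$, so the ``ambient'' valuation bounds on both pairings differ by exactly one unit.

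\textbf{Step 3: The dichotomy.} This is the main step. I will pick an $O_{\breve{F}}$-basis $e_1,\dots,e_n$ of $B$ adapted to $A$, so that $A=\bigoplus_{i\leq h}\pi e_i O_{\breve{F}}\oplus\bigoplus_{i>h}e_i O_{\breve{F}}$ (possible since $\pi B\subset A\overset{h}{\subset} B$ makes $B/A$ a length-$h$ $\pi$-torsion $O_{\breve{F}}$-module). Setting $a_i=\pi e_i$ for $i\leq h$ and $a_i=e_i$ for $i>h$, and letting $\epsilon_i=1$ if $i\leq h$ and $\epsilon_i=0$ otherwise, the twisted Gram matrices satisfy $(G_A^{(k)})_{ij}=\pi^{\epsilon_i+\epsilon_j}(G^{(k)})_{ij}$, where $G^{(k)}_{ij}=\{e_i,\tau^k e_j\}$. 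The conditions $A\subset B^\vee$, $B\subset A^\vee$, and their $\tau^k$-twists already force $G^{(k)}_{ij}\in O_{\breve{F}}$ whenever $i>h$ or $j>h$, so the only entries of $G^{(k)}$ with possible valuation $-1$ sit in the $h\times h$ top-left block. The crux of the argument is to use the sharp index-$1$ conditions $\pi B^\vee\overset{1}{\subset} A$ and $\pi A^\vee\overset{1}{\subset} B$ (together with their $\tau^k$-twists) to establish the dichotomy: either every $G^{(k)}$ has all entries in $O_{\breve{F}}$ (giving case (1) via Step 2), or the $(n-h)\times(n-h)$ bottom-right block of every $G^{(k)}$ lies in $\pi O_{\breve{F}}$, which together with the $\pi^{\epsilon_i+\epsilon_j}$ factor yields $(G_A^{(k)})_{ij}\in\pi O_{\breve{F}}$ for all $i,j,k$, i.e., $\Lambda_A\subset\pi\Lambda_A^\vee$ (case (2)). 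Propagating a single obstruction ``valuation $-1$ occurs in some $G^{(k_0)}$'' uniformly across all $\tau^k$-twists is the main technical obstacle; this relies essentially on the rigidity imposed by the index-$1$ lattice chains. With the dichotomy in place, Step 1 assembles the two diagrams, completing the proof.
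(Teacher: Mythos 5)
Steps 1 and 2 of your proposal are a valid and clean reduction: it is correct that the lemma boils down to showing that at least one of $\Lambda_B\subset\Lambda_B^\vee$ or $\Lambda_A\subset\pi\Lambda_A^\vee$ holds, and correct that (using the $\tau$-invariance of $\Lambda_B$, $\tau(L^\vee)=(\tau L)^\vee$, and $\{\tau x,\tau y\}=\{x,y\}^{\sigma^2}$) this is equivalent to controlling $\{B,\tau^kB\}$ resp.\ $\{A,\tau^kA\}$ for all $k\in\BZ$. However, Step 3 has two genuine problems, one technical and one structural.

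The technical problem is that your assertion ``$A\subset B^\vee$, $B\subset A^\vee$, and their $\tau^k$-twists already force $G^{(k)}_{ij}\in O_{\breve F}$ whenever $i>h$ or $j>h$'' is not justified and appears to be false. The $\tau^k$-twist of $A\subset B^\vee$ is $\tau^kA\subset(\tau^kB)^\vee$, which via the Galois identity gives only $\{A,B\}^{\sigma^{2k}}\subset O_{\breve F}$ — the same information as $k=0$. What you actually need is $\{A,\tau^kB\}\subset O_{\breve F}$, which is a different statement. The only $k$ for which this is forced by the hypotheses are $k=0$ and, after taking duals of the defining inclusions ($\tau(B)\subset A^\vee$ from $A\subset B^\vee$, and $\tau(A)\subset B^\vee$ from $B\subset A^\vee$), $k=\pm1$. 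For $|k|\geq 2$ there is no a priori integrality bound anywhere in $G^{(k)}$, so the claim that the only obstructions lie in the $h\times h$ corner block is unsupported.

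The structural problem is that even granting the Gram-matrix setup, the dichotomy is only described, not proved: you explicitly flag ``propagating a single obstruction\dots is the main technical obstacle.'' But that propagation is the entire mathematical content of the lemma. The paper's proof handles it by entirely different means: it first establishes (Lemma~\ref{lemma3.3}) that the successive sums $T_iA$, $T_jB$ grow by index exactly $1$ at each step until they stabilize, using the sharp index-$1$ relations to pin down $A\cap\tau(A)=\pi B^\vee$ and $B\cap\tau(B)=\pi A^\vee$ when $A$, $B$ are not $\tau$-invariant; it then runs a case analysis on whether $A$ or $B$ is already $\tau$-invariant, whether $B\subset B^\vee$, and which of $c$, $d$ is smaller, and in each case derives the needed inclusion ($T_dB\subset(T_dB)^\vee$ or $T_cA\subset\pi(T_cA)^\vee$) by intersecting $\tau$-translates and comparing against $\pi B^\vee$ or $\pi A^\vee$, with the final subcase ruled out by contradiction. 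This is where the index-$1$ rigidity you invoke is actually put to work; your Gram-matrix framing does not by itself capture it, and you would need to redo the analogous case analysis inside that framing to make Step 3 into a proof.
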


To prove the Lemma \ref{lemma3.2}, we need the following lemma.

\begin{lemma}\label{lemma3.3}
 For $1 \leq i <c$, $1 \leq j<d$,
	\begin{equation}\label{lemma3.3 eq1}
	T_{i}A \cap \tau(T_{i}A)=\tau(T_{i-1}A),
	\end{equation}
	\begin{equation}\label{lemma3.3 eq2}
	T_{i-1}A \overset{1}{\subset} T_{i}A,
	\end{equation}

	\begin{equation}\label{lemma3.3 eq3}
	T_{j}B \cap \tau(T_{j}B)=\tau(T_{j-1}B),
	\end{equation}
	\begin{equation}\label{lemma3.3 eq4}
	T_{j-1}B \overset{1}{\subset} T_{j}B.
	\end{equation}
	\end{lemma}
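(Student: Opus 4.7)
The plan is to prove \eqref{lemma3.3 eq1} and \eqref{lemma3.3 eq2} jointly by induction on $i$; the $B$-side assertions \eqref{lemma3.3 eq3} and \eqref{lemma3.3 eq4} follow by the identical argument after swapping the roles of the two dual-lattice chains in Proposition~\ref{proposition2.4}.

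I first derive a preliminary ``one-step'' estimate pinning down $\tau A$ relative to $A$. From the inclusions $A\subset B^{\vee}\subset \pi^{-1}A$ and $B\subset A^{\vee}$ in Proposition~\ref{proposition2.4}, combined with the identity $A^{\vee\vee}=\tau A$, one obtains
\[
\pi A \,\subset\, \tau A \,\subset\, \pi^{-1}A;
\]
applying powers of $\tau$ gives $\pi\tau^{k-1}A\subset \tau^{k}A\subset \pi^{-1}\tau^{k-1}A$, so that every successive quotient $T_{i+1}A/T_{i}A$ is annihilated by $\pi$. A short separate computation, using the exact colengths in the full lattice chain of Proposition~\ref{proposition2.4}, establishes the base case $[A+\tau A:A]=1$, which is \eqref{lemma3.3 eq2} in its first nontrivial instance.

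The inductive step is clean. The definitional identity $T_{i+1}A=T_{i}A+\tau(T_{i}A)$ is immediate from $T_{i}A=\sum_{k=0}^{i-1}\tau^{k}A$. Assume \eqref{lemma3.3 eq1} and \eqref{lemma3.3 eq2} hold at level $i$, and suppose $i+1<c$. The second-isomorphism theorem, together with the inductive hypotheses, yields
\[
[T_{i+1}A:T_{i}A]=[\tau(T_{i}A):T_{i}A\cap\tau(T_{i}A)]=[\tau(T_{i}A):\tau(T_{i-1}A)]=[T_{i}A:T_{i-1}A]=1,
\]
which is \eqref{lemma3.3 eq2} at level $i+1$. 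For \eqref{lemma3.3 eq1} at level $i+1$, apply $\tau$ to the just-established inclusion $T_{i}A\overset{1}{\subset} T_{i+1}A$ to obtain $\tau T_{i}A\overset{1}{\subset}\tau T_{i+1}A$; since
\[
\tau T_{i}A\,\subset\, T_{i+1}A\cap\tau T_{i+1}A\,\subset\,\tau T_{i+1}A,
\]
the intersection is either $\tau T_{i}A$ or $\tau T_{i+1}A$. The latter alternative would force $\tau T_{i+1}A\subset T_{i+1}A$, i.e.\ $T_{i+1}A$ would be $\tau$-stable, contradicting the minimality of $c$; so the former holds, completing the induction.

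The main obstacle is the base case: to verify $[A+\tau A:A]=1$ one must use, beyond the preliminary estimate $\tau A\subset\pi^{-1}A$, the precise indices in the chain $\pi B^{\vee}\overset{1}{\subset} A\overset{n-1}{\subset}B^{\vee}$ and $\pi A^{\vee}\overset{1}{\subset} B\overset{n-1}{\subset}A^{\vee}$ to rule out $[A+\tau A:A]\ge 2$. Once this length is pinned down, everything afterwards is driven purely by the second-isomorphism theorem and the minimality of $c$ (and $d$).
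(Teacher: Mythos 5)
Your proof follows the same strategy as the paper: pin down the base case $A\cap\tau A = \pi B^{\vee}$ using the index-one conditions, then induct on $i$ via the second isomorphism theorem and the minimality of $c$. Your inductive step is laid out more fully than the paper's (which only says ``we can use induction on $i$''), while your base case is only sketched; the paper obtains $[A+\tau A:A]=1$ by dualizing $\pi A^{\vee}\overset{1}{\subset}B$ to get $\pi B^{\vee}\overset{1}{\subset}\tau A$, so that $\pi B^{\vee}\subset A\cap\tau A\subsetneq A$ forces $A\cap\tau A=\pi B^{\vee}$. This is exactly the ``short separate computation'' you allude to, and it is worth noting that the weaker preliminary estimate $\pi A\subset\tau A\subset\pi^{-1}A$ (and the resulting $\pi$-torsion observation about $T_{i+1}A/T_iA$) plays no role once the precise index is extracted this way.
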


\begin{proof}
	We will show \eqref{lemma3.3 eq1}, \eqref{lemma3.3 eq2}. The proof of \eqref{lemma3.3 eq3}, \eqref{lemma3.3 eq4} is similar.
	
	Note that we have
	\begin{equation}\label{lemma3.3 eq5}
	\pi B^{\vee} \overset{1}{\subset} A \overset{n-1}{\subset} B^{\vee},
	\end{equation}
	\begin{equation}\label{lemma3.3 eq6}
	\pi A^{\vee} \overset{1}{\subset} B \overset{n-1}{\subset} A^{\vee}.
	\end{equation}
	
	Therefore, we have $\pi B^{\vee} \overset{1}{\subset} A$ and $\pi B^{\vee} \overset{1}{\subset} \tau(A)$ by taking the dual of \eqref{lemma3.3 eq6}. If $A$ is $\tau$-invariant, then $c=0$, and hence there is nothing to prove. Now assume that $A$ is not $\tau$-invariant. Since $\pi B^{\vee} \subset A \cap \tau(A) \subsetneq A$ and $\pi B^{\vee}$ is of index 1 in $A$, $A \cap \tau(A)$ should be $\pi B^{\vee}$. Also $A$ and $\tau(A)$ should have index 1 in $T_1A$. This shows \eqref{lemma3.3 eq2} when $i=1$.
	
	For \eqref{lemma3.3 eq1}, note that $\tau(A) \overset{1}{\subset} T_1A$ and $\tau(A) \overset{1}{\subset} \tau(T_1A)$. If $T_1A$ is $\tau$-invariant, then $c=1$. Therefore, there is nothing to show. Assume that $T_1A$ is not $\tau$-invariant. Then $T_1A \cap \tau(T_1A)=\tau(A)$. This shows \eqref{lemma3.3 eq1} for $i=1$.
	
	For arbitrary $i$, we can use the induction on $i$.
	\end{proof}

We now go back to the proof of Lemma \ref{lemma3.2}.

\begin{proof}[Proof of Lemma \ref{lemma3.2}]
	We will prove this lemma by dividing by 6 cases and their subcases.
	
	\textbf{Case 1}. If $B \in \CL_0$, then (1) holds.
	
	\textbf{Case 2}. If $A \in \CL_1$, then (2) holds.
	
	\textbf{Case 3}. Assume that $A$ is $\tau$-invariant, but not a vertex lattice in $\CL_1$. Then $A \nsubseteq \pi A^{\vee}$. Since $\pi A^{\vee}$ is of index 1 in $B$, and $A \subset B$, we have $B=A+\pi A^{\vee}.$ Since $A$ is $\tau$-invariant, $B$ is also $\tau$-invariant. Therefore, if $B \subset B^{\vee}$, then $B \in \CL_0$, and hence (1) holds. Therefore, it suffices to show that $B \subset B^{\vee}$. Assume that $B \nsubseteq B^{\vee}$. Since $\pi B^{\vee}$ is of index 1 in $A$ and $\pi B \subset A$, we have $A=\pi B + \pi B^{\vee}$. However, $\pi B^{\vee} \subset \pi A^{\vee}$ and $\pi B \subset \pi A^{\vee}$ implies that $A=\pi B + \pi B^{\vee} \subset \pi A^{\vee}$ which contradicts to our assumption that $A$ is not a vertex lattice.
	
	\textbf{Case 4}. Assume that $B$ is $\tau$-invariant, but not a vertex lattice in $\CL_0$. Then $B \nsubseteq B^{\vee}$. Since $\pi B^{\vee}$ is of index 1 in $A$ and $\pi B \subset A$, we have that $A=\pi B+ \pi B^{\vee}.$ In particular, $A$ is also $\tau$-invariant. Also, $\pi B^{\vee} \subset \pi A^{\vee}$ and $\pi B \subset \pi A^{\vee}$ implies that $A \subset \pi A^{\vee}$. Therefore, $A$ is vertex lattice in $\CL_1$ and (2) holds in this case.
	
	\textbf{Case 5}. Assume that $A,B$ are not $\tau$-invariant and $B \subset B^{\vee}$. In this case, we have
	\begin{equation}\label{3.4eq01}
	A \cap \tau(A)= \pi B^{\vee},
	\end{equation}
	\begin{equation}\label{3.4eq02}
	B \cap \tau(B)=\pi A^{\vee}.
	\end{equation}
	Also, note that
	\begin{equation*}
	B+\tau(B) \subset B^{\vee} \subset \pi^{-1} \tau(A),
	\end{equation*}
	\begin{equation*}
	\tau(B)+\tau^2(B) \subset \tau(B^{\vee}) \subset \pi^{-1} \tau(A).
	\end{equation*}
	Therefore, we have
	\begin{equation*}
	T_2B \subset \pi^{-1} \tau(A) \subset \pi^{-1} T_1A,
	\end{equation*}
	and,
	\begin{equation}\label{3.4eq4}
	T_dB \subset \pi^{-1} T_{d-1}A.
	\end{equation}
	
	\textbf{Case 5-1}. Assume that $d-1 < c$. Since $T_dB$ is $\tau$-invariant, \eqref{3.4eq4} implies that
	\begin{equation*}
	T_dB \subset \bigcap_{l \in \BZ}\pi^{-1} \tau^l(T_{d-1}A)\overset{\eqref{lemma3.3}}{=}\bigcap_{l \in \BZ}\pi^{-1} \tau^l(A)\overset{\eqref{3.4eq01}}{=}\bigcap_{l \in \BZ}\pi^{-1} \tau^l(\pi B^{\vee})=(T_dB)^{\vee}.
	\end{equation*}
	The last equality is induced by
	\begin{equation*}
	(T_dB)^{\vee}=B^{\vee} \cap \tau(B^{\vee}) \cap \cdots \cap \tau^{d-1}(B^{\vee}),
	\end{equation*}
	and the fact that $(T_dB)^{\vee}$ is $\tau$-invariant. Therefore, (1) holds in this case.

	\textbf{Case 5-2}. Assume that $d-1 \geq c$. Then, $T_cA \subset T_cB$ and $T_cA$ is $\tau$-invariant. Therefore, we have
	\begin{equation*}
	T_cA \subset \bigcap_{l \in \BZ} \tau^l(T_{c}B)\overset{\eqref{lemma3.3}}{=}\bigcap_{l \in \BZ} \tau^l(B)\overset{\eqref{3.4eq02}}{=}\bigcap_{l \in \BZ} \tau^l(\pi A^{\vee})=\pi (T_cA)^{\vee}.
	\end{equation*}
	The last equality is induced by
		\begin{equation*}
	(T_cA)^{\vee}=A^{\vee} \cap \tau(A^{\vee}) \cap \cdots \cap \tau^{c-1}(A^{\vee}),
	\end{equation*}
	and the fact that $(T_cA)^{\vee}$ is $\tau$-invariant. Therefore, (2) holds in this case.

	\textbf{Case 6}. Assume that $A,B$ are not $\tau$-invariant and $B \nsubseteq B^{\vee}$. In this case, $\eqref{3.4eq01}$ and $\eqref{3.4eq02}$ hold and we have $A=\pi B + \pi B^{\vee} \subset \pi A^{\vee}$ (see the case 4). By $\eqref{3.4eq02}$, we have $A \subset B$ and $A \subset \tau(B)$. Therefore, $T_1A \subset \tau(B)$ and
	\begin{equation*}
	T_cA \subset \tau(T_{c-1}B).
	\end{equation*}
	
	\textbf{Case 6-1} Assume that $c \leq d$. Then, we have
	\begin{equation*}
	T_cA \subset \bigcap_{l \in \BZ} \tau^l(T_{c-1}B)\overset{\eqref{lemma3.3}}{=}\bigcap_{l \in \BZ} \tau^l(B)\overset{\eqref{3.4eq02}}{=}\bigcap_{l \in \BZ} \tau^l(\pi A^{\vee})=\pi (T_cA)^{\vee}.
	\end{equation*}
	Therefore, (2) holds in this case.
	
	\textbf{Case 6-2} Assume that $d<c$. Then, $B \subset \pi^{-1}A$ implies that $T_dB \subset \pi^{-1}T_dA$. Therefore, we have
		\begin{equation*}
	T_dB \subset \bigcap_{l \in \BZ}\pi^{-1} \tau^l(T_{d}A)\overset{\eqref{lemma3.3}}{=}\bigcap_{l \in \BZ}\pi^{-1} \tau^l(A)\overset{\eqref{3.4eq01}}{=}\bigcap_{l \in \BZ}\pi^{-1} \tau^l(\pi B^{\vee})=(T_dB)^{\vee}.
	\end{equation*}
	This is a contradiction, since $B \nsubseteq B^{\vee}$ and $B \subset T_dB \subset (T_dB)^{\vee} \subset B^{\vee}$.
	
	This completes the proof of the Lemma \ref{lemma3.2}.
\end{proof}

Now, let us give the definition of the sets $R_{\Lambda}(k), S_{\Lambda}(k)$.

\begin{definition}
	(1) For a vertex lattice $\Lambda \in \CL_1$, we define the set
	
	\begin{displaymath}
R_{\Lambda}(k):=\left\{\begin{array}{c} 
O_{\breve{F}}$-lattices$\\
 A \overset{h}{\subset} B \subset N_{k,0}\\
 
\end{array}
\middle|
\begin{array}{cccccccc}
(i)&\pi B^{\vee} &\overset{1}{\subset} &A &\subset& \Lambda &\subset &\pi \Lambda^{\vee} \\
&\cup & &\cup & &&&\\
\pi^2 \Lambda^{\vee} \subset&\pi^2 A^{\vee}&\overset{1}{\subset}& \pi B& &  &&\\
(ii)&\pi B&\overset{n-h}{\subset}&  A & \overset{h}{\subset}&B&
\end{array}
\right\}
\end{displaymath}

	(2) For a vertex lattice $\Lambda \in \CL_0$, we define the set
\begin{displaymath} 
S_{\Lambda}(k):=\left\{\begin{array}{c} 
O_{\breve{F}}$-lattices$\\
A \overset{h}{\subset} B \subset N_{k,0}
\end{array}
\middle|
\begin{array}{cccccccc}
(i)&\pi A^{\vee} &\overset{1}{\subset} &B &\subset& \Lambda &\subset &\Lambda^{\vee} \\
&\cup & &\cup & &&&\\
\pi \Lambda^{\vee} \subset&\pi B^{\vee}&\overset{1}{\subset}&A & &  &&\\

(ii)&\pi B&\overset{n-h}{\subset}&  A & \overset{h}{\subset}&B&
\end{array}
\right\}
\end{displaymath}

\end{definition}

\begin{proposition}\label{proposition3.7}
	We have $\CN(k)=\bigcup_{\Lambda \in \CL_1} R_{\Lambda}(k) \cup \bigcup_{\Lambda \in \CL_0} S_{\Lambda}(k).$
\end{proposition}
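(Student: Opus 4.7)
The plan is to prove the two inclusions separately. The forward inclusion $\CN(k) \subset \bigcup_{\Lambda \in \CL_1} R_{\Lambda}(k) \cup \bigcup_{\Lambda \in \CL_0} S_{\Lambda}(k)$ is essentially a restatement of Lemma \ref{lemma3.2}, and the reverse inclusion amounts to tracking lattice indices against Proposition \ref{proposition2.4}.

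For the forward direction, I start with a pair $A \overset{h}{\subset} B$ in $\CN(k)$ and apply Lemma \ref{lemma3.2}. If assertion (1) of that lemma holds, then $\Lambda_B \in \CL_0$ and the displayed diagram is precisely condition (i) in the definition of $S_{\Lambda_B}(k)$; if assertion (2) holds, then $\Lambda_A \in \CL_1$ and I similarly recover condition (i) of $R_{\Lambda_A}(k)$. Condition (ii), namely $\pi B \overset{n-h}{\subset} A \overset{h}{\subset} B$, is automatic: $A \overset{h}{\subset} B$ is built into the definition of $\CN(k)$, and since $B$ is an $O_{\breve F}$-lattice of rank $n$ one has $[B : \pi B] = n$, which forces $[A : \pi B] = n - h$.

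For the reverse direction, I fix $(A, B) \in S_{\Lambda}(k)$ with $\Lambda \in \CL_0$; the case $(A, B) \in R_{\Lambda}(k)$ with $\Lambda \in \CL_1$ is strictly analogous. Condition (i) directly provides $\pi B^{\vee} \overset{1}{\subset} A$ and $\pi A^{\vee} \overset{1}{\subset} B$; combined with the fact that $B^{\vee}$ and $A^{\vee}$ are $O_{\breve F}$-lattices of rank $n$, these upgrade to $A \overset{n-1}{\subset} B^{\vee}$ and $B \overset{n-1}{\subset} A^{\vee}$ by subtracting the index $1$ from the total length $n$. The chain $\pi B \subset A \subset B$ is contained in condition (ii). Thus $(A, B)$ satisfies all three axioms of Proposition \ref{proposition2.4} and defines a point of $\CN(k)$.

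The only real conceptual obstacle has already been surmounted in Lemma \ref{lemma3.2}, whose proof required a delicate six-case analysis of what happens when the smallest $\tau$-stable lattices $T_c A$ and $T_d B$ fail to be vertex lattices themselves. Once that lemma is in hand, Proposition \ref{proposition3.7} reduces to the index bookkeeping described above, and I do not anticipate any further difficulty.
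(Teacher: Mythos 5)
Your strategy — unpack the paper's one-line appeal to Lemma~\ref{lemma3.2} into a pair of inclusions — is the same one the paper has in mind, and your treatment of the forward inclusion and of the automatic verification of condition~(ii) is correct. There is, however, a real (if small) gap in the reverse direction.

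When you say that $\pi B^{\vee}\overset{1}{\subset}A$ and $\pi A^{\vee}\overset{1}{\subset}B$ ``upgrade to $A\overset{n-1}{\subset}B^{\vee}$ and $B\overset{n-1}{\subset}A^{\vee}$ by subtracting the index $1$ from the total length $n$,'' you are tacitly using the containments $A\subset B^{\vee}$ and $B\subset A^{\vee}$ — without them the notation $A\overset{n-1}{\subset}B^{\vee}$ is meaningless and the subtraction $[B^{\vee}:A]=[B^{\vee}:\pi B^{\vee}]-[A:\pi B^{\vee}]$ is not justified. Neither containment is written in condition~(i); they must be extracted, and you never do so. For $S_{\Lambda}(k)$ with $\Lambda\in\CL_0$ this is easy: dualizing $B\subset\Lambda$ gives $\Lambda^{\vee}\subset B^{\vee}$, so the chain $A\subset B\subset\Lambda\subset\Lambda^{\vee}\subset B^{\vee}\subset A^{\vee}$ supplies both. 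But for $R_{\Lambda}(k)$ with $\Lambda\in\CL_1$ the argument is not ``strictly analogous'' as you claim — the relevant chain runs through $\pi A^{\vee}$ rather than $\Lambda^{\vee}$: from $A\subset\Lambda\subset\pi\Lambda^{\vee}$ together with $\Lambda^{\vee}\subset A^{\vee}$ one gets $A\subset\pi A^{\vee}$; combining this with the vertical inclusion $\pi A^{\vee}\subset B^{\vee}$ from condition~(i) gives $A\subset B^{\vee}$, while combining $\pi B\subset A$ with $\pi^{-1}A\subset A^{\vee}$ gives $B\subset A^{\vee}$. Once those two containments are recorded in each case, your index bookkeeping closes the argument and the proposition follows.
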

\begin{proof}
	This is clear from the Lemma \ref{lemma3.2}.
\end{proof}

\begin{proposition}\label{proposition3.6}
If $\Lambda \in \CL_0$ and $S_{\Lambda}$ is not empty, then  $h+1 \leq t(\Lambda) \leq n$, and $t({\Lambda}) \equiv h+1$ mod 2.
\end{proposition}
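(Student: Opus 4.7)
The plan is to read off $t(\Lambda)$ by summing the $O_{\breve F}$-lengths along an explicit chain of lattices extracted from any pair in $S_{\Lambda}(k)$. First, I observe that the upper bound $t(\Lambda) \leq n$ is automatic from $\Lambda \in \CL_0$ alone: since $\pi\Lambda^{\vee} \subset \Lambda \subset \Lambda^{\vee}$ and $\Lambda^{\vee}/\pi\Lambda^{\vee}$ has $O_{\breve F}$-length $n$, one has $t(\Lambda) = [\Lambda : \pi\Lambda^{\vee}] \leq n$. So the real content lies in the lower bound and the parity, and for these I would use the hypothesis $S_{\Lambda}(k) \neq \emptyset$.

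Next, I would pick any $(A \overset{h}{\subset} B) \in S_{\Lambda}(k)$ and concatenate the inclusions in condition (i) of the definition of $S_{\Lambda}$ into the single chain
\begin{equation*}
\pi\Lambda^{\vee} \subset \pi B^{\vee} \overset{1}{\subset} A \overset{h}{\subset} B \subset \Lambda \subset \Lambda^{\vee},
\end{equation*}
in which only the outer lengths $b := [\Lambda : B]$ and $d := [\pi B^{\vee} : \pi\Lambda^{\vee}]$ are not already fixed by the definition. The key step is to show $d = b$: since $\Lambda$ is $\tau$-invariant one has $\Lambda^{\vee\vee} = \Lambda$, and the nondegeneracy of $\{\cdot,\cdot\}$ on $N_{k,0}$ induces a perfect pairing of finite-length $O_{\breve F}$-modules $(B^{\vee}/\Lambda^{\vee}) \times (\Lambda/B) \to \breve F/O_{\breve F}$, which forces $[B^{\vee} : \Lambda^{\vee}] = [\Lambda : B] = b$; multiplying by $\pi$ gives $d = b$.

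I expect this duality identity to be the main (though minor) subtlety of the argument, since $\{\cdot,\cdot\}$ is only $\sigma$-semilinear in the second variable rather than $O_{\breve F}$-bilinear; however, since $\sigma$ acts as an automorphism on $\breve F/O_{\breve F}$, length-preservation under $(-)^{\vee}$ survives the semilinearity. With $d = b$ in hand, summing indices along the chain yields
\begin{equation*}
t(\Lambda) = [\Lambda : \pi\Lambda^{\vee}] = b + h + 1 + b = h + 1 + 2b,
\end{equation*}
from which $t(\Lambda) \geq h+1$ (as $b \geq 0$) and $t(\Lambda) \equiv h+1 \pmod 2$ (as $2b$ is even) follow at once, completing the proof.
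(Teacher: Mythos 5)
Your proof is correct and fills in precisely the index-counting argument that the paper leaves implicit (the paper's proof merely asserts the result is ``clear from Lemma~\ref{lemma3.2}(1)''). The only point worth flagging is that the perfection of the induced pairing on $(B^{\vee}/\Lambda^{\vee}) \times (\Lambda/B)$ quietly uses $(B^{\vee})^{\vee} = \tau(B)$ (rather than $=B$) in computing the right kernel; alternatively, a Smith normal form computation, using that $\sigma(\pi)=\pi$, gives $[B^{\vee}:\Lambda^{\vee}]=[\Lambda:B]$ directly and sidesteps this subtlety entirely.
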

\begin{proof}
	This is clear from the Lemma \ref{lemma3.2} (1).
\end{proof}

\begin{proposition}\label{proposition3.9}
	If $\Lambda \in \CL_1$ and $R_{\Lambda}$ is not empty, then $n-h+1 \leq t(\Lambda) \leq n$, and $t({\Lambda}) \equiv n-h+1$ mod 2.
\end{proposition}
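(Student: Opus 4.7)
The plan is to mirror the one-line argument that is expected for Proposition \ref{proposition3.6}: extract the relevant lattice chain from the definition of $R_{\Lambda}(k)$ (which is precisely what Lemma \ref{lemma3.2}(2) produces) and then evaluate $t(\Lambda)$ by a length count.

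First, I would pick any $(A,B) \in R_{\Lambda}(k)$. Reading off the defining diagram of $R_{\Lambda}(k)$ together with the condition $\pi B \overset{n-h}{\subset} A$, I obtain the chain of $O_{\breve{F}}$-lattices
\begin{equation*}
\pi^{2}\Lambda^{\vee} \;\subset\; \pi^{2}A^{\vee} \;\overset{1}{\subset}\; \pi B \;\overset{n-h}{\subset}\; A \;\subset\; \Lambda.
\end{equation*}
Setting $a := [\Lambda : A] \geq 0$, I use that the pairing $\{\cdot,\cdot\}$ is nondegenerate and $O_{\breve{F}}$-valued, so that passage to dual lattices reverses inclusions and preserves indices; applied to $A \subset \Lambda$ this gives $[A^{\vee} : \Lambda^{\vee}] = a$, and multiplying by $\pi^{2}$ yields $[\pi^{2}A^{\vee} : \pi^{2}\Lambda^{\vee}] = a$. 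Summing indices along the chain then produces
\begin{equation*}
t(\Lambda) \;=\; [\Lambda : \pi^{2}\Lambda^{\vee}] \;=\; a + (n-h) + 1 + a \;=\; 2a + (n-h+1),
\end{equation*}
which immediately establishes both the lower bound $t(\Lambda) \geq n-h+1$ and the parity $t(\Lambda) \equiv n-h+1 \pmod{2}$.

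For the upper bound $t(\Lambda) \leq n$, I would use only the vertex-lattice condition $\Lambda \subset \pi\Lambda^{\vee}$: multiplying by $\pi$ gives $\pi\Lambda \subset \pi^{2}\Lambda^{\vee}$, so $\Lambda/\pi^{2}\Lambda^{\vee}$ is a quotient of $\Lambda/\pi\Lambda$, whose $\BF_{q^{2}}$-dimension is $n$. There is no substantial obstacle here; the only point deserving a small sanity check is reconciling the $O_{\breve{F}}$-length convention used in the $R_{\Lambda}(k)$ chain with the $\BF_{q^{2}}$-dimension convention used in the definition of $t(\Lambda)$, but since every quotient appearing is annihilated by $\pi$, extension of scalars along $\BF_{q^{2}} \hookrightarrow k$ preserves dimension and the two counts agree.
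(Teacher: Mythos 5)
Your proof is correct and is precisely the spelled-out version of what the paper leaves implicit: the paper's one-line proof cites the diagram of Lemma \ref{lemma3.2}(2), which is the same chain you read off from the defining conditions of $R_{\Lambda}(k)$, and the conclusion is the length count you carry out (with $a = [\Lambda : A]$ giving $t(\Lambda) = 2a + (n-h+1)$, so the lower bound and parity follow from $a \geq 0$, and the upper bound from $\Lambda \subset \pi\Lambda^{\vee}$). One small point worth noting explicitly, which you implicitly rely on: the index preservation under duality $[A^{\vee} : \Lambda^{\vee}] = [\Lambda : A]$ holds here even though $\{\cdot,\cdot\}$ is only $\sigma$-sesquilinear, because $\sigma$ fixes $\pi$, so elementary-divisor arguments go through unchanged.
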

\begin{proof}
	This is clear from the Lemma \ref{lemma3.2} (2).
\end{proof}

\begin{definition}
	We write $\CL_0^+$ for the set of lattices in $\CL_0$ with $t(\Lambda) \geq h+1$ and $\CL_0^-$ for the set of lattices in $\CL_0$ with $t(\Lambda) \leq h-1$. Similarly, we denote by $\CL_1^+$ the set of lattices in $\CL_1$ with $t(\Lambda) \geq n-h+1$ and $\CL_1^-$ the set of lattices in $\CL_1$ with $t(\Lambda) \leq n-h-1$.
\end{definition}

\begin{remark}\label{remark3.14}
	For $\Lambda_1 \in \CL_1^+$, we have $\pi (\pi \Lambda_1^{\vee})^{\vee}=\Lambda_1 \subset \pi \Lambda_1^{\vee} \subset \pi^{-1} \Lambda_1=(\pi \Lambda_1^{\vee})^{\vee}$. Therefore, we can regard $\pi \Lambda_1^{\vee}$ as the element of $\CL_0$. By this identification, we have a bijection from $\CL_0^+ \sqcup \CL_0^-$ to $\CL_0^+ \sqcup \CL_1^+$ by sending $\Lambda \in \CL_0^+$ to $\Lambda$, and $\Lambda \in \CL_0^-$ to $\pi \Lambda^{\vee}$.
\end{remark}

\begin{remark}\label{remark3.10}
	When $h=0$ (the case in \cite{VW}), $R_{\Lambda}(k)$ does not occur in $\CN(k)$ (by Proposition \ref{proposition3.9}). When $h=1$, for any pair $(A,B) \in R_{\Lambda}(k)$, $A$ should be $\Lambda$ and $t(\Lambda)=n$. In this case, $B$ can be any lattice satisfying $\Lambda \overset{1}{\subset} B \subset \pi^{-1} \Lambda$. Hence, we have $R_{\Lambda}(k) \simeq \BP^{n-1}(k)$. We should note that Kudla and Rapoport already proved this result in their unpublished notes \cite{KR4}.
\end{remark}

\begin{proposition}\label{proposition3.10}
	Let $\Lambda_1, \Lambda_2$ be elements in $\CL_0^+$.
	
	(1) If $\Lambda_1 \subset \Lambda_2$, then $S_{\Lambda_1}(k) \subset S_{\Lambda_2}(k).$
	
	(2) If $\Lambda_1 \cap \Lambda_2$ is in $\CL_0^+$, then $S_{\Lambda_1}(k) \cap S_{\Lambda_2}(k)=S_{\Lambda_1 \cap \Lambda_2}(k)$. Otherwise, it is empty.
	
\end{proposition}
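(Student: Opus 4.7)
The plan is to work directly from the definition of $S_{\Lambda}(k)$, using the elementary duality identity $(\Lambda_1 \cap \Lambda_2)^{\vee} = \Lambda_1^{\vee} + \Lambda_2^{\vee}$ for $\tau$-invariant lattices in $N_{k,0}$. Both halves of this identity follow from each other by dualizing and invoking $(A^{\vee})^{\vee} = \tau(A)$, which equals $A$ on $\tau$-invariant lattices (Section \ref{subsec:section2.3}).

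For (1), I would simply unwind the definitions. Given $\Lambda_1 \subset \Lambda_2$ in $\CL_0^+$, dualizing gives $\pi \Lambda_2^{\vee} \subset \pi \Lambda_1^{\vee}$. For any pair $(A,B) \in S_{\Lambda_1}(k)$, the inclusions $B \subset \Lambda_1$ and $\pi \Lambda_1^{\vee} \subset A$ supplied by the definition upgrade immediately to $B \subset \Lambda_2$ and $\pi \Lambda_2^{\vee} \subset A$. All remaining conditions defining $S_{\Lambda_2}(k)$ depend only on $(A,B)$ (namely the conditions of Proposition \ref{proposition2.4} together with the index condition $\pi B \overset{n-h}{\subset} A \overset{h}{\subset} B$), and are already in hand.

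For (2), the inclusion $S_{\Lambda_1 \cap \Lambda_2}(k) \subset S_{\Lambda_1}(k) \cap S_{\Lambda_2}(k)$ will follow from (1) applied to each of $\Lambda_1 \cap \Lambda_2 \subset \Lambda_i$. For the reverse inclusion, I would take $(A,B) \in S_{\Lambda_1}(k) \cap S_{\Lambda_2}(k)$, note $B \subset \Lambda_1 \cap \Lambda_2$ and $\pi \Lambda_i^{\vee} \subset A$ for $i=1,2$, and apply the duality identity to conclude $\pi(\Lambda_1 \cap \Lambda_2)^{\vee} = \pi(\Lambda_1^{\vee} + \Lambda_2^{\vee}) \subset A \subset \Lambda_1 \cap \Lambda_2$. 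This single computation is the crux: it shows that $\Lambda_1 \cap \Lambda_2$ is automatically a vertex lattice in $\CL_0$, and simultaneously places $(A,B)$ in $S_{\Lambda_1 \cap \Lambda_2}(k)$. For the ``otherwise'' clause, the same argument shows that whenever $S_{\Lambda_1}(k) \cap S_{\Lambda_2}(k)$ is nonempty, $\Lambda_1 \cap \Lambda_2$ lies in $\CL_0$ with nonempty $S$, so Proposition \ref{proposition3.6} forces $t(\Lambda_1 \cap \Lambda_2) \geq h+1$, i.e.\ $\Lambda_1 \cap \Lambda_2 \in \CL_0^+$; contrapositively, if $\Lambda_1 \cap \Lambda_2 \notin \CL_0^+$ the intersection is empty.

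I do not anticipate a real obstacle: the whole argument is a short lattice manipulation once the definitions are unpacked. The only step that deserves a sentence of explanation is the duality identity $(\Lambda_1 \cap \Lambda_2)^{\vee} = \Lambda_1^{\vee} + \Lambda_2^{\vee}$, which is itself a one-line consequence of the $\tau$-invariance of $\Lambda_1$ and $\Lambda_2$ together with the fact that dualization sends sums to intersections.
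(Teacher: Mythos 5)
Your proof is correct and matches the paper's approach: (1) is an immediate unwinding of the definitions, and for (2) the paper likewise combines the two defining diagrams via the duality identity $\pi(\Lambda_1\cap\Lambda_2)^{\vee}=\pi\Lambda_1^{\vee}+\pi\Lambda_2^{\vee}$. Your explicit appeal to Proposition~\ref{proposition3.6} to upgrade $\Lambda_1\cap\Lambda_2\in\CL_0$ (which the lattice-chain computation yields directly) to $\Lambda_1\cap\Lambda_2\in\CL_0^+$ spells out a step the paper leaves tacit, but the underlying argument is the same.
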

\begin{proof}
	(1) is clear from its definition. 
	
	For (2), we will show that $S_{\Lambda_1}(k) \cap S_{\Lambda_2}(k) \subset S_{\Lambda_1 \cap \Lambda_2}(k)$. Let $(A,B)$ be the element in $S_{\Lambda_1}(k) \cap S_{\Lambda_2}(k).$ Note that $(A,B)$ satisfies the following diagrams,
	\begin{equation*}
	\begin{array}{cccccccc}
	&\pi A^{\vee} &\overset{1}{\subset} &B &\subset& \Lambda_1 & \subset&\Lambda_1^{\vee}\\
	&\cup & &\cup & &&&\\
	\pi \Lambda_1^{\vee} \subset&\pi B^{\vee}&\overset{1}{\subset}&A & &  &&,
	\end{array}
	\end{equation*}
	and
	\begin{equation*}
	\begin{array}{cccccccc}
	&\pi A^{\vee} &\overset{1}{\subset} &B &\subset& \Lambda_2 & \subset &\Lambda_2^{\vee}\\
	&\cup & &\cup & &&&\\
	\pi \Lambda_2^{\vee} \subset&\pi B^{\vee}&\overset{1}{\subset}&A & &  &&.
	\end{array}
	\end{equation*}
	
	These two diagrams imply that
	\begin{equation*}
	\begin{array}{cccccccc}
	&&\pi A^{\vee} &\overset{1}{\subset} &B &\subset& \Lambda_1 \cap\Lambda_2\subset \Lambda_1^{\vee}\subset (\Lambda_1 \cap\Lambda_2)^{\vee} \\
	&&\cup & &\cup & &\\
	\pi (\Lambda_1 \cap\Lambda_2)^{\vee}=&\pi \Lambda_1^{\vee}+\pi \Lambda_2^{\vee} \subset&\pi B^{\vee}&\overset{1}{\subset}&A& &  .
	\end{array}
	\end{equation*}
	Therefore, $\Lambda_1 \cap \Lambda_2$ is in $\CL_0^+$, and $(A,B)$ should be contained in $S_{\Lambda_1 \cap \Lambda_2}(k)$.
	
	Conversely, $ S_{\Lambda_1 \cap \Lambda_2}(k)\subset S_{\Lambda_1}(k) \cap S_{\Lambda_2}(k)$ is obvious from (1).
	 This completes the proof of the proposition.
\end{proof}

\begin{proposition}\label{proposition3.11}
	Let $\Lambda_1, \Lambda_2$ be elements in $\CL_1^+$.
	
	(1) If $\Lambda_1 \subset \Lambda_2$, then $R_{\Lambda_1}(k) \subset R_{\Lambda_2}(k).$
	
	(2) If $\Lambda_1 \cap \Lambda_2$ is in $\CL_1^+$, then $R_{\Lambda_1}(k) \cap R_{\Lambda_2}(k)=R_{\Lambda_1 \cap \Lambda_2}(k)$. Otherwise, it is empty.
	
\end{proposition}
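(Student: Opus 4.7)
The plan is to mirror the proof strategy used for Proposition \ref{proposition3.10}, adapting the inclusions to the shifted setting appropriate for $\CL_1^+$ (where the relevant ambient lattice is $\pi\Lambda^\vee$ rather than $\Lambda^\vee$).

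For part (1), I would argue directly from the defining diagram of $R_{\Lambda}(k)$. If $\Lambda_1 \subset \Lambda_2$, then dually $\pi^2\Lambda_2^\vee \subset \pi^2\Lambda_1^\vee$ and $\pi\Lambda_1^\vee \subset \pi\Lambda_2^\vee$, so for any $(A,B) \in R_{\Lambda_1}(k)$ the chain $A \subset \Lambda_1 \subset \Lambda_2 \subset \pi\Lambda_2^\vee$ holds (using $\Lambda_2 \in \CL_1$ for the last inclusion), and similarly $\pi^2\Lambda_2^\vee \subset \pi^2\Lambda_1^\vee \subset \pi^2 A^\vee$. The index conditions $\pi B^\vee \overset{1}{\subset} A$ and $\pi^2 A^\vee \overset{1}{\subset} \pi B$ and condition (ii) come with the pair $(A,B)$ and are unchanged.

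For part (2), the inclusion $R_{\Lambda_1 \cap \Lambda_2}(k) \subset R_{\Lambda_1}(k) \cap R_{\Lambda_2}(k)$ is immediate from (1) once we know $\Lambda_1 \cap \Lambda_2 \in \CL_1^+$. The substantive direction is the reverse, which simultaneously forces $\Lambda_1 \cap \Lambda_2 \in \CL_1^+$ whenever the intersection of the $R$-sets is nonempty. Given $(A,B) \in R_{\Lambda_1}(k) \cap R_{\Lambda_2}(k)$, the key identity is $\pi^2(\Lambda_1 \cap \Lambda_2)^\vee = \pi^2\Lambda_1^\vee + \pi^2\Lambda_2^\vee$; combining the two defining diagrams gives
\begin{equation*}
\pi^2(\Lambda_1 \cap \Lambda_2)^\vee \;\subset\; \pi^2 A^\vee \;\overset{1}{\subset}\; \pi B \;\overset{n-h}{\subset}\; A \;\subset\; \Lambda_1 \cap \Lambda_2,
\end{equation*}
while $\Lambda_1 \cap \Lambda_2 \subset \pi\Lambda_1^\vee \subset \pi\Lambda_1^\vee + \pi\Lambda_2^\vee = \pi(\Lambda_1 \cap \Lambda_2)^\vee$. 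Since $\Lambda_1 \cap \Lambda_2$ is manifestly $\tau$-invariant, this exhibits $\Lambda_1 \cap \Lambda_2$ as a vertex lattice in $\CL_1$, and counting indices in the display above yields $t(\Lambda_1 \cap \Lambda_2) \geq n-h+1$, placing it in $\CL_1^+$. The same display shows $(A,B)$ satisfies the defining diagram of $R_{\Lambda_1 \cap \Lambda_2}(k)$.

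The only genuine subtlety I expect is the bookkeeping to check that the index $[\Lambda_1 \cap \Lambda_2 : \pi^2(\Lambda_1 \cap \Lambda_2)^\vee]$ is actually at least $n-h+1$ rather than merely at least $n-h$; this is where the extra index-one step $\pi^2 A^\vee \overset{1}{\subset} \pi B$ from condition (i) of $R_\Lambda$ is essential and must be used, in contrast to the $\CL_0^+$ case of Proposition \ref{proposition3.10} where the analogous step was $\pi B^\vee \overset{1}{\subset} A$. Contrapositively, if $\Lambda_1 \cap \Lambda_2 \notin \CL_1^+$, then no such $(A,B)$ can exist and the intersection is empty, completing the proof.
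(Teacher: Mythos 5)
Your argument is correct and mirrors the paper's own proof, which simply refers back to Proposition~\ref{proposition3.10} and relies on the same key identity $(\Lambda_1\cap\Lambda_2)^\vee=\Lambda_1^\vee+\Lambda_2^\vee$ together with the index chain built into the definition of $R_\Lambda(k)$. One small slip in part~(1): from $\Lambda_1\subset\Lambda_2$ you should get $\pi\Lambda_2^\vee\subset\pi\Lambda_1^\vee$, not the reversed inclusion you wrote, but this misstatement is never actually used in your chain $A\subset\Lambda_1\subset\Lambda_2\subset\pi\Lambda_2^\vee$ and $\pi^2\Lambda_2^\vee\subset\pi^2\Lambda_1^\vee\subset\pi^2 A^\vee$.
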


\begin{proof}
	The proof is the same as the proof of Proposition \ref{proposition3.10}
\end{proof}

Now, let us consider the intersection $R_{\Lambda_1}(k) \cap S_{\Lambda_0}(k).$

\begin{proposition}\label{proposition3.13}
	Let $\Lambda_1 \in \CL_1^+, \Lambda_0 \in \CL_0^+$.
	
	(1) If $\pi \Lambda_1^{\vee} \nsubseteq \Lambda_0$, then $R_{\Lambda_1}(k) \cap S_{\Lambda_0}(k)=\emptyset$.
	
	(2) If $\pi \Lambda_1^{\vee} \subset \Lambda_0$, then\\
	\begin{equation*}
	R_{\Lambda_1}(k) \cap S_{\Lambda_0}(k)=\left\{\begin{array}{c} 
	O_{\breve{F}}$-lattices$\\
	A \overset{h}{\subset} B \subset N_{k,0}
	\end{array}
	\middle|
	\begin{array}{cccccccc}
	
	& \pi \Lambda_1^{\vee}&\subset&\pi A^{\vee}&\overset{1}{\subset} & B & \subset&\Lambda_0\\
	&\cup&&&&&&\\
	& \Lambda_1&\supset&A&\overset{1}{\supset} &  \pi B^{\vee}& \supset&\pi \Lambda_0^{\vee} \\
	\end{array}
	\right\}
	\end{equation*}
	\end{proposition}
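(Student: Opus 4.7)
The strategy is to unpack and then combine the two defining diagrams. For part (1), I would argue by contrapositive. If $(A,B)\in R_{\Lambda_1}(k)\cap S_{\Lambda_0}(k)$, then the defining condition of $R_{\Lambda_1}(k)$ gives $A\subset\Lambda_1$, so dualizing and multiplying by $\pi$ yields $\pi\Lambda_1^{\vee}\subset\pi A^{\vee}$. The defining condition of $S_{\Lambda_0}(k)$ then provides $\pi A^{\vee}\subset B\subset\Lambda_0$. Chaining gives $\pi\Lambda_1^{\vee}\subset\Lambda_0$, which is precisely the asserted necessary condition.

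For part (2), under the hypothesis $\pi\Lambda_1^{\vee}\subset\Lambda_0$, I would verify the two set-inclusions separately. The forward inclusion ($\subset$) is a direct diagram chase: given $(A,B)\in R_{\Lambda_1}(k)\cap S_{\Lambda_0}(k)$, every containment and index appearing in the right-hand side diagram is either directly taken from one of the two defining diagrams, or obtained by the same dualization used in part (1) (namely $\pi\Lambda_1^{\vee}\subset\pi A^{\vee}$ from $A\subset\Lambda_1$, and analogously $\pi\Lambda_0^{\vee}\subset\pi B^{\vee}$ from $B\subset\Lambda_0$).

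For the reverse inclusion ($\supset$), I would suppose $(A,B)$ has $A\overset{h}{\subset}B$ and satisfies the right-hand side diagram, and check that all conditions defining $R_{\Lambda_1}(k)$ and $S_{\Lambda_0}(k)$ hold. The outer containments $\Lambda_1\subset\pi\Lambda_1^{\vee}$ and $\Lambda_0\subset\Lambda_0^{\vee}$ are automatic from $\Lambda_1\in\CL_1^+$ and $\Lambda_0\in\CL_0^+$. The inclusion $\pi^2\Lambda_1^{\vee}\subset\pi B$ required for $R_{\Lambda_1}(k)$ follows by multiplying $\pi\Lambda_1^{\vee}\subset\pi A^{\vee}\subset B$ by $\pi$, and the index condition $\pi^2 A^{\vee}\overset{1}{\subset}\pi B$ is just $\pi$ times the given $\pi A^{\vee}\overset{1}{\subset}B$. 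The complementary index $\pi B\overset{n-h}{\subset}A$ in condition (ii) of each set follows from $A\overset{h}{\subset}B$ and $[B:\pi B]=n$. All remaining containments are already visible on the right-hand side.

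I do not anticipate a serious obstacle: the proposition is essentially a bookkeeping result fusing the two defining diagrams into a single compatible one. The only conceptually meaningful ingredient is the compatibility condition $\pi\Lambda_1^{\vee}\subset\Lambda_0$ established in part (1), which is exactly what ensures the two defining diagrams can coexist on the same pair $(A,B)$.
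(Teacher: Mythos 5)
Your proof is correct and matches the paper's approach, which the paper compresses into the single line ``This is clear from the definition.'' You fill in precisely the intended bookkeeping: the contrapositive argument for (1) via dualizing $A\subset\Lambda_1$ to get $\pi\Lambda_1^\vee\subset\pi A^\vee\subset B\subset\Lambda_0$, and the two-inclusion verification for (2) by merging the two defining diagrams.
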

\begin{proof}
This is clear from the definition.
\end{proof}

\begin{remark}\label{remark3.15}
	Let $h=1$, $\Lambda_1 \in \CL_1^+, \Lambda_0 \in \CL_0^+$, and $\pi \Lambda_1^{\vee} \subset \Lambda_0$. For any $(A,B) \in R_{\Lambda_1}(k)$, we have $A=\Lambda_1$ by Remark \ref{remark3.10}. Therefore,
	\begin{equation*}
		R_{\Lambda_1}(k) \cap S_{\Lambda_0}(k)=\left\{\begin{array}{c} 
	O_{\breve{F}}$-lattices$\\
	 B \subset N_{k,0}
	\end{array}
	\middle|
	\begin{array}{c}
	\pi \Lambda_1^{\vee} \overset{1}{\subset} B \subset \Lambda_0
	\end{array}
	\right\}.
	\end{equation*}
	
	This is isomorphic to $\BP^{m-1}(k)$, where $m=[\Lambda_0 : \pi \Lambda_1^{\vee}]$.
\end{remark}

\begin{remark}\label{remark2.20}
	We can apply our method for $\CN_{E/F}^0(2,2)$ which has been studied in \cite{HP}. We should note that all of the following descriptions of $k$-points is already obtained in loc.cit. with a different method. 
	
	By using the relative Dieudonne theory and similar steps in Section \ref{sec:section2}, we can show that there is a bijection between $\mathcal{N}(k)$ and the set
	\begin{displaymath}
	\left\{\begin{array}{c} 
	O_{\breve{F}}\text{-lattice } B \subset N_{k,0}
	\end{array}
	\middle|
	\begin{array}{c}
	\pi B^{\vee} \overset{2}{\subset} B \overset{2}{\subset} B^{\vee}
	
	\end{array}
	\right\}
	\end{displaymath}
	We can divide the set into three cases.
	
	\textbf{case 1} $B \cap \tau(B) \overset{1}{\subset} B$.
	
	\textbf{case 2} $B \cap \tau(B)=\pi B^{\vee}$ and $B \overset{1}{\subset} T_1B$.
	
	\textbf{case 3} $B \cap \tau(B)=\pi B^{\vee}$ and $T_1B=B^{\vee}$.
	
	In case 1, let $\pi A^{\vee}=B \cap \tau(B)$. Then, the pair $(A,B)$ satisfies
	\begin{displaymath}
	\begin{array}{c} 
	\pi A^{\vee} \overset{1}{\subset} B \overset{3}{\subset} A^{\vee};\\
	\pi B^{\vee} \overset{1}{\subset} A \overset{3}{\subset} B^{\vee};\\
	\pi B \overset{3}{\subset} A \overset{1}{\subset} B.
	\end{array}
	\end{displaymath}
	Therefore, by using Lemma \ref{lemma3.2}, we can show that at least one of the following is true.
	\begin{enumerate}
		\item $A$ is $\tau$-invariant and $A=\pi A^{\vee}$.
		\item $\Lambda_B \subset \Lambda_B^{\vee}$.
	\end{enumerate}
	
	In case 2, one can prove that $\Lambda_B \subset \Lambda_B^{\vee}.$
	
	In case 3, since $B \cap \tau(B) =\pi B^{\vee}$, we have $B^{\vee} + \tau(B^{\vee}) =\pi^{-1} \tau(B)$ by taking dual. Since $B^{\vee}=B+\tau(B)$, we have
	\begin{equation*}
	B+\tau(B)+\tau^2(B)=\pi^{-1} \tau(B).
	\end{equation*}
	Let $d$ be the smallest integer such that $T_dB$ is $\tau$-invariant. Then $T_dB=\pi^{-1}\tau(T_{d-2}B)$ is $\tau$-invariant, and this means that $T_{d-2}B$ is also $\tau$-invariant. This is possible only when $B$ is $\tau$-invariant.
	
	In summary, $B \cap \tau(B)$ is a vertex lattice of type $0$ or $\Lambda_B \subset \Lambda_B^{\vee}$ (hence $\Lambda_B$ is a vertex lattice). This is the analogue of Lemma \ref{lemma3.2}.
	
	Therefore, for each vertex lattice $\Lambda$, we can attach the following set.
	
	(1) If $\Lambda=\pi\Lambda^{\vee}$, then we attach the set,
	\begin{displaymath} 
	\left\{\begin{array}{c} 
	O_{\breve{F}}$-lattices$\\ 
 B \subset N_{k,0}
	\end{array}
	\middle|
	\begin{array}{c}
	\Lambda \overset{1}{\subset} B \overset{2}{\subset} B^{\vee} \overset{1}{\subset} \Lambda^{\vee}
	\end{array}
	\right\}.
	\end{displaymath}
	This is the set of $k$-points of a Fermat hypersurface.
	
	(2) If $\pi \Lambda^{\vee} \overset{2}{\subset} \Lambda$, then we attach the set,
		\begin{displaymath} 
	\left\{\begin{array}{c} 
	O_{\breve{F}}$-lattices$\\ 
	B \subset N_{k,0}
	\end{array}
	\middle|
	\begin{array}{c}
B=\Lambda
	\end{array}
	\right\}.
	\end{displaymath}
	This is one $k$-point.
	
	(3) If $\pi \Lambda^{\vee} \overset{4}{\subset} \Lambda$, then we attach the set,
		\begin{displaymath} 
	\left\{\begin{array}{c} 
	O_{\breve{F}}$-lattices$\\ 
	B \subset N_{k,0}
	\end{array}
	\middle|
	\begin{array}{c}
	\pi \Lambda^{\vee} \overset{1}{\subset} B \overset{2}{\subset} B^{\vee} \overset{1}{\subset} \Lambda = \Lambda^{\vee}
	\end{array}
	\right\}.
	\end{displaymath}
	This is the set of $k$-points of a Fermat hypersurface.
	
$\CN(k)$ is the union of the above sets and this is the same result as in \cite{HP}.
	
\end{remark}

\bigskip

\section{Subschemes $\CN_{\Lambda}$ of $\CN$}\label{sec:section3}
In this section, we will first define the subscheme $\CN_{\Lambda}$ for each vertex lattice $\Lambda$, and prove that $\CN_{\Lambda}$ is isomorphic to a generalized Deligne-Lusztig variety. Also, we will prove the regularity of $\CN^h_{E/F}(1,n-1) \otimes O_{\breve{E}}$. Before we begin, let us introduce some notation. In the end of the Section \ref{subsec:section2.1}, we showed that $\CN^h_{E/F}(1,n-1) \otimes O_{\breve{E}}$ is representable by a formal scheme over $\Spf O_{\breve{E}}$ and furthermore, $\CN^h_{E/F}(1,n-1)$ is representable by a formal scheme over $\Spf O_E$ if $F$ is unramified over $\BQ_p$. For this reason, we will use the following notation. Let $\BF=\BF_{q^2}$ if $F$ is an unramified extension of $\BQ_{p}$, and let $\BF=\overline{\BF}_{q^2}$ if $F$ is ramified over $\BQ_p$. Then $\CN^h_{E/F}(1,n-1) \otimes_{O_E} \BF$ is the special fiber of $\CN^h_{E/F}(1,n-1)$ (resp. $\CN^h_{E/F}(1,n-1) \otimes O_{\breve{E}}$) if $F$ is unramified over $\BQ_p$ (resp. if $F$ is ramified over $\BQ_p$).

\subsection{Strict formal $O_F$-modules $X_{\Lambda ^+}$ and $X_{\Lambda ^-}$}\label{subsec:section3.1}

In this subsection, we fix a vertex lattice $\Lambda \in \CL_i^+$, for $i=0,1$. We will define the strict formal $O_F$-modules $X_{\Lambda ^+}$, $X_{\Lambda ^-}$ over $\BF_{q^2}$ with $O_E$-action, polarizations $\lambda_{\Lambda ^{\pm}}$ and quasi-isogenies $\rho_{\Lambda^{\pm}}:X_{\Lambda^{\pm}} \rightarrow \BX$. For this, we will construct the following two Dieudonne submodules of $N$.

First, if $\Lambda \in \CL_0^+$, we define the lattices $\Lambda^+$ and $\Lambda^-$ by
\begin{equation*}
\Lambda_0^+=\Lambda
\end{equation*}
\begin{equation*}
\Lambda_1^+=\CV^{-1}(\Lambda)
\end{equation*}
\begin{equation*}
\Lambda_0^-=\pi \Lambda^{\vee}
\end{equation*}
\begin{equation*}
\Lambda_1^-=\CV(\Lambda^{\vee})
\end{equation*}
\begin{equation*}
\Lambda^+=\Lambda_0^+ \oplus \Lambda_1^+
\end{equation*}
\begin{equation*}
\Lambda^-=\Lambda_0^- \oplus \Lambda_1^-
\end{equation*}

Then, one can easily show that $\Lambda^-=(\Lambda^+)^{\perp}$. Since $\CF=\CV$ on $\Lambda^+$ and $\Lambda^-$, we have that $\Lambda^+$ and $\Lambda^-$ are Dieudonne submodules of $N$.

In case $\Lambda \in \CL_1^+$, we define the lattices $\Lambda^+$ and $\Lambda^-$ by
\begin{equation*}
\Lambda_0^+=\Lambda
\end{equation*}
\begin{equation*}
\Lambda_1^+=\CV^{-1}(\Lambda)
\end{equation*}
\begin{equation*}
\Lambda_0^-=\pi^2 \Lambda^{\vee}
\end{equation*}
\begin{equation*}
\Lambda_1^-=\pi \CV(\Lambda^{\vee})
\end{equation*}
\begin{equation*}
\Lambda^+=\Lambda_0^+ \oplus \Lambda_1^+
\end{equation*}
\begin{equation*}
\Lambda^-=\Lambda_0^- \oplus \Lambda_1^-
\end{equation*}

Then, we have $\Lambda^-=\pi(\Lambda^+)^{\perp}$. Again, these $\Lambda^+$ and $\Lambda^-$ are Dieudonne submodules of $N$.

For $\Lambda \in \CL_i^+$, we have $\Lambda \subset \pi^i \Lambda^{\vee}$. Therefore, the pairing $\pi^{-i+1} \langle \cdot, \cdot \rangle$ on $N$ induces a $W_{O_F}(\BF_{q^2})$-pairing on $\Lambda^+$ and $\Lambda^-$.

Now, let $X_{\Lambda^+}$ and $X_{\Lambda^-}$ be the strict formal $O_F$-modules associated to $\Lambda^+$ and $\Lambda^-$ with quasi-isogenies $\rho_{\Lambda^{\pm}}:X_{\Lambda^{\pm}} \rightarrow \BX$.

We will use these two strict formal $O_F$-modules to define the subschemes $\CN_{\Lambda}$ of $\CN$.

\subsection{Subschemes $\CN_{\Lambda}$ attached to vertex lattices $\Lambda$}\label{subsec:section3.2}

We fix $\Lambda \in \CL_i^+$, for $i=0,1$. Let $S$ be a $\BF$-scheme. We define $\CN_\Lambda$ as the subfunctor of $\CN \otimes_{O_E}\BF$ consisting of tuples $(X,i_X,\lambda_X,\rho_X) \in \CN(S)$ such that
\begin{equation*}
\rho_{X,\Lambda^+}:X \xrightarrow{\rho_X} \BX_s \xrightarrow{(\rho_{\Lambda^+})^{-1}_S} (X_{\Lambda^+})_S
\end{equation*}
\begin{equation*}
\rho_{X,\Lambda^-}:(X_{\Lambda^-})_S  \xrightarrow{(\rho_{\Lambda^-})_S} \BX_S  \xrightarrow{\rho_X^{-1}} X
\end{equation*}
are isogenies.

We have the following lemma.
\begin{lemma}\label{lemma4.1}
	The functor $\CN_{\Lambda}$ is representable by a projective $\BF$-scheme and the monomorphism $\CN_{\Lambda} \hookrightarrow \CN\otimes \BF$ is a closed immersion.
\end{lemma}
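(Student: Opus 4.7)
The plan is to realize $\CN_\Lambda$ as a closed subscheme of a Quot-type scheme attached to a fixed finite group scheme; this will simultaneously yield both assertions.

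First, I would argue that the monomorphism $\CN_\Lambda \hookrightarrow \CN \otimes \BF$ is a closed immersion. For any quasi-isogeny $\rho$ of $p$-divisible groups over a base $S$, the locus in $S$ where $\rho$ extends to an honest isogeny is a closed subscheme, by a standard result in Rapoport-Zink theory (\cite[Proposition 2.9]{RZ}). Since $\CN_\Lambda$ is cut out of $\CN \otimes \BF$ by demanding that \emph{both} $\rho_{X,\Lambda^+}$ and $\rho_{X,\Lambda^-}$ be honest isogenies, it is a closed subfunctor of $\CN \otimes \BF$.

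For projectivity, the key observation is that the composite
\[
\phi := \rho_{\Lambda^+}^{-1} \circ \rho_{\Lambda^-} \colon X_{\Lambda^-} \longrightarrow X_{\Lambda^+}
\]
over $\BF_{q^2}$ is a fixed honest isogeny, because on Dieudonné modules it corresponds to the finite-index inclusion $\Lambda^- \subset \Lambda^+$ arranged in Section \ref{subsec:section3.1}. Let $H := \Ker(\phi)$, a finite flat $O_E$-stable commutative group scheme over $\BF_{q^2}$. For any $S$-point $(X, i_X, \lambda_X, \rho_X) \in \CN_\Lambda(S)$, the identity $\rho_{X,\Lambda^+} \circ \rho_{X,\Lambda^-} = \phi_S$ combined with the isogeny property of $\rho_{X,\Lambda^-}$ identifies $X$ canonically with a quotient $X_{\Lambda^-,S}/K$ for a unique finite flat $O_E$-stable closed subgroup scheme $K \subset H_S$, whose order is forced by the $F$-height. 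This should give a natural monomorphism from $\CN_\Lambda$ into the Quot scheme of $O_E$-stable finite flat closed subgroup schemes of $H$, which is projective since $H$ itself is finite over $\BF_{q^2}$.

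I would then check that the image is cut out by closed conditions: (a) the determinant condition of signature $(1, n-1)$ on $\Lie(X_{\Lambda^-,S}/K)$, and (b) the descent of the fixed polarization on $X_{\Lambda^-}$ to a polarization on $X_{\Lambda^-,S}/K$ with kernel of order $q^{2h}$, which is governed by $K$ being suitably isotropic for the pairing induced from $\lambda_\BX$ via the relation $\Lambda^- = (\Lambda^+)^\perp$ in the $\CL_0$-case and $\Lambda^- = \pi(\Lambda^+)^\perp$ in the $\CL_1$-case. Both are standard closed conditions, so $\CN_\Lambda$ becomes a closed subscheme of a projective $\BF$-scheme and is therefore itself projective. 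The main technical burden I expect is the bookkeeping in the middle step: translating the two-sided isogeny condition into the purely scheme-theoretic datum of a subgroup scheme $K \subset H$ and confirming that the polarization and signature conditions become closed conditions on the Quot scheme. Once this dictionary is fixed, projectivity reduces to the classical fact that Quot schemes of finite group schemes over a field are projective.
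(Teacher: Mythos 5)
Your proof is correct and reconstructs in detail the argument that the paper delegates by citation to \cite[Lemma 4.2]{VW}: closedness of the immersion via the standard fact that the locus where a quasi-isogeny is an honest isogeny is closed, and projectivity by embedding $\CN_{\Lambda}$ into the projective scheme classifying $O_E$-stable finite flat closed subgroup schemes of the fixed finite group scheme $H = \Ker(\rho_{\Lambda^+}^{-1}\circ\rho_{\Lambda^-})$, then cutting out the determinant and polarization conditions as closed conditions. The paper gives no independent argument beyond the reference, so your write-up supplies precisely the details the cited proof contains.
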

\begin{proof}
	See \cite[Lemma 4.2]{VW}.
\end{proof}

\begin{lemma}\label{lemma4.2}
	If $\Lambda \in \CL_0^+$, then $\CN_{\Lambda}(k)=S_{\Lambda}(k)$, and if $\Lambda \in \CL_1^+$, then $\CN_{\Lambda}(k)=R_{\Lambda}(k)$.
\end{lemma}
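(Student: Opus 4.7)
The plan is to translate the isogeny conditions defining $\CN_{\Lambda}$ into inclusions of Dieudonn\'e modules via the covariant relative Dieudonn\'e theory in force throughout the paper (confirmed by the formula $\Lie X \simeq M/\CV M$ underlying the determinant condition in Proposition~\ref{proposition2.2}). Under this theory, an $O_E$-linear quasi-isogeny between two objects of $\CN(k)$ is an honest isogeny if and only if the induced map on Dieudonn\'e modules is an inclusion of $O_{\breve{F}}$-lattices inside $N$. Since $X_{\Lambda^+}$ and $X_{\Lambda^-}$ are by construction in Section~\ref{subsec:section3.1} the strict formal $O_F$-modules with Dieudonn\'e modules exactly $\Lambda^{+}$ and $\Lambda^{-}$, the defining condition of $\CN_{\Lambda}$ becomes
\begin{equation*}
\CN_{\Lambda}(k) = \{\,M=M_0\oplus M_1 \in \CN(k) \;\vert\; \Lambda^{-} \subset M \subset \Lambda^{+}\,\},
\end{equation*}
understood componentwise in the $\BZ/2\BZ$-grading.

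Next I would unfold these four componentwise inclusions using the dictionary $A=M_0$ and $M_1 = \CV(B^{\vee})$; the latter follows from the identity $\pi(M_1^{\perp})^{\vee} = \CF M_1$ established inside the proof of Proposition~\ref{proposition2.4}, together with $\CF\CV=\pi$. For $\Lambda \in \CL_0^+$, with $\Lambda_0^+=\Lambda$, $\Lambda_1^+=\CV^{-1}(\Lambda)$, $\Lambda_0^-=\pi\Lambda^{\vee}$, $\Lambda_1^-=\CV(\Lambda^{\vee})$: the inclusion $M_0 \subset \Lambda_0^+$ is just $A \subset \Lambda$; the inclusion $M_1 \subset \Lambda_1^+$ is automatic from $\CV(M_1) \subset M_0 = A \subset \Lambda$ using the Dieudonn\'e-module structure; the inclusion $\Lambda_1^- \subset M_1$ becomes $\Lambda^{\vee} \subset B^{\vee}$ (by injectivity of $\CV$), i.e.\ $B \subset \Lambda$, using $(\Lambda^{\vee})^{\vee}=\Lambda$ since $\Lambda$ is $\tau$-invariant; and the remaining inclusion $\Lambda_0^- \subset M_0$, i.e.\ $\pi\Lambda^{\vee} \subset A$, follows from $B \subset \Lambda$ via $\pi\Lambda^{\vee} \subset \pi B^{\vee} \subset A$, the last inclusion being part of the $\CN(k)$ data. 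Hence the essential new condition is $B \subset \Lambda$, which together with the $\CN(k)$ relations matches exactly the definition of $S_{\Lambda}(k)$.

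For $\Lambda \in \CL_1^+$ the same procedure with $\Lambda_0^-=\pi^2\Lambda^{\vee}$ and $\Lambda_1^-=\pi\CV(\Lambda^{\vee})$ reduces the four inclusions to the single essential condition $A \subset \Lambda$: from $A \subset \Lambda$ combined with the $\CN(k)$ relations $\pi A^{\vee} \overset{1}{\subset} B$ and $\pi B \subset A$ one deduces $\pi\Lambda^{\vee} \subset \pi A^{\vee} \subset B$ and $\pi^2\Lambda^{\vee} \subset \pi B \subset A$, which via $M_1 = \CV(B^{\vee})$ account for the remaining inclusions $\Lambda_1^- \subset M_1$, $\Lambda_0^- \subset M_0$, and $M_1 \subset \Lambda_1^+$. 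This reproduces precisely the defining condition of $R_{\Lambda}(k)$. In both cases the converse is immediate by reading the dictionary in reverse. The whole argument is a definition chase; the only steps requiring care are the standard conversions between inclusions involving $\CV$ and ordinary lattice inclusions, using $(\Lambda^{\vee})^{\vee}=\Lambda$ for $\tau$-invariant $\Lambda$ and $\CF\CV=\CV\CF=\pi$. There is no substantive obstacle.
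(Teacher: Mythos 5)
Your approach is exactly the paper's: the paper's proof of this lemma is simply ``This is clear from the definition of $\CN_{\Lambda}$'', and you have unwound that definition chase, translating the two isogeny conditions into the componentwise sandwich $\Lambda^{-}\subset M\subset\Lambda^{+}$ and using the dictionary $A=M_{0}$, $B=M_{1}^{\perp}$, $M_{1}=\CV(B^{\vee})$. The $\CL_{0}^{+}$ half is carried out correctly. In the $\CL_{1}^{+}$ half there is one misattribution: the inclusion $\Lambda_{1}^{-}=\pi\CV(\Lambda^{\vee})\subset M_{1}=\CV(B^{\vee})$ unwinds (apply $\CV^{-1}$) to $\pi\Lambda^{\vee}\subset B^{\vee}$, which is equivalent to $B\subset(\pi\Lambda^{\vee})^{\vee}=\pi^{-1}\tau(\Lambda)=\pi^{-1}\Lambda$, i.e.\ to $\pi B\subset\Lambda$. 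This is \emph{not} what the chain $\pi\Lambda^{\vee}\subset\pi A^{\vee}\subset B$ that you cite gives (that chain yields $\pi\Lambda^{\vee}\subset B$, a true but different statement, and $B$ need not be contained in $B^{\vee}$ for an element of $\CN(k)$). The correct source is the other chain you already recorded: $\pi B\subset A\subset\Lambda$ gives $\pi B\subset\Lambda$, hence $\Lambda_{1}^{-}\subset M_{1}$. With that one step corrected the argument is complete and matches the paper's intent.
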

\begin{proof}
	This is clear from the definition of $\CN_{\Lambda}$.
\end{proof}

\subsection{Deligne-Lusztig varieties}\label{subsec:section3.3}
In this subsection, we will recall some results about Deligne-Lusztig varieties.

Let $G$ be a connected reductive group over a finite field $\FK$. Denote by $G_{\overline{\FK}}$ the base change of $G$ over $\overline{\FK}$, where $\overline{\FK}$ is a fixed algebraic closure of $\FK$. Let $\CF:G \rightarrow G$ be the Frobenius morphism with respect to $\FK$, and let $(W,S)$ be the Weyl system of $G_{\overline{\FK}}$. Then $\CF$ gives an automorphism on $W$. By Lang's theorem, $G$ is quasi-split, and hence $\CF(S)=S$.

For $I \subset S$, let $W_I$ be the subgroup of $W$ generated by $I$, and let $P_I=BW_IB$ be the corresponding standard parabolic subgroup of $G$.

For $I$, $J$ $\subset S$, we denote by $^IW^J$ the set of minimal length representatives $w\in W$ in the double coset $W_I \backslash W / W_J$.

Now, we define the generalized Deligne-Lusztig varieties as follows.

\begin{definition}
	Let $I \subset S$. For each $w \in W$, we define the generalized \textit{Deligne-Lusztig variety} $X_I(w)$ by
	\begin{equation*}
	X_I(w):=\{g \in G/P_I : g^{-1}\CF(g) \in P_IwP_{\CF(I)} \}.
	\end{equation*}
\end{definition}

We will need the following two results later.

\begin{proposition}\label{proposition4.4}
	(\cite[Lemma 2.1.3]{Hoe}) For $w\in$$^IW^{\CF(I)}$, the Deligne-Lusztig variety $X_I(w)$ is smooth of dimension $l(w)+l(W_{\CF(I)})-l(W_{I \cap^w\CF(I)})$, where $l(w)$ is the length of $w$, $l(W_I)=\max\lbrace l(w') \vert w' \in W_I \rbrace$, and $^w\CF(I)=w\CF(I)w^{-1}$.
\end{proposition}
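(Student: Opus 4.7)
The plan is to realize $X_I(w)$ as the free quotient, by the right action of $P_I$, of the preimage of a smooth $(P_I\times P_{\CF(I)})$-orbit in $G$ under the Lang map. Since the Lang map is étale, smoothness and the dimension count will descend cleanly to $X_I(w)$.

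Two ingredients are needed. First, the Lang map $L : G \to G$, $L(g) := g^{-1}\CF(g)$, is étale: because $d\CF = 0$ in characteristic $p$, one has $dL_1 = -\mathrm{id}$, and by equivariance (equivalently, by viewing $L$ as the quotient map for the free twisted action $h\cdot g = h^{-1}g\CF(h)$ of $G^{\CF}$, via Lang's theorem) the bijectivity of the differential persists at every point. Second, $\Omega := P_I w P_{\CF(I)}$ is the orbit of $w$ under $(p,q)\cdot g = pgq^{-1}$, hence smooth and locally closed in $G$; writing it as a union of Bruhat cells $BuwvB$ indexed by $(u,v)\in W_I\times W_{\CF(I)}$, one obtains $\dim\Omega = \dim B + l_{\max}(W_I w W_{\CF(I)})$, and the minimality hypothesis $w\in {}^IW^{\CF(I)}$ yields the standard Coxeter-theoretic identity
\begin{equation*}
l_{\max}(W_I w W_{\CF(I)}) = l(W_I) + l(w) + l(W_{\CF(I)}) - l(W_{I\cap {}^w\CF(I)}),
\end{equation*}
the correction term accounting for reflections of $W_I$ that coincide with $w$-conjugates of reflections of $W_{\CF(I)}$.

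With these in hand, $\widetilde X_I(w) := L^{-1}(\Omega)$ is smooth of dimension $\dim\Omega$. Its defining condition is right $P_I$-invariant: for $p\in P_I$ one computes $(gp)^{-1}\CF(gp) = p^{-1}(g^{-1}\CF(g))\CF(p) \in P_I\cdot\Omega\cdot P_{\CF(I)} = \Omega$, using $\CF(P_I)=P_{\CF(I)}$, and this right $P_I$-action on $\widetilde X_I(w)$ is free. Therefore the quotient $\widetilde X_I(w)/P_I = X_I(w)\subset G/P_I$ is smooth of dimension
\begin{equation*}
\dim\Omega - \dim P_I = l(w) + l(W_{\CF(I)}) - l(W_{I\cap {}^w\CF(I)}),
\end{equation*}
using $\dim P_I = \dim B + l(W_I)$.

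The main obstacle is the combinatorial identity for $l_{\max}(W_I w W_{\CF(I)})$: this is where the hypothesis $w\in {}^IW^{\CF(I)}$ is essential, and it alone is responsible for the correction term $l(W_{I\cap {}^w\CF(I)})$. Its proof is purely inside Coxeter-group theory, by induction on $l(w)$ via the exchange condition, tracking how multiplying the minimal representative by a simple reflection on the left or right either preserves minimality or produces a generator of the stabilizer subgroup $W_{I\cap {}^w\CF(I)}$ that is overcounted on both sides.
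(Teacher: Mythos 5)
Your argument is correct and reproduces the standard Lang-map proof; the paper itself cites \cite[Lemma 2.1.3]{Hoe} without giving a proof, and that reference (like Digne--Michel and Bonnaf\'e--Rouquier before it) argues along exactly this route. One small caution: your parenthetical description of why $L$ is \'etale --- ``the free twisted action $h\cdot g = h^{-1}g\CF(h)$ of $G^{\CF}$'' --- is off, since for $h\in G^{\CF}$ that action is just conjugation, under which $L$ is merely equivariant rather than invariant. What one actually uses is that $L$ is invariant under \emph{left} translation by $G^{\CF}$, so its fibers are free $G^{\CF}$-orbits; or, more directly, the translation identity $L(gx)=x^{-1}L(g)\CF(x)$ shows $dL_g$ is an isomorphism at every $g$ once $dL_1=-\mathrm{id}$ is known. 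The remaining steps --- smoothness and irreducibility of the $(P_I\times P_{\CF(I)})$-orbit $\Omega=P_IwP_{\CF(I)}$, the Coxeter-theoretic formula for the longest element of the double coset $W_IwW_{\CF(I)}$ when $w$ is the minimal representative, right $P_I$-invariance of $\widetilde X_I(w)$ via $\CF(P_I)=P_{\CF(I)}$, and descent of smoothness and dimension along the Zariski-locally trivial $P_I$-bundle $G\to G/P_I$ --- are all sound, and the dimension arithmetic comes out exactly to $l(w)+l(W_{\CF(I)})-l(W_{I\cap{}^w\CF(I)})$.
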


\begin{proposition}\label{proposition4.5}
	(\cite{BR}) The following assertions are equivalent.\\
	(1) $X_I(w)$ is geometrically irreducible.\\
	(2) $X_I(w)$ is connected.\\
	(3) There exists no $J\subsetneq S$ with $\CF(J)=J$ such that $W_Iw \subset W_J$.
\end{proposition}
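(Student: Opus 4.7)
My plan is to handle the three implications separately. $(1) \Rightarrow (2)$ is immediate. For $(2) \Rightarrow (1)$, I would invoke Proposition \ref{proposition4.4}: since $w \in {}^{I}W^{\CF(I)}$, the variety $X_I(w)$ is smooth, so its connected components coincide with its irreducible components; connectedness then forces irreducibility, and geometric irreducibility is automatic because $X_I(w)$ is already a variety over the algebraic closure $\overline{\FK}$.

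For the contrapositive of $(3) \Rightarrow (2)$, suppose $W_I w \subset W_J$ for some $\CF$-stable $J \subsetneq S$. Since $1 \in W_I$ we get $w \in W_J$, hence $W_I \subset W_J w^{-1} = W_J$, so $I \subset J$; similarly $\CF(I) \subset \CF(J) = J$. Thus $P_I, P_{\CF(I)} \subset P_J$, and the natural projection $\pi \colon G/P_I \to G/P_J$ is defined. By standard Tits-system theory, $P_I w P_{\CF(I)} = \bigsqcup_{u \in W_I w W_{\CF(I)}} BuB$, which is contained in $BW_J B = P_J$. Hence for every $g \in X_I(w)$ one has $g^{-1}\CF(g) \in P_J$, so $\pi(gP_I) \in (G/P_J)^{\CF}$. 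Lang's theorem applied to the connected group $P_J$ yields $(G/P_J)^{\CF} \cong G^{\CF}/P_J^{\CF}$, a finite set of cardinality strictly greater than one since $P_J \subsetneq G$. The resulting morphism $X_I(w) \to G^{\CF}/P_J^{\CF}$ is $G^{\CF}$-equivariant for left multiplication, and the target is a transitive $G^{\CF}$-set, so the map is surjective (assuming $X_I(w)$ nonempty); thus $X_I(w)$ has more than one connected component.

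The substantial direction is $(2) \Rightarrow (3)$. Following Bonnaf\'e--Rouquier, the strategy is to identify $\pi_0(X_I(w))$ canonically with $G^{\CF}/H$, where $H \subset G^{\CF}$ is generated by the $P_{J'}^{\CF}$ as $J'$ ranges over those $\CF$-stable subsets of $S$ with $W_I w \subset W_{J'}$; under hypothesis (3) the only such $J'$ is $S$ itself, so $H = G^{\CF}$ and the quotient is a point, giving connectedness. The main obstacle is establishing this description of $\pi_0(X_I(w))$. I would first reduce to the case $I = \emptyset$ via the smooth surjective projection $X_\emptyset(\tilde w) \to X_I(w)$ for a suitable lift $\tilde w$, and then proceed by induction on the semisimple rank of $G$: the base case (essentially the Coxeter-element situation handled by Lusztig) is direct, and the inductive step tracks the connectedness of $X_\emptyset(w)$ via a Bott--Samelson-type resolution, reading off the component structure from which $\CF$-orbits of simple reflections occur in a reduced expression of $w$. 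This is carried out in detail in \cite{BR}, and I would follow that argument closely.
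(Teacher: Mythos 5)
The paper supplies no proof for this proposition; it is stated as a citation to Bonnaf\'e--Rouquier \cite{BR}, so there is no argument in the text to compare against. Your sketch is a reasonable reconstruction of the standard proof: the easy chain $(1)\Rightarrow(2)$, the smoothness argument for $(2)\Rightarrow(1)$ via Proposition \ref{proposition4.4} (implicitly replacing $w$ by its minimal-length representative, which is harmless since $X_I(w)$ depends only on the double coset $W_I w W_{\CF(I)}$), the projection $G/P_I\to G/P_J$ to a finite set $(G/P_J)^{\CF}$ when a proper $\CF$-stable $J$ with $W_I w\subset W_J$ exists, and the substantial connectedness statement deferred to \cite{BR}. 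One thing to correct: you have swapped your labels. The paragraph you call ``the contrapositive of $(3)\Rightarrow(2)$'' in fact assumes the negation of $(3)$ and deduces the negation of $(2)$, which is the contrapositive of $(2)\Rightarrow(3)$; and the paragraph you call ``$(2)\Rightarrow(3)$'' (identifying $\pi_0(X_I(w))$ with $G^{\CF}/H$ and noting that under hypothesis $(3)$ one gets $H=G^{\CF}$) is actually the proof of $(3)\Rightarrow(2)$. The mathematics in each paragraph is fine; only the names of the implications are exchanged.
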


\subsection{The Deligne-Lusztig variety $Y_{\Lambda}$}\label{subsec:section3.4}
In this subsection, we will define the Deligne-Lusztig variety $Y_{\Lambda}$. For $i=0,1$ we fix a vertex lattice $\Lambda \in \CL_i^+$. We use the following notation. 

$\bullet$ Let $V_{\Lambda}$ be $\Lambda_0^+/\Lambda_0^-$ and let $(\cdot , \cdot )$ be the skew-hermitian form on $V_{\Lambda}$ induced by $\pi^{-i}\lbrace \cdot, \cdot \rbrace$. Note that $V_{\Lambda}$ is a $\BF_{q^2}$-vector space of dimension $d:=t(\Lambda)$.

$\bullet$ Let $J_{\Lambda}$ be the special unitary group associated to $(V, (\cdot, \cdot))$. This is a connected reductive group over $\BF_q$.

$\bullet$ Let $\CF:J_{\Lambda} \rightarrow J_{\Lambda}$ be the Frobenius morphism over $\BF_q$ and $(W,S)$ be the Weyl system of $J_{\Lambda}$. 

Note that 
\begin{equation*}
J_{\Lambda} \otimes_{\BF_q} \BF_{q^2} \simeq SL(V_{\Lambda})=SL_{d,\BF_{q^2}}.
\end{equation*}
Therefore, we can identify $W$ with the symmetric group $S_d$, and $S$ with $\lbrace s_1, \dots, s_d \rbrace$, where $s_i$ is the transposition of $i$ and $i+1$.

The Frobenius $\CF$ induces an automorphism of $W$, and this is given by the conjugation with $w_0 \in S_d$, where $w_0(i)=d+1-i$ for all $i$.

$\bullet$ For a $\BF_{q^2}$-algebra $R$, we denote by $V_{\Lambda, R}$ the base change $V_{\Lambda} \otimes_{\BF^{q^2}}R$. Let $\sigma$ be the Frobenius of $R$. For a $R$-module M, denote by $M^{(\sigma)}=M \otimes_{R,\sigma}R$, the Frobenius twist, and denote by $M^*=Hom_R(M,R)$. Let $U$ be a locally direct summand of $V_{\Lambda,R}$ of rank $m$. We define its dual module $U^{\curlyvee}$ as follows.
 Since $(\cdot,\cdot)$ induces an $R$-linear isomorphism
\begin{equation*}
\psi:(V_{\Lambda,R})^{(\sigma)} \simeq (V_{\Lambda,R})^*,
\end{equation*} $\psi(U^{(\sigma)})$ is a locally direct summand of $(V_{\Lambda,R})^*$ of rank $m$. Let $U^{\curlyvee}$ be the kernel of the composition
 \begin{equation*}
 V_{\Lambda,R} \simeq (V_{\Lambda,R})^{**} \twoheadrightarrow \psi(U^{(\sigma)})^*.
 \end{equation*}
This is a locally direct summand of $V_{\Lambda,R}$ of rank $d-m$.

In particular, if $R=k$, then
\begin{equation*}
U^{\curlyvee}=\lbrace x \in V_{\Lambda,k}:(x,U)=0\rbrace.
\end{equation*}

\begin{remark}
	Let $R=k$. For a lattice $A$ such that $\pi^{i+1}\Lambda^{\vee} \subset A \subset \Lambda$, the quotient $A/\pi^{i+1}\Lambda^{\vee}$ is a subspace of $V_{\Lambda,k}$. Then by definition, we have
	\begin{equation*}
	\pi^{i+1}A^{\vee}/\pi^{i+1}\Lambda^{\vee}=(A/\pi^{i+1}\Lambda^{\vee})^{\curlyvee}.
	\end{equation*}
\end{remark}

We will need the following lemma.

\begin{lemma}\label{lemma4.7}
	(\cite[Lemma 2.17]{Vol}) Fix $I \subset S$, and let $\FF \Fl$ be a flag in $J_{\Lambda}/P_I$. Then the Frobenius $\CF$ and the duality morphism $\FF \Fl \mapsto \FF \Fl^{\curlyvee}$ define the same morphism $J_{\Lambda}/P_I \rightarrow J_{\Lambda}/P_{\CF(I)}$, i.e. the dual flag $\FF \Fl^{\curlyvee}$ is equal to $\CF(\FF \Fl)$.
\end{lemma}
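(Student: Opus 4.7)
The plan is to unpack the $\BF_q$-structure of $J_\Lambda$ coming from the skew-hermitian form $(\cdot,\cdot)$ and verify that the $\BF_q$-Frobenius $\CF$ agrees with the duality morphism $\FF\Fl \mapsto \FF\Fl^{\curlyvee}$ as morphisms $J_\Lambda/P_I \to J_\Lambda/P_{\CF(I)}$. Since both maps are determined by their action on $R$-valued points for $R$ a $\BF_{q^2}$-algebra, it suffices to check the identity functorially on flags.

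The first step is to describe the Galois descent explicitly. Under the identification $J_\Lambda \otimes_{\BF_q} \BF_{q^2} \simeq SL(V_\Lambda)$, the nontrivial element of $\Gal(\BF_{q^2}/\BF_q)$ acts on $SL(V_\Lambda)$ by an involution $\Phi$ that combines entry-wise $\sigma$-base change with the adjoint involution attached to $(\cdot,\cdot)$. Consequently, after base change to $\BF_{q^2}$, the $\BF_q$-Frobenius on $J_\Lambda/P_I$ is the composition of $\Phi$ with the relative $q$-power Frobenius of $SL(V_\Lambda)/P_I$ over $\BF_{q^2}$.

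Next I would compute the effect of this composition on a flag $\FF\Fl=(U_1\subset\cdots\subset U_r)$ of type $I$ in $V_{\Lambda,R}$. The coefficient-wise $\sigma$-Frobenius sends each $U_j$ to $U_j^{(\sigma)}\subset V_{\Lambda,R}^{(\sigma)}$; the form-induced isomorphism $\psi:V_{\Lambda,R}^{(\sigma)}\simeq V_{\Lambda,R}^{*}$ together with the adjoint step in $\Phi$ then returns the flag of annihilators, namely $\FF\Fl^{\curlyvee}=(U_r^{\curlyvee}\subset\cdots\subset U_1^{\curlyvee})$ in $V_{\Lambda,R}$. Matching dimensions, this flag has type $\CF(I)$, consistent with $\CF$ acting on $S\simeq\{s_1,\dots,s_{d-1}\}$ by $s_i\mapsto s_{d-i}$, so the image indeed lies in $J_\Lambda/P_{\CF(I)}$.

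The main difficulty will be the bookkeeping of the two $\sigma$-twists in play: one from the intrinsic $\sigma$-semilinearity of $(\cdot,\cdot)$ (built into the definition of $\psi$ and hence of $U^{\curlyvee}$ in Section~\ref{subsec:section3.4}), and one from the descent-theoretic Frobenius. The content of the lemma is that these twists, combined with the adjoint step in $\Phi$, compose to exactly the duality operation on subspaces. Once this match is confirmed on functorial points, the equality $\CF(\FF\Fl)=\FF\Fl^{\curlyvee}$ of morphisms follows, as in \cite[Lemma~2.17]{Vol}.
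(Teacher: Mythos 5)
The paper offers no proof of this lemma; it simply cites Vollaard's Lemma 2.17, so there is no in-paper argument to compare against. Your reconstruction is, as far as I can tell, the right argument (and likely the one in \cite{Vol}): identify $J_\Lambda \otimes_{\BF_q}\BF_{q^2}$ with $SL(V_\Lambda)$, unwind the Galois descent datum via the sesquilinear form, and observe that on flags the twisted Frobenius is the entry-wise $\sigma$-twist followed by passage to the annihilator, which by construction is exactly $U\mapsto U^{\curlyvee}$; reversing the order of the flag accounts for $I\mapsto \CF(I)=w_0 I w_0^{-1}$.

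Two small cautions. First, the phrase ``the relative $q$-power Frobenius of $SL(V_\Lambda)/P_I$ over $\BF_{q^2}$'' is ill-posed: the relative Frobenius over $\BF_{q^2}$ is the $q^2$-power; what you mean is the entry-wise $q$-power, which is a $\sigma$-semilinear map over $\BF_{q^2}$ rather than a morphism of $\BF_{q^2}$-schemes. You can avoid the issue by phrasing everything in terms of the absolute $q$-Frobenius of $J_\Lambda$ and how it factors through the descent datum on $\overline{\BF}_q$-points. Second, when you say ``the form-induced isomorphism $\psi$ together with the adjoint step in $\Phi$ then returns the flag of annihilators,'' be explicit that the adjoint/inverse-transpose step is what turns the stabilizer of $\psi(U^{(\sigma)})\subset V^*_{\Lambda,R}$ back into the stabilizer of its annihilator inside $V_{\Lambda,R}$, which is precisely the kernel defining $U^{\curlyvee}$ in the paper's Section~\ref{subsec:section3.4}. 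With those points made precise, the computation on $R$-points closes the argument correctly.
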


 Let $\Lambda \in \CL_0^+$ and $d=2l+h+1$ (recall that $h$ is from $\CN_{E/F}^h(1,n-1)$). We can take the set $I_{\Lambda} \subset S$ such that the elements in $J_{\Lambda}/P_{I_{\Lambda}}$ parametrize flags
 \begin{equation*}
 0 \overset{l+1}{\subset} \overline{A} \overset{h}{\subset} \overline{B} \overset{l}{\subset} V_{\Lambda},
 \end{equation*} 
  where $\overline{A}, \overline{B}$ are subspaces of $V_{\Lambda}$. For example, we take
  \begin{equation*}
  I_{\Lambda}=\lbrace s_1, \dots, s_l,s_{l+2},\dots,s_{l+h},s_{l+h+2},\dots,s_{2l+h} \rbrace,
  \end{equation*}
   where $h>1$, $l>1$.

In case $\Lambda \in \CL_1^+$, and $d=2l+(n-h)+1$, we take $I_{\Lambda} \subset S$ such that the elements in $J_{\Lambda}/P_{I_{\Lambda}}$ parametrize flags
\begin{equation*}
0 \overset{l+1}{\subset} \overline{\pi B} \overset{n-h}{\subset} \overline{A} \overset{l}{\subset} V_{\Lambda},
\end{equation*}

where $\overline{\pi B}, \overline{A}$ are subspaces of $V_{\Lambda}$.\\
 
\begin{definition}
In case $h=0, n$, we define $w_{\Lambda}=id$. In case $1 \leq h \leq n-1$, we define $w_{\Lambda}$ as follows. If $\Lambda \in \CL_0^+$, we define $w_{\Lambda}=s_{l+1}s_{l+2}\dots s_{l+h}$ or $w_{\Lambda}=(l+1,l+h+1)$, the transposition of $l+1$ and $l+h+1$. Note that these two $w_{\Lambda}$ gives the same coset in $W_{I_{\Lambda}} w_{\Lambda} W_{\CF(I_{\Lambda})}$. In case $\Lambda \in \CL_1^+$, we define $w_{\Lambda}=s_{l+1}s_{l+2}\dots s_{l+n-h}$.
\end{definition}

 Then we have the following proposition.
 
 \begin{proposition}\label{proposition4.8} We have the following bijections.
 \begin{enumerate}
 	\item  If $1 \leq h \leq n-1$ and $\Lambda \in \CL_0^+$, then
 	\begin{equation*}
 	S_{\Lambda}(k)=X_{I_{\Lambda}}(id)(k) \sqcup X_{I_{\Lambda}}(w_{\Lambda})(k).
 	\end{equation*}
 	
 	\item If $1 \leq h \leq n-1$ and $\Lambda \in \CL_1^+$, then
 	\begin{equation*}
 	R_{\Lambda}(k)=X_{I_{\Lambda}}(id)(k) \sqcup X_{I_{\Lambda}}(w_{\Lambda})(k)
  \end{equation*}
  
   \item If $h=0$ and $\Lambda \in \CL_0^+$, then 
   	\begin{equation*}
   S_{\Lambda}(k)=X_{I_{\Lambda}}(id)(k).
   \end{equation*}
   
   \item If $h=n$ and $\Lambda \in \CL_1^+$, then
   	\begin{equation*}
   R_{\Lambda}(k)=X_{I_{\Lambda}}(id)(k).
   \end{equation*}
\end{enumerate}	
 \end{proposition}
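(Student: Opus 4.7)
The plan is to construct the bijection explicitly by passing to the subquotient $V_{\Lambda,k}$ of $\Lambda$, that is, by reducing modulo $\pi \Lambda^\vee$ in cases (1), (3) and modulo $\pi^2 \Lambda^\vee$ in cases (2), (4). I focus on case (1); case (2) is entirely analogous with the roles of $A$ and $\pi B$ swapped and $h$ replaced by $n-h$, while cases (3) and (4) are degenerate limits ($h=0$ forces $A=B$, and symmetrically for $h=n$) where the flag collapses to a single subspace and only the identity double coset survives.

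Given $(A,B) \in S_\Lambda(k)$, the chain $\pi\Lambda^\vee \subset A \subset B \subset \Lambda$ (using $\pi\Lambda^\vee \subset \pi B^\vee \subset A$ from the diagram defining $S_\Lambda$) yields reductions $\overline{A}, \overline{B} \subset V_{\Lambda,k}$, and the index conditions force $\dim \overline{A} = l+1$ and $\dim \overline{B} = l+h+1$, so that $(\overline{A} \subset \overline{B})$ is a point of $(J_\Lambda/P_{I_\Lambda})(k)$. The key identity $\overline{A}^\curlyvee = \pi A^\vee/\pi\Lambda^\vee$, and likewise for $B$, converts the remaining conditions $\pi A^\vee \overset{1}{\subset} B$ and $\pi B^\vee \overset{1}{\subset} A$ into $\overline{A}^\curlyvee \overset{1}{\subset} \overline{B}$ and $\overline{B}^\curlyvee \overset{1}{\subset} \overline{A}$. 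The assignment $(A,B) \mapsto (\overline{A},\overline{B})$ is a bijection onto the locus of flags satisfying these two dual containments, since $A$ and $B$ are recovered as preimages in $\Lambda$.

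By Lemma \ref{lemma4.7}, $\CF(\overline{A},\overline{B}) = (\overline{B}^\curlyvee, \overline{A}^\curlyvee)$, a flag of type $\CF(I_\Lambda)$. The relative position of $\CL = (\overline{A},\overline{B})$ and $\CF\CL$ is an element of $W_{I_\Lambda} \backslash W / W_{\CF(I_\Lambda)}$, completely encoded by the matrix of intersection dimensions $\dim(F_i \cap G_j)$. The containments $\overline{B}^\curlyvee \subset \overline{A}$ and $\overline{A}^\curlyvee \subset \overline{B}$, together with $\overline{A} \subset \overline{B}$, fix three of the four entries, leaving only $\dim(\overline{A} \cap \overline{A}^\curlyvee)$ free. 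Since $\dim \overline{A} + \dim \overline{A}^\curlyvee = d$ and both subspaces lie in $\overline{B}$ of dimension $l+h+1$, this free parameter takes exactly two values, namely $l$ or $l+1$.

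The value $l+1$ corresponds to $\overline{A} \subset \overline{A}^\curlyvee$, giving the complete chain $\overline{B}^\curlyvee \subset \overline{A} \subset \overline{A}^\curlyvee \subset \overline{B}$, which matches the configuration of the standard type-$I_\Lambda$ and type-$\CF(I_\Lambda)$ flags, hence $w=id$. The value $l$ gives the remaining double coset compatible with the prescribed containments; a direct calculation, conjugating $\CF(I_\Lambda) = S \setminus \lbrace s_l, s_{l+h} \rbrace$ by $w_\Lambda = s_{l+1} \cdots s_{l+h}$, identifies this as the class of $w_\Lambda$. Disjointness is automatic, as the two loci are distinguished by their relative position. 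The main obstacle is the combinatorial bookkeeping in $W = S_d$ under $\CF = w_0(\cdot)w_0^{-1}$ and verifying that $w_\Lambda$ is the correct minimal length representative; a sanity check using Proposition \ref{proposition4.4} gives $\dim X_{I_\Lambda}(id) = l+h-1$ and $\dim X_{I_\Lambda}(w_\Lambda) = l+h$, matching the expected dimensions of the closed and open strata.
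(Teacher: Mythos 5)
Your proposal is correct and follows essentially the same route as the paper's proof: reduce modulo $\pi\Lambda^\vee$ to the flag $(\overline{A}\subset\overline{B})$ in $V_{\Lambda,k}$, invoke Lemma~\ref{lemma4.7} to identify $\CF(\overline{A},\overline{B})$ with the dual flag, and split into the two strata according to whether $A\subset\pi A^\vee$ (equivalently $\dim(\overline{A}\cap\overline{A}^\curlyvee)=l+1$), yielding $X_{I_\Lambda}(id)$, or not, yielding $X_{I_\Lambda}(w_\Lambda)$. The paper states this more tersely; you fill in the index computations and the double-coset bookkeeping, but the underlying argument is identical.
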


 \begin{proof}
 	(1) Let $(A \subset B) \in S_{\Lambda}(k)$. By sending this to
 	$(A/\pi \Lambda^{\vee} \subset B/ \pi \Lambda^{\vee})$, we have an element in $X_{I_{\Lambda}}(id)(k) \sqcup X_{I_{\Lambda}}(w_{\Lambda})(k)$ (here we use Lemma \ref{lemma4.7}).
 	
 	Indeed, if 
 	\begin{equation*}
 	0 \overset{l}{\subset} \pi B^{\vee} \overset{1}{\subset} A \overset{h-1}{\subset} \pi A^{\vee} \overset{1}{\subset} B \overset{l}{\subset} \Lambda,
 	\end{equation*}
 	
 	then $(A/\pi \Lambda^{\vee} \subset B/ \pi \Lambda^{\vee}) \in X_{I_{\Lambda}}(id)(k)$.
 	
 	And if
 	\begin{equation*}
 	\pi B^{\vee} \subset A \nsubseteq \pi A^{\vee} \subset B,
 	\end{equation*}
 	
 	then $(A/\pi \Lambda^{\vee} \subset B/ \pi \Lambda^{\vee}) \in X_{I_{\Lambda}}(w_{\Lambda})(k)$.
 	
 	The proofs of (2), (3), (4) are similar.
 \end{proof}
 
\begin{definition}
For $i=0,1$, let $\Lambda \in \CL_i^+$. If $1 \leq h \leq n-1$, then we define a $\BF_{q^2}$-scheme
 \begin{equation*}
Y_{\Lambda}:=X_{I_{\Lambda}}(id) \sqcup X_{I_{\Lambda}}(w_{\Lambda})=\overline{X_{I_{\Lambda}}(w_{\Lambda})}.
 \end{equation*}
 The second equality is from the property of the Bruhat order (see \cite[Lemma 3.7]{HP}).
 If $h=0$ and $\Lambda \in \CL_0^+$, then we define $Y_{\Lambda}:=X_{I_{\Lambda}}(id)$. Similarly, if $h=n$ and $\Lambda \in \CL_1^+$, then we define $Y_{\Lambda}:=X_{I_{\Lambda}}(id)$.
  By abuse of notation, we denote by $Y_{\Lambda}$ its base change $Y_{\Lambda} \otimes \BF$.
\end{definition}

 By Proposition \ref{proposition4.4} and Proposition \ref{proposition4.5}, we have the following proposition.
 
 \begin{proposition}\label{proposition4.9} For $\Lambda \in \CL_i^+$ ($i=0,1$), $Y_{\Lambda}$ is irreducible, and
 	
 	(1) if $\Lambda \in \CL_0^+$, the dimension of $Y_{\Lambda}$ is 
 	\begin{equation*}
 	\dfrac{t(\Lambda)-1-h}{2}+h,
 	\end{equation*}
 	
 	(2) if $\Lambda \in \CL_1^+$, the dimension of $Y_{\Lambda}$ is 
 	\begin{equation*}
 	\dfrac{t(\Lambda)-1-(n-h)}{2}+n-h.
 	\end{equation*}
 \end{proposition}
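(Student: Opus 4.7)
The plan is to derive both assertions by explicit computation with the generalized Deligne--Lusztig variety $X_{I_\Lambda}(w_\Lambda)$, whose closure is $Y_\Lambda$ by definition, applying Proposition \ref{proposition4.4} for the dimension and Proposition \ref{proposition4.5} for irreducibility. Since $J_\Lambda\otimes_{\BF_q}\BF_{q^2}\simeq SL_{d}$ with $d=t(\Lambda)$, we identify $W$ with $S_d$ under simple reflections $s_1,\dots,s_{d-1}$, and the Frobenius acts by conjugation by the longest element, so $\CF(s_j)=s_{d-j}$. I focus on $\Lambda\in\CL_0^+$ with $1\le h\le n-1$, where $d=2l+h+1$: the parabolic set $I_\Lambda$ omits $\{s_{l+1},s_{l+h+1}\}$, and consequently $\CF(I_\Lambda)$ omits $\{s_{l},s_{l+h}\}$.

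Before applying the dimension formula I must verify that $w_\Lambda=s_{l+1}s_{l+2}\cdots s_{l+h}$, which as a permutation is the cycle $(l+1,l+2,\dots,l+h+1)$ of length $h$, lies in ${}^{I_\Lambda}W^{\CF(I_\Lambda)}$. This amounts to checking on the standard simple roots $\alpha_j=e_j-e_{j+1}$ that $w_\Lambda^{-1}(\alpha_j)>0$ for every $s_j\in I_\Lambda$ and $w_\Lambda(\alpha_j)>0$ for every $s_j\in\CF(I_\Lambda)$, a direct consequence of the cycle structure of $w_\Lambda$. Next, a case-by-case tracking of $w_\Lambda^{-1}s_jw_\Lambda$ for $s_j\in I_\Lambda$ shows that this conjugate is again a simple reflection lying in $\CF(I_\Lambda)$ in every case except $j=l$ (where one obtains the non-simple transposition $(l,l+h+1)$), so
\[
I_\Lambda\cap{}^{w_\Lambda}\CF(I_\Lambda)=I_\Lambda\setminus\{s_l\},
\]
which is isomorphic to $S_l\times S_h\times S_l$ as a Coxeter subsystem. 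Combined with $W_{\CF(I_\Lambda)}\simeq S_l\times S_h\times S_{l+1}$, Proposition \ref{proposition4.4} yields
\[
\dim X_{I_\Lambda}(w_\Lambda)=h+\tbinom{l}{2}+\tbinom{h}{2}+\tbinom{l+1}{2}-2\tbinom{l}{2}-\tbinom{h}{2}=l+h,
\]
matching $\tfrac{t(\Lambda)-1-h}{2}+h$. The $\CL_1^+$ case is identical after interchanging the roles of $h$ and $n-h$, and the endpoint cases $h=0$ or $h=n$ are handled by the same argument with $w_\Lambda=\mathrm{id}$, which gives dimension $\tbinom{l+1}{2}-\tbinom{l}{2}=l$.

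For irreducibility I apply Proposition \ref{proposition4.5}: the existence of a proper $\CF$-stable $J\subsetneq S$ with $W_{I_\Lambda}w_\Lambda\subset W_J$ forces both $I_\Lambda\subset J$ and $w_\Lambda\in W_J$. Since $w_\Lambda\notin W_{I_\Lambda}$ (the cycle moves $e_{l+1}$ out of the standard flag), $J$ must strictly contain $I_\Lambda$, so the only candidates are $J=I_\Lambda\cup\{s_{l+1}\}$ and $J=I_\Lambda\cup\{s_{l+h+1}\}$. Using $\CF(s_{l+1})=s_{l+h}$ and $\CF(s_{l+h+1})=s_l$, one checks that neither candidate is $\CF$-stable unless $h=1$, and in that remaining case $J=S\setminus\{s_{l+1}\}$ is $\CF$-stable but $w_\Lambda=s_{l+1}\notin W_J$. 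Hence $X_{I_\Lambda}(w_\Lambda)$, and therefore its closure $Y_\Lambda$, is geometrically irreducible. The principal technical obstacle is the careful bookkeeping for the conjugation computation giving $I_\Lambda\cap{}^{w_\Lambda}\CF(I_\Lambda)$; once this is in place, the rest of the proof is routine substitution into Propositions \ref{proposition4.4} and \ref{proposition4.5}.
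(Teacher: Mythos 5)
Your argument is correct and follows exactly the route the paper intends: it simply substitutes the explicit data of $I_\Lambda$, $\CF(I_\Lambda)$, and $w_\Lambda$ into Propositions \ref{proposition4.4} and \ref{proposition4.5}, and your computation of $I_\Lambda\cap{}^{w_\Lambda}\CF(I_\Lambda)=I_\Lambda\setminus\{s_l\}$ and the resulting dimension $l+h$ check out. One small point: for $h=0,n$ the step ``$w_\Lambda\notin W_{I_\Lambda}$, hence $J$ must strictly contain $I_\Lambda$'' breaks down since $w_\Lambda=\mathrm{id}\in W_{I_\Lambda}$; irreducibility still follows because the unique proper $J\supseteq I_\Lambda$ is $I_\Lambda$ itself, which is not $\CF$-stable (it omits $s_{l+1}$ while $\CF(I_\Lambda)$ omits $s_l$), so it is worth stating this endpoint check separately rather than calling it ``the same argument.''
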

\bigskip

\subsection{Description of the points of $\CN_{\Lambda}$}\label{subsec:section3.5}

In this subsection, we will use the theory of $O_F$-windows in \cite{ACZ}, \cite{Ahs} to obtain a description of $\CN_{\Lambda}(k)$ for an arbitrary field extension $k$ of $\BF$ (For a perfect field $k$, we can use the relative Dieudonne theory as in Section \ref{subsec:section2.2}, \ref{subsec:section2.3}). This will be used to prove the Theorem \ref{theorem 4.11}. For simplicity we denote by $O$ the ring of integers $O_F$. Let $k$ be an arbitrary field extension of $\BF$, and let $W_{O}(k)$ be the ring of ramified Witt vectors. Let $W_{O,k}=(W_{O}(k), I_{O}(k), k, ^{\sigma},^{\CV^{-1}})$ and $W_{O,\BF}=(W_{O}(\BF), \pi W_{O}(\BF), \BF, ^{\sigma},^{\CV^{-1}})$ be Witt $O$-frames.

Let $(\BM ,\CF,\CV)$ be the relative Dieudonne module of $\BX$ defined in Section \ref{subsec:section2.2}. then $(\BM,\CV \BM, \CF, \CV^{-1})$ is the $W_{O,\BF}$-window of $\BX$. The inclusion $W_O(\BF) \hookrightarrow W_O(k)$ induces a morphism of $O$-frames $W_{O,\BF} \rightarrow W_{O,k}$. Then by base change, we get the $W_{O,k}$-window ($\BM_k,\BM_k',\CF_k,\CV_k^{-1})$ of $\BX \otimes k$. More precisely,

$\bullet$ $\BM_k=W_O(k) \otimes_{W_{O}(\BF)} \BM$.

$\bullet$ $\BM_k'= \Ker (w_0 \otimes \pr)$, where $w_0$ is 0-th Witt polynomial, and $\pr:\BM \rightarrow \BM/\CV \BM$.

$\bullet$ $\CF_k=^{\sigma} \otimes \CF$.

$\bullet$ $\CV_k^{-1}$ is the unique $^{\sigma}$-linear morphism which satisfies
\begin{equation*}
\CV_k^{-1}(w\otimes y)=^{\sigma}w \otimes \CV^{-1}y,
\end{equation*}
\begin{equation*}
\CV_k^{-1}(^{\CV}w\otimes y)=w \otimes \CF y,
\end{equation*}
for all $w \in W_O(k)$, $x \in \BM$, and $y\in \CV \BM$.

Let $N_k=\BM_k \otimes_{W_O(k)} \Frac(W_O(k))$. The $O_E$-action on $\BM$ induces the $O_E$-action on $N_k$.

The polarization $\lambda \otimes k$ on $\BX \otimes_{\BF} k$ induces a nondegenerate $\Frac(W_O(k))$-bilinear alternating form $\langle \cdot, \cdot \rangle$ on $N_k$
\begin{equation*}
\langle \cdot, \cdot \rangle : N_k \times N_k \rightarrow \Frac(W_O(k)),
\end{equation*}
such that for all $x,y \in N_k$ and $a\in E$, it satisfies
\begin{equation*}
\langle F_kx,F_ky \rangle=\pi \langle x, y \rangle^\sigma,
\end{equation*}
\begin{equation*}
\langle ax,y \rangle=\langle x, a^{*}y\rangle.
\end{equation*}

The $O_E$-action on $N_k$ induces $\BZ/2\BZ$-grading
\begin{equation*}
N_k=N_{k,0} \oplus N_{k,1}.
\end{equation*}

Each $N_{k,i}$ is totally isotropic with respect to $\langle \cdot, \cdot \rangle$ and $F_k$ is homogeneous of degree 1 with respect to the decomposition. For a $W_O(k)$-lattice $M=M_0 \oplus M_1 \subset N_k$, we define the dual lattice $M^{\perp}=M_1^{\perp} \oplus M_0^{\perp}$ as 
\begin{equation*}
M_i^{\perp}=\lbrace x \in N_{k,i+1} \vert \langle x, M_i \rangle \in W_O(k) \rbrace, i=0,1.
\end{equation*}

Let $(\Lambda_k^{\pm},\CV\Lambda_k^{\pm},\CF_k,\CV_k^{-1})$ be the $W_{O,k}$-windows of $X_{\Lambda^{\pm}}\otimes k$. Then by the theory of $O$-windows, we have the following proposition.

\begin{proposition}\label{proposition4.10}
	There is a bijection between the set $\CN_{\Lambda}(k)$ and the set of $W_O(k)$-lattices $M=M_0 \oplus M_1$ in $N_k$ such that
	
	(1) $M$ is $F_k$ and $O_E$-invariant.
	
	(2) $M_0 \overset{h}{\subset} M_1^{\perp} \overset{n-h}{\subset} \pi^{-1}M_0$, $M_1 \overset{h}{\subset} M_0^{\perp} \overset{n-h}{\subset} \pi^{-1}M_1.$ 
	
	(3) $\pi M_0 \overset{n-1}{\subset} M'_0 \overset{1}{\subset} M_0$, $\pi M_1 \overset{1}{\subset} M'_1 \overset{n-1}{\subset} M_1$, where $M'=M'_0 \oplus M'_1=\Ker (M \rightarrow \Lambda^+_k/\CV\Lambda^+_k)$.
	
	(4) $\Lambda_k^- \subset M \subset \Lambda_k^+.$
\end{proposition}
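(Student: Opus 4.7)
The plan is to invoke the theory of $O_F$-windows of Ahsendorf--Cheng--Zink and Ahsendorf (\cite{ACZ}, \cite{Ahs}) to reduce the problem to linear algebra, exactly paralleling the perfect-field description of Proposition \ref{proposition2.2} but now with the relative Dieudonné module replaced by a $W_{O,k}$-window. Concretely, the antiequivalence of categories between strict formal $O_F$-modules over $k$ and $W_{O,k}$-windows over $k$ lets me encode a point $(X,i_X,\lambda_X,\rho_X) \in \CN(k)$ as a quadruple $(\widetilde{M},\widetilde{M}',F_k,V_k^{-1})$ together with a quasi-isogeny to the window of $\BX\otimes k$. Using $\rho_X$ to identify things rationally, this data is the same as a $W_O(k)$-lattice $M \subset N_k$ together with the induced submodule $M' \subset M$ arising from the Hodge filtration.

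First I would translate the structures on $(X,i_X,\lambda_X)$ one by one. The $O_E$-action $i_X$ gives both the $\BZ/2\BZ$-grading $M=M_0\oplus M_1$ and the $O_E$-stability, and $F_k$-stability of $M$ is the window axiom; this is condition (1). The polarization $\lambda_X$ (whose kernel has order $q^{2h}$ and sits in $X[\pi]$) translates exactly as in Section \ref{subsec:section2.2} into the duality chain in condition (2), since the non-degenerate pairing $\langle\cdot,\cdot\rangle$ on $N_k$ pulled back from the framing polarization is the same formal gadget whether $k$ is perfect or not. The determinant condition of signature $(1,n-1)$ on $\Lie X = M/M'$ forces $M_0/M'_0$ to have length $1$ and $M_1/M'_1$ to have length $n-1$ (with the correct $O_E$-action on the quotients); combined with $F_k$-invariance and Lemma \ref{lemma2.3} applied to the window instead of the Dieudonné module, this is precisely condition (3). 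Here the choice of $M'$ as the kernel of $M \to \Lambda_k^+/\CV\Lambda_k^+$ is the correct one because the window $\Lambda_k^+$ of the auxiliary object $X_{\Lambda^+}$ has the property $\CV\Lambda_k^+ = \Lambda_k'^+$ by its construction in Section \ref{subsec:section3.1}, so $M' = M \cap \CV\Lambda_k^+$ is indeed the Hodge filtration intrinsic to $X$.

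Next I would translate the $\CN_\Lambda$-condition itself. By definition, $(X,i_X,\lambda_X,\rho_X) \in \CN_\Lambda(k)$ iff the composites
\[
\rho_{X,\Lambda^+}:X \xrightarrow{\rho_X} \BX_k \xrightarrow{(\rho_{\Lambda^+})^{-1}_k} (X_{\Lambda^+})_k,\qquad \rho_{X,\Lambda^-}:(X_{\Lambda^-})_k \xrightarrow{(\rho_{\Lambda^-})_k} \BX_k \xrightarrow{\rho_X^{-1}} X
\]
are honest isogenies, not merely quasi-isogenies. Under the window antiequivalence, an isogeny $Y \to Y'$ of strict formal $O_F$-modules over $k$ corresponds to an inclusion of windows in the reverse direction. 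Hence $\rho_{X,\Lambda^+}$ being an isogeny is equivalent to $M \subset \Lambda_k^+$, and $\rho_{X,\Lambda^-}$ being an isogeny is equivalent to $\Lambda_k^- \subset M$. Together these give condition (4). Conversely, given a lattice $M$ satisfying (1)--(4), one reconstructs $M'$ from (3), obtains the $V_k^{-1}$ uniquely from $F_k$ and $M'$ via the window axioms (the existence of $V_k^{-1}$ on $M'$ is forced by $F_k$ through the standard argument for nilpotent windows), and the inclusions in (4) provide the two isogenies; the tuple $(X,i_X,\lambda_X,\rho_X)$ is then produced by the antiequivalence and verified to lie in $\CN_\Lambda(k)$.

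The step I expect to require the most care is the translation of the determinant condition into the specific indices appearing in (3), because for non-perfect $k$ one cannot just quote Lemma \ref{lemma2.3} verbatim: the argument uses that $\CF$ and $\CV$ act on a free Dieudonné module, whereas the window $M'$ is only a submodule of $M$ satisfying $\pi M \subset M' \subset M$ with $M/M'$ locally free of the correct rank. The resolution is that condition (3) is essentially pointwise on $\Spec W_O(k)$: the rank statement $\length(M_0/M'_0)=1$ and $\length(M_1/M'_1)=n-1$ is the only thing that needs to be checked globally, and it follows from the signature $(1,n-1)$ determinant condition on $\Lie X$ together with the $O_E$-grading, exactly as in the perfect case. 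All other indices in (2)--(4) are forced by the polarization and the framing inclusions $\Lambda_k^- \subset \Lambda_k^+$, so once the window-theoretic Lemma \ref{lemma2.3} is in place the two sets are identified.
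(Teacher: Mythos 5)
Your proposal follows the same window-theoretic route as the paper's (very terse) proof: invoke the $O_F$-window equivalence of \cite{ACZ}, translate the $O_E$-action and $\CF_k$-stability into (1), the polarization constraint $\Ker\lambda\subset\BX[\pi]$ of order $q^{2h}$ into (2), the signature $(1,n-1)$ determinant condition into (3), and the two isogenies defining $\CN_\Lambda$ into the sandwich $\Lambda_k^-\subset M\subset\Lambda_k^+$ of (4). One small inaccuracy: your appeal to a ``window-theoretic Lemma \ref{lemma2.3}'' for condition (3) is not needed and is slightly misleading, since (3) is just the statement that $M/M'$ has lengths $(1,n-1)$ in its graded pieces (the determinant condition on $\Lie X=M/M'$) together with $\pi M\subset M'$ (a window axiom), whereas Lemma \ref{lemma2.3} is about $\CF M_1\subset M_0$ and does not appear in (3); you in fact correct this yourself in your closing paragraph. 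The one point deserving the care you flag, but not really addressed by either you or the paper, is why $M'=\Ker(M\to\Lambda_k^+/\CV\Lambda_k^+)$ coincides with the Hodge filtration $Q$ of the window of $X$ (a priori one only gets $Q\subset M'$ from the map of windows); your assertion that it ``is indeed the Hodge filtration intrinsic to $X$'' is stated rather than argued, but since the paper is equally silent on this point, this is not a gap you should be singled out for.
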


\begin{proof}
	The first condition is obvious. The condition (2) is from the condition on polarization: $\Ker\lambda \subset \BX[\pi]$ and the order of $\Ker\lambda$ is $q^{2h}$. The condition (3) is the determinant condition. The last condition is from the definition of $\CN_{\Lambda}$.
\end{proof}

\bigskip

\subsection{The isomorphism between $\CN_{\Lambda}$ and $Y_{\Lambda}$}\label{subsec:section3.6}
Let $\Lambda \in \CL^+_i$. In this subsection, we will prove that $\CN_{\Lambda}$ and $Y_{\Lambda}$ are isomorphic. Let $S$ be a $\BF$-scheme, and let $X$ be a strict formal $O_F$-module over $S$. We denote by $D(X)$ the Lie algebra of the universal extension of $X$ in the sense of \cite{ACZ}. Recall that $X \mapsto D(X)$ is the functor from the category of $\pi$-divisible formal $O$-module over $S$ to the category of locally free $O_S$-modules. This is compatible with base change.

Now, we will define a morphism $f: \CN_{\Lambda} \rightarrow Y_{\Lambda}$.
Let $R$ be a $\BF$-algebra, and $(X,i_X,\lambda_X,\rho_X)\in \CN_{\Lambda}(R)$.
By definition of $\CN_{\Lambda}$, we have two isogenies
\begin{equation*}
X_{\Lambda^-,R} \xrightarrow{\rho_{X,\Lambda^-}} X_R \xrightarrow{\rho_{X,\Lambda^+}} X_{\Lambda^+,R}
\end{equation*}

Let $\BB_{\Lambda}=\Lambda^+/\Lambda^-$, $E(X):=\Ker{(D(\rho_{X,\Lambda^-}))}$.
Then by \cite[Corollary 4.7]{VW}, $E(X)$ is a direct summand of the $R$-module $\BB_{\Lambda} \otimes_{\BF}R$. By the $O_E$-action on $\BB_{\Lambda}$ and on $E(X)$, we have the following decompositions
\begin{equation*}
\BB_{\Lambda}=\BB_{\Lambda,0} \oplus \BB_{\Lambda,1},
\end{equation*}
\begin{equation*}
E(X)=E_0(X) \oplus E_1(X).
\end{equation*}
We write $\langle \cdot, \cdot \rangle'$ for the alternating form $\pi^{-i+1}\langle \cdot, \cdot \rangle$ on $\BB_{\Lambda}$.

\begin{remark}\label{remark4.10}
	Let $R=k$ be an algebraically closed field. If $\Lambda \in \CL_0^+$, then $E_0(X)=A/\pi \Lambda^{\vee}$ and $E_1(X)^{\perp'}=B/\pi \Lambda^{\vee}$ ($^{\perp'}$ means the dual with respect to $\langle \cdot, \cdot \rangle'$) with the notation use in the proof of Proposition \ref{proposition4.8}. Therefore, $E_0(X) \subset E_1(X)^{\perp'}$. Similarly, if $\Lambda \in \CL_1^+$, then $E_0(X)=A/\pi^2\Lambda^{\vee}$ and $E_1(X)^{\perp'}=\pi B/\pi^2 \Lambda^{\vee}$. Therefore, we have $E_1(X)^{\perp'} \subset E_0(X).$
\end{remark}

From the remark, we obtain a map $f:\CN_{\Lambda}(R) \rightarrow Y_{\Lambda}(R)$ by sending $(X,i_X,\lambda_X,\rho_X)$ to $(E_0(X) \subset E_1(X)^{\perp'})$ where $\Lambda \in \CL_0^+$, and to $(E_1(X)^{\perp'} \subset E_0(X))$ where $\Lambda \in \CL_1^+$ (note that both $E_0(X), E_1(X)^{\perp'}$ are subspaces of $\BB_{\Lambda,0}=V_{\Lambda}$ in Section \ref{subsec:section3.4}). Since this map commutes with base change, it gives the desired morphism $f:\CN_{\Lambda} \rightarrow Y_{\Lambda}$.

\begin{theorem} \label{theorem 4.11}
	The morphism $f$ is an isomorphism.
\end{theorem}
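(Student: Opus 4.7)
The plan is to construct an explicit inverse morphism $g : Y_\Lambda \to \CN_\Lambda$ via the window-theoretic description of $\CN_\Lambda$-points. First, I would extend Proposition \ref{proposition4.10} from perfect fields to arbitrary $\BF$-algebras $R$, using the equivalence between $\pi$-divisible formal $O_F$-modules over $R$ and $W_O(R)$-windows developed in \cite{ACZ}. This identifies $\CN_\Lambda(R)$ with the set of pairs $(M,M')$ of $W_O(R)$-submodules of $\Lambda^+ \otimes_{W_O(\BF)} W_O(R)$ satisfying the relative analogues of conditions (1)--(4) of Proposition \ref{proposition4.10}: $F$- and $O_E$-stability, the polarization condition, the determinant condition (expressed via $M' = \Ker(M \to \Lambda^+_R/\CV \Lambda^+_R)$), and the sandwich $\Lambda^-_R \subset M \subset \Lambda^+_R$.

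Next, given a point of $Y_\Lambda(R)$ --- a flag of locally direct summands of $V_{\Lambda,R}$ of the shape prescribed in Section \ref{subsec:section3.4} --- I would lift it along the surjection $\Lambda^+_{0,R} \twoheadrightarrow V_{\Lambda,R}$ to lattices $A, B \subset \Lambda^+_{0,R}$ sandwiched between $\Lambda^-_{0,R}$ and $\Lambda^+_{0,R}$. Setting $M_0 := A$, defining $M_1$ uniquely via the polarization condition of Proposition \ref{proposition4.10}(2), and extracting $M'_0, M'_1$ from $B$ in accordance with the determinant condition (3) yields a candidate window. The three verifications needed are: (i) $F$-stability of $M$, which by Lemma \ref{lemma4.7} translates into the Deligne--Lusztig condition cutting out $Y_\Lambda$ inside $J_\Lambda/P_{I_\Lambda}$; (ii) the polarization condition, equivalent to the duality $\curlyvee$-relation built into the flag; and (iii) the determinant condition, encoded in the index relations defining $Y_\Lambda$. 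Functoriality in $R$ then produces the morphism $g$, and $f \circ g = \mathrm{id}_{Y_\Lambda}$, $g \circ f = \mathrm{id}_{\CN_\Lambda}$ are essentially tautologies from the constructions: $E_0(X)$ and $E_1(X)^{\perp'}$ recover the flag from $M$ (cf.\ Remark \ref{remark4.10}), and conversely $M$ is recovered from its image in $V_{\Lambda,R}$ by pullback along the sandwich $\Lambda^-_R \subset M$.

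The principal obstacle will be verification (i): showing in the relative setting that $F$-stability of $M$ is genuinely equivalent to the Deligne--Lusztig condition on the flag. Over perfect fields this is routine via relative Dieudonné theory (as in Section \ref{subsec:section2.2}), but in families one must carefully use the identity $\CF = \CV$ on $\Lambda^\pm$ noted in Section \ref{subsec:section3.1} to match the reduction of the window Frobenius modulo $I_O(R)$ with the duality-twisted Frobenius on $V_{\Lambda,R}$ appearing in the definition of $Y_\Lambda$. Once this dictionary is in place, the polarization and determinant verifications, together with functoriality in $R$, should follow by direct computation, and bijectivity on $R$-points for every $\BF$-algebra $R$ gives the isomorphism of functors, hence of representing schemes.
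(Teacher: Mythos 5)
Your proposal takes a genuinely different route from the paper's, and it has a real gap. The paper's proof (following \cite{VW}) is a soft argument: $f$ is proper (source proper, target separated), universally bijective (bijection on algebraically closed $k$-points via Lemma \ref{lemma4.2} and Proposition \ref{proposition4.8}), hence a finite universal homeomorphism; it is birational because Proposition \ref{proposition4.10} gives bijectivity on $k$-points for \emph{arbitrary} field extensions $k/\BF$; and $Y_\Lambda$ is normal, so Zariski's main theorem finishes. Crucially, this only ever requires a window-theoretic description of $\CN_\Lambda$-points over \emph{fields} — never over general $\BF$-algebras. Your proposal instead tries to construct an explicit inverse $g$, which forces you to extend Proposition \ref{proposition4.10} to all $\BF$-algebras $R$. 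That is a materially stronger input, and it is exactly the difficulty the soft argument was designed to sidestep.

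The gap is in the lifting step. You propose to lift a flag in $V_{\Lambda,R}$ along "the surjection $\Lambda^+_{0,R}\twoheadrightarrow V_{\Lambda,R}$" to $W_O(R)$-submodules sandwiched between $\Lambda^-_{0,R}$ and $\Lambda^+_{0,R}$. But for a perfect field $k$ one has $\Lambda^+_{0,k}/\Lambda^-_{0,k}\cong V_\Lambda\otimes_{\BF_{q^2}}k=V_{\Lambda,k}$ (because $W_O(k)/\pi W_O(k)\cong k$), whereas for a general $\BF$-algebra $R$ the natural map $\Lambda^+_{0,R}\to V_{\Lambda,R}$ has kernel strictly larger than $\Lambda^-_{0,R}$ — it also kills $I_O(R)\cdot\Lambda^+_{0,R}$, and $W_O(R)/\pi W_O(R)\not\cong R$. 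So a locally direct summand of $V_{\Lambda,R}$ has no canonical preimage that is a $W_O(R)$-submodule trapped between $\Lambda^-_{0,R}$ and $\Lambda^+_{0,R}$; the fiber of possible lifts is large, and the window axioms by themselves do not obviously single one out. This mismatch between $R$-linear data (what $Y_\Lambda$ classifies, via $D(X)$) and $W_O(R)$-linear data (what windows classify) is the actual obstruction to a direct inverse, not verification (i) as you suggest; verification (i) would be a routine computation once the lift is in hand. In the paper's argument the discrepancy never arises, because over fields the two notions agree. To make your approach rigorous you would need to either construct the lift intrinsically from the display (not just from its Hodge filtration), or reformulate $Y_\Lambda$ as a moduli of window-level filtrations — either of which amounts to reproving a chunk of the display-theoretic Grothendieck–Messing theory rather than a shortcut.
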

\begin{proof}
	The proof is the same as the proof of [VW] Theorem 4.8.
	Indeed, $f$ gives a bijection on $k$-valued points, where $k$ is algebraically closed field by Lemma \ref{lemma4.2}, Proposition \ref{proposition4.8}. Therefore, $f$ is universally bijective. Since $\CN_{\Lambda}$ is proper (by Lemma \ref{lemma4.1}) and $Y_\Lambda$ is separated, we have that $f$ is proper. Therefore, $f$ is a universal homeomorphism.
	Now, for an arbitrary field extension $k$ of $\BF$, we can work systematically using Proposition \ref{proposition4.10} to show that $f$ is a bijection on $k$-valued points, and hence $f$ is birational.
	Therefore $f$ is proper, finite, birational morphism, and $Y_{\Lambda}$ is normal (See \cite[Fact 2.1]{Gor1}). Now, by Zariski's main theorem, $f$ is an isomorphism.
\end{proof}

\subsection{Regularity of $\CN$}\label{subsec:section3.7}

In this subsection, we will prove that $\CN_{ O_{\breve{E}}}:=\CN^h_{E/F}(1,n-1)_{ O_{\breve{E}}}$ is regular, where $E=\BQ_{p^2}$. Therefore, in this subsection, $\pi=p$, $F=\BQ_p$, $E=\BQ_{p^2}$, but, we will use the general notation. See Proposition \ref{proposition5.12} for the general case. First, note that $\CN_{ O_{\breve{E}}}=\CN^0_{E/F}(1,n-1)_{ O_{\breve{E}}}$ is formally smooth over $\Spf O_{\breve{E}}$ (see \cite{VW}). This shows that $\CN^n_{E/F}(1,n-1)_{ O_{\breve{E}}}$ is formally smooth over $\Spf O_{\breve{E}}$, since $\CN^0 \simeq \CN^n$ (see Remark \ref{remark5.2}). Therefore, we can assume that $ 1 \leq h \leq n-1$. When $h=1$, the regularity of $\CN_{ O_{\breve{E}}}$ is proved in \cite[Theorem 5.1]{RSZ1}. We can use the same method to prove the regularity of $\CN_{O_{\breve{E}}}$, where $h\geq 2$. 
To prove this, we need the local model for $\CN$ as in \cite[Definition 3.27]{RZ} and \cite{PRS}. We will follow the definition in \cite{RSZ1}. Let $l(\cdot,\cdot)$ be a $E/F$-hermitian form on $E^n$ given by the matrix 	
\begin{displaymath}
\left(\begin{array}{cc} 
\pi I_h &\\
& I_{n-h}
\end{array}
\right)
\end{displaymath}

Fix an element $\delta \in O_E^{\times}$ such that $\delta^{*}=-\delta$. Let $(\cdot,\cdot)$ be the $F$-bilinear alternating form on $E^n$ defined by
\begin{equation*}
(x,y)=\frac{1}{2} Tr_{E/F}(\delta l(x,y)),\quad x,y\in E^n.
\end{equation*}
Let $\Lambda_0:=O_E^n$ and $\Lambda_1 :=\pi^{-1}O_E^h \oplus O_E^{n-h}$. Then $\Lambda_0$ is the dual lattice of $\Lambda_1$ with respect to $(\cdot,\cdot)$.
The local model $\CN^{loc}$ is the scheme over $O_E$ representing the functor which sends each $O_E$-scheme $S$ to the set of pairs $(\CM_0,\CM_1)$ satisfying the following conditions:

$\bullet$ For each $i=0,1$, $\CM_i$ is an $O_E \otimes_{O_F}O_S$-subsheaf of $\Lambda_i \otimes_{O_F}O_S$ which Zariski locally on $S$ is an $O_S$-direct summand of rank $n$;

$\bullet$ The natural maps $\Lambda_0 \otimes_{O_F}O_S \rightarrow \Lambda_1 \otimes_{O_F}O_S$ and $\Lambda_1 \otimes_{O_F}O_S \xrightarrow{(\pi^h, 1^{n-h})} \Lambda_0 \otimes_{O_F}O_S$ carry $\CM_0$ into $\CM_1$ and $\CM_1$ into $\CM_0$, respectively;

$\bullet$ $\CM_0^{\perp}=\CM_1$ with respect to the natural perfect pairing $(\Lambda_0 \otimes_{O_F}O_S) \times (\Lambda_1 \otimes_{O_F}O_S) \rightarrow O_S$ induced by $(\cdot,\cdot)$;

$\bullet$ It satisfies the determinant condition of signature $(n-1,1)$ 
\begin{equation*}
\Charpol(a\otimes 1 \vert \CM_i)=(T-a)^{n-1}(T-a^{*}) \in O_S[T]
\end{equation*}
for all $a\in O_E$, $i=0,1$.

As in \cite{RSZ1}, the base change $(\CN^{loc})_{O_{\breve{E}}}$ is the local model for $\CN_{O_{\breve{E}}}$. Therefore, we can use this to prove the following local property of $\CN_{ O_{\breve{E}}}$.
\begin{proposition}\label{proposition4.13}
	If $1 \leq h \leq n-1$, then the formal scheme $\CN_{ O_{\breve{E}}}$ has semistable reduction. In particular, $\CN_{ O_{\breve{E}}}$ is regular.
\end{proposition}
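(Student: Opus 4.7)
The plan is to apply the local model diagram and reduce the assertion to a direct analysis of $(\CN^{loc})_{O_{\breve{E}}}$. Since $\CN_{O_{\breve{E}}}$ and $(\CN^{loc})_{O_{\breve{E}}}$ are isomorphic in the étale topology, it suffices to show that $(\CN^{loc})_{O_{\breve{E}}}$ has semistable reduction, which then implies regularity of $\CN_{O_{\breve{E}}}$. The two boundary cases are already handled: $h = 1$ is in \cite[Theorem 5.1]{RSZ1}, and $h = n-1$ follows from the duality isomorphism $\CN^h \simeq \CN^{n-h}$. So the work lies in treating intermediate values of $h$ uniformly.

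First I would verify that the generic fiber $\CN^{loc}_E$ is smooth of dimension $n-1$: on the generic fiber the $O_E \otimes_{O_F} O_S$-module structure splits, the determinant condition picks out a hyperplane in one summand and the entire other summand, and the duality $\CM_0^\perp = \CM_1$ makes $\CM_1$ determined by $\CM_0$. This is exactly a Grassmannian factor.

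Next, for the special fiber, I would identify the ``worst points'' — the $\BF$-points of $(\CN^{loc})_{O_{\breve{E}}}$ where the two transition maps
\[
\bar\alpha : \Lambda_0 \otimes \BF \longrightarrow \Lambda_1 \otimes \BF, \qquad \bar\beta : \Lambda_1 \otimes \BF \xrightarrow{(\pi^h,\, 1^{n-h})} \Lambda_0 \otimes \BF
\]
degenerate maximally on $(\CM_0,\CM_1)$. Using the chain condition, the duality $\CM_0^\perp = \CM_1$, and the determinant condition of signature $(n-1,1)$, one computes that at such a worst point the pair $(\CM_0,\CM_1)$ is completely pinned down by a flag of subspaces in $\Lambda_0 / \pi \Lambda_1$ compatible with the fixed hermitian form. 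I would then choose an affine chart around such a point by writing $\CM_0$ as the graph of an $(n-1)\times 1$ matrix of deformation parameters $(x_1,\ldots,x_n)$ over an open subscheme of the Grassmannian.

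Plugging this chart into the duality condition eliminates the $\CM_1$-deformation parameters, and plugging into the determinant condition produces, modulo a regular sequence consisting of the free deformation parameters, a single equation of the form $x_i x_j \equiv \pi \pmod{\text{unit}}$. This gives an étale-local presentation
\[
\widehat{\mathcal O}_{(\CN^{loc})_{O_{\breve{E}}},\,x} \;\simeq\; O_{\breve{E}}[\![x_1,\ldots,x_n]\!]/(x_1 x_2 - \pi),
\]
which is regular and realizes the semistable model.

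The main obstacle is the explicit coordinate computation for intermediate $h$: writing down the worst points as flags in $\Lambda_0/\pi\Lambda_1$ respecting both the chain maps with ranks depending on $h$ and the hermitian pairing, and then verifying that after elimination only one quadratic equation survives. Once this is done, the smooth locus contains the generic fiber and the rest of the local model, and regularity and semistable reduction of $\CN_{O_{\breve{E}}}$ follow from the local model diagram.
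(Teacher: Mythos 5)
Your high-level plan (reduce to the local model via the local model diagram, then analyze $\CN^{loc}$ directly) is the right starting point, and matches the paper's first step, which invokes \cite[Proposition 3.33]{RZ}. But your proposal then diverges and contains a real gap: the step ``plugging into the determinant condition produces \dots a single equation $x_i x_j \equiv \pi$'' is asserted, not proved, and you yourself flag it as ``the main obstacle.'' A proof that ends by saying the main obstacle is still an obstacle is not complete.

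More importantly, you miss the observation that makes this computation unnecessary. You use the fact that $O_E \otimes_{O_F} O_S$ splits \emph{only on the generic fiber}, but since $E/F$ is unramified this splitting $O_E \otimes_{O_F} O_S \xrightarrow{\sim} O_S \times O_S$, $a\otimes b \mapsto (ab, a^*b)$, holds integrally for any $O_E$-scheme $S$. This decomposes $\CM_i = \CM_i' \oplus \CM_i''$ for each $i$, the determinant condition forces $\CM_i'$ to have rank $n-1$, and the duality $\CM_0^{\perp} = \CM_1$ shows that $\CM_0'$ and $\CM_1'$ determine $\CM_1''$ and $\CM_0''$. Hence $(\CM_0,\CM_1)\mapsto (\CM_0',\CM_1')$ identifies $\CN^{loc}$ with the \emph{standard $GL_n$ local model} of G\"ortz for $\mu = (1^{(n-1)},0)$ and the two-term lattice chain $(\Lambda_0\otimes O_E)' \subset (\Lambda_1\otimes O_E)'$. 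The semistable reduction of that standard model is a known theorem (\cite[4.4.5]{Gor2}), so no new chart-by-chart computation is needed. Without noticing the integral splitting, your route would require re-deriving G\"ortz's theorem in this case --- possible, but you have not done it, and the hermitian-flag bookkeeping at the worst points for intermediate $h$ is precisely what the reduction to $GL_n$ is designed to avoid.
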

\begin{proof}
By \cite[Proposition 3.33]{RZ}, it suffices to show that local model $\CN^{loc}$ has semistable reduction. Let $S$ be a $O_E$-scheme. Consider the decomposition
	\begin{equation*}
	O_E \otimes_{O_F} O_S \overset{~}{\rightarrow} O_S \times O_S
	\end{equation*}
	\begin{equation*}
	a \otimes b \longmapsto (ab, a^*b).
	\end{equation*}

For any $(\CM_0,\CM_1) \in \CN^{loc}(S)$, the above decomposition induces decompositions
	\begin{equation*}
	\CM_i=\CM_i' \oplus \CM_i'' \subset \Lambda_i \otimes_{O_F}O_S=(\Lambda_i \otimes_{O_F}O_S)'\oplus(\Lambda_i \otimes_{O_F}O_S)'', i=0,1.
	\end{equation*}
By the determinant condition, $\CM_i' \subset (\Lambda_i \otimes_{O_F}O_S)'$ is $O_S$-locally direct summand of rank $n-1$. Since $\CM_0=\CM_1^{\perp}$, we have that $\CM_0'$ and $\CM_1'$ determine $\CM_1''$ and $\CM_0''$, respectively. Therefore, the map $(\CM_0,\CM_1) \mapsto (\CM_0',\CM_1')$ is an isomorphism from $\CN^{loc}$ to the standard local model over $O_E$ in \cite{Gor2} for the group $GL_n$, the cocharacter $\mu=(1^{(n-1)},0)$, and the periodic lattice chain determined by $(\Lambda_0 \otimes_{O_F}O_E)' \subset (\Lambda_1 \otimes_{O_F}O_E)'.$ By \cite[4.4.5]{Gor2} (in case $k=h, r=n-1$ or $k=h, r=1$, since two cases are isomorphic by Lemma 4.8 in loc. cit), this standard local model has semistable reduction.
\end{proof}

\bigskip

\subsection{The global structure of $\CN$: the Bruhat-Tits stratification}\label{subsec:subsection3.8}

In this section, we will study the global structure of $\CN=\CN^h_{E/F}(1,n-1)$. Let $\CN_{red}$ be the underlying reduced subscheme of $\CN$. We define

\begin{displaymath} 
t_{\text{max}}=\left\{
\begin{array}{cc} 
n \quad&\text{if } (n-h) \text{ is odd};\\
n-1  \quad&\text{if } (n-h) \text{ is even},
\end{array}
\right.
\end{displaymath}

\begin{displaymath} 
t_{\text{min}}=\left\{
\begin{array}{cc} 
0 \quad&\text{if } h \text{ is odd};\\
1 \quad&\text{if } h \text{ is even}.
\end{array}
\right.
\end{displaymath}

Let $\CA$ be the set of lattices in $\CL_0$ of type $t_{\text{min}}$, and $\CB$ the set of lattices in $\CL_0$ of type $t_{\text{max}}$. By Remark \ref{remark3.14}, we have a bijective map from $\CL_0^+ \sqcup \CL_0^-$ to $\CL_0^+ \sqcup \CL_1^+$. This map sends an element $\Lambda \in \CA$, to $\pi \Lambda^{\vee}$ which is an element of $\CL_1^+$ of type $n-t_{\text{min}}$. We have the following theorem.

\begin{theorem}\label{theorem3.14}
	The map sending $\Lambda \in \CA$ to $\CN_{\pi \Lambda^{\vee}}$ and $\Lambda \in \CB$ to $\CN_{\Lambda}$ is a bijective map from $\CA \cup \CB$ to the set of irreducible components of $\CN_{red}$. For $\Lambda \in \CA$, $\CN_{ \pi \Lambda^{\vee}}$ is an irreducible component of dimension 
	\begin{equation*} \dfrac{h-1-t_{\text{min}}}{2}+(n-h).
	\end{equation*}
	For $\Lambda \in \CB$, $\CN_{\Lambda}$ is an irreducible component of dimension
	\begin{equation*}
	\dfrac{t_{\text{max}}-1-h}{2}+h.
	\end{equation*}
\end{theorem}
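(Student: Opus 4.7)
The strategy is to combine the Deligne--Lusztig description $\CN_{\Lambda} \simeq Y_{\Lambda}$ from Theorem \ref{theorem 4.11} with the monotonicity of strata under lattice inclusion (Propositions \ref{proposition3.10}--\ref{proposition3.11}) and an extension argument that embeds every admissible vertex lattice into one of maximal type. First I compute dimensions: by Proposition \ref{proposition4.9} the dimension of $\CN_{\Lambda}$ is $\tfrac{t(\Lambda)-1-h}{2}+h$ for $\Lambda \in \CL_0^+$ and $\tfrac{t(\Lambda)-1-(n-h)}{2}+(n-h)$ for $\Lambda \in \CL_1^+$, so by the parity and range constraints of Propositions \ref{proposition3.6} and \ref{proposition3.9} the dimension is maximized exactly when $t(\Lambda)=t_{\text{max}}$ in $\CL_0^+$, giving the value $\tfrac{t_{\text{max}}-1-h}{2}+h$, or when $t(\Lambda)=n-t_{\text{min}}$ in $\CL_1^+$, giving $\tfrac{h-1-t_{\text{min}}}{2}+(n-h)$. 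Under the identification of Remark \ref{remark3.14}, the first family corresponds to $\CB$ and the second to $\CA$.

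Next I show these maximal strata really are the irreducible components. Each such $\CN_{\Lambda}$ is a closed, projective, smooth, geometrically irreducible subscheme of $\CN_{red}$ by Lemma \ref{lemma4.1} and Proposition \ref{proposition4.9}. By Propositions \ref{proposition3.10} and \ref{proposition3.11}, a strict lattice inclusion $\Lambda \subsetneq \Lambda'$ within the same $\CL_i^+$ yields $\CN_{\Lambda} \subsetneq \CN_{\Lambda'}$ (strictness by the dimension formula), so any $\CN_{\Lambda}$ with $\Lambda$ of non-maximal type is properly contained in a larger stratum and cannot be an irreducible component. For distinctness among maximal strata, if $\Lambda \neq \Lambda'$ both lie in $\CL_0^+$ with $t(\Lambda)=t(\Lambda')=t_{\text{max}}$, then $\Lambda \cap \Lambda'$---if in $\CL_0^+$ at all---has strictly smaller type, so $\CN_{\Lambda \cap \Lambda'}$ is strictly smaller by Proposition \ref{proposition3.10}; the same argument in $\CL_1^+$ uses Proposition \ref{proposition3.11}, and a stratum from $\CL_0^+$ cannot coincide with one from $\CL_1^+$ since Proposition \ref{proposition3.13} places their intersection strictly inside each.

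The main obstacle, carrying the geometric content, is the coverage claim that every $k$-point of $\CN_{red}$ lies in some maximal $\CN_{\Lambda}$. By Proposition \ref{proposition3.7}, every $k$-point belongs to some $S_{\Lambda}$ or $R_{\Lambda}$ for $\Lambda \in \CL_0^+ \cup \CL_1^+$, so it suffices to embed each such $\Lambda$ in one of maximal type. For $\Lambda \in \CL_0^+$, the quotient $\Lambda^{\vee}/\Lambda$ inherits a nondegenerate $\BF_{q^2}$-hermitian form from the form $\{\cdot,\cdot\}$ of \S\ref{subsec:section2.3} (after the appropriate normalization), and any isotropic subspace $U \subset \Lambda^{\vee}/\Lambda$ of dimension $(t_{\text{max}}-t(\Lambda))/2$ lifts to a lattice $\Lambda' \subset \Lambda^{\vee}$ with $\Lambda \subset \Lambda' \subset (\Lambda')^{\vee}$ and $t(\Lambda')=t_{\text{max}}$; such subspaces exist over $\BF_{q^2}$ by the classical classification of hermitian forms over finite fields. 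The case $\Lambda \in \CL_1^+$ is parallel with $\pi\Lambda^{\vee}/\Lambda$ in place of $\Lambda^{\vee}/\Lambda$. The delicate bookkeeping is in tracking the parity distinction between $t_{\text{max}}=n$ and $t_{\text{max}}=n-1$ (and analogously for $t_{\text{min}}$) and verifying that the induced hermitian form is nondegenerate, both of which require a careful check of $\{\cdot,\cdot\}$ at the level of vertex-lattice quotients but pose no essential difficulty.
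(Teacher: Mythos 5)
Your proof is correct and follows essentially the same route as the paper's, which simply cites Propositions \ref{proposition3.10}, \ref{proposition3.11}, Lemma \ref{lemma4.2}, and Proposition \ref{proposition4.9} and declares the statement ``clear.'' Your write-up fills in the one step the paper leaves implicit, namely that every vertex lattice in $\CL_0^+$ (resp. $\CL_1^+$) of non-maximal type embeds in one of type $t_{\text{max}}$ (resp. $n - t_{\text{min}}$), which you handle via a totally isotropic extension in $\Lambda^{\vee}/\Lambda$ together with a Witt-index count over $\BF_{q^2}$; this is the right argument, and your parity bookkeeping (comparing $(t_{\text{max}}-t(\Lambda))/2$ against $\lfloor (n-t(\Lambda))/2 \rfloor$ using Propositions \ref{proposition3.6} and \ref{proposition3.9}) is accurate.
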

\begin{proof}
	This is clear from Proposition \ref{proposition3.10}, Proposition \ref{proposition3.11}, Lemma \ref{lemma4.2}, Proposition \ref{proposition4.9}.
\end{proof}

Let $\tilde{J}=SU(N_0,\lbrace \cdot, \cdot \rbrace)$ (recall that $N=N_0 \oplus N_1$ is the rational relative Dieudonne module of $\BX$ and $\lbrace \cdot, \cdot \rbrace$ is a form defined in Section \ref{subsec:section2.3}). This is an algebraic group over $F$ . We denote by $\CB(\tilde{J},F)$ the abstract simplicial complex of the Bruhat-Tits building of $\tilde{J}$. By \cite[Theorem 3.6]{Vol} and \cite[Section 4.1]{VW}, we can identify $\CL_0$ with the set of vertices of $\CB(\tilde{J},F)$. 
Proposition \ref{proposition3.10}, Proposition \ref{proposition3.11}, Lemma \ref{lemma4.2} show that the intersection behavior of $\CN_{\Lambda}$ ($\Lambda \in \CL_0^+)$, $\CN_{ \pi \Lambda^{\vee}}$ ($\Lambda \in \CL_0^-)$ is closely related to the Bruhat-Tits building structure of $\CB(\tilde{J},F)$. For example, let
\begin{equation*}
\Lambda_{\text{min}} \overset{1}{\subset} \dots \overset{1}{\subset} \Lambda \overset{1}{\subset} \Lambda' \overset{1}{\subset} \dots \overset{1}{\subset} \Lambda_{\text{max}},
\end{equation*}
be a chain in $\CL_0$, where $\Lambda_{\text{min}}, \Lambda, \Lambda', \Lambda_{\text{max}}$ are of type $t_{\text{min}}, h-1, h+1, t_{\text{max}}$, respectively. Then we have
\begin{equation*}
\CN_{\pi \Lambda^{\vee}} \subset \dots \subset \CN_{\pi \Lambda_{\text{min}}^{\vee}},
\end{equation*}
\begin{equation*}
\CN_{\Lambda'} \subset \dots \subset \CN_{\Lambda_{\text{max}}}.
\end{equation*}

By the above Theorem \ref{theorem3.14}, $\CN_{\pi \Lambda_{\text{min}}^{\vee}}, \CN_{\Lambda_{\text{max}}}$ are irreducible components of $\CN_{red}$.
For an algebraically closed field $k$ containing $\BF$, we have
\begin{equation*}
\CN_{ \pi \Lambda^{\vee}}(k) \cap \CN_{\Lambda'}(k)=\lbrace(\pi\Lambda_k^{\vee}, \Lambda'_k)\rbrace \neq \emptyset.
\end{equation*}

Also, we have the following proposition.

\begin{proposition} \label{proposition3.15}
	Let $\Lambda_0, \Lambda_0' \in \CL_0^+$, $\Lambda_1, \Lambda_1' \in \CL_1^+$.
	\begin{enumerate}
		\item The following assertions are equivalent.
		\begin{enumerate}
			\item $\CN_{\Lambda_0} \cap \CN_{\Lambda_0'} \neq \emptyset$.
			\item $\Lambda_0 \cap \Lambda_0' \in \CL_0^+$. 
		\end{enumerate}
		In this case, we have
		\begin{equation*}
		\CN_{\Lambda_0} \cap \CN_{\Lambda_0'}=\CN_{\Lambda_0 \cap \Lambda_0'}.
		\end{equation*}
		\item The following assertions are equivalent.
		\begin{enumerate}
			\item $\CN_{\Lambda_1} \cap \CN_{\Lambda_1'} \neq \emptyset$.
			\item $\Lambda_1 \cap \Lambda_1' \in \CL_1^+$.
		\end{enumerate}
		In this case, we have
		\begin{equation*}
		\CN_{\Lambda_1} \cap \CN_{\Lambda_1'}=\CN_{\Lambda_1 \cap \Lambda_1'}.
		\end{equation*}
		\item The following assertions are equivalent.
		\begin{enumerate}
			\item $\CN_{\Lambda_0} \cap \CN_{\Lambda_1} \neq \emptyset$.
			\item $\pi \Lambda_1^{\vee} \subset \Lambda_0$.
		\end{enumerate}
		\item For an algebraically closed field $k$ containing $\BF$, we have
		\begin{equation*}
		\CN(k)= \bigcup_{\Lambda \in \CL_0^+ \cup \CL_1^+} \CN_{\Lambda}(k).
		\end{equation*}
	\end{enumerate}
\end{proposition}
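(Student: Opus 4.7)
The plan is to reduce Proposition \ref{proposition3.15} to the $k$-point statements already established in Propositions \ref{proposition3.10}, \ref{proposition3.11}, and \ref{proposition3.13}, combined with Lemma \ref{lemma4.2}, and for part (4) also with Propositions \ref{proposition3.6}, \ref{proposition3.7}, and \ref{proposition3.9}.

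First I would establish monotonicity: for $\Lambda \subset \Lambda'$ both in $\CL_i^+$ ($i=0,1$), there is a scheme-theoretic inclusion $\CN_\Lambda \hookrightarrow \CN_{\Lambda'}$. From the definitions in Section \ref{subsec:section3.1}, $\Lambda \subset \Lambda'$ implies $\Lambda^+ \subset (\Lambda')^+$, while taking duals gives $(\Lambda')^- \subset \Lambda^-$. These lattice inclusions lift to honest isogenies $\iota^+\colon X_{\Lambda^+} \to X_{(\Lambda')^+}$ and $\iota^-\colon X_{(\Lambda')^-} \to X_{\Lambda^-}$ compatible with the framings $\rho_{\Lambda^\pm},\rho_{(\Lambda')^\pm}$. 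Then for any $(X,i_X,\lambda_X,\rho_X) \in \CN_\Lambda(S)$, the isogenies $\rho_{X,\Lambda^\pm}$ compose with $\iota^\pm$ to produce the isogenies $\rho_{X,(\Lambda')^\pm}$ required for membership in $\CN_{\Lambda'}$.

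For parts (1) and (2), applying monotonicity to $\Lambda_0 \cap \Lambda_0'$ (resp.\ $\Lambda_1 \cap \Lambda_1'$) gives one inclusion $\CN_{\Lambda \cap \Lambda'} \subset \CN_\Lambda \cap \CN_{\Lambda'}$. For the reverse inclusion, Lemma \ref{lemma4.2} identifies $\CN_{\Lambda_0}(k) = S_{\Lambda_0}(k)$ (and analogously for the other cases), and Proposition \ref{proposition3.10} (resp.\ \ref{proposition3.11}) yields both the set-theoretic equality $(\CN_\Lambda \cap \CN_{\Lambda'})(k) = \CN_{\Lambda \cap \Lambda'}(k)$ and the non-emptiness criterion. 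Since $\CN_{\Lambda \cap \Lambda'}$ is reduced (it is a Deligne--Lusztig variety by Theorem \ref{theorem 4.11}), equality on $k$-points promotes to equality of reduced closed subschemes, which is what the proposition asserts.

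Part (3) is immediate from Lemma \ref{lemma4.2} combined with Proposition \ref{proposition3.13}. For part (4), Proposition \ref{proposition3.7} gives the decomposition $\CN(k) = \bigcup_{\Lambda \in \CL_0} S_\Lambda(k) \cup \bigcup_{\Lambda \in \CL_1} R_\Lambda(k)$, and Propositions \ref{proposition3.6} and \ref{proposition3.9} force $S_\Lambda(k)=\emptyset$ unless $\Lambda \in \CL_0^+$ and $R_\Lambda(k)=\emptyset$ unless $\Lambda \in \CL_1^+$; Lemma \ref{lemma4.2} then rewrites each surviving term as $\CN_\Lambda(k)$. The main obstacle is the scheme-theoretic monotonicity step: one has to verify that the isogeny conditions defining $\CN_\Lambda$ really do propagate along $\iota^\pm$ at the level of the moduli functor, so that one has the inclusion of closed subschemes rather than just of $k$-points.
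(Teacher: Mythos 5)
Your proposal is essentially the same as the paper's proof, which simply cites Propositions \ref{proposition3.10}, \ref{proposition3.11}, \ref{proposition3.13} for parts (1)--(3) and Proposition \ref{proposition3.7} together with Lemma \ref{lemma4.2} for part (4). Your added observation that the inclusion $\CN_{\Lambda\cap\Lambda'}\subset\CN_\Lambda\cap\CN_{\Lambda'}$ holds functorially (via the isogenies $\iota^\pm$ induced by $\Lambda^\pm\subset(\Lambda')^\pm$, respectively $(\Lambda')^\mp\subset\Lambda^\mp$) is a correct and worthwhile elaboration; note only that the $k$-point comparison by itself establishes equality of the underlying reduced closed subschemes, which is the reading the paper implicitly adopts, since it offers no argument that the scheme-theoretic intersection $\CN_\Lambda\cap\CN_{\Lambda'}$ is reduced.
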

\begin{proof}
	(1), (2), (3) are clear from Proposition \ref{proposition3.10}, Proposition \ref{proposition3.11}, Proposition \ref{proposition3.13}. (4) is clear from Proposition \ref{proposition3.7}, Lemma \ref{lemma4.2}.
\end{proof}

For $i=0,1$ and $\Lambda \in \CL_i^+$, we define a set
\begin{equation*}
\CL^+_{\Lambda}:=\lbrace \Lambda' \in \CL_i^+ \vert \Lambda' \subsetneq \Lambda \rbrace,
\end{equation*}
and let
\begin{equation*}
\CN_{\Lambda}^0:=\CN_{\Lambda} \backslash \bigcup_{\Lambda' \in \CL^+_{\Lambda}} \CN_{\Lambda'}.
\end{equation*}

We have the following analogue of \cite[Proposition 5.3]{VW}.
\begin{proposition}\label{proposition3.16}
The subset $\CN^0_{\Lambda}$ is open and dense in $\CN_{\Lambda}$.
\end{proposition}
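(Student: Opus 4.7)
The plan is to reduce the statement to two claims: (a) the bad locus $\bigcup_{\Lambda' \in \CL^+_\Lambda} \CN_{\Lambda'}$ is a finite union of closed subschemes of $\CN_\Lambda$, hence closed, and (b) it has strictly smaller dimension than $\CN_\Lambda$. Combined with the irreducibility of $\CN_\Lambda$ from Proposition \ref{proposition4.9}, density will follow immediately since any proper closed subset of an irreducible scheme is nowhere dense.

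For claim (a), I would first observe that $\CL^+_\Lambda$ is finite. Say $\Lambda \in \CL_0^+$, so $\pi\Lambda^\vee \subset \Lambda$. If $\Lambda' \subsetneq \Lambda$ with $\Lambda' \in \CL_0^+$, then $(\Lambda')^\vee \supsetneq \Lambda^\vee$ gives $\pi\Lambda^\vee \subsetneq \pi(\Lambda')^\vee \subset \Lambda' \subset \Lambda$, so every $\Lambda'$ sits in the finite-length $O_{\breve F}$-module $\Lambda/\pi\Lambda^\vee$. The same argument with $\pi^2\Lambda^\vee$ handles the case $\Lambda \in \CL_1^+$. Next, for each $\Lambda' \in \CL^+_\Lambda$, Proposition \ref{proposition3.15} gives $\CN_\Lambda \cap \CN_{\Lambda'} = \CN_{\Lambda \cap \Lambda'} = \CN_{\Lambda'}$, so $\CN_{\Lambda'}$ is a closed subscheme of $\CN_\Lambda$. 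Hence $\bigcup_{\Lambda' \in \CL^+_\Lambda} \CN_{\Lambda'}$ is a finite union of closed subschemes, and $\CN^0_\Lambda$ is open in $\CN_\Lambda$.

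For claim (b), I would establish that $t(\Lambda') < t(\Lambda)$ for every strict inclusion $\Lambda' \subsetneq \Lambda$ in the same class. Assuming $\Lambda \in \CL_0^+$, the chain $\pi\Lambda^\vee \subsetneq \pi(\Lambda')^\vee \subset \Lambda' \subsetneq \Lambda$ together with $\dim_{\BF_{q^2}}(\pi(\Lambda')^\vee/\pi\Lambda^\vee) = \dim_{\BF_{q^2}}((\Lambda')^\vee/\Lambda^\vee) = \dim_{\BF_{q^2}}(\Lambda/\Lambda')$ yields
\begin{equation*}
t(\Lambda) \;=\; 2\,\dim_{\BF_{q^2}}(\Lambda/\Lambda') + t(\Lambda'),
\end{equation*}
so $t(\Lambda') \leq t(\Lambda) - 2$, and an identical computation works for $\CL_1^+$ using $\pi^2\Lambda^\vee$ in place of $\pi\Lambda^\vee$. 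Inserting this into the dimension formula of Proposition \ref{proposition4.9} gives $\dim \CN_{\Lambda'} < \dim \CN_\Lambda$. Since $\CN_\Lambda$ is irreducible, any finite union of proper closed subsets has dimension strictly smaller than $\dim \CN_\Lambda$, hence cannot exhaust $\CN_\Lambda$. Therefore $\CN^0_\Lambda$ is nonempty, and by irreducibility it is dense.

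There is no real obstacle here; the only mildly delicate point is the finiteness of $\CL^+_\Lambda$, which rests on the elementary observation that sublattices of $\Lambda$ containing $\pi\Lambda^\vee$ (respectively $\pi^2\Lambda^\vee$) correspond bijectively to subspaces of the finite-dimensional residue space $\Lambda/\pi\Lambda^\vee$ (respectively $\Lambda/\pi^2\Lambda^\vee$). Everything else is a clean packaging of Propositions \ref{proposition3.15} and \ref{proposition4.9}.
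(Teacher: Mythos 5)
Your proof is correct and follows essentially the same route as the paper, which simply defers to \cite[Proposition~5.3]{VW}; that argument likewise combines finiteness of $\CL^+_\Lambda$, irreducibility of $\CN_\Lambda$ (via the Deligne--Lusztig isomorphism of Theorem~\ref{theorem 4.11} and Proposition~\ref{proposition4.9}), and the dimension drop $t(\Lambda')<t(\Lambda)$ for $\Lambda'\subsetneq\Lambda$. The only cosmetic slip is calling $\Lambda/\pi\Lambda^\vee$ an $O_{\breve F}$-module---it is a finite-dimensional $\BF_{q^2}$-vector space since $\Lambda$ is an $O_E$-lattice killed modulo $\pi$---but this does not affect the finiteness conclusion.
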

\begin{proof}
	The proof is the same as the proof of \cite[Proposition 5.3]{VW}.
\end{proof}

By definition, we have a disjoint union of locally closed subschemes
\begin{equation*}
\CN_{\Lambda}=\CN_{\Lambda}^0 \sqcup \bigsqcup_{\Lambda' \in \CL^+_{\Lambda}} \CN^0_{\Lambda'}.
\end{equation*}
This gives a locally finite stratification $(\CN^0_{\Lambda})_{\Lambda \in \CL_i^+,i=0,1}$ of $\CN$.

\begin{definition}\label{definition3.17}
	The stratification $(\CN^0_{\Lambda})_{\Lambda \in \CL_i^+,i=0,1}$ of $\CN$ is called the \textsl{Bruhat-Tits stratification}. The closed subschemes $\CN_{\Lambda}$ are called the \textsl{closed Bruhat-Tits strata}.
\end{definition}

\bigskip
\subsection{The moduli space $\CN_{E/K}^h(r,n-r)$}\label{subsec:section3.9}

Let $K$ be a finite extension of $\BQ_p$ contained in $F$, with ring of integers $O_K$, and residue field $\BF_s$. We fix a uniformizer $\omega$. In this subsection, we will define the moduli space $\CN_{E/K}^h(r,n-r)$. For this, we imitate the construction in \cite{Mih}. We will use the notation in Section \ref{sec:section2}. Also, we will use the theory of $O$-display in \cite{ACZ}. 

Let $F^u$ (resp. $E^u$) be the maximal unramified extension of $K$ in $F$ (resp. $E$). Let $[F:K]=ef$, where $f=[F^u:K]$ is the inertia degree, and $e=[F:F^u]$ is the ramification index. We denote by $\breve{K}$ the completion of a maximal unramified extension of $K$, and $^F:\breve{K} \rightarrow \breve{K}$ the Frobenius automorphism. We choose a decomposition $\Psi:=\Hom_K(E^u,\breve{K})=\Psi_0 \sqcup \Psi_1$ such that $(\Psi_0)^{*}=\Psi_1$, where $^{*}$ is the nontrivial Galois automorphism of $E$ over $F$. We fix an element $\psi_0 \in \Psi_0$, and $\breve{E}:=E \otimes_{E^u,\psi_0} \breve{K}$.

\begin{definition}(\cite[Definition 2.7]{Mih})
	For $a \in E$, we define the following polynomials,
	\begin{equation*}
		P^{E/K}_{(0,1)}(a;t)=\prod_{\psi \in \Psi_1}\psi(\Charpol_{E/E^u}(a;t)) \in E^u[t];
	\end{equation*}
		\begin{equation*}
	P^{E/K}_{(1,0)}(a;t)= P^{E/K}_{(0,1)}(a;t)(t-a)(t-a^{*})^{-1} \in E[t];
	\end{equation*}
		\begin{equation*}
	P^{E/K}_{(r,n-r)}(a;t)=(P^{E/K}_{(1,0)}(a;t))^r(P^{E/K}_{(0,1)}(a;t))^{n-r} \in E[t].
	\end{equation*}
\end{definition}
\bigskip

\begin{definition} (cf. \cite[Definition 3.1]{Mih})
	Let $S$ be a scheme over $\Spf{O_E}$. A \textit{(supersingular) hermitian} $O_E \text{-} O_K \text{-} h$\textit{-module} over $S$ is a triple $(X,i_X,\lambda_X)$, where $X/S$ is a supersingular strict formal $O_K$-module, $i_X$ is an $O_E$-action on X, and $\lambda_X:X \rightarrow X^{\vee}$ is a polarization such that its Rosati involution induces the involution $^{*}$ on $O_E$. Also, $\Ker{\lambda_X} \subset X[\pi]$ and the order of $\Ker{\lambda_X}$ is $s^{2fh}=q^{2h}$.
	
	An isomorphism (resp. quasi-isogeny) of two hermitian $O_E \text{-} O_K \text{-} h$ modules $(X,i_X,\lambda_X)$ and $(Y, i_{Y},\lambda_Y)$ is an $O_E$-linear isomorphism (resp. quasi-isogeny) $\alpha :X \rightarrow Y$ of the underlying strict formal $O_K$-modules and $\alpha^{\vee} \circ \lambda_Y \circ \alpha$ differs locally on $S$ from $\lambda_X$ by a scalar in $O_K^{\times}$.

	We say that a hermitian $O_E \text{-} O_K \text{-} h$-module $(X,i_X,\lambda_X)$ is of \textit{rank} $n$ if the $K$-height of $X$ is $n[E:K]$.\\
\end{definition}	
	
	Let $X$ be a hermitian $O_E \text{-} O_K \text{-} h$\textit{-module} over a $\Spf{O_E}$-scheme $S$. Then by $O_E$-action, we have the grading
	\begin{equation*}
	\Lie(X)=\bigoplus_{\psi \in \Psi} \Lie_{\psi}(X).
	\end{equation*}
	Here $\Lie_{\psi}(X)$ is the direct summand on which $O_{E^u}$ acts via $\psi$.
	We define the following determinant condition.
	\begin{definition}(cf. \cite[Definition 2.8]{Mih})
	Let $S$ be a scheme over $\Spf{O_E}$. A hermitian $O_E \text{-} O_K \text{-} h$-module $(X,i_X,\lambda_X)$ of rank $n$ over $S$ is of \textit{signature} $(r,n-r)$ if for all $a \in O_E$,
	\begin{equation}\label{eq5.01}
	\Charpol(i_X(a) \vert \Lie X)= P^{E/K}_{(r,n-r)}(a;t),
	\end{equation}
	\begin{equation}\label{eq5.02}
	(i_X(a)-a) \vert_{\Lie_{\psi_0}(X)}=0.
	\end{equation}
	
	Here, we view $P^{E/K}_{(r,n-r)}(a;t)$ as an element of $O_S[t]$ via the structure morphism. The second equation means that $O_E$ acts on $\Lie_{\psi_0}(X)$ via the structure morphism. Note that \eqref{eq5.01} implies \eqref{eq5.02} if $E$ is unramified over $\BQ_p$.
	\end{definition}

\bigskip

Let $(\BX,i_{\BX},\lambda_{\BX})$ be a hermitian $O_E \text{-} O_K \text{-} h$-module of signature $(r,n-r)$ over $\BF_{q^2}$. Let $\CN_{E/K}^h(r,n-r)$ be the set-valued functor on (Nilp) which sends a scheme $S \in$(Nilp) to the set of isomorphism classes of tuples $(X,i_X,\lambda_X,\rho_X)$. Here $(X,i_X,\lambda_X)$ is a hermitian $O_E \text{-} O_K \text{-} h$-module of signature $(r,n-r)$ over $S$ and $\rho_X$ is a $O_E$-linear quasi-isogeny
\begin{equation*}
\rho_X: X\times_S \overline{S} \rightarrow \BX \times_{\BF^{q^2}}\overline{S}
\end{equation*}
of height 0.

Furthermore, we require that the following diagram commutes up to a constant in $O_K^{\times}$,
\begin{center}
	\begin{tikzcd}
		X_{\overline{S}} \arrow{r}{\lambda_{X_{\overline{s}}}}
		\arrow{d}{\rho_X}
		&X^{\vee}_{\overline{S}} \\
		\mathbb{X}_{\overline{S}}
		\arrow{r}{\lambda_{\BX_{\overline{S}}}}  &\mathbb{X}_{\overline{S}}^{\vee}
		\arrow{u}{\rho_X^{\vee}}.
	\end{tikzcd}
\end{center}

Two quadruples $(X,i_X,\lambda_X,\rho_X)$ and $(Y,i_Y,\lambda_Y,\rho_Y)$ are isomorphic if there exists an $O_E$-linear isomorphism $\alpha:X \rightarrow Y$ with $\rho_Y \circ (\alpha \times_S \overline{S})=\rho_X$ and $\alpha^{\vee} \circ \lambda_Y \circ \alpha$ differs locally on $S$ from $\lambda_X$ by a scalar in $O_K^{\times}$.

The functor $\CN_{E/K}^h(r,n-r) \otimes O_{\breve{E}}$ is representable by a formal scheme which is locally formally of finite type over $\Spf{O_{\breve{E}}}$ (See \cite{Mih}).

\begin{remark}\label{remark5.4}
Let us fix a hermitian $O_E \text{-} \BZ_p \text{-} h$-module $(\BX,i_{\BX},\lambda_{\BX})$ of signature $(r,n-r)$ over $\BF_{q^2}$ such that its rational Dieudonne module $(N,\CF)$ generated by elements $\eta \in N$ satisfying $\CF^{2f}\eta=p^f \eta$, where $f$ is a inertia degree of $F/\BQ_p$. Such a triple exists by \cite[Lemma 2.10]{Mih} with slight modification of the polarization and the base field. This is decent in the sense of \cite[Definition 2.13]{RZ}, and hence we can use \cite[Theorem 2.16]{RZ}. Therefore, if we fix such a triple, then the functor $\CN_{E/\BQ_p}^h(r,n-r)$ is representable by a formal scheme which is locally formally of finite type over $\Spf O_E$.
\end{remark}

\begin{remark}\label{remark5.6}
	One can see that there is a unique hermitian $O_E \text{-} \BZ_p \text{-} h$-module $(\BX,i_{\BX},\lambda_{\BX})$ of signature $(r,n-r)$ over $k$ up to quasi-isogeny, where $k$ is an algebraic closure of $\BF_{q^2}$. This can be proved by using \cite[Proposition 2.5]{Mih}, \cite[Lemma 2.10]{Mih} with slight modification of the polarization.
\end{remark}

\begin{remark} \label{remark5.5}
	The definition of $\CN_{E/F}^h(r,n-r)$ in Section \ref{sec:section2} coincides with the definition in this section.
\end{remark}

\begin{definition}(cf. \cite[Definition 4.2]{Mih})
	We denote by $O_E \text{-} O_K \text{-} h$-Herm the stack of hermitian $O_E \text{-} O_K \text{-} h$-modules $(X,i_X,\lambda_X)$ over $\Sch / \Spf O_E$ such that locally for Zariski topology, it is of signature $(r,n-r)$ for some $r$. The morphisms in this category are the $O_E$-linear morphisms of $p$-divisible groups.
\end{definition}

Now, let $S=\Spec{R}$ be an affine scheme over $\Spf{O_E}$ and $(X,i_X,\lambda_X)$ be an hermitian $O_E \text{-} O_K \text{-} h$-module of signature $(r,n-r)$ over $S$. Let $(P,Q,F,F_1)$ be the $O_K$-display (i.e., $O_K$-window over $W_{O_K,R}$) of $(X,i_X,\lambda_X)$. We denote by $\langle \cdot, \cdot \rangle : P \times P \rightarrow W_{O_K}(R)$ the $W_{O_K}(R)$-bilinear alternating form induced by $\lambda_X$. From the $O_E$-action, we have the decomposition
\begin{equation*}
O_E \otimes_{O_K} W_{O_K}(R) \simeq \prod_{\psi \in \Psi}O_E \otimes_{O_{E^u}} W_{O_K}(R).
\end{equation*}
This decomposition gives gradings
\begin{equation*}
\begin{split}
P=\prod_{\psi \in \Psi}P_{\psi}=\prod_{\psi \in \Psi_0}P_{\psi} \oplus P_{\psi^*},\\
Q=\prod_{\psi \in \Psi}Q_{\psi}=\prod_{\psi \in \Psi_0}Q_{\psi} \oplus Q_{\psi^*}.
\end{split}
\end{equation*}

Let $(P^{\vee},Q^{\vee},F^{\vee},F_1^{\vee})$ be the dual $O_K$-window of $(P,Q,F,F_1)$ (see \cite[Section 11]{Mih}), and consider its gradings
\begin{equation*}
P^{\vee}=\prod_{\psi \in \Psi}P^{\vee}_{\psi}=\prod_{\psi \in \Psi_0}P^{\vee}_{\psi} \oplus P^{\vee}_{\psi^*},
\end{equation*}
\begin{equation*}
Q^{\vee}=\prod_{\psi \in \Psi}Q^{\vee}_{\psi}=\prod_{\psi \in \Psi_0}Q^{\vee}_{\psi} \oplus Q^{\vee}_{\psi^*}.
\end{equation*}

Let $P_{\psi,\BQ}:=P_{\psi} \otimes \BQ$, and let $\langle \cdot, \cdot \rangle_{\BQ}=\langle \cdot, \cdot \rangle \otimes \BQ$. Note that our pairing satisfies
\begin{equation*}
\langle \cdot, \cdot \rangle_{\BQ}|_{P_{\psi,\BQ} \times P_{\psi',\BQ}}\equiv 0 \qquad \text{if} \quad \psi' \neq \psi^*.
\end{equation*}

Therefore, we have
\begin{equation*}
P_{\psi}^{\vee}=\lbrace x \in P_{\psi,\BQ} | \langle x, P_{\psi^*} \rangle_{\BQ} \subset W_{O_K}(R) \rbrace.
\end{equation*}

Also, we have the following lemma.

\begin{lemma}\label{lemma5.7}
	The order of $\Ker{\lambda_X}$ is $q^{2h}=s^{2fh}$ if and only if $P_{\psi} \overset{h}{\subset} P_{\psi}^{\vee}$, $\forall \psi \in \Psi$ .
\end{lemma}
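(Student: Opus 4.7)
My approach will use the duality theory of $O_K$-displays (as in \cite{Mih}, \S11). Under this equivalence, the polarization $\lambda_X : X \to X^\vee$ corresponds on displays to the inclusion $\lambda_P : P \hookrightarrow P^\vee$ induced by $\langle\cdot,\cdot\rangle$, and the order of $\Ker\lambda_X$ equals $s^{\length_{W_{O_K}(R)}(P^\vee/P)}$. Since the pairing vanishes on $P_\psi \times P_{\psi'}$ for $\psi' \neq \psi^*$, this inclusion decomposes as $\bigoplus_\psi (P_\psi \hookrightarrow P_\psi^\vee)$, and I obtain
\begin{equation*}
|\Ker\lambda_X| = s^{\sum_\psi h_\psi}, \quad \text{where } h_\psi := \length_{W_{O_K}(R)}(P_\psi^\vee / P_\psi).
\end{equation*}

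The ``if'' direction is then immediate: if $h_\psi = h$ for every $\psi$, the total is $|\Psi| \cdot h = 2fh$, giving $|\Ker\lambda_X| = s^{2fh} = q^{2h}$. For the ``only if'' direction, I plan to prove that all the $h_\psi$ are forced to be equal, using two independent symmetries. First, because $\langle\cdot,\cdot\rangle$ is alternating, the map $x \mapsto \langle x,\cdot\rangle : P_\psi \to \Hom(P_{\psi^*}, W_{O_K}(R))$ is the transpose (up to sign) of the analogous map with $\psi$ and $\psi^*$ swapped. The cokernel of a map between finite free modules of the same rank over a local ring has the same length as that of its transpose, so this gives $h_\psi = h_{\psi^*}$.

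Second, the display Frobenius $F$ is $\sigma$-linear over $W_{O_K}(R)$ and $O_E$-linear, so it carries $P_\psi$ into $P_{\sigma\psi}$ and becomes a bijection after inverting $\pi_K$. The compatibility of the polarization with $F$, expressed in the display formalism via a relation of the shape $\langle Fx, Fy \rangle = c \langle x, y\rangle^\sigma$ for a suitable unit $c$, will force $F$ to induce a $\sigma$-semilinear, length-preserving identification of $P_\psi^\vee / P_\psi$ with $P_{\sigma\psi}^\vee / P_{\sigma\psi}$, giving $h_\psi = h_{\sigma\psi}$. Since $\sigma$ acts transitively on $\Psi = \Hom_K(E^u, \breve K)$ (as $E^u/K$ is unramified of degree $|\Psi| = 2f$), combining the two symmetries shows that $h_\psi$ is independent of $\psi$. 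Writing the common value as $h'$, the total length is $2fh'$, which equals $2fh$ iff $h' = h$, as required.

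The hardest part will be pinning down the Frobenius--polarization compatibility rigorously in the $O_K$-display setting: one must check that the $\sigma$-semilinearity of $F$ translates into a length-preserving map between the relevant subquotients (the twist is harmless as $\sigma$ is a ring automorphism), and keep track cleanly of the fact that the display polarization is often formulated via a Frobenius twist of $P^\vee$ rather than $P^\vee$ itself. Once this compatibility is in place, the remainder is bookkeeping with the $\Psi$-grading.
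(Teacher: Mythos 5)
Your high-level strategy matches the paper's: write $|\Ker\lambda_X| = s^{\sum_\psi h_\psi}$ with $h_\psi := [P_\psi^\vee : P_\psi]$, note the easy ``if'' direction, and for ``only if'' prove the $h_\psi$ are all equal by exploiting the $\psi\leftrightarrow\psi^*$ symmetry and a Frobenius argument over the orbit $\Psi$. The first symmetry is fine, and the transitivity of $\sigma$ on $\Psi$ is correct since $E^u/K$ is unramified cyclic of degree $2f$.

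The gap is in the Frobenius step, and it is not merely a detail to be ``pinned down.'' Two things go wrong. First, the claimed compatibility $\langle Fx,Fy\rangle = c\,\langle x,y\rangle^\sigma$ with $c$ a \emph{unit} is not what holds in the $O_K$-display formalism; the $F$--$F$ compatibility has a factor of the uniformizer (schematically $\langle Fx,Fy\rangle = \pi\,{}^F\langle x,y\rangle$), while the unit-free identity is the mixed one $\langle F\cdot,F_1\cdot\rangle = \langle F_1\cdot,F\cdot\rangle = {}^F\langle\cdot,\cdot\rangle$, which is exactly what the paper uses. Second, and more fundamentally, the display Frobenius $F$ is not a bijection of $P$: even after you record a pairing identity, $x\in P_\psi^\vee$ controls $\langle Fx, FP_{\psi^*}\rangle$, whereas membership in $P_{\sigma\psi}^\vee$ requires controlling $\langle Fx, P_{\sigma\psi^*}\rangle$, and $F(P_{\psi^*})$ is a proper sublattice of $P_{\sigma\psi^*}$ in general, so the inclusion you want does not follow. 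Your own ``becomes a bijection after inverting $\pi_K$'' comment is precisely the danger sign: after inverting $\pi_K$ the lengths $h_\psi$ are invisible.

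The paper's fix, which your proposal needs, is to replace $F$ by the $\sigma$-linear bijection $\Phi$ arising from a normal decomposition $P_\psi = L_\psi\oplus T_\psi$ (so $\Phi = F_1\oplus F$), and then to invoke the special signature condition, which pins down $L_\psi, T_\psi$ on all $\psi\neq\psi_0,\psi_0^*$ and thereby forces $\Phi$ to be purely $F$ or purely $F_1$ on those graded pieces. Only with this structural input does the mixed identity $\langle F\cdot,F_1\cdot\rangle = {}^F\langle\cdot,\cdot\rangle$ turn $\Phi$ into a length-preserving identification $\Phi(P_{{}^{F^i}\psi_0}^\vee) = P_{{}^{F^{i+1}}\psi_0}^\vee$. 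Without the signature condition your transitivity-of-$\sigma$ argument has nothing to run on, because there is no distinguished lattice-level isomorphism $P_\psi\to P_{\sigma\psi}$ compatible with the pairing. So the plan as written would fail; fixing it essentially reconstructs the proof in the paper.
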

\begin{proof}
	Let $P_{\psi}=L_{\psi} \oplus T_{\psi}$, $Q_{\psi}=L_{\psi}+I_{O_K}(R)T_{\psi}$ be a normal decomposition. By the signature condition, we have
	\begin{equation*}
	\begin{split}
	L_{\psi}=P_{\psi}, T_{\psi}=0, \text{\quad if $\psi \in \Psi_0\backslash \lbrace \psi_0\rbrace$},\\
	L_{\psi}=0, T_{\psi}=P_{\psi}, \text{\quad if $\psi \in \Psi_1\backslash \lbrace \psi_0^*\rbrace$}.
	\end{split}
	\end{equation*}
	From the normal decomposition, we get a $^F$-linear isomorphism
\begin{displaymath}
	\begin{array}{ccc}
		\Phi_{\psi}:P_{\psi}=L_{\psi}\oplus T_{\psi} & \rightarrow
		&P_{^F\psi} \\
		\quad\qquad\qquad (l,t)& \mapsto & (F_1(l)+F(t))
	\end{array}
\end{displaymath}
	By our special signature condition, we have
\begin{displaymath}
\begin{array}{cccc}
\Phi_{\psi}:P_{\psi} & \rightarrow
&P_{^F\psi}& \\
\qquad x& \mapsto  & F_1(x),& \text{        if        } \psi \in \Psi_0 \backslash \lbrace \phi_0 \rbrace
\end{array}
\end{displaymath}
\begin{displaymath}
\begin{array}{cccc}
\Phi_{\psi}:P_{\psi} & \rightarrow
&P_{^F\psi}& \\
\qquad x& \mapsto  & F(x),& \text{        if        } \psi \in \Psi_1 \backslash \lbrace \phi_0^* \rbrace
\end{array}
\end{displaymath}

We claim that if $P_{^F\psi_0} \overset{k}{\subset} P_{^F\psi_0}^{\vee}$ for some $k$, then for all $\psi \in \Psi$ we have 
\begin{equation*}
P_{\psi} \overset{k}{\subset} P_{\psi}^{\vee}.
\end{equation*}

First, note that $\Phi_{\psi}$ is a $^F$-linear isomorphism, hence
\begin{equation*}
\begin{split}
\Phi(P_{^{F^i}\psi_0})=P_{^{F^{i+1}}\psi_0},\\
\Phi(P_{^{F^i}\psi_0^*})=P_{^{F^{i+1}}\psi_0^*}.
\end{split}
\end{equation*}

We will show that $\Phi(P^{\vee}_{^{F^i}\psi_0})=P^{\vee}_{^{F^{i+1}}\psi_0}$ for $1 \leq i \leq f-1$.
Note that
\begin{equation*}
\begin{split}
x \in P^{\vee}_{^{F^{i+1}}\psi_0} \Leftrightarrow \langle x, P_{^{F^{i+1}}\psi_0^*}\rangle \subset W_{O_K}(R)\\
	\Leftrightarrow \langle x, \Phi(P_{^{F^{i}}\psi_0^*}) \rangle \subset W_{O_K}(R).
\end{split}
\end{equation*}
First, assume that $^{F^i}\psi_0^* \in \Psi_0 \backslash \lbrace \psi_0 \rbrace$, then $\Phi=F_1$ on $P_{^{F^{i}}\psi_0^*}$. Therefore,

\begin{equation*}
\begin{split}
\langle x, \Phi(P_{^{F^{i}}\psi_0^*})\rangle \subset W_{O_K}(R)\\
\Leftrightarrow \langle x, F_1(P_{^{F^{i}}\psi_0^*}) \rangle \subset W_{O_K}(R).\\
\Leftrightarrow \langle \Phi(\Phi^{-1}(x)), F_1(P_{^{F^{i}}\psi_0^*}) \rangle \subset W_{O_K}(R).
\end{split}
\end{equation*}
Since $x \in P^{\vee}_{^{F^{i+1}}\psi_0}$ and $^{F^i}\psi_0 \in \Psi_1 \backslash \lbrace \psi_0^* \rbrace$, we have

\begin{equation*}
\begin{split}
\langle \Phi(\Phi^{-1}(x)), F_1(P_{^{F^{i}}\psi_0^*}) \rangle \subset W_{O_K}(R)\\
\Leftrightarrow \langle F(\Phi^{-1}(x)), F_1(P_{^{F^{i}}\psi_0^*}) \rangle \subset W_{O_K}(R)\\
\Leftrightarrow ^F\langle \Phi^{-1}(x), P_{^{F^{i}}\psi_0^*} \rangle \subset W_{O_K}(R)\\
\Leftrightarrow \Phi^{-1}(x) \in P^{\vee}_{^{F^{i}}\psi_0}.
\end{split}
\end{equation*} Here, we used the fact that $\langle F \cdot, F_1 \cdot \rangle=
\langle F_1 \cdot, F \cdot \rangle=^F\langle  \cdot, \cdot \rangle$.

In the case that $^{F^i}\psi_0^* \in \Psi_1 \backslash \lbrace \psi_0^* \rbrace$, we can prove the claim in the same way.

Therefore, $\Phi(P^{\vee}_{^{F^i}\psi_0})=P^{\vee}_{^{F^{i+1}}\psi_0}$ for $1 \leq i \leq f-1$.

Now, assume that $P_{^F\psi_0} \overset{k}{\subset} P_{^F\psi_0}^{\vee}$, then we can show inductively that
\begin{equation*}
P_{^{F^{i+1}}\psi_0}=\Phi(P_{^{F^i}\psi_0})\overset{k}{\subset} \Phi(P^{\vee}_{^{F^i}\psi_0})=P^{\vee}_{^{F^{i+1}}\psi_0}, \quad \forall 1 \leq i \leq f-1.
\end{equation*}
Since $P_{\psi} \overset{k}{\subset} P^{\vee}_{\psi}$ if and only if $P_{\psi^*} \overset{k}{\subset} P^{\vee}_{\psi^*}$, we can conclude that the claim holds.

By this claim, we have
\begin{equation*}
\begin{split}
|\Ker \lambda_X| =s^{2fh}
\Leftrightarrow P \overset{2fh}{\subset} P^{\vee}
\Leftrightarrow P_{\psi} \overset{h}{\subset} P^{\vee}_{\psi}, \quad \forall \psi \in \Psi.
\end{split}
\end{equation*}

\end{proof}

With this lemma, we can follow the whole steps in \cite[Chatper 4]{Mih}. Indeed, the only difference is the polarization, hence with the above lemma, one can show the following analogue of \cite[Proposition 4.4]{Mih}. Let $Sch / \Spf O_E$ (resp. $Sch'/ \Spf O_E$) be the category of schemes (resp. locally noetherian schemes) over $\Spf O_E$ together with the Zariski topology.

\begin{proposition}\label{proposition5.8}(cf. \cite[Proposition 4.4]{Mih})
	There is an isomorphism of stacks over $\Sch/\Spf O_E$
	\begin{equation*}
	\CC_{K,F^u}: O_E \text{-} O_K \text{-} h \text{-} \text{Herm} \overset{\simeq}{\rightarrow} O_E \text{-} O_{F^u} \text{-} h\text{-} \text{Herm}
	\end{equation*}
	that is equivariant for the Rosati involutions and sends objects of signature $(r,n-r)$ to objects of signature $(r,n-r)$.
\end{proposition}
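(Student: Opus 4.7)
The plan is to follow closely the proof of \cite[Proposition 4.4]{Mih}, where the analogous equivalence is constructed for hermitian data without keeping careful track of the specific $h$-polarization condition. The essential new input here is that the condition $|\Ker\lambda_X| = q^{2h}$ transfers correctly under the equivalence, and this is precisely what Lemma \ref{lemma5.7} provides. Once this compatibility is in place, signature preservation and Rosati-equivariance are formal consequences of the construction.

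First I would reduce to the affine case $S = \Spec R$ with $R$ an $O_E$-algebra in which $\pi$ is nilpotent, and pass to the equivalent category of $O_K$-windows $(P, Q, F, F_1)$ equipped with an $O_E$-action and a polarization pairing. The $O_E$-action yields the grading $P = \bigoplus_{\psi \in \Psi} P_\psi$, and the signature conditions \eqref{eq5.01}--\eqref{eq5.02} force $F_1$ (respectively $F$) to induce $^F$-linear isomorphisms $\Phi_\psi : P_\psi \xrightarrow{\sim} P_{{}^F\psi}$ for every $\psi \in \Psi \setminus \{\psi_0, \psi_0^*\}$. The key observation, due to Mihatsch, is that only the $\psi_0$ and $\psi_0^*$ components carry genuinely new information: one extracts an $O_{F^u}$-window out of the $\psi_0$-component and, conversely, recovers the full $O_K$-window from an $O_{F^u}$-datum by propagating along the isomorphisms $\Phi_\psi$. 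This yields the underlying functor $\CC_{K,F^u}$ on objects and, by the same recipe, on morphisms.

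The compatibility with polarization is the crucial step. By Lemma \ref{lemma5.7}, the condition $|\Ker\lambda_X| = q^{2h}$ is equivalent to $P_\psi \overset{h}{\subset} P_\psi^\vee$ for every $\psi \in \Psi$; in particular, $P_{\psi_0} \overset{h}{\subset} P_{\psi_0}^\vee$, so that the $O_{F^u}$-display one produces satisfies the corresponding $h$-condition directly on the polarization side. Conversely, by the same argument used in the proof of Lemma \ref{lemma5.7}, if the $\psi_0$-component satisfies the $h$-index condition, the $^F$-linear isomorphisms $\Phi_\psi$ together with the identity $\langle F\cdot, F_1\cdot\rangle = {}^F\langle\cdot,\cdot\rangle$ force $P_\psi \overset{h}{\subset} P_\psi^\vee$ for every $\psi$. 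Hence the inverse assignment also respects the $h$-condition.

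The main obstacle will be bookkeeping of the polarization data under propagation via $\Phi_\psi$, particularly the compatibility of the alternating pairings on different graded pieces and the sign conventions arising from the choice of $F$ versus $F_1$. Once this is settled, the extension to quasi-isogenies and to a general (not necessarily affine or noetherian) base $S \in \Sch/\Spf O_E$ follows by Zariski descent together with the standard rigidity of quasi-isogenies, exactly as in \cite[Section 4]{Mih}. Signature preservation holds because the polynomial condition on $\Lie X$ only involves the $\psi_0$-component under \eqref{eq5.02}, which is preserved verbatim by $\CC_{K,F^u}$, and Rosati-equivariance is inherited from the $O_E$-action, which commutes with the construction on all graded pieces.
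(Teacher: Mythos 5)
Your proposal is correct and takes essentially the same route as the paper. The paper's proof is deliberately terse---it simply instructs the reader to repeat the argument of \cite[Proposition 4.4]{Mih} (together with \cite[Remark 4.5]{Mih}), noting that the only new point to verify is that the $h$-polarization condition transports correctly, which is precisely the content of Lemma \ref{lemma5.7}. Your expansion of this (grading by $\Psi$, the degeneracy of the normal decomposition away from $\psi_0,\psi_0^*$ yielding the $^F$-linear isomorphisms $\Phi_\psi$, propagation of the $\psi_0$-slice, and the two-sided use of Lemma \ref{lemma5.7} for the index condition) matches the intent of the citation exactly, so no discrepancy to report.
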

\begin{proof}
	One can follow the proof of \cite[Proposition 4.4]{Mih} with Lemma \ref{lemma5.7}. Also see \cite[Remark 4.5]{Mih}.
\end{proof}

In addition, we can show the following analogue of \cite[proposition 4.6]{Mih}.

\begin{proposition}\label{proposition5.9}(cf. \cite[Proposition 4.6]{Mih})
	There is an isomorphism of stacks over $\Sch' /\Spf O_{\breve{E}}$
	\begin{equation*}
	\CC_{F^u,F}: (O_E \text{-} O_{F^u} \text{-} h \text{-} \text{Herm})_{O_{\breve{E}}} \overset{\simeq}{\rightarrow} (O_E \text{-} O_F \text{-} h\text{-} \text{Herm})_{O_{\breve{E}}}
	\end{equation*}
		that is equivariant for the Rosati involutions and sends objects of signature $(r,n-r)$ to objects of signature $(r,n-r)$. Here, $(-)_{O_{\breve{E}}}$ means the base change to $O_{\breve{E}}$.
\end{proposition}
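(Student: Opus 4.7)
The plan is to adapt Mihatsch's proof of \cite[Proposition 4.6]{Mih} to our setting, where the only essential difference from his is the polarization condition $|\Ker \lambda_X| = q^{2h}$. As in the proof of Proposition \ref{proposition5.8}, the modification needed to handle this condition is encapsulated in Lemma \ref{lemma5.7}; given this, it suffices to reproduce Mihatsch's construction and to check that it respects our $h$-polarization.

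First I would construct the functor on $O$-windows. For $R$ an $O_{\breve{E}}$-algebra and $(X, i_X, \lambda_X)$ an object on the $O_{F^u}$-side, let $(P, Q, F, F_1)$ be its $W_{O_{F^u}, R}$-window. The $O_E$-action grades $P = \bigoplus_{\psi \in \Psi} P_\psi$. Using the chosen embedding $\psi_0 \in \Psi_0$ that gives $\breve{E} = E \otimes_{E^u, \psi_0} \breve{K}$, the totally ramified extension $O_F/O_{F^u}$ is split inside $O_{\breve{E}}$, and one can transport $(P, Q, F, F_1)$ to a $W_{O_F, R}$-window $(P', Q', F', F_1')$ carrying the same $O_E$-action and an alternating pairing induced from $\langle \cdot, \cdot \rangle$. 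This defines $\CC_{F^u, F}(X, i_X, \lambda_X)$.

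Next I would verify the three compatibilities. Equivariance for Rosati involutions reduces to the compatibility of the pairing with the $\Psi_0 \sqcup \Psi_1$ decomposition, which is intrinsic to the $O_E$-action. Preservation of signature $(r, n-r)$ follows from the explicit form of $P^{E/K}_{(r,n-r)}(a;t)$, which depends only on the $O_E$-action on $\Lie$ and is unchanged by the window-to-window translation. The polarization condition $|\Ker \lambda_X| = q^{2h}$ translates, by the argument of Lemma \ref{lemma5.7}, into the graded statement $P_\psi \overset{h}{\subset} P_\psi^\vee$ for all $\psi \in \Psi$, which manifestly survives the transport.

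The main obstacle is constructing the quasi-inverse. Going from an $O_F$-window back to an $O_{F^u}$-window is not possible before base change, since the totally ramified extension $F/F^u$ does not split. After base change to $O_{\breve{E}}$, however, the $\Psi$-grading on the $O_F$-display provides enough information to reverse the construction following Mihatsch, and one checks that the two functors are mutually inverse on objects and on morphisms. Naturality in $R$ then upgrades the statement to an isomorphism of stacks over $\Sch' / \Spf O_{\breve{E}}$, yielding the claim.
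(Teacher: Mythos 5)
Your proposal takes the same route as the paper, whose proof is simply ``follow the proof of \cite[Proposition 4.6]{Mih} with Lemma \ref{lemma5.7}''; you correctly identify that the only essential modification is the $h$-polarization condition, which Lemma \ref{lemma5.7} translates into the graded condition $P_\psi \overset{h}{\subset} P_\psi^{\vee}$, and that Mihatsch's construction and its quasi-inverse then carry over verbatim after base change to $O_{\breve{E}}$. One small imprecision: the phrase ``the totally ramified extension $O_F/O_{F^u}$ is split inside $O_{\breve{E}}$'' is not literally true (a totally ramified extension cannot be split by an unramified base change); the role of the base change in Mihatsch's argument is to make a canonical choice of auxiliary data for the window translation, but this does not affect the validity of your overall argument.
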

\begin{proof}
	One can follow the proof of \cite[Proposition 4.6]{Mih} with Lemma \ref{lemma5.7}.
\end{proof}

The following proposition is an analogue of \cite[Theorem 4.1]{Mih}.

\begin{proposition}\label{proposition5.10}(cf. \cite[Theorem 4.1]{Mih})
	For any intermediate field $\BQ_p \subset K \subset F$, we have an isomorphism
	\begin{equation*}
	c_{K,F}:(\CN_{E/F}^h(r,n-r))_{O_{\breve{E}}} \simeq (\CN_{E/K}^h(r,n-r))_{O_{\breve{E}}}.
	\end{equation*}
	Furthermore, if $F$ is unramified over $\BQ_p$, then
	\begin{equation*}
	c_{K,F}:\CN_{E/F}^h(r,n-r) \simeq \CN_{E/K}^h(r,n-r).
	\end{equation*}
\end{proposition}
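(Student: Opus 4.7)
The plan is to promote the stack equivalences between the hermitian-module categories provided by Propositions~\ref{proposition5.8} and~\ref{proposition5.9} to an equivalence of Rapoport--Zink moduli problems, following the strategy used by Mihatsch for the $h=0$ case in~\cite{Mih}. First, compose the two stack equivalences into
\begin{equation*}
\CC_{K,F} := \CC_{F^u,F} \circ \CC_{K,F^u} : (O_E\text{-}O_K\text{-}h\text{-}\text{Herm})_{O_{\breve{E}}} \overset{\simeq}{\rightarrow} (O_E\text{-}O_F\text{-}h\text{-}\text{Herm})_{O_{\breve{E}}},
\end{equation*}
which is Rosati-equivariant and preserves the signature $(r,n-r)$. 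Apply $\CC_{K,F}$ to a chosen framing object for $\CN_{E/K}^h(r,n-r)$ to produce a matching framing object for $\CN_{E/F}^h(r,n-r)$ (or vice versa via $\CC_{K,F}^{-1}$); this matched choice is what allows the two moduli functors to be directly compared.

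For any $S \in$~(Nilp) over $O_{\breve{E}}$ and any $(X, i_X, \lambda_X, \rho_X) \in \CN_{E/F}^h(r,n-r)_{O_{\breve{E}}}(S)$, set $(X', i_{X'}, \lambda_{X'}) := \CC_{K,F}^{-1}(X, i_X, \lambda_X)$ and let $\rho_{X'}$ be the $O_E$-linear quasi-isogeny obtained by applying $\CC_{K,F}^{-1}$ to $\rho_X$; this is well-defined because $\CC_{K,F}$, being a categorical equivalence, induces a bijection on $O_E$-linear quasi-isogenies. Declare this assignment to be $c_{K,F}$; the inverse map is constructed symmetrically via $\CC_{K,F}$, and functoriality in $S$ is automatic from the stack formulation.

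The substantive step is to verify that the transported tuple satisfies the remaining axioms of $\CN_{E/K}^h(r,n-r)_{O_{\breve{E}}}(S)$: namely (a) that $\rho_{X'}$ has $K$-height zero, and (b) that the polarization compatibility diagram commutes up to a scalar in $O_K^\times$. For (a), the $O$-window construction underlying $\CC_{K,F^u}$ and $\CC_{F^u,F}$ identifies the $K$-height with the $F$-height up to the multiplicative factor $[F:K]$, so height zero is preserved. For (b), the Rosati-equivariance of both constituent equivalences tracks the polarization scalar through the normal decomposition analysis carried out in Lemma~\ref{lemma5.7}; this is the main technical obstacle, but it proceeds exactly as in~\cite{Mih} once one confirms that the argument there is insensitive to the value of $h$, which is precisely the content of Lemma~\ref{lemma5.7}.

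Finally, when $F/\BQ_p$ is unramified we have $F = F^u$, so $\CC_{F^u,F}$ is the identity functor and $\CC_{K,F} = \CC_{K,F^u}$ is already defined over $\Sch/\Spf O_E$ by Proposition~\ref{proposition5.8}. The same construction then yields an isomorphism $c_{K,F}: \CN_{E/F}^h(r,n-r) \simeq \CN_{E/K}^h(r,n-r)$ directly, with no base change to $O_{\breve{E}}$ required.
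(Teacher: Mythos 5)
Your proof is correct and follows the same route as the paper: compose the stack equivalences from Propositions~\ref{proposition5.8} and~\ref{proposition5.9}, fix compatible framing objects, and transport the moduli data across the composite equivalence following Mihatsch's proof of \cite[Theorem 4.1]{Mih}. The paper's own proof is a one-line citation of exactly these ingredients; the details you fill in (height-zero preservation, polarization compatibility via Lemma~\ref{lemma5.7}, and $F=F^u$ in the unramified case) are precisely what that reference supplies.
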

\begin{proof}
	This follows from the above two propositions, and by fixing framing objects. See the proof of \cite[Theorem 4.1]{Mih}.
\end{proof}

\begin{remark}\label{remark5.11}
	Let $F$ be an unramified extension of $\BQ_{p}$. Let $(\BX,i_{\BX},\lambda_{\BX})$ be a hermitian $O_E \text{-} \BZ_p \text{-} h$-module in Remark \ref{remark5.4} and consider a hermitian $O_E \text{-} O_F \text{-} h$-module $\CC_{\BQ_p,F}((\BX,i_{\BX},\lambda_{\BX}))$ by using Proposition \ref{proposition5.8}. By Remark \ref{remark5.4}, we have that $\CN_{E/\BQ_p}^h(r,n-r)$ is representable by a formal scheme over $\Spf O_E$ which is locally formally of finite type, with the framing object $(\BX,i_{\BX},\lambda_{\BX})$. Therefore, by Proposition \ref{proposition5.10}, $\CN_{E/F}^h(r,n-r)$ is representable by a formal scheme over $\Spf O_E$ which is locally formally of finite type with the framing object $\CC_{\BQ_p,F}((\BX,i_{\BX},\lambda_{\BX}))$.
\end{remark}

\begin{remark}
	One can see that there is a unique hermitian $O_E \text{-} O_K \text{-} h$-module $(\BX,i_{\BX},\lambda_{\BX})$ of signature $(r,n-r)$ over $k$ up to quasi-isogeny, where $k$ is an algebraic closure of $\BF_{q^2}$. This can be proved by using Remark \ref{remark5.6}, Proposition \ref{proposition5.8}, Proposition \ref{proposition5.9}.
\end{remark}

\begin{proposition}\label{proposition5.12}
	If $h=0, n$, the formal scheme $\CN_{E/F}^h(1,n-1)_{O_{\breve{E}}}$ is formally smooth over $\Spf O_{\breve{E}}$. If $1 \leq h \leq n-1$, then $\CN_{E/F}^h(1,n-1)_{O_{\breve{E}}}$ has semistable reduction. In particular, it is regular, for all $h$.
\end{proposition}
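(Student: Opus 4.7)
The strategy is to reduce the general case to the already-established case $F=\BQ_p$, $E=\BQ_{p^2}$ of Proposition \ref{proposition4.13} (and to \cite{VW} for $h=0$), by means of the isomorphism of Proposition \ref{proposition5.10}. Applying that proposition with $K=\BQ_p$ yields
\[
\CN_{E/F}^{h}(1,n-1)_{O_{\breve{E}}} \;\simeq\; \CN_{E/\BQ_p}^{h}(1,n-1)_{O_{\breve{E}}},
\]
so it suffices to prove the three assertions for the right-hand side, which is now of PEL type since $O_E$ acts over $\BZ_p$.

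For the boundary cases I would argue as follows. When $h=0$ the right-hand side is the Rapoport--Zink space with hyperspecial level for the quasi-split unitary group attached to $E/\BQ_p$; the proof of formal smoothness in \cite{VW} carries over from $E=\BQ_{p^2}$ to arbitrary unramified $E/\BQ_p$, since the determinant condition \eqref{eq5.02} cuts out a smooth Grassmannian at the distinguished embedding $\psi_0$ and is automatic elsewhere. The case $h=n$ reduces to $h=0$ via the duality isomorphism $\CN^{0}\simeq\CN^{n}$ of Remark \ref{remark5.2}.

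For the range $1\leq h\leq n-1$ I would mimic the local-model argument of Proposition \ref{proposition4.13} for the PEL datum $(\BZ_p, O_E)$. After base change to $O_{\breve{E}}$, the decomposition $O_E \otimes_{\BZ_p} O_S = \prod_{\psi \in \Psi}(O_E \otimes_{O_{E^u},\psi} O_S)$ together with the signature conditions \eqref{eq5.01}--\eqref{eq5.02} and the polarization pairing confines all nontrivial moduli information to the pair of embeddings $\{\psi_0,\psi_0^{*}\}$. The resulting local model at $\psi_0$ is the standard $GL_n$ local model over $O_{\breve{E}}$ with cocharacter $\mu=(1^{(n-1)},0)$ and the two-step lattice chain $\Lambda_0\subset\Lambda_1$ determined by the level datum $h$, to which \cite[4.4.5]{Gor2} applies to give semistable reduction; this transfers back to $\CN_{E/\BQ_p}^{h}(1,n-1)_{O_{\breve{E}}}$ by \cite[Proposition 3.33]{RZ}, whence regularity.

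The most delicate step will be checking that the polarization duality $\CM_0^{\perp}=\CM_1$ is compatible with the $\Psi$-grading, so that projection onto the $(\psi_0,\psi_0^{*})$-factor is genuinely an isomorphism of local models. In the strict $O_F$-setting of Proposition \ref{proposition4.13} this was immediate because $E/F$ is quadratic unramified; after passing via Proposition \ref{proposition5.10} to the strict $\BZ_p$-setting one must track the full $\Psi$-grading and verify that the pairing identifies the factor at $\psi$ with the dual of the factor at $\psi^{*}$. This is the local analogue of the bookkeeping carried out in \cite[\S4]{Mih}, and once it is in hand the reduction to Gortz's theorem is immediate.
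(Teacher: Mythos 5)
Your proposal follows the paper's proof essentially step by step: apply Proposition \ref{proposition5.10} with $K=\BQ_p$ to land in a PEL-type Rapoport--Zink space, use the duality $\CN^0\simeq\CN^n$ of Remark \ref{remark5.2} for $h=n$, and for $1\leq h\leq n-1$ run a local-model computation in which the $\Psi$-grading together with the signature conditions \eqref{eq5.01} and \eqref{eq5.02} isolates a rank-one quotient at $\psi_0$, identifies the local model with the standard $GL_n$ model for $\mu=(1^{(n-1)},0)$ and the two-step lattice chain, and then appeals to \cite[4.4.5]{Gor2} and \cite[Proposition 3.33]{RZ}. The compatibility of the polarization with the $\Psi$-grading that you flag as delicate is disposed of in the paper by a short check: the signature conditions force $\CP_{\Lambda,\psi_0^*}=\CP_{\Lambda,\psi_0}^{\perp}$ and make $\CP_{\Lambda,\psi}$ either zero or everything at the banal $\psi$, so the whole datum is determined by $\CP_{\Lambda,\psi_0}$ alone.

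Two misstatements are worth flagging, though they do not affect your detailed argument. Proposition \ref{proposition5.10} only replaces the base field $K$ by $\BQ_p$ while keeping $E$ (hence the residue field $\BF_{q^2}$) fixed; the target $\CN_{E/\BQ_p}^h(1,n-1)_{O_{\breve{E}}}$ is of PEL type but is \emph{not} the $F=\BQ_p$, $E=\BQ_{p^2}$ case of Proposition \ref{proposition4.13}, so that proposition cannot be cited directly --- one must redo the local-model computation for general $E$, which is what your middle paragraph in fact does and what the paper does. Likewise, since $F/\BQ_p$ may be ramified, $E/\BQ_p$ may be ramified as well; the phrase ``arbitrary unramified $E/\BQ_p$'' in your $h=0$ discussion is too restrictive, and this is precisely why the Eisenstein-type condition \eqref{eq5.02} is imposed at $\psi_0$. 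The paper avoids this issue by simply citing \cite[Proposition 2.14]{Mih} for $h=0$, which already proves formal smoothness at hyperspecial level for arbitrary $F$.
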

\begin{proof}
	When $h=0$, it is proved in \cite[Proposition 2.14]{Mih}. Since $\CN_{E/F}^0(1,n-1)_{O_{\breve{E}}}$ and $\CN_{E/F}^n(1,n-1)_{O_{\breve{E}}}$ are isomorphic (see Remark \ref{remark5.2}), $\CN_{E/F}^n(1,n-1)_{O_{\breve{E}}}$ is also formally smooth over $\Spf O_{\breve{E}}$. Now assume that $1 \leq h \leq n-1$. By Proposition \ref{proposition5.10}, it suffices to show that $\CN_{E/\BQ_p}^h(1,n-1)_{O_{\breve{E}}}$ has semistable reduction. Since this moduli problem is PEL-type, it suffices to show that its local model has semistable reduction (\cite[Proposition 3.33]{RZ}). To define the local model $\CN^{loc}$ in our case, we need to use the notation in Section \ref{subsec:section3.7} (here, we follow \cite[Appendix B]{RSZ2}). Let $l (\cdot, \cdot)$ be a $E/F$-hermitian form on $E^n$ given by the matrix
	\begin{displaymath}
	\left(\begin{array}{cc} 
	\pi I_h &\\
	& I_{n-h}
	\end{array}
	\right).
	\end{displaymath}
	
Fix an element $\delta \in O_E^{\times}$ such that $\delta^*=-\delta$. Let $\theta^{-1}_{F/\BQ_p}$ be a generator of the inverse different of $F/\BQ_p$. Let $(\cdot, \cdot)$ be the $\BQ_p$-bilinear alternating form,
\begin{equation*}
(x,y)=Tr_{E/\BQ_p}(\theta^{-1}_{F/\BQ_p} \delta l(x,y)), \quad x,y \in E^n.
\end{equation*}
Let $\Lambda_0=O_E^n$ and $\Lambda_1=\pi^{-1}O_E^h \oplus O_E^{n-h}$. Then the dual $\Lambda_1^{\vee}$ of the lattice $\Lambda_1$ with respect to $(\cdot, \cdot)$ is $\Lambda_0$. Now, let $\CL$ be the self-dual lattice chain 
\begin{equation*}
\lbrace \dots \subset \pi \Lambda_1 \subset \Lambda_0 \subset \Lambda_1= \Lambda_0^{\vee} \subset \pi^{-1} \Lambda_0 \subset \dots \rbrace
\end{equation*}

Then $\CN^{loc}$ is the functor which sends each $O_{\breve{E}}$-schemes $S$ to the set of isomorphism classes of families $(\Lambda \otimes_{\BZ_p} O_S \twoheadrightarrow \CP_{\Lambda})_{\Lambda \in \CL}$ such that

$\bullet$ For each $\Lambda$, $\CP_{\Lambda}$ is an $O_E \otimes_{\BZ_p} O_S$-linear quotient of $\Lambda \otimes_{\BZ_p} O_S$, locally free on $S$ as an $O_S$-module.

$\bullet$ For each inclusion $\Lambda \subset \Lambda'$ in $\CL$, the arrow $\Lambda \otimes_{\BZ_p} O_S \rightarrow \Lambda' \otimes_{\BZ_p} O_S$ induces an arrow $\CP_{\Lambda} \rightarrow \CP_{\Lambda'}$.

$\bullet$ For each $\Lambda$, the isomorphism $\Lambda \otimes_{\BZ_p} O_S \xrightarrow{\pi \otimes 1} (\pi \Lambda) \otimes_{\BZ_p} O_S$ identifies $\CP_{\Lambda} \overset{~}{\rightarrow} \CP_{\pi\Lambda}$.

$\bullet$ For each $\Lambda$, the perfect pairing $(\Lambda \otimes_{\BZ_p} O_S) \times (\Lambda^{\vee} \otimes_{\BZ_p} O_S) \xrightarrow{(\cdot, \cdot) \otimes O_S} O_S$ identifies $(\Ker(\Lambda \otimes_{\BZ_p} O_S \twoheadrightarrow \CP_{\Lambda}))^{\perp}$ with $\Ker(\Lambda^{\vee} \otimes_{\BZ_p} O_S \twoheadrightarrow P_{\Lambda^{\vee}}).$\\

We need to impose one more condition.

By the $O_E$-action on $S$, there is a natural identification
\begin{equation*}
O_{E^u} \otimes_{\BZ_p} O_S \xrightarrow{~} \prod_{\psi \in \Psi} O_S.
\end{equation*}

This induces a decomposition,
\begin{equation*}
\CP_{\Lambda} \xrightarrow{~} \bigoplus_{\psi \in \Psi} \CP_{\Lambda, \psi}.
\end{equation*}

$\bullet$ For each $\Lambda$, $\CP_{\Lambda}$ satisfies
\begin{equation}\label{equation5.1}
\Charpol_{O_S}(a\otimes 1 \vert \CP_{\Lambda})= P^{E/\BQ_p}_{(1,n-1)}(a;t),
\end{equation}

\begin{equation}\label{equation5.2}
(a \otimes 1 - 1 \otimes a) \vert P_{\Lambda, \psi_0}=0.
\end{equation}
Here, $P_{\Lambda, \psi_0}$ is the direct summand on which $O_{E^u}$ acts via $\psi_0$. These two conditions follow from the conditions \eqref{eq5.01} and \eqref{eq5.02}. \\

Now, fix a scheme $S$ over $O_{\breve{E}}$, and let $(\Lambda \otimes_{\BZ_p} O_S \twoheadrightarrow \CP_{\Lambda})_{\Lambda \in \CL} \in \CN^{loc}(S)$. 
By the signature condition \eqref{equation5.1}, we have
	\begin{displaymath}
\left\{
\begin{array}{l}
\CP_{\Lambda, \psi_0} \text{ is locally free of rank 1 over } O_S,\\
\CP_{\Lambda, \psi_0^*}=\CP_{\Lambda, \psi_0}^{\perp} \subset (\Lambda \otimes_{\BZ_p} O_S)_{\psi_0^*},\\
\CP_{\Lambda, \psi}=0 \qquad\text{ if } \psi \in \Psi_0 \backslash \lbrace \psi_0 \rbrace,\\
\CP_{\Lambda, \psi^*}=(\Lambda \otimes_{\BZ_p} O_S)_{\psi^*} \qquad \text{if } \psi \in \Psi_1 \backslash \lbrace \psi_0^* \rbrace.\\
\end{array}
\right.
\end{displaymath}
Therefore, $(\Lambda \otimes_{\BZ_p} O_S \twoheadrightarrow \CP_{\Lambda})_{\Lambda \in \CL}$ is determined by $(\CP_{\Lambda,\psi_0})_{\Lambda \in \CL}$.

Also, by the condition \eqref{equation5.2}, $O_E$ acts on $\CP_{\Lambda, \psi_0}$ via the structure morphism, therefore $\CP_{\Lambda, \psi_0}$ is a quotient of 
\begin{equation*}
A_{\Lambda} :=(\Lambda \otimes_{\BZ_p} O_S) \otimes_{O_E \otimes_{\BZ_p} O_S} O_S,
\end{equation*}
which is locally free of rank $n$ over $O_S$.

It follows that the map $(\Lambda \otimes_{\BZ_p} O_S \twoheadrightarrow \CP_{\Lambda})_{\Lambda \in \CL} \mapsto  (A_{\Lambda} \twoheadrightarrow \CP_{\Lambda, \psi_0})_{\Lambda \in \CL}$ is an isomorphism from $\CL^{loc}$ to the standard local model over $\Spec O_{\breve{E}}$ in Proposition \ref{proposition4.13} (i.e. the standard local model with the group $GL_n$, the cocharacter $\mu=(1^{(n-1)},0)$, and the lattice chain $\CL$). Therefore, by \cite[4.4.5]{Gor2} (in case $k=h, r=1$) again, this local model has semistable reduction.
\end{proof}

\bigskip

\section{Uniformization of unitary Shimura varieties}\label{sec:section4}

In this section, we will define a Shimura variety and study its basic locus. This Shimura variety is studied in \cite{RSZ2}. In this section, we use the notation $\BA$ for the adele rings and $\BA_f$ for the ring of finite adeles and $\BA_f^p$ for the finite adeles away from the prime $p$.

Let $F$ be a CM field over $\BQ$ and $F^+$ be its totally real subfield of index $2$. We fix a presentation $F=F^+(\sqrt{\Delta})$. Denote by $d$ the dimension of $F^+$ over $\BQ$. We denote by $a \mapsto \bar{a}$ the nontrivial automorphism of $F/F^+$. Denote by $\Phi_{F^+}$ (resp. $\Phi_F$) the set of real (resp. complex) embeddings of $F^+$ (resp. $F$). We define $\Phi$ as the CM type of $F$ determined by $\sqrt{\Delta}$, i.e.,
\begin{equation*}
\Phi:=\lbrace \phi \in \Phi_F \text{ } \vert \text{ } \phi(\sqrt{\Delta}) \in \BR_{>0}\sqrt{-1} \rbrace.
\end{equation*} 
 
 We have a natural projection $\pi:\Phi_F \rightarrow \Phi_{F^+}$. For every $\tau \in \Phi_{F^+}$, denote by $\tau^-$ (resp. $\tau^+$) the unique element in $\Phi$ (resp. $\Phi_F \backslash \Phi$) whose image under $\pi$ is $\tau$. We fix a distinguished element $\tau_1 \in \Phi_{F^+}$ (resp. $\tau_1^- \in \Phi$).\\

\subsection{The Shimura data}\label{subsec:section4.1}
We first define the Shimura data $(G,\lbrace h_G \rbrace)$ as follows. Let $V$ be a $F/F^+$-hermitian vector space of dimension $n$ with the hermitian form
\begin{equation*}
( \cdot, \cdot)_V :V\times V \rightarrow F,
\end{equation*}
that is $F$-linear in the first variable. Let $U(V)$ be the unitary group of $V$. This is a reductive group over $F^+$ such that for every $F^+$-algebra $R$,
\begin{equation*}
U(V)(R)=\lbrace g \in \Aut_R(V \otimes_{F^+} R) \vert (gv,gw)_V=(v,w)_V, \quad\forall v,w \in V\otimes_{F^+} R \rbrace.
\end{equation*}

We assume that for $\tau_1$, the signature of $V\otimes_{F^+,\tau_1} \BR$ is $(1,n-1)$ and for $\tau \in \Phi_{F^+}\backslash \lbrace \tau_1 \rbrace$, the signature of $V \otimes_{F^+,\tau} \BR$ is $(0,n)$.

Let $G := \Res_{F^+/\BQ} U(V)$. We define the Hodge map
\begin{equation*}
h_G:\Res_{\BC/\BR} \BG_{m,\BC} \rightarrow G_{\BR}
\end{equation*}
by the map sending $z \in \BC^{\times}=\Res_{\BC/\BR}\BG_{m,\BC}(\BR)$ to
\begin{displaymath}
\left( \left( \begin{array}{cc}
z/\bar{z} &  \\
 & I_{n-1}
\end{array} \right), \left( \begin{array}{c}
I_n
\end{array} \right) ,\cdots, \left( \begin{array}{c}
I_n
\end{array} \right)\right),
\end{displaymath}
where we identify $G_{\BR}(\BR)$ as a subgroup of $GL_n(\BC)^d$ via $\lbrace \tau_1^-, \cdots,\tau_d^- \rbrace = \Phi$.
Then we have a Shimura data $(G, \lbrace h_G \rbrace)$.

Now, we will define the second Shimura data $(Z,\lbrace h_Z \rbrace)$. Let $Z$ be the torus
\begin{equation*}
Z:=\lbrace z \in \Res_{F^+/\BQ}\BG_m \vert \Nm_{F/F^+}(z) \in \BG_m\rbrace.
\end{equation*}

We define the Hodge map
\begin{equation*}
h_Z:\Res_{\BC/\BR} \BG_{m,\BC} \rightarrow Z_{\BR}
\end{equation*}
by the map sending $z \in \BC^{\times}=\Res_{\BC/\BR}\BG_{m,\BC}(\BR)$ to
\begin{displaymath}
\left( \left( \begin{array}{c}
\bar{z}
\end{array} \right) ,\cdots, \left( \begin{array}{c}
\bar{z} 
\end{array} \right); z\bar{z}\right),
\end{displaymath}
where we identify $Z_{\BR}(\BR)$ as a subgroup of $GL_1(\BC)^d \times \BC^{\times}$ via $\lbrace \tau_1^-, \cdots,\tau_d^- \rbrace.$

Then we have the second Shimura data $(Z,h_Z)$.

Now, we consider the reductive group $\tilde{G}=G\times Z $ over $\BQ$. We define its Hodge map
\begin{equation*}
h_{\tilde{G}}:\Res_{\BC/\BR} \BG_{m,\BC} \xrightarrow{(h_G,h_Z)} \tilde{G}_{\BR}.
\end{equation*}
Then $(\tilde{G}, \lbrace h_{\tilde{G}} \rbrace)$ is the product Shimura data, which is defined in \cite{RSZ2} (with the same notation). Denote by $E$ its reflex field. This is the fixed field of the following subgroup
\begin{equation*}
\Aut(\BC/E):=\lbrace \sigma \in \Aut(\BC) \vert \sigma \circ \Phi =\Phi \text{ and } \sigma \tau_1^-=\tau_1^- \rbrace.
\end{equation*}

This Shimura variety has a moduli interpretation over $\Spec E$. We recall this moduli problem from \cite[Section 3.2]{RSZ2}. First, we need to define an auxiliary moduli problem $\CM_0^{\Fa}$ over $O_E$, where $\Fa$ is a fixed nonzero ideal of $O_{F^+}$. We denote by $M^{\Fa}_0$ its generic fiber. For a locally noetherian $O_E$-scheme, we define $\CM_0^{\Fa}(S)$ to be the groupoid of triples $(A_0,i_0,\lambda_0)$, such that

$\bullet$ $A_0$ is an abelian scheme over $S$ with an $O_F$-action $i_0:O_F \rightarrow \End(A_0)$, which satisfies the Kottwitz condition of signature $((0,1)_{\tau\in \Phi_{F^+}})$, i.e.,
\begin{equation*}
\Charpol(i(a) \vert \Lie(A_0))=\prod_{\tau \in \Phi_{F^+}}(T-\tau^+(a)) \text{, for all }a\in O_F.
\end{equation*}

$\bullet$ $\lambda_0$ is a polarization of $A_0$ such that $\Ker \lambda_0 =A_0[\Fa]$. Also, $\lambda_0$'s Rosati involution induces on $O_F$, via $i_0$, the nontrivial Galois automorphism of $F/{F^+}$.

A morphism between two objects $(A_0,i_0,\lambda_0)$ and $(A_0',i_0',\lambda_0')$ is an $O_F$-linear isomorphism $\mu_0:A_0 \rightarrow A_0'$ under which $\lambda_0'$ pulls back to $\lambda_0$.

This $\CM_0^{\Fa}$ is a Deligne-Mumford stack, finite and \'etale over $\Spec O_E$. Also, we can choose an ideal $\Fa$ such that $\CM_0^{\Fa}$ is nonempty (\cite[Remark 3.3]{RSZ2}).

Let $K_Z \subset Z(\BA_{f})$ be the unique maximal compact subgroup $Z(\hat{\BZ})$.

If $F^+=\BQ$, then $\CM_0^{\Fa} \otimes \BC$ is isomorphic to the Shimura variety $Sh_{K_Z}(Z,h_Z)$. In general, $\CM_0^{\Fa} \otimes \BC$ is copies of $Sh_{K_Z}(Z,h_Z)$ and each copy corresponds to a similarity class of a certain 1-dimensional hermitian space. More precisely, we define $\CR^{\Fa}_{\Phi}(F)$ as the set of isomorphism classes of pairs $(W,\langle \cdot, \cdot \rangle)$ where $W$ is a 1-dimensional $F$-vector spaces and $\langle \cdot, \cdot \rangle$ is a nondegenerate alternating form $\langle \cdot, \cdot \rangle :W \times W \rightarrow \BQ$ such that

$\bullet$ $\langle ax, y \rangle= \langle x, \bar{a}y \rangle$  for all  $x,y \in W$, $a \in F$;

$\bullet$ $x \rightarrow \langle \sqrt{\Delta}x,x \rangle$ is a negative definite quadratic form on $W$;

$\bullet$ $W$ contains an $O_F$-lattice $\Lambda$ whose dual $\Lambda^{\perp}$ with respect to $\langle \cdot, \cdot \rangle$ is $\Fa^{-1}\Lambda$.

We denote by $\CR^{\Fa}_{\Phi}(F)/_{\sim}$ the set of similarity classes of elements of $\CR^{\Fa}_{\Phi}(F)$ by a factor in $\BQ^{\times}$.

Then, we have a disjoint union decomposition
\begin{equation*}
\CM_0^{\Fa} \simeq \bigsqcup_{W \in \CR^{\Fa}_{\Phi}(F)/_{\sim}} \CM_0^{\Fa,W},
\end{equation*}
and each $\CM_0^{\Fa,W} \otimes \BC$ is isomorphic to the Shimura variety $Sh_{K_Z}(Z,h_Z)$. We denote by $M_0^{\Fa,W}$ the generic fiber of $\CM_0^{\Fa,W}$.

From now on, we fix an element $W \in \CR^{\Fa}_{\Phi}/_{\sim}$

Now, we consider open compact subgroups $K_{\tilde{G}} \subset \tilde{G}(\BA_{f})$ of the form
\begin{equation*}
K_{\tilde{G}}=K_G \times K_Z \subset G(\BA_{{F^+},f})\times Z(\BA_{f}),
\end{equation*}
where $K_G$ is an open compact subgroup of $G(\BA_{{F^+},f})$.

We now define a moduli functor $M_{K_{\tilde{G}}}(\tilde{G})$ on the category of locally noetherian schemes over $E$ as follows. For every such scheme $S$, let $M_{K_{\tilde{G}}}(\tilde{G})(S)$ be the groupoid of tuples $(A_0,i_0,\lambda_0,A,i,\lambda,\bar{\eta})$, where

$\bullet$ $(A_0,i_0,\lambda_0)$ is an object of $\CM_0^{\Fa,W}(S)$.

$\bullet$ $A$ is an abelian scheme over $S$ with an $F$-action $i:F \rightarrow \End(A)_{\BQ}$ satisfying the Kottwitz condition of signature $((1,n-1)_{\tau_1}, (0,n)_{\tau \in \Phi_{F^+} \backslash \lbrace \tau_1 \rbrace})$, i.e., for all $a \in F$,
\begin{equation*}
\Charpol(i(a) \vert \Lie(A))=(T-\tau_1^-(a))(T-\tau_1^+(a))^{n-1}\prod_{\tau \in \Phi_{F^+} \backslash \lbrace \tau_1 \rbrace}(T-\tau^+(a))^n.
\end{equation*}

$\bullet$ $\lambda$ is a polarization of $A$, whose Rosati involution induces on $F$, via $i$, the nontrivial Galois automorphism of $F/{F^+}$.

$\bullet$ $\bar{\eta}$ is a $K_{\tilde{G}}$-level structure. This is a $K_G$-orbit of $\BA_{F,f}$-linear isometries
\begin{equation*}
\eta:\Hom_F(\hat{V}(A_0), \hat{V}(A)) \simeq -V \otimes_F \BA_{F,f}.
\end{equation*}
Here, $-V$ is the same $E$-vector space as $V$, but its hermitian form multiplied by $-1$. We write $\hat{V}(A)$ for the full rational Tate module of $A$. Also, we considered $\Hom_F(\hat{V}(A_0), \hat{V}(A)) $ as a hermitian space with the hermitian form $h_A$,
\begin{equation*}
h_A(x,y)=\lambda_0^{-1} \circ y^{\vee} \circ \lambda \circ x \in \End_{\BA_{F,f}}(\hat{V}(A_0))=\BA_{F,f}.
\end{equation*}

A morphism between two objects
\begin{equation*}
(A_0,i_0,\lambda_0,A,i,\lambda,\bar{\eta}) \rightarrow (A_0',i_0',\lambda_0',A',i',\lambda',\bar{\eta}'),
\end{equation*} 
is given by an isomorphism $\mu_0:(A_0, i_0, \lambda_0) \simeq (A_0',i_0',\lambda_0')$ in $M_0^{\Fa,W}$ and an $F$-linear isogeny $\mu:A\rightarrow A'$ pulling $\lambda'$ back to $\lambda$ and $\bar{\eta}'$ back to $\bar{\eta}$.

Now, we can state the following proposition.
\begin{proposition}\label{proposition4.4.1}
	(\cite[Proposition 3.5]{RSZ2}) $M_{K_{\tilde{G}}}(\tilde{G})$ is a Deligne-Mumford stack smooth of relative dimension $n-1$ over $\Spec E$. The coarse moduli scheme of $M_{K_{\tilde{G}}}(\tilde{G})$ is a quasi-projective scheme over $\Spec E$, naturally isomorphic to the canonical model of $Sh_{K_{\tilde{G}}}(\tilde{G},\lbrace h_{\tilde{G}} \rbrace)$. For $K_{\tilde{G}}$ sufficiently small, the forgetful morphism $M_{K_{\tilde{G}}}(\tilde{G}) \rightarrow M_0^{\Fa,W}$ is relatively representable.
\end{proposition}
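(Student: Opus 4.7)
The plan is to reduce this to a statement about a standard PEL-type moduli problem fibered over the auxiliary space $\CM_0^{\Fa,W}$. Recall from the discussion preceding the proposition that $\CM_0^{\Fa,W}$ is finite \'etale over $\Spec O_E$, so in particular a Deligne-Mumford stack whose generic fiber $M_0^{\Fa,W}$ is \'etale over $\Spec E$. The natural map $M_{K_{\tilde{G}}}(\tilde{G}) \rightarrow M_0^{\Fa,W}$ forgets the tuple $(A,i,\lambda,\bar\eta)$, so the content of the proposition over each geometric point of $M_0^{\Fa,W}$ is a moduli problem of abelian varieties with $F$-action, polarization, Kottwitz signature condition, and a $K_G$-level structure measured against the fixed $(A_0,i_0,\lambda_0)$. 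The first thing I would do is fix such a triple $(A_0,i_0,\lambda_0)$ over a locally noetherian $E$-scheme $S$ and describe the fiber functor explicitly as a moduli of $(A,i,\lambda,\bar\eta)$ satisfying the stated conditions.

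Next, for relative representability, I would follow the classical recipe of Mumford--Kottwitz: given two objects $(A,i,\lambda,\bar\eta)$ and $(A',i',\lambda',\bar\eta')$ of the fiber over a fixed $(A_0,i_0,\lambda_0)$, the sheaf of isomorphisms is representable by a finite unramified $S$-scheme, and for $K_G$ sufficiently small (neat in the sense of Borel) the $K_G$-level structure rigidifies the moduli problem so that there are no nontrivial automorphisms. Combined with standard representability results for moduli of polarized abelian schemes with endomorphism and level structure (Mumford GIT, or the construction in \cite{Kot} suitably adapted), this gives that the fiber is represented by a quasi-projective $S$-scheme. Passing to the total space, $M_{K_{\tilde{G}}}(\tilde{G}) \rightarrow M_0^{\Fa,W}$ becomes relatively representable by a quasi-projective scheme, so the source is a Deligne-Mumford stack with a quasi-projective coarse moduli scheme over $\Spec E$.

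For smoothness and the relative dimension, I would use the Grothendieck--Messing deformation theory for abelian schemes with $F$-action: the tangent space at a point is computed from the filtered Hodge structure, and the Kottwitz signature condition $((1,n-1)_{\tau_1},(0,n)_{\tau \neq \tau_1})$ forces the only nontrivial Hodge-theoretic deformation direction to lie in the $(1,n-1)$ factor, yielding a smooth formal neighborhood of relative dimension $n-1$ over $E$. The identification with the canonical model of $\Sh_{K_{\tilde{G}}}(\tilde{G},\{h_{\tilde{G}}\})$ is then obtained in the standard Deligne fashion: one first computes the $\BC$-points of $M_{K_{\tilde{G}}}(\tilde{G})$ via the complex uniformization of abelian varieties with $F$-action, matches this with the complex double coset description of $\Sh_{K_{\tilde{G}}}(\tilde{G})(\BC)$ using the hermitian space $\Hom_F(H_1(A_0),H_1(A)) \simeq -V$, and then verifies that the reciprocity law at special points (CM abelian varieties coming from $Z$ and the tori in $G$) matches Deligne's reciprocity morphism for $(\tilde{G},\{h_{\tilde{G}}\})$, so that the moduli-theoretic model descends to $E$ and is canonically isomorphic to the canonical model.

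The main obstacle I expect is the matching with the canonical model, specifically verifying the reciprocity law at CM points, since the auxiliary factor $\CM_0^{\Fa,W}$ encodes a choice of similarity class of hermitian line $W$ and one must check that the ambient hermitian space produced by the moduli problem matches the sign conventions in the signature $(1,n-1)$ at $\tau_1$ versus $(0,n)$ elsewhere. Everything else (representability, smoothness, quasi-projectivity) is either immediate from the standard PEL machinery or, since the result is exactly \cite[Proposition 3.5]{RSZ2}, can be invoked directly; I would state the proof as citing \cite{RSZ2} and sketching only the compatibility points relevant to the later integral structure developed in our paper.
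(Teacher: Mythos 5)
The paper gives no proof of this proposition at all; it is stated with the parenthetical citation to \cite[Proposition 3.5]{RSZ2} and nothing else, which is exactly what you conclude at the end of your proposal. Your sketch of the underlying PEL machinery (Mumford--Kottwitz representability for neat level, Grothendieck--Messing for smoothness of relative dimension $n-1$, complex uniformization plus reciprocity at CM points to identify the canonical model) is a reasonable reconstruction of what goes into the cited result, but the paper itself simply invokes it.
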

\bigskip
\subsection{Integral models}\label{subsec:section4.2}

	In this subsection, we will imitate the semi-global integral model in \cite[Section 4]{RSZ2}. Our case is related to AT parahoric level. We use the following notation. Fix a prime $p \neq 2$ and an embedding $\tilde{v}:\bar{\BQ} \rightarrow \bar{\BQ}_p.$ This embedding determines  places $u$ of $E$, $v_0$ of ${F^+}$, and $w_0$ of $F$ via $\tau_1^-$. Denote by $S_p$ the set of places $v$ of ${F^+}$ over $p$. Let $F_v:= F \otimes_{F^+} F^+_v$. Then, $F_v$ is a quadratic field extension of $F_v^+$ (resp. $F_v \simeq F^+_v \times F^+_v$), if $v$ is nonsplit (resp. split). Denote by $\pi_v$ a uniformizer in $F_v$ (when $v$ splits, this uniformizer is an ordered pair of uniformizers on the right side of the isomorphism $F_v \simeq F^+_v \times F^+_v$). Assume that $v_0$ is unramified over $p$ and inert in $F$. We assume that the ideal $\Fa$ in the definition of $\CM_0^{\Fa}$ is prime to $p$ and we fix an element $W \in \CR^{\Fa}_{\Phi}/_{\sim}$.
	
	Now, we choose lattices $\Lambda_{v} \subset V_v$ such that
	\begin{equation*}
	\Lambda_{v} \subset \Lambda_{v}^{\perp} \subset \pi_v^{-1}\Lambda_v,
	\end{equation*}
	where $\Lambda_v^{\perp}$ means the dual lattice of $\Lambda_v$ with respect to the hermitian form. Let $h$ be the index of $\Lambda_{v_0}$ in $\Lambda_{v_0}^{\perp}$, i.e., $[\Lambda_{v_0}^{\perp}:\Lambda_{v_0}]=h$.
	
	We take open compact subgroup $K_{\tilde{G}} \subset \tilde{G}(\BA_{f})$ as follows.
	\begin{equation*}
	K_{\tilde{G}}=K_G \times K_Z= K_G^p \times K_{G,p} \times K_Z,
	\end{equation*}
	where $K_G^p \subset G(\BA^p_{F^+,f})$ is arbitrary, and
	\begin{equation*}
	K_{G,p}:=\prod_{v \in S_p} K_{G,v} \subset \prod_{v \in S_p}G(F^+_v),
	\end{equation*}
	where $K_{G,v}$ is the stabilizer of $\Lambda_v$ in $G(F^+_v)$.
	
	Now, we can formulate a moduli problem over $\Spec O_{E,(u)}$ as follows.
	For a locally noetherian scheme $S$ over $\Spec O_{E,(u)}$, we associate the set of isomorphism classes of tuples $(A_0,i_0,\lambda_0,A,i,\lambda,\bar{\eta}^p)$, where
	
	$\bullet$ $(A_0,i_0,\lambda_0)$ is an object of $\CM_0^{\Fa,W}(S)$.
	
	$\bullet$ $A$ is an abelian scheme over $S$.
	
	$\bullet$ $i$ is an $O_F \otimes \BZ_{(p)}$-action satisfying the Kottwitz condition of signature $((1,n-1)_{\tau_1}, (0,n)_{\tau \in \Phi_{F^+} \backslash \lbrace \tau_1 \rbrace})$, i.e., for all $a \in F$,
	\begin{equation}\label{kottwitz}
	\Charpol(i(a) \vert \Lie(A))=(T-\tau_1^-(a))(T-\tau_1^+(a))^{n-1}\prod_{\tau \in \Phi_{F^+} \backslash \lbrace \tau_1 \rbrace}(T-\tau^+(a))^n.
	\end{equation}
	
	$\bullet$ $\lambda$ is a polarization of $A$, whose Rosati involution induces on $O_F \otimes \BZ_{(p)}$ the nontrivial Galois automorphism of $F/F^+$. Also, we impose the following condition. The action of $O_{F^+} \otimes \BZ_p \simeq \prod_{v \in S_p} O_{F^+,v}$ induces a decomposition of $p$-divisible group,
	\begin{equation*}
	A[p^{\infty}]=\prod_{v \in S_p}A[v^{\infty}].
	\end{equation*}
	Since Rosati involution of $\lambda$ fixes $O_{F^+}$, $\lambda$ induces a polarization $\lambda_v:A[v^{\infty}] \rightarrow A^{\vee}[v^{\infty}] \simeq A[v^{\infty}]^{\vee}$ for each $v$. We impose the condition that $\Ker \lambda_v$ is contained in $A[i(\pi_v)]$ of rank $\vert \Lambda_v^{\perp}/\Lambda_v \vert$ for each $v \in S_p$.
	
	$\bullet$ $\bar{\eta}^p$ is a $K_G^p$-orbit of $\BA^p_{F,f}$-linear isometries
\begin{equation*}
\eta:\Hom_F(\hat{V}^p(A_0), \hat{V}^p(A)) \simeq -V \otimes_F \BA_{F,f}^p.
\end{equation*}
Here, $-V$ is the same $E$-vector space as $V$, but its hermitian form multiplied by $-1$. We write $\hat{V}^p(A)$ for the rational prime-to-$p$ Tate module of $A$. Also, we considered $\Hom_F(\hat{V}^p(A_0), \hat{V}^p(A)) $ as a hermitian space with the hermitian form $h_A^p$,
\begin{equation*}
h_A^p(x,y)=\lambda_0^{-1} \circ y^{\vee} \circ \lambda \circ x \in \End_{\BA_{F,f}^p}(\hat{V}^p(A_0))=\BA_{F,f}^p.
\end{equation*}
	
For $v \neq v_0$, we impose the Eisenstein condition and the sign condition. Before we explain these conditions, we define a function $r:\Hom(F,\BC) \rightarrow \lbrace 0,1,n-1,n \rbrace$ such that,
\begin{displaymath} 
\tau \mapsto r_{\tau}:=\left\{
\begin{array}{cc}
1 & \tau=\tau_1^-;\\
0 & \tau \in \Phi \backslash \lbrace\tau_1^-\rbrace;\\
n-r_{\bar{\tau}} & \tau \notin \Phi.
\end{array}
\right.
\end{displaymath}
	
First, we recall the Eisenstein condition from \cite[Section 4.1]{RSZ2}. We impose the Eisenstein condition only when the base scheme $S$ has nonempty special fiber. In this case, we may base change via $\tilde{v}:O_{E,(u)} \rightarrow \bar{\BZ}_p$ (the ring of integers of $\bar{\BQ}_p$), and pass to completions and assume that $S$ is a scheme over $\Spf \bar{\BZ}_p$. We have a decomposition of the $p$-divisible group 
	\begin{equation*}
	A[p^{\infty}]=\prod_{w \vert p}A[w^{\infty}].
	\end{equation*}
where $w$ runs over the places of $F$ over $p$.	Since we assume that $p$ is locally nilpotent on $S$, there is a natural isomorphism
\begin{equation*}
\Lie A \simeq \Lie A[p^{\infty}] = \bigoplus_{w \vert p} A[w^{\infty}].
\end{equation*}
By using the embedding $\tilde{v}:\bar{\BQ} \rightarrow \bar{\BQ}_p$, we can identify
\begin{equation*}
\Hom_{\BQ}(F,\bar{\BQ}) \simeq \Hom_{\BQ}(F,\bar{\BQ}_p),
\end{equation*}
and this gives an identification
\begin{equation}\label{eq7.2.2}
\lbrace \tau \in \Hom_{\BQ}(F,\bar{\BQ}) \vert \tilde{v} \circ \tau =w \rbrace \simeq \Hom_{\BQ}(F_w,\bar{\BQ}_p).
\end{equation}
For each place $w$, by the Kottwitz condition \eqref{kottwitz}, the $p$-divisible group $A[w^{\infty}]$ is of height $n[F_w:\BQ_p]$ and dimension
\begin{equation*}
\dim A[w^{\infty}]=\sum_{\tau \in \Hom_{\BQ}(F_w,\bar{\BQ}_p)}r_{\tau}.
\end{equation*}

For each place $w$ such that $w \vert v$ and $v\neq v_0$, the action of $F$ on $A[w^{\infty}]$ is of a banal signature type in the sense of \cite[Appendix B]{RSZ2}. In other words, $r_{\tau}$ is $0$ or $n$ for all $\tau \in \Hom_{\BQ}(F_w,\bar{\BQ}_p)$. Let $\pi =\pi_w$ be a uniformizer in $F_w$ and let $F_w^u$ be the maximal unramified extension of $\BQ_p$ in $F_w$.
For each $\psi \in \Hom_{\BQ}(F_w^u, \bar{\BQ}_p)$, let
\begin{equation*}
A_{\psi}:=\lbrace \tau \in \Hom_{\BQ}(F_w,\bar{\BQ}_p) \vert \tau\vert_{F_w^u}=\psi\text{ and } r_{\tau}=n \rbrace.
\end{equation*}
Let
\begin{equation*}
Q_{A_{\psi}}:=\prod_{\tau \in A_{\psi}}(T-\tau(\pi)).
\end{equation*}
Then, the Eisenstein condition at $v (\neq v_0)$ is as follows. For each place $w$ that divides $v$, and for all $\psi \in \Hom_{\BQ}(F_w^u,\bar{\BQ}_p),$
\begin{equation*}
Q_{A_{\psi}}(i(\pi) \vert \Lie A[w^{\infty}])=0.
\end{equation*}

Now, we will define the sign condition at $v (\neq v_0)$. We impose this condition only when $v$ does not split in $F$. The sign condition at $v$ is the condition that for every point $s$ of $S$,
\begin{equation*}
\inv_v^r(A_{0,s},i_{0,s},\lambda_{0,s},A_s,i_s,\lambda_s)=\inv_v(-V_v).
\end{equation*}
We need to explain these two factors. For the left one, we refer to \cite[Appendix A]{RSZ2}. Also, we define
\begin{equation*}
\inv_v(-V_v):=(-1)^{n(n-1)/2} \det{(-V_v)} \in F_v^{+,\times}/\Nm F_v^{+,\times},
\end{equation*}
where $\det({-V_v}) \in F_v^{+,\times}/\Nm F_v^{+,\times}$ is the class of the determinant of any hermitian matrix of the hermitian space $-V_v$.

A morphism between two objects
\begin{equation*}
(A_0,i_0,\lambda_0,A,i,\lambda,\bar{\eta}^p) \rightarrow (A_0',i_0',\lambda_0',A',i',\lambda',\bar{\eta}'^p),
\end{equation*}
is given by an isomorphism $(A_0,i_0,\lambda_0) \simeq (A_0',i_0',\lambda_0')$ in $\CM_0^{\Fa,W}(S)$ and a quasi-isogeny $A \rightarrow A'$ which induces an isomorphism
\begin{equation*}
A[p^{\infty}] \simeq A'[p^{\infty}],
\end{equation*}
compatible with $i$ and $i'$, with $\lambda$ and $\lambda'$, and with $\bar{\eta}^p$ and $\bar{\eta}'^p$.

\begin{proposition}\label{proposition4.4.2}
	The moduli problem defined above is representable by a Deligne-Mumford stack $\CM_{K_{\tilde{G}}}(\tilde{G})$ flat over $\Spec O_{E,(u)}$. For $K^p_{G}$ small enough, $\CM_{K_{\tilde{G}}}(\tilde{G})$ is relatively representable over $\CM_0^{\Fa,W}$. The generic fiber $\CM_{K_{\tilde{G}}}(\tilde{G}) \times_{\Spec O_{E,(u)}} \Spec E$ is canonically isomorphic to $M_{K_{\tilde{G}}}(\tilde{G})$. Furthermore, if $h=0, n$, then $\CM_{K_{\tilde{G}}}(\tilde{G})$ is smooth over $\Spec O_{E,(u)}$. If $h \neq 0,n$, then $\CM_{K_{\tilde{G}}}(\tilde{G})$ has semistable reduction over $\Spec O_{E,(u)}$ provided that $E_{u}$ is unramified over $\BQ_p$.
\end{proposition}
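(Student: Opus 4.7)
The plan is to follow the strategy of \cite[Proposition 3.5]{RSZ2}, working relative to the auxiliary factor $\CM_0^{\Fa,W}$ and then decomposing the local problem at $p$ according to the places $v \in S_p$. First I would show relative representability of the forgetful morphism $\CM_{K_{\tilde{G}}}(\tilde{G}) \to \CM_0^{\Fa,W}$ for $K^p_G$ sufficiently small. The moduli of tuples $(A,i,\lambda,\bar{\eta}^p)$ with the Kottwitz signature condition, the polarization condition on $\Ker\lambda_v$, the Eisenstein and sign conditions away from $v_0$, and a prime-to-$p$ level structure is representable by a quasi-projective scheme over $\CM_0^{\Fa,W}$ by the standard arguments of Rapoport--Zink and Kottwitz, since each added condition (the specified order/type of $\Ker\lambda_v$, the Eisenstein equations on $\Lie A[w^\infty]$, and the sign condition) is a closed condition on the universal family. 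The $K^p_G$-smallness removes nontrivial automorphisms in the usual way. Flatness over $\Spec O_{E,(u)}$ will follow from the local model diagram once it is in place, since the local model will be shown to be flat.

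Next I would identify the generic fiber. After base change to $\Spec E$, the Eisenstein and sign conditions become automatic (they are imposed only on schemes with nonempty special fiber at $u$), the polarization condition on $\Ker\lambda_v$ for $v \neq v_0$ and the prime-to-$p$ level structure combine with the automatic existence of a full $K_{\tilde{G}}$-level structure on the rational Tate module to recover a level-$K_{\tilde{G}}$ datum on $\Hom_F(\hat V(A_0),\hat V(A))$, yielding an isomorphism with $M_{K_{\tilde{G}}}(\tilde{G})$. This essentially follows the formalism in \cite[Section 4]{RSZ2}.

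For the smoothness and semistability, I would construct a local model diagram
\begin{equation*}
\CM_{K_{\tilde{G}}}(\tilde{G}) \longleftarrow \widetilde{\CM} \longrightarrow \CM^{\mathrm{loc}},
\end{equation*}
where $\widetilde{\CM}$ is the scheme parametrizing the original data together with a trivialization of the de Rham realization of $A[p^\infty]$, and $\CM^{\mathrm{loc}}$ is the product over $v \in S_p$ of the local models attached to $A[v^\infty]$. The left arrow is a smooth surjection, and the right arrow is smooth as well by Grothendieck--Messing. For $v \neq v_0$ the signature is banal, so by \cite[Appendix B]{RSZ2} the corresponding local factor of $\CM^{\mathrm{loc}}$ becomes étale over $\Spec O_{E,(u)}$ after imposing the Eisenstein condition. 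The factor at $v_0$ is, after base change to $O_{\breve{E}_u}$, isomorphic to the local model of $\CN^h_{F_{w_0}/F^+_{v_0}}(1,n-1)_{O_{\breve{E}}}$, which was analyzed in Proposition \ref{proposition5.12}: it is formally smooth when $h=0,n$, and has semistable reduction otherwise, assuming $E_u$ is unramified over $\BQ_p$ so that base change behavior is transparent. Combining these with the local model diagram yields the claim.

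The main obstacle I expect is verifying that the local model at $v_0$ in the global moduli problem really does coincide, after unramified base change, with the local model analyzed in Proposition \ref{proposition5.12}. This requires tracing through the decomposition $A[p^\infty] = \prod_{v \in S_p} A[v^\infty]$, matching the imposed polarization condition $\Ker\lambda_{v_0} \subset A[i(\pi_{v_0})]$ of rank $\vert \Lambda_{v_0}^{\perp}/\Lambda_{v_0}\vert$ with the integer $h$, and checking the compatibility of the Kottwitz condition at $\tau_1^-$ with the signature $(1,n-1)$ used in the local moduli; this is essentially the translation from PEL-type global data to the strict formal $O_{F^+_{v_0}}$-module picture via Proposition \ref{proposition5.10}. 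Once this identification is in place, the smoothness/semistability assertions transfer from the local to the global situation in a standard way.
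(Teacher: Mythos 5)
Your proposal follows essentially the same route as the paper. The paper is more efficient on the first three claims (representability, relative representability, identification of the generic fiber, and smoothness when $h=0,n$): it simply cites \cite[Theorem 4.1]{RSZ2} rather than re-deriving them, since adding the parahoric polarization condition at $v_0$ does not change those arguments. For the new content — semistable reduction when $h\neq 0,n$ — the paper does exactly what you outline: decompose the local model $M=\prod_{v\in S_p} M_v \times_{\Spec O_{E_{r|v}}} \Spec O_{E_u}$ along the identification \eqref{eq7.2.3}, observe that $M_v$ is banal and hence equals $\Spec O_{E_{r|v}}$ for $v\neq v_0$, identify $M_{v_0}$ with the local model in Proposition \ref{proposition5.12}, and use that $E_u$ unramified preserves semistable reduction under base change. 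One detail the paper flags explicitly, which your ``main obstacle'' paragraph gestures at but does not pin down: the hypothesis that $v_0$ is unramified over $p$ is used precisely so that the wedge/Kottwitz condition \eqref{eq5.01} automatically implies the condition \eqref{eq5.02}, which is what ensures $M_{v_0}$ really is the local model appearing in Proposition \ref{proposition5.12}. If you were to write out the identification, that is the point you would need to verify, and it is where the unramifiedness of $v_0$ enters the argument.
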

\begin{proof} The representability and the statement for the generic fiber and the smoothness when $h=0$ (and hence when $h=n$) are proved in \cite[Theorem 4.1]{RSZ2}. Therefore, it suffice to show that this has semistable reduction over $\Spec O_{E,(u)}$ where $h \neq 0,n$ and $E_{u}$ is unramified over $\BQ_p$. To prove this we need to use the theory of the local model as in \cite[Theorem 4.10]{RSZ2}. The local model corresponding to $A_0$ is \'etale because $\CM_0^{\Fa,W}$ is. Let $M$ be the local model corresponding to $A$. Before we prove that $M$ has semistable reduction, we introduce some notation. By the identification \eqref{eq7.2.2}, we have
	\begin{equation}\label{eq7.2.3}
	\Hom_{\BQ}(F, \bar{\BQ}) \simeq \bigsqcup_{v \in S_p} \Hom_{\BQ_p}(F_v,\BQ_p).
	\end{equation}	
Let $r\vert_v : \Hom_{\BQ}(F_v,\bar{\BQ}_p) \rightarrow \lbrace 0,1,n-1,n \rbrace$ be the restriction of the function $r$ to $\Hom_{\BQ}(F_v,\bar{\BQ}_p)$. Let
	\begin{equation*}
	\sig_{r\vert_v}:=\sum_{\tau \in \Hom_{\BQ}(F_v,\bar{\BQ}_p)}r_{\tau} \tau,
	\end{equation*}
which is an element of $\BN[\Phi_F]$, the commutative monoid freely generated by $\Phi_F$. Note that the Galois group $\Gal(\BC/\BQ)$ acts on $\Phi_F$ hence on $\BN[\Phi_F]$. Let $E_{{r\vert v}}$ be the fixed field of the stabilizer in $\Gal(\BC/\BQ)$ of the element $\sig_{r\vert_v}$.
	
	Then we have a decomposition
	\begin{equation*}
	M=\prod_{v \in S_p}M_v \times_{\Spec O_{E_{r\vert v}}} \Spec O_{E_u},
	\end{equation*}
which is induced from \eqref{eq7.2.3}.

	For $v \neq v_0$, by our Kottwitz condition, $M_v$ is a banal local model as in \cite[Appendix B]{RSZ2}. Therefore, $M_v = \Spec O_{E_{r\vert v}}$. Also, $M_{v_0}$ is a local model which appears in the proof of Proposition \ref{proposition5.12} (here, we used the condition that $v_0$ is unramified, and therefore the condition \eqref{eq5.02} follows from the condition \eqref{eq5.01} which follows from the Kottwitz condition). Therefore, it has semistable reduction over ${\Spec O_{E_{r\vert v}}}$. Since $E_u$ is unramified over $\BQ_p$ (hence, over $E_{r\vert v}$) and semistable reduction is stable under an unramified base change, $M$ has semistable reduction over $\Spec O_{E_u}$,
\end{proof} 
\bigskip

\subsection{The uniformization theorem}\label{subsec:section4.3}

In this subsection, we will relate the basic locus of the special fiber of $\CM_{K_{\tilde{G}}}(\tilde{G})$ to the (relative) Rapoport-Zink space $\CN_{F_{w_0}/F^+_{v_0}}^h(1,n-1)$ in Section \ref{sec:section2}, via the non-archimedean uniformization theorem of Rapoport and Zink. We will follow the proof of \cite[Theorem 8.15]{RSZ2}. In order to simplify notation, we write $\CM$ for $\CM_{K_{\tilde{G}}}(\tilde{G})$, and $\CN$ for $\CN_{F_{w_0}/F^+_{v_0}}^h(1,n-1)$.
	
	Let $\breve{E}_u$ be the completion of a maximal unramified extension of $E_u$, and $k$ be the residue field of $O_{\breve{E}_u}$. Let $\CM_{O_{\breve{E}_u}} = \CM \otimes_{O_{E,(u)}} O_{\breve{E}_u}$. We denote by $\CM^{ss}$ the basic locus of $\CM \otimes_{O_{E,(u)}} k$ and by $\widehat{\CM^{ss}}$ the completion of $\CM_{O_{\breve{E}_u}}$ along $\CM^{ss}$.
	
	Choose a point $(\bm{A}_0,\bm{i}_0,\bm{\lambda}_0,\bm{A},\bm{i},\bm{\lambda},\bm{\bar{\eta}})$ of $\CM^{ss}(O_{\breve{E}_u})$. Let 
	\begin{equation*}
	\begin{array}{l}
	\BX_{0}=\bm{A}_0[p^{\infty}]=\prod_{v \in S_p} \bm{A}_0[v^{\infty}],\\
	\BX=\bm{A}[p^{\infty}]=\prod_{v \in S_p} \bm{A}[v^{\infty}],
	\end{array}
	\end{equation*}
	and $i_{\BX_{0}},\lambda_{\BX_{0}}, i_{\BX},\lambda_{\BX}$ be the induced $O_{F} \otimes \BZ_p$-actions and polarizations. This choice gives us the following non-archimedean uniformization morphism along the basic locus by \cite[Theorem 6.30]{RZ},
	\begin{equation*}
	\Theta:I(\BQ) \backslash \CN' \times \tilde{G}(\BA^p_f) /K^p_{\tilde{G}} \simeq \widehat{\CM^{ss}}.
	\end{equation*}
	Here the group $I$ is an inner form of $\tilde{G}$ associated to the hermitian space $V'$, where $V'$ is negative definite at all archimedean places and isomorphic to $V$ at all non-archimedean places except at $v_0$ (hence, by the product formula and the Hasse principle, $V'$ is determined), and $\CN'$ is the corresponding Rapoport-Zink space whose framing object is $(\BX_{0}, i_{\BX_{0}},\lambda_{\BX_{0}},\BX, i_{\BX},\lambda_{\BX})$. 

By \cite[Lemma 8.16]{RSZ2}, we have
\begin{equation*}
\CN' \simeq (Z(\BQ_p)/K_{Z,p}) \times (\CN_{F_{w_0}/\BQ_p}^h(1,n-1))_{O_{\breve{E}_u}} \times \prod_{v \in S_p \backslash \lbrace v_0 \rbrace} U(V)(F^+_v)/K_{G,v}.
\end{equation*}
Also, by Proposition \ref{proposition5.10}, $\CN_{O_{\breve{E}_u}} \simeq (\CN_{F_{w_0}/\BQ_p}^h(1,n-1))_{O_{\breve{E}_u}}$.

The following theorem summarizes the above discussion.

\begin{theorem}\label{theorem}
	There is a non-archimedean uniformization isomorphism
		\begin{equation*}
	\Theta: I(\BQ) \backslash  \CN' \times \tilde{G}(\BA^p_f) /K^p_{\tilde{G}} \simeq \widehat{\CM^{ss}},
	\end{equation*}
	where
	\begin{equation*}
	\CN' \simeq (Z(\BQ_p)/K_{Z,p}) \times \CN_{O_{\breve{E}_u}} \times \prod_{v \in S_p \backslash \lbrace v_0 \rbrace} U(V)(F^+_v)/K_{G,v}.
	\end{equation*}

\end{theorem}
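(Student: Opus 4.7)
The plan is to apply the non-archimedean uniformization theorem of Rapoport-Zink \cite[Theorem 6.30]{RZ} to the integral model $\CM$ constructed in Proposition \ref{proposition4.4.2}, and then to identify the resulting local Rapoport-Zink space with the relative space $\CN$ studied in Section \ref{sec:section2}. First I would verify that the basic locus $\CM^{ss}$ is non-empty (otherwise the theorem is vacuous) and choose a geometric base point $(\bm{A}_0,\bm{i}_0,\bm{\lambda}_0,\bm{A},\bm{i},\bm{\lambda},\bm{\bar{\eta}}) \in \CM^{ss}(O_{\breve{E}_u})$ as in the preamble. From this base point I would form the $p$-divisible groups $\BX_0 = \bm{A}_0[p^{\infty}]$ and $\BX = \bm{A}[p^{\infty}]$ together with their induced $O_F \otimes \BZ_p$-actions and polarizations; these serve as the framing object for a Rapoport-Zink space $\CN'$ of PEL-type whose rational similitudes are governed by the algebraic group $I$.

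Second, I would confirm that the moduli problem defining $\CM$ meets the hypotheses of the uniformization theorem of \cite[Theorem 6.30]{RZ}: the level structure away from $p$ is given by a compact open subgroup $K^p_{\tilde{G}}$, the $p$-divisible group together with its polarization and $O_F$-action is encoded in a Rapoport-Zink datum, and the determinant/Kottwitz/Eisenstein/sign conditions at places $v \neq v_0$ cut out precisely the banal factors described in \cite[Appendix B]{RSZ2}. The inner form $I$ is then identified by comparing rational Tate modules away from $p$ with the hermitian space $-V \otimes \BA_{F,f}$ and using the fact that the local invariant at $v_0$ must flip to produce a totally definite form $V'$ elsewhere, which together with the Hasse principle pins down $V'$ uniquely and hence determines $I$ as an inner twist of $\tilde{G}$.

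Third, I would decompose the local Rapoport-Zink space $\CN'$ according to the product decomposition $F^+ \otimes \BQ_p \simeq \prod_{v \in S_p} F^+_v$. At places $v \neq v_0$ the signature type is banal, so the Rapoport-Zink space is zero-dimensional and reduces to the étale quotient $U(V)(F^+_v)/K_{G,v}$; the torus factor $Z(\BQ_p)/K_{Z,p}$ arises from the auxiliary datum $(Z,h_Z)$ and the polarized abelian scheme $\bm{A}_0$, as recorded in \cite[Lemma 8.16]{RSZ2}. At the distinguished place $v_0$ the Rapoport-Zink space is of PEL-type over $\BZ_p$, and coincides with $\CN^h_{F_{w_0}/\BQ_p}(1,n-1)_{O_{\breve{E}_u}}$. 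Invoking Proposition \ref{proposition5.10} then yields an isomorphism with $\CN_{O_{\breve{E}_u}} = \CN^h_{F_{w_0}/F^+_{v_0}}(1,n-1)_{O_{\breve{E}_u}}$, producing the claimed factorization of $\CN'$.

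The main obstacle I anticipate is the careful matching of framing data at $v_0$: one must check that the polarization $\bm{\lambda}$ induces a polarization on $\bm{A}[v_0^{\infty}]$ whose kernel has order $q^{2h}$ and lies inside the $\pi_{v_0}$-torsion, so that the local framing object indeed satisfies the conditions defining the relative Rapoport-Zink space $\CN^h_{F_{w_0}/F^+_{v_0}}(1,n-1)$. This reduces to unwinding the parahoric level condition at $v_0$ (the stabilizer of $\Lambda_{v_0}$ with $[\Lambda_{v_0}^{\perp}:\Lambda_{v_0}] = h$) and the induced polarization datum from $\bm{\lambda}_{v_0}$, and then transferring between $\BQ_p$-PEL and relative $O_{F^+_{v_0}}$-formalisms via Proposition \ref{proposition5.10}. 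Once these identifications are in place, \cite[Theorem 6.30]{RZ} delivers the isomorphism $\Theta$, and one obtains the stated decomposition of $\CN'$ by combining \cite[Lemma 8.16]{RSZ2} with Proposition \ref{proposition5.10}.
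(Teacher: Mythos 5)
Your proposal follows essentially the same route as the paper: invoke the Rapoport--Zink uniformization theorem \cite[Theorem 6.30]{RZ} after choosing a base point in $\CM^{ss}(O_{\breve{E}_u})$ and forming the framing object, decompose $\CN'$ by \cite[Lemma 8.16]{RSZ2} into a torus factor, banal étale quotients at $v \neq v_0$, and a PEL Rapoport--Zink space at $v_0$, and then identify the $v_0$-factor with $\CN_{O_{\breve{E}_u}}$ via Proposition \ref{proposition5.10}. The only difference in emphasis is that the paper's proof body spends its detail on constructing the inverse morphism $\Theta^{-1}$ explicitly (via \cite[Proposition 6.29]{RZ} and comparison of prime-to-$p$ Tate modules), whereas you focus on verifying that the hypotheses of \cite[Theorem 6.30]{RZ} are met; both are compatible and the key citations are identical.
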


\begin{proof}
	This is essentially the same as the proof of \cite[Theorem 6.30]{RZ}. For the convenience of the reader, we will construct the inverse morphism of $\Theta$. Let $S$ be a $O_{\breve{E}_u}$-scheme such that $p$ is locally nilpotent. Let $s$ be a geometric point of $S$. Choose a point $P=(A_0,i_0,\lambda_0,A,i,\lambda,\bar{\eta}) \in \CM^{ss}(S)$. By \cite[Proposition 6.29]{RZ}, we can choose $O_{F}$-linear quasi-isogenies
	\begin{equation*}
	\begin{array}{l}
	\tilde{\rho}_0 :A_0 \times_S S_k \rightarrow \bm{A}_{0k}  \times_k S_k,\\
	\tilde{\rho}:A \times_S S_k \rightarrow \bm{A}_k \times_k S_k,
	\end{array}
	\end{equation*}
	compatible with polarizations. Then, we have the induced quasi-isogenies
	\begin{equation*}
	\begin{array}{l}
	\rho_{0}: A_0[p^{\infty}]\times_S S_k \rightarrow \BX_{0k} \times_k S_k,\\
	\rho: A[p^{\infty}]\times_S S_k \rightarrow \BX_{k} \times_k S_k,
	\end{array}
	\end{equation*}
	The tuple $(A_0[p^{\infty}], A[p^{\infty}], \rho_{0}, \rho)$ (with the induced $O_{F} \otimes \BZ_p$-actions and the induced polarizations) gives an element in $\CN'(S)$ and this is the $\CN'$ part of $\Theta^{-1}(P)$.
	
	Now, we should find an element $(z, g) \in Z(\BA_f^p) \times G(\BA_f^p)=\tilde{G}(\BA_f^p)$ such that $\Theta^{-1}(P)=((A_0[p^{\infty}], A[p^{\infty}], \rho_{0}, \rho), (z, g))$.
	
	The element $z$ in $Z(\BA_f^p)$ comes from the moduli space $\CM_0^{\Fa,W}$. More precisely, by definition of $\CM_0^{\Fa,W}$, we have two $O_E \otimes \BA_f^p$-linear similitudes
	\begin{equation*}
	\begin{array}{l}
	\xi:\hat{V}^p(A_{0s}) \rightarrow W \otimes \BA^p_f,\\
	\zeta:\hat{V}^p(\bm{A}_{0k}) \rightarrow W \otimes \BA^p_f.
	\end{array}
	\end{equation*}
	Therefore, the composite
	\begin{equation*}
	W \otimes \BA^p_f \xrightarrow{\xi^{-1}} \hat{V}^p(A_{0s}) \xrightarrow{\rho_{0}} \hat{V}^p(\bm{A}_{0k}) \xrightarrow{\zeta} W \otimes \BA^p_f
	\end{equation*}
	gives an element $z$ in $Z(\BA_f^p)$.
	
	For the element $g$, consider the composite
	\begin{equation*}
	\begin{array}{ll}
	-V \otimes_F \BA_{F,f}^p \xrightarrow{\eta^{-1}} &\Hom_F(\hat{V}^p({A}_{0s}),\hat{V}^p({A}_s))\\ &\xrightarrow{(\rho_0^{-1},\rho)}\Hom_F(\hat{V}^p(\bm{A}_{0k}),\hat{V}^p(\bm{A}_k))  \xrightarrow{\bm{\eta}}-V \otimes_F \BA_{F,f}^p.
	\end{array}
	\end{equation*}
	This is an isometry which gives rise to an element $g$ in $G(\BA_f^p)$.
	
	The construction of $\Theta$ is identical to the arguments in \cite[Chapter 6]{RZ}.
\end{proof}

\bigskip

\section{Special cycles and arithmetic intersection numbers}\label{sec:section5}
In this section, we use the notation in Section \ref{sec:section2}. Also we denote by $k=\bar{\BF}_p$ and by $val$ the valuation of $E$. We will define the special cycles and study their intersections.

Let $(\overline{\BY},i_{\overline{\BY}},\lambda_{\overline{\BY}})$ be a strict formal $O_F$-module of $F$-height 2 over $k$, with an action $i_{\overline{\BY}}:O_E \rightarrow \End(\overline{\BY})$ and with principal polarization $\lambda_{\overline{\BY}}$. Also, we assume that it satisfies the determinant condition of signature $(0,1)$. Let $\CN^0(0,1)$ be the corresponding moduli space. To simplify notation, we write $\CN^0$ for $\CN^0(0,1)_{O_{\breve{E}}}$, $\CN$ for $\CN^h_{E/F}(1,n-1) _{O_{\breve{E}}}$ and $\widehat{\CN}$ for $\CN^{n-h}_{E/F}(1,n-1)_{O_{\breve{E}}}$.

\begin{definition}
	The \textit{space of special homomorphisms} is the $E$-vector space
	\begin{equation*}
	\BV:=\Hom_{O_E}(\overline{\BY},\BX)\otimes_{\BZ}\BQ.
	\end{equation*}
	For $x,y \in \BV$, we define a hermitian form $h$ on $\BV$ as
	\begin{equation*}
	h(x,y)=\lambda_{\overline{\BY}}^{-1} \circ y^{\vee}  \circ \lambda_{\BX} \circ x \in \End_{O_E}(\overline{\BY}) \otimes \BQ \overset{i_{\overline{\BY}}^{-1}}{\simeq} E.
	\end{equation*}
	We often omit $i_{\overline{\BY}}^{-1}$ via the identification $\End_{O_E}(\overline{\BY}) \otimes \BQ {\simeq} E$.
\end{definition}

\begin{remark}\label{remark5.2}
	We have an isomorphism between $\CN$ and $\widehat{\CN}$. For each $O_{\breve{E}}$-scheme $S$, the isomorphism sends $(X,i_X,\lambda_X,\rho_X) \in \CN(S)$ to 
	\begin{equation*}
	(X^{\vee}, \overline{i}_X^{\vee},\lambda'_X,(\rho_X^{\vee})^{-1}) \in \widehat{\CN}(S).
	\end{equation*}
	Here $\lambda'_X : X^{\vee} \rightarrow X$ is the unique polarization such that $\lambda'_X \circ \lambda_X =i_X(\pi)$, and for $a \in O_E$, we define $\overline{i}_X^{\vee}(a):=i_X(\overline{a})^{\vee}$.
\end{remark}	
\begin{definition}
	We write $\theta:\CN \rightarrow \widehat{\CN}$ for the isomorphism which is defined in Remark \ref{remark5.2}.
\end{definition}

\begin{definition}\label{definition5.1.4}
	$\quad$
	\begin{enumerate}
	\item For a given special homomorphism $x \in \BV$, we define the special cycle $\CZ(x)$ associated to $x$ in $\CN^0 \times \CN$ as the subfunctor of collections $\xi=(\overline{Y},i_{\overline{Y}},\lambda_{\overline{Y}},\rho_{\overline{Y}},X,i_X,\lambda_X,\rho_X)$ in $(\CN^0 \times \CN)(S)$ such that the quasi-homomorphism
	\begin{equation*}
	\rho_X^{-1}\circ x \circ \rho_{\overline{Y}}:\overline{Y} \times_S \overline{S} \rightarrow X \times_S \overline{S}
	\end{equation*}
	extends to a homomorphism from $\overline{Y}$ to $X$.
	
	\item For a given special homomorphism $y \in \BV$, we define the special cycle $\CY(y)$ associated to $y$ in $\CN^0 \times \CN$ as follows. First, consider the cycle  $\CZ(\lambda_{\BX}\circ y)$ in $\CN^0 \times \widehat{\CN}$. This is the subfunctor of collections $\xi=(\overline{Y},i_{\overline{Y}},\lambda_{\overline{Y}},\rho_{\overline{Y}},X^{\vee}, \overline{i}_X^{\vee},\lambda'_X,(\rho_X^{\vee})^{-1})$ in $(\CN^0 \times \widehat{\CN})(S)$ such that the quasi-homomorphism
		\begin{equation*}
	\rho_X^{\vee}\circ \lambda_{\BX}\circ y \circ \rho_{\overline{Y}}:\overline{Y} \times_S \overline{S} \rightarrow X^{\vee} \times_S \overline{S}
	\end{equation*}
	extends to a homomorphism from $\overline{Y}$ to $X^{\vee}$. We define $\CY(y)$ as $(id \times \theta^{-1})(\CZ(\lambda_{\BX}\circ y))$ in $\CN^0 \times \CN$.
\end{enumerate}

We note that $\CN^0$ can be identified with $\Spf O_{\breve{E}}$, hence $\CZ(x), \CY(y)$ can be identified with closed formal subschemes of $\CN$. Also, by abuse of notation, we often write $x:\overline{Y} \rightarrow X$ for the extension of quasi-homomorphism $\rho_X^{-1}\circ x \circ \rho_{\overline{Y}}$.
\end{definition}

Let $\overline{\BM}^0=\overline{\BM}^0_0 \oplus \overline{\BM}^0_1$ be the Dieudonne module of $\overline{\BY}$. As in \cite[Remark 2.5]{KR2}, it is easy to see that $\overline{\BM}^0_0= O_{\breve{F}} \overline{1}_0$ and $\overline{\BM}^0_1=O_{\breve{F}} \overline{1}_1$, where $\CF \overline{1}_1=\overline{1}_0$, $\CF \overline{1}_0=\pi \overline{1}_1$ and $\lbrace \overline{1}_0, \overline{1}_0 \rbrace = \pi$. We write $N^0$ for $\overline{\BM}^0 \otimes \BQ$.

Now, let $x \in \BV$. This induces a homomorphism from $N^0$ to $N$. We also write $x$ for the induced homomorphism. Note that we can write $x=x_0 + x_1$, where $x_0:N^0_0 \rightarrow N_0$ and $x_1:N^0_1 \rightarrow N_1$, since the morphism $x$ has degree $0$ with respect to the decompositions $N^0_0 \oplus N^0_1$ and $N_0 \oplus N_1$. 

To study the sets of $k$-points $\CZ(x)(k), \CY(y)(k)$,  $x,y \in \BV$, recall that we have a bijection between $\CN(k)$ and the set of lattices $(A,B)$ in $N_{k,0}$ (see Proposition \ref{proposition2.4}). Now, we can state the following analogue of \cite[Proposition 3.10]{KR2}.
\begin{proposition}\label{proposition5.5.5}(cf. \cite[Proposition 3.10]{KR2})
	For $x, y \in \BV$, we have the following bijections.
	\begin{enumerate}
		\item \begin{equation*}
		\CZ(x)(k)=	\left\{\begin{array}{c} 
		O_{\breve{F}} \text{-lattices} \\
	 A \overset{h}{\subset} B \subset N_{k,0}
		\end{array}
		\middle|
		\begin{array}{c}
		\pi B^{\vee} \overset{1}{\subset} A \overset{n-1}{\subset} B^{\vee},\\
		\pi A^{\vee} \overset{1}{\subset} B \overset{n-1}{\subset} A^{\vee},\\
		\pi B \subset A \subset B,\\
		x_0(\overline{1}_0) \in \pi B^{\vee}.
		
		\end{array}\right\}
		\end{equation*}
		
			\item \begin{equation*}
		\CY(y)(k)=	\left\{\begin{array}{c} 
		O_{\breve{F}} \text{-lattices} \\
		A \overset{h}{\subset} B \subset N_{k,0}
		\end{array}
		\middle|
		\begin{array}{c}
		\pi B^{\vee} \overset{1}{\subset} A \overset{n-1}{\subset} B^{\vee},\\
		\pi A^{\vee} \overset{1}{\subset} B \overset{n-1}{\subset} A^{\vee},\\
		\pi B \subset A \subset B,\\
		y_0(\overline{1}_0) \in \pi A^{\vee}.
		
		\end{array}\right\}
		\end{equation*}
		
	\end{enumerate}
\end{proposition}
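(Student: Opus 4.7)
The plan is to translate the extension conditions defining $\CZ(x)(k)$ and $\CY(y)(k)$ into lattice-inclusion conditions on relative Dieudonn\'e modules via Proposition \ref{proposition2.4}, and then to collapse each pair of conditions into a single inclusion on the zero-graded piece using the Frobenius-equivariance of special homomorphisms.

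First, for a point $(X,i_X,\lambda_X,\rho_X)\in\CN(k)$ corresponding via Proposition \ref{proposition2.4} to a pair $(A,B)$, I would recall that the relative Dieudonn\'e module is $M = M_0 \oplus M_1 \subset N$ with $M_0 = A$ and $M_1^{\perp} = B$. The proof of Proposition \ref{proposition2.4} already established the key identity $\CF M_1 = \pi B^{\vee}$; the strictly analogous computation (swapping the roles of the two graded pieces) yields $\CF M_0^{\perp} = \pi A^{\vee}$. I would also note that any $O_E$-linear special homomorphism commutes with $\CF$, so for $x \in \BV$ one has $x_0(\overline{1}_0) = \CF x_1(\overline{1}_1)$, equivalently $x_1(\overline{1}_1) = \CF^{-1} x_0(\overline{1}_0)$, and similarly for $y$.

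For part (1), relative Dieudonn\'e theory says that $\rho_X^{-1} \circ x \circ \rho_{\overline{Y}}$ extends to a homomorphism $\overline{Y} \to X$ exactly when $x(\overline{\BM}^0) \subset M$. Decomposing by the $\BZ/2\BZ$-grading yields $x_0(\overline{1}_0) \in M_0 = A$ and $x_1(\overline{1}_1) \in M_1$. Rewriting the second condition using $x_1(\overline{1}_1) = \CF^{-1} x_0(\overline{1}_0)$ and $\CF M_1 = \pi B^{\vee}$, it becomes $x_0(\overline{1}_0) \in \pi B^{\vee}$, and since $\pi B^{\vee} \subset A$ the first condition is then automatic.

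For part (2), the defining identity $\CY(y) = (id \times \theta^{-1})(\CZ(\lambda_{\BX} \circ y))$ lets me test membership of $(A,B)$ in $\CY(y)(k)$ by checking that the image $(X^{\vee},\overline{i}_X^{\vee},\lambda'_X,(\rho_X^{\vee})^{-1})$ belongs to $\CZ(\lambda_{\BX}\circ y)(k)$ inside $(\CN^0 \times \widehat{\CN})(k)$. Identifying $N(\BX^{\vee})$ and $N(X^{\vee})$ with $N$ via $\lambda_{\BX}$ and $(\rho_X^{\vee})^{-1}$, the Dieudonn\'e module of $X^{\vee}$ becomes $M^{*} = \{v \in N \vert \langle v,M\rangle \subset O_{\breve{F}}\} = M_1^{\perp} \oplus M_0^{\perp} = B \oplus A^{\perp}$, while $\lambda_{\BX} \circ y$ corresponds to $y$ itself. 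The extension condition $y(\overline{\BM}^0) \subset M^{*}$ thus reduces to $y_0(\overline{1}_0) \in B$ and $y_1(\overline{1}_1) \in A^{\perp}$; by the same manipulation as before, using $\CF M_0^{\perp} = \pi A^{\vee}$, the second condition becomes $y_0(\overline{1}_0) \in \pi A^{\vee}$, and the inclusion $\pi A^{\vee} \subset B$ from Proposition \ref{proposition2.4} makes the first automatic.

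The hard part will be the bookkeeping in part (2): verifying that, after the canonical identifications induced by $\lambda_{\BX}$ and the quasi-isogenies $\rho_X, \rho_X^{\vee}$, the Dieudonn\'e module of the dual $X^{\vee}$ really is $M^{*}$ inside $N$ and that the composition $\lambda_{\BX}\circ y$ corresponds to $y$. The supporting lattice identity $\CF M_0^{\perp} = \pi A^{\vee}$ is then a routine consequence of the pairing relations $\langle \CF u, v \rangle = \langle u, \CV v \rangle^{\sigma}$ and $\{u,v\} = \langle u, \CF v\rangle$, paralleling the identity $\CF M_1 = \pi B^{\vee}$ used in Proposition \ref{proposition2.4}.
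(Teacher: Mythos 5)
Your proposal is correct and follows essentially the same approach as the paper: identify the extension condition with a Dieudonn\'e-module inclusion, split by the $\BZ/2\BZ$-grading, and use $\CF$-equivariance of $x$ together with $\CF M_1 = \pi B^{\vee}$ (and its twin $\CF M_0^{\perp} = \pi A^{\vee}$) to collapse the two graded conditions into one; for part (2) the crucial point, as in the paper, is that the Dieudonn\'e module of $X^{\vee}$ is $M^{\perp} = B \oplus A^{\perp}$. The paper simply cites \cite[Proposition 3.10]{KR2} for the computation you spell out explicitly.
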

\begin{proof} The proof of (1) is identical to the proof of \cite[Proposition 3.10]{KR2}. For (2), note that for the Dieudonne module $M=A\oplus B^{\perp}$ of $(X,i_X,\lambda_X,\rho_X) \in \CN(k)$, its dual $M^{\perp}=B\oplus A^{\perp}$ is the Dieudonne module of $X^{\vee}$ (here, $^\perp$ means the dual with respect to $\langle \cdot, \cdot \rangle$ in Section \ref{subsec:section2.2}). Therefore, (2) can be proved in the same way.
\end{proof}

\begin{lemma}(\cite[Lemma 1.16]{Vol}) Let $t \in O_E$ with $t^*=-t$ and let $V$ be a $E$-vector space of dimension $n$. Let $I_n$ be the identity matrix of rank $n$ and let $J_n$ be the matrix
\begin{displaymath} 
J_n:=\left( \begin{array}{cccc} 
\pi & & & \\
  &1 & & \\
 & & \ddots & \\
 & & & 1
\end{array}\right).
\end{displaymath}
There exist two perfect skew-hermitian forms on $V$ up to isomorphism. These forms correspond to $t I_n$ and to $t J_n$ respectively. Furthermore, if $M$ is a lattice in $V$ and $i \in \BZ$ with
\begin{equation*}
\pi^{i+1} M^{\vee} \overset{r}{\subset} M \overset{n-r}{\subset} \pi^i M^{\vee},
\end{equation*}
then $n-r \equiv ni \mod 2$ in the first case and $n-r \not\equiv ni \mod 2$ in the second case.
\end{lemma}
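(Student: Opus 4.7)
The plan is to handle the two assertions in sequence, using a single numerical invariant that connects them.

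For the classification, the idea is that perfect skew-hermitian forms on $V$ are classified by an invariant in $F^{\times}/\Nm_{E/F}(E^{\times})$. Fix an $E$-basis of $V$ and let $H$ be the Gram matrix; the skew-hermitian condition $H^{*}=-H^{T}$ gives $(\det H)^{*}=(-1)^{n}\det H$, which is the same sign relation satisfied by $t^{n}$, so $\det H/t^{n}\in F^{\times}$ is well defined. Changing basis by $g\in \mathrm{GL}_{n}(E)$ multiplies this quantity by $\Nm(\det g)$, so its class in $F^{\times}/\Nm(E^{\times})$ is an invariant. Since $E/F$ is unramified, $F^{\times}/\Nm(E^{\times})\simeq\BZ/2$, with the uniformizer $\pi$ representing the nontrivial class. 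I would then diagonalize an arbitrary form by iterated orthogonal splitting: since $p\neq 2$, nondegeneracy forces the existence of an anisotropic vector (if $h(v,v)=0$ for all $v$, polarizing shows $h(v,w)\in F$ for all $v,w$, and combining with skew-hermiticity forces $2h=0$), so Gram--Schmidt produces an orthogonal basis. Rescaling diagonal entries using the surjectivity of $\Nm:O_{E}^{\times}\twoheadrightarrow O_{F}^{\times}$ (valid in the unramified case) brings every form into either $tI_{n}$ or $tJ_{n}$, and these two are inequivalent because their invariants $\det/t^{n}$ equal $1$ and $\pi$, respectively.

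For the parity statement, fix an $O_{E}$-basis of $M$ and let $H$ be the Gram matrix in that basis. A direct identification shows $M^{\vee}=(H^{T})^{-1}O_{E}^{n}$, so $\mathrm{length}_{O_{E}}(M^{\vee}/M)=\val_{E}(\det H)$. I would then compute the same length from the chain $M\subset\pi^{-i}M\subset M^{\vee}\subset\pi^{-i-1}M$ supplied by the hypothesis: the first quotient $\pi^{-i}M/M\simeq (O_{E}/\pi^{i})^{n}$ has length $ni$, while $M^{\vee}/\pi^{-i}M$ is carried by multiplication by $\pi^{i}$ to $\pi^{i}M^{\vee}/M$, which is killed by $\pi$ (because $\pi\cdot\pi^{i}M^{\vee}\subset M$ by hypothesis) and so is an $\BF_{q^{2}}$-vector space of dimension $n-r$. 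Adding, $\val_{E}(\det H)=ni+(n-r)=n(i+1)-r$. Choosing $t\in O_{E}^{\times}$, as we may in the unramified case, gives $\val_{F}(\det H/t^{n})=\val_{E}(\det H)\equiv (n-r)+ni\pmod{2}$.

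Combining: the class of $\det H/t^{n}$ in $F^{\times}/\Nm(E^{\times})$ is trivial iff $(n-r)+ni$ is even, i.e.\ iff $n-r\equiv ni\pmod 2$. By the classification step this corresponds exactly to the $tI_{n}$ case, while the opposite parity corresponds to the $tJ_{n}$ case, yielding the claimed dichotomy. The main obstacle I expect is the diagonalization step: one must argue carefully that anisotropic vectors exist and that the orthogonal complement of a nondegenerate line is again nondegenerate, so the induction on $n$ goes through. The subsequent normalization of diagonal entries into $\{t,t\pi\}$ is routine given that $O_{E}^{\times}\to O_{F}^{\times}$ is surjective under $\Nm$, which in turn is a standard consequence of $E/F$ being unramified.
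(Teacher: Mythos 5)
The paper gives no proof of this lemma: it simply defers to \cite[Lemma 1.16]{Vol} and observes that the argument carries over to a general base $F$. So there is no in-paper argument to compare against. On its own terms, your strategy is the standard one: classify perfect skew-hermitian forms by the discriminant $\det H/t^n$ in $F^\times/\Nm_{E/F}(E^\times)\simeq\BZ/2$, then compute the valuation of $\det H$ from the index chain. The numerical half is correct: from $M^\vee=(H^T)^{-1}O_E^n$ the length of $M^\vee/M$ equals the $E$-adic valuation of $\det H$, the chain $M\subset\pi^{-i}M\subset M^\vee$ evaluates that length as $ni+(n-r)$, and the parity of this integer matches the two discriminant classes exactly as claimed.

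There is, however, a real gap in the classification half. Rescaling $e_i\mapsto\beta_i e_i$ multiplies the diagonal entry $c_i$ by $\Nm(\beta_i)$, so it only brings a diagonalized form to $\mathrm{diag}(t\pi^{\epsilon_1},\dots,t\pi^{\epsilon_n})$ with each $\epsilon_i\in\{0,1\}$; it cannot, on its own, reduce the number of nontrivial $\epsilon_i$ to at most one. You still need the binary fact $\mathrm{diag}(t\pi,t\pi)\simeq\mathrm{diag}(t,t)$, which is precisely what makes the discriminant a complete invariant and what your phrase ``rescaling brings every form into either $tI_n$ or $tJ_n$'' quietly assumes. This is where the surjectivity of $\Nm:O_E^\times\to O_F^\times$ is really spent: pick $a,b\in O_E^\times$ with $\Nm(a/b)=-1$, so that $\mathrm{diag}(t\pi,t\pi)$ represents $0$, hence is a rescaled hyperbolic plane, hence isomorphic to $\mathrm{diag}(t,-t)\simeq\mathrm{diag}(t,t)$. (Compare the explicit base changes via elements $\alpha,\beta$ with $\alpha\alpha^{*}=-1$, $\beta\beta^{*}=1/2$ in the proof of Lemma~\ref{lemma5.5.7}.) Once this step is supplied, the rest of your argument goes through.
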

\begin{proof}
	See \cite[Lemma 1.16]{Vol}. Note that $F$ is a finite extension of $\BQ_p$, therefore the above statement is more general. But, the proof is identical.
\end{proof}

\begin{remark}\label{remark5.5.6}
	Recall that the $E$-vector space $N_{k,0}^{\tau}$ in Section \ref{subsec:section2.3} has a lattice $M$ with
	\begin{equation*}
	\pi M^{\vee} \overset{h+1}{\subset} M \overset{n-h-1}{\subset} M^{\vee}.
	\end{equation*}
	This fact follows from Lemma \ref{lemma3.2}. Therefore, by the above lemma, the form $\lbrace \cdot, \cdot \rbrace$ is isomorphic to $t I_n$ if $n-h-1 \equiv 0 \mod 2$ and is isomorphic to $t J_n$ if  $n-h-1 \not\equiv 0 \mod 2$.
\end{remark}

We need the following analogue of \cite[Lemma 3.7]{KR2}.
\begin{lemma}\label{lemma5.5.7} Assume that $h \neq 0, n$. Then we have 
	\begin{equation*}
	\bigcap_{\Lambda} \Lambda=(0),
	\end{equation*} 
	where $\Lambda$ runs over all vertex lattices of type $h+1$.
\end{lemma}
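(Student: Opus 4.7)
The plan is to exploit the action of the unitary group $U := U(N_{k,0}^\tau, \lbrace \cdot, \cdot \rbrace)$ on the set of vertex lattices of type $h+1$, combined with the noncompactness of $U$. Write $I := \bigcap_\Lambda \Lambda$ for the intersection in question; it is contained in any fixed vertex lattice $\Lambda_0$ of type $h+1$, hence is a finitely generated $O_E$-submodule of $N_{k,0}^\tau$.

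As a first input, I would check that the form on $N_{k,0}^\tau$ is isotropic. Since $h \neq 0, n$ forces $n \geq 2$, there are two subcases: if $n \geq 3$, any hermitian form of dimension at least three over the unramified quadratic extension $E/F$ is isotropic; if $n = 2$, then $h = 1$ and $n - h - 1 = 0$ is even, so by Remark \ref{remark5.5.6} the form is $tI_2$, which is isotropic because $-1 \in \Nm_{E/F}(E^\times)$ (as $E/F$ is unramified with $p > 2$). Next I would invoke the standard Bruhat--Tits theoretic fact that $U$ acts transitively on vertex lattices of a fixed type: in the setup of Section \ref{subsec:subsection3.8} we have already identified $\CL_0$ with the vertex set of the building of $\tilde J$, and vertices of a given type form a single orbit under $U$. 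Consequently $I$ is $U$-stable, and so is the $E$-subspace $E \cdot I \subset N_{k,0}^\tau$. Since the standard representation of $U$ on $N_{k,0}^\tau$ is $E$-irreducible, either $E \cdot I = 0$, in which case $I = 0$ and we are done, or else $E \cdot I = N_{k,0}^\tau$, in which case $I$ is a full-rank $U$-invariant $O_E$-lattice.

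To rule out the second possibility, I would use the isotropy established above. Fix a nonzero isotropic $f \in N_{k,0}^\tau$ and a hyperbolic partner $f'$ with $\lbrace f, f' \rbrace = 1$. For each $N \in \BZ$ the transformation $g_N$ defined by $f \mapsto \pi^N f$, $f' \mapsto \pi^{-N} f'$ on $\langle f, f'\rangle$ and the identity on the orthogonal complement belongs to $U$ (one checks preservation of $\lbrace \cdot, \cdot \rbrace$ using $\pi \in F$). Choosing $k$ so large that $\pi^k f \in I$, the $U$-invariance of $I$ gives $g_{-N}(\pi^k f) = \pi^{k-N} f \in I$ for every $N \geq 0$, contradicting the fact that $I$ is a lattice. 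The main delicate point is the transitivity of $U$ on type-$(h+1)$ vertex lattices: while standard, it rests on the classification of parahoric subgroups of unitary groups, and in writing the argument it would be worth either explicitly reducing to the building-theoretic description already used in Section \ref{subsec:subsection3.8} or citing a Bruhat--Tits reference.
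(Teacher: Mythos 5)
Your argument is correct, and it takes a genuinely different route from the paper. The paper's proof is explicit and constructive: it fixes a diagonalization of the hermitian form (the proof is written out in the case where the form is $tI_n$ with $h$ even, say), performs a sequence of base changes to exhibit hyperbolic pairs $\lbrace g_i \rbrace$, and then, for each choice of these pairs and each tuple $I \in \BZ^{h/2} \times \BZ^{k}$, writes down a concrete vertex lattice $\Lambda_{\lbrace g_i \rbrace, I}$ of type $h+1$ obtained by rescaling the hyperbolic pairs by powers of $\pi$; intersecting over all such choices is already $(0)$, and the remaining parity cases are asserted to be similar. Your proof replaces this construction by a soft group-theoretic argument: transitivity of the unitary group on type-$(h+1)$ vertex lattices makes $I := \bigcap_\Lambda \Lambda$ a $U$-invariant $O_E$-module, $E$-irreducibility of the standard representation forces $E \cdot I$ to be $0$ or all of $N_{k,0}^\tau$, and isotropy of the form produces the one-parameter subgroup $g_N$ that rules out a full-rank $U$-invariant lattice. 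What this buys you is uniformity in $(n,h)$ and no case-splitting; what it costs is the transitivity input, which is not established in the paper and must be cited (it follows, e.g., from the uniqueness of the Jordan splitting of hermitian $O_E$-lattices over the unramified extension $E/F$, or from Bruhat--Tits theory for the simply-connected quasi-split group $SU$ together with the identification $\CL_0 \leftrightarrow \CB(\tilde J, F)$ already used in \S\ref{subsec:subsection3.8}). Two small remarks for the writeup: your $g_N$ has determinant $1$, so the argument runs unchanged with $SU$ in place of $U$, matching whichever transitivity reference you use; and the exponent you call $k$ collides with the paper's notation $k$ for the algebraically closed residue field, so rename it.
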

\begin{proof}
	First, assume that $n=h+1+2k$ for some integer $k \geq 0$, and $h+1$ is odd. Then by Remark \ref{remark5.5.6}, the form $\lbrace \cdot, \cdot \rbrace$ is isomorphic to $t I_n$. Choose a basis $\lbrace e_1, \dots, e_n \rbrace$ such that $\lbrace e_i, e_j \rbrace = t\delta_{ij}$. Choose any $h+1$ elements $\lbrace f_1, \dots, f_{h+1} \rbrace$ in $\lbrace e_1, \dots, e_n \rbrace$ and rename $\lbrace e_1, \dots, e_n \rbrace$ to $\lbrace f_1, \dots, f_n \rbrace$.
	
	Let $\alpha$, $\beta$ be elements in $E$ such that $\alpha \alpha^*=-1$ and $\beta \beta^*=1/2$.
	
	We define
	\begin{equation*}
	\begin{array}{ll}
	g_{h+1}:=f_{h+1},&\\
	g_{2i+1}:= \beta(f_{2i+1} + \alpha f_{2i+2}),&\\
	g_{2i+2}:=\beta(f_{2i+1} - \alpha f_{2i+2}), & \forall 0 \leq i \leq \frac{h}{2}-1.
	\end{array}
	\end{equation*}
	
	Then we have
	\begin{equation*}
	\begin{array}{ll}
	\lbrace g_{2i+1}, g_{2i+1} \rbrace=0,&\lbrace g_{2i+2}, g_{2i+2} \rbrace=0, \\
	\lbrace g_{2i+1}, g_{2i+2} \rbrace=t, & \forall 0 \leq i \leq h/2-1.
	\end{array}
	\end{equation*}
	
	Now consider an element $\gamma \in E$ such that $1+\gamma \gamma^*=\pi$, and define
	\begin{equation*}
	\begin{array}{ll}
	h_{h+1+2i+1}:=f_{h+1+2i+1} + \gamma f_{h+1+2i+2}&\\
	h_{h+1+2i+2}:=\gamma^* f_{h+1+2i+1} -  f_{h+1+2i+2},& \forall 0 \leq i \leq k-1.
	\end{array}
	\end{equation*}
	Also, we define
	\begin{equation*}
	\begin{array}{ll}
	g_{h+1+2i+1}:=\beta(h_{h+1+2i+1} + \alpha h_{h+1+2i+2})&\\
	g_{h+1+2i+2}:=\beta(h_{h+1+2i+1} - \alpha h_{h+1+2i+2}),& \forall 0 \leq i \leq k-1.
	\end{array}
	\end{equation*}
 	Then we have
\begin{equation*}
\begin{array}{ll}
\lbrace g_{h+1+2i+1}, g_{h+1+2i+1} \rbrace=0,&\lbrace g_{h+1+2i+2}, g_{h+1+2i+2} \rbrace=0, \\
\lbrace g_{h+1+2i+1}, g_{h+1+2i+2} \rbrace=t\pi, & \forall 0 \leq i \leq k-1.
\end{array}
\end{equation*}
	
	For $I:=(a_1,\dots,a_{h/2},b_1, \dots, b_k) \in \BZ^{h/2} \times \BZ^k$, we set
	\begin{equation*}
	\begin{array}{lll}
	\Lambda_{\lbrace g_1,\dots, g_n\rbrace, I}:=&[\pi^{a_1}g_1,\pi^{-a_1}g_2,\dots,&\pi^{a_{h/2}}g_{h-1},\pi^{-a_{h/2}}g_{h},\\
	& &g_{h+1}, \pi^{b_1}g_{h+2}, \dots, \pi^{-b_k}g_n ].
	\end{array}
	\end{equation*}
	Then, this is a vertex lattice of type $h+1$ and we have
	\begin{equation*}
	\bigcap_{\lbrace g_1,\dots, g_n\rbrace, I} \Lambda_{\lbrace g_1,\dots, g_n\rbrace, I}=(0),
	\end{equation*}
	where $\lbrace g_1,\dots, g_n\rbrace$ runs over all choices and $I$ runs through $\BZ^{h/2} \times \BZ^k$.
	
	This proves the lemma in the case that $n=h+1+2k$ for some integer $k \geq 0$, and $h+1$ is odd.
	
	Similar arguments work for the other cases.
\end{proof}
	
\begin{proposition}\label{proposition5.5}
	The functors $\CZ(x)$ and $\CY(y)$ are represented by closed formal subschemes of $\CN^0 \times \CN$. In fact, $\CZ(x)$ and $\CY(y)$ are relative divisors in $\CN^0 \times \CN$ (or empty) for any $x, y \in \BV \backslash \lbrace 0 \rbrace$.
\end{proposition}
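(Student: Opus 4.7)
The plan is to handle representability and the Cartier divisor property separately, using crystalline Dieudonn\'e / Grothendieck-Messing deformation theory in the spirit of \cite{KR2}. Since $\CN^0$ can be identified with $\Spf O_{\breve{E}}$, one may as well view $\CZ(x)$ and $\CY(y)$ as subfunctors of $\CN$. For representability, the condition on a test $O_{\breve{E}}$-scheme $S$ that the quasi-isogeny $\rho_X^{-1}\circ x\circ\rho_{\overline{Y}}$ extends to an $S$-homomorphism $\overline{Y}\to X$ is a closed condition by the standard rigidity of quasi-isogenies of $p$-divisible groups (cf.\ \cite[Proposition 2.9]{RZ}), so $\CZ(x)$ is a closed formal subscheme of $\CN^0\times\CN$; the same reasoning gives closedness of $\CZ(\lambda_{\BX}\circ y)$ in $\CN^0\times\widehat{\CN}$, whence $\CY(y)=(\mathrm{id}\times\theta^{-1})(\CZ(\lambda_{\BX}\circ y))$ is closed in $\CN^0\times\CN$.

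For the divisor property of $\CZ(x)$, I would work \'etale-locally: at a geometric point $\bar{s}\in\CZ(x)$, let $R$ be the completed local ring of $\CN$ at $\bar{s}$, which is regular by Proposition \ref{proposition5.12}. By Grothendieck-Messing / crystalline Dieudonn\'e theory applied to the associated $p$-divisible groups, deformations of the special fiber $X_{\bar s}$ over $\Spec R$ correspond to lifts of the Hodge filtration on the crystal, and the special homomorphism $x$ extends to a homomorphism over $R$ if and only if its crystalline realization sends the Hodge filtration of $\overline{\BY}$ into that of $X$. Under the signature condition $(1,n-1)$ at $\tau_1^-$, the relevant piece of the Hodge filtration of $X$ is a rank-one direct summand inside a free $R$-module of rank $n$, so the obstruction to extension is a single equation $f_x\in R$; hence $\CZ(x)$ is locally principal at $\bar{s}$.

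The next step is to show that $f_x$ is a non-zero divisor in $R$ and does not divide $\pi$, which together give both the Cartier property and flatness over $\Spf O_{\breve{E}}$. If $f_x$ vanished on some irreducible component of $\Spec R$, then $\CZ(x)$ would contain a closed Bruhat-Tits stratum $\CN_\Lambda$ (by Theorem \ref{theorem3.14} and Proposition \ref{proposition3.15}); via the $k$-point description in Proposition \ref{proposition5.5.5}, this would force $x_0(\overline{1}_0)\in\pi\Lambda^{\vee}$ for every vertex lattice $\Lambda$ in a cofinal family, and Lemma \ref{lemma5.5.7} would then yield $x=0$, contradicting the hypothesis. A parallel lattice-theoretic argument on the generic fiber, where $\CZ(x)_E$ is a proper closed subscheme of $\CN_E$ whenever $x\neq 0$, shows that $f_x$ does not divide $\pi$. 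The same strategy applies verbatim to $\CZ(\lambda_{\BX}\circ y)$ inside $\widehat{\CN}=\CN^{n-h}_{E/F}(1,n-1)_{O_{\breve{E}}}$, which has the analogous Bruhat-Tits structure and regularity (again by Proposition \ref{proposition5.12}), and the isomorphism $\theta$ from Remark \ref{remark5.2} transports the resulting relative divisor to $\CY(y)$ inside $\CN^0\times\CN$.

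The main obstacle is the crystalline step: producing the single defining equation $f_x$ requires carefully identifying the correct rank-one piece of the Hodge filtration against which $x^{\mathrm{crys}}$ is tested, and verifying that the extension condition collapses to a single equation rather than a system; this is precisely where the signature condition $(1,n-1)$ is used. A related subtlety is ruling out that $f_x$ is a zero divisor (rather than merely nonzero), which is why I invoke the global structure of $\CN_{red}$ from Theorem \ref{theorem3.14} together with the key intersection statement of Lemma \ref{lemma5.5.7}.
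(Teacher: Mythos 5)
Your argument is essentially the paper's: the paper proves this proposition by reducing to the method of \cite[Proposition~3.5]{KR2} and flagging Lemma~\ref{lemma5.5.7} as the one new ingredient needed to show $\CZ(x)(k)\neq\CN(k)$, and you have spelled out exactly that KR2-style argument (closedness via rigidity of quasi-isogenies, local principality via Grothendieck-Messing and the signature $(1,n-1)$, then ruling out vanishing by Lemma~\ref{lemma5.5.7}). Two small points to make it watertight. First, Lemma~\ref{lemma5.5.7} carries the hypothesis $h\neq 0,n$; the paper disposes of the edge cases first by noting $\CZ(x)=\CY(x)$ when $h=0$ and $\CZ(\pi x)=\CY(x)$ when $h=n$ and citing \cite[Proposition~3.5]{KR2} directly, so you should add that case split before invoking the lemma. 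Second, since $\CN$ is regular by Proposition~\ref{proposition5.12}, the completed local ring $R$ at any point is an integral domain, so "$f_x$ vanishes on an irreducible component of $\Spec R$" simply means $f_x=0$ in $R$; the cleaner formulation (following KR2) is that the locus where $\CZ(x)$ locally coincides with $\CN$ is open and closed, and is forced to be empty once one knows $\CZ(x)(k)\neq\CN(k)$, which is precisely what Lemma~\ref{lemma5.5.7} supplies via the description of $\CZ(x)(k)$ in Proposition~\ref{proposition5.5.5}.
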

\begin{proof}
	If $h=0$ (resp. $h=n$), then we have $\CZ(x)=\CY(x)$ (resp. $\CZ(\pi x)=\CY(x)$). Therefore, the case where $h=0$ is proved in \cite[Proposition 3.5]{KR2} (the case that $h=n$ is the same since we have the isomorphism $\theta$). For the other cases, we can follow the proof of \cite[Proposition 3.5]{KR2} with Lemma \ref{lemma5.5.7}. Indeed, we only need to show that $\CZ(x)(k)$ cannot be $\CN(k)$. If $\CN(k) \subset \CZ(x)(k)$, then we have
	\begin{equation*}
	x \in \bigcap_{\Lambda} \pi \Lambda^{\vee},
	\end{equation*}
	where $\Lambda$ runs over all vertex lattices of type $h+1$. This fact follows from Lemma \ref{lemma3.2} and Proposition \ref{proposition5.5.5}. Now, since we have
	\begin{equation*}
	\bigcap_{\Lambda} \pi \Lambda^{\vee} \subset \bigcap_{\Lambda}  \Lambda=(0),
	\end{equation*}
	by Lemma \ref{lemma5.5.7}, we have that $x$ should be $0$. This finishes the proof of the proposition.
\end{proof}

We have the following analogue of the remarks after \cite[Lemma 5.2]{KR2} (and also in \cite{KR4}).

\begin{proposition}\label{proposition5.6} $\quad$
\begin{enumerate}
\item If $val(h(x,x))=0$, then $\CZ(x) \simeq \CN^{h}_{E/F}(1,n-2)_{O_{\breve{E}}}$.
\item If $val(h(y,y))=-1$, then $\CY(y) \simeq \CN^{h-1}_{E/F}(1,n-2)_{O_{\breve{E}}}$.
\end{enumerate}
\end{proposition}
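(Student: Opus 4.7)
The plan is to treat (1) directly via a splitting of the framing object, and then to reduce (2) to (1) through the isomorphism $\theta : \CN \overset{\sim}{\to} \widehat{\CN}$ of Remark \ref{remark5.2}.

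For (1), suppose $\val(h(x,x)) = 0$. The special homomorphism $x : \overline{\BY} \to \BX$ is, up to quasi-isogeny, an $O_E$-linear embedding whose image is self-dual for the restriction of the hermitian pairing. I would pass to relative Dieudonn\'e modules and show that $x(\overline{\BM}^0)$ is an $O_{\breve F}$-direct summand of $\BM$ which is stable under $\CF$, $\CV$ and $O_E$ and on which $\langle\cdot,\cdot\rangle$ restricts to a perfect pairing; then $\BM = x(\overline{\BM}^0) \oplus \BM'$ as hermitian Dieudonn\'e modules, where $\BM'$ corresponds to a strict formal $O_F$-module $\BX'$ of $F$-height $2(n-1)$ with an $O_E$-action of signature $(1,n-2)$ and a polarization whose kernel still has order $q^{2h}$ (because $\overline{\BY}$ is principally polarized, so the defect is concentrated in $\BM'$). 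Thus $(\BX',i_{\BX'},\lambda_{\BX'})$ is a framing object for $\CN^h_{E/F}(1,n-2)_{O_{\breve E}}$.

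Next I would produce the isomorphism $\CZ(x) \simeq \CN^h_{E/F}(1,n-2)_{O_{\breve E}}$ by the following recipe. Given an $O_{\breve E}$-scheme $S$ on which $\pi$ is locally nilpotent and a point of $\CZ(x)(S)$, the extended homomorphism $\overline{Y} \to X$ together with the induced map $\overline{Y} \to X \to X^{\vee} \to \overline{Y}^{\vee}$ realizes $\overline{Y}$ as a principally polarized $O_E$-stable direct factor of $X$ (by rigidity of this splitting, which holds on the framing fibre by the Dieudonn\'e-module argument and lifts because the obstruction to the splitting is a homomorphism between a principally polarized height-$2$ piece and its orthogonal complement, forced to be zero by the polarization condition). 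Writing $X = \overline{Y} \times X'$ accordingly, the datum $(A,i_A,\lambda_A,\rho_A) := (X',i_{X'},\lambda_{X'},\rho_{X'})$ defines an object of $\CN^h_{E/F}(1,n-2)_{O_{\breve E}}(S)$, and the inverse is evident. A routine check on automorphisms shows this is an isomorphism of functors, hence of formal schemes.

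For (2), by definition $\CY(y) = (\id \times \theta^{-1})(\CZ(\lambda_{\BX} \circ y))$ inside $\CN^0 \times \CN$, where $\CZ(\lambda_{\BX}\circ y) \subset \CN^0 \times \widehat{\CN}$ is the $\CZ$-cycle attached to $z := \lambda_{\BX}\circ y \in \Hom_{O_E}(\overline{\BY},\BX^{\vee})\otimes\BQ$. Using the polarization $\lambda_{\BX}'$ on $\BX^{\vee}$ with $\lambda_{\BX}' \circ \lambda_{\BX} = i_{\BX}(\pi)$ and the self-duality $\lambda_{\BX}^{\vee} = \lambda_{\BX}$, I compute
\begin{equation*}
z^{\vee} \circ \lambda_{\BX}' \circ z = y^{\vee}\circ \lambda_{\BX} \circ \lambda_{\BX}' \circ \lambda_{\BX} \circ y = y^{\vee}\circ \lambda_{\BX}\circ y \circ i_{\overline{\BY}}(\pi) = \lambda_{\overline{\BY}} \circ i_{\overline{\BY}}(\pi\cdot h(y,y)),
\end{equation*}
so the hermitian value of $z$ on $\widehat{\CN}$ equals $\pi\cdot h(y,y)$, which has valuation $0$ under the hypothesis $\val(h(y,y)) = -1$. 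Part (1), applied inside $\widehat{\CN} = \CN^{n-h}_{E/F}(1,n-1)_{O_{\breve E}}$, gives $\CZ(z) \simeq \CN^{n-h}_{E/F}(1,n-2)_{O_{\breve E}}$; applying $\theta^{-1}$ in the smaller rank identifies this with $\CN^{h-1}_{E/F}(1,n-2)_{O_{\breve E}}$, as desired.

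The main obstacle is the rigidity step in the middle of the proof of (1): the assertion that the splitting of the framing object propagates to arbitrary deformations in $\CZ(x)$. I expect this to follow either from a direct argument using the extended homomorphism $\overline{Y}\to X$ (which produces a map onto a principally polarized, hence rigid, summand) or, more cleanly, from the theory of $O_F$-windows in Section \ref{subsec:section3.5}, where one can realize the desired splitting already at the level of windows and invoke Grothendieck--Messing to transfer it back to the $p$-divisible group.
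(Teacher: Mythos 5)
Your part (2) is correct and essentially identical to the paper's reduction to part (1) through $\theta$; the sign convention in the computation differs from the paper's ($\lambda_{\BX}^\vee=\lambda_{\BX}$ versus $-\lambda_{\BX}$), but either gives valuation $0$. For part (1), however, the ``rigidity step'' you flag is a genuine gap in what you have written: you establish the splitting on the framing fibre via Dieudonn\'e modules and then assert it propagates to deformations, offering an unexplained obstruction-class remark and a Grothendieck--Messing sketch, neither of which you carry out.

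The paper closes this gap with a more elementary observation requiring no deformation theory at all. A point of $\CZ(x)(S)$ hands you, by the very definition of $\CZ(x)$, an \emph{honest} homomorphism $x:\overline{Y}\to X$ over $S$; since $\lambda_{\overline{Y}}$ is a principal polarization, $x^*:=\lambda_{\overline{Y}}^{-1}\circ x^\vee\circ\lambda_X$ is also a homomorphism over $S$, and $x^*\circ x=h(x,x)\in\End_{O_E}(\overline{Y})\cong O_E$. After rescaling $x$ by a unit so that $h(x,x)=1$, the composite $e:=x\circ x^*$ is an idempotent in $\End_{O_E}(X)$ over $S$, satisfying $e^\vee\circ\lambda_X=\lambda_X\circ e$, so it induces a product decomposition $X=eX\times(1-e)X$ over $S$ with $eX\cong\overline{Y}$; the $O_E$-action, polarization, and quasi-isogeny all split accordingly, and $((1-e)X,\,i_2,\,\lambda_2,\,\rho_2)$ lands in $\CN^h_{E/F}(1,n-2)_{O_{\breve E}}(S)$. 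The inverse is the obvious product construction. In other words, the splitting is defined over $S$ from the outset --- there is nothing to lift. You do mention this direct route in passing as one possibility, but as written your proof relies on a rigidity step you did not complete; absorbing the idempotent argument into the body of the proof removes the gap and is both shorter and the paper's actual approach.
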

\begin{proof}
	(1) For an $O_{\breve{E}}$-scheme $S$, assume that $(X,i_X,\lambda_X,\rho_X) \in \CZ(x)(S)$. We can take a rescaled $x$ by an element in $O_E^{\times}$ such that $h(x,x)=1$. We denote by $x^*$ the element $\lambda_{\overline{Y}}^{-1} \circ x^{\vee}  \circ \lambda_{X}.$ Then we have that $e:=x \circ x^*$ is an idempotent in $\End_{O_E}(X)$, so that $X = e(X) \times (1-e)(X)$. Via this decomposition, we have the decomposition of the action $i_X=i_1 \times i_2$. Also, note that we have the canonical isomorphisms $e^{\vee}(X^{\vee})=(eX)^{\vee}$ and $(1-e^{\vee})(X)=((1-e)(X))^{\vee}$. By this identification, we have that the polarization $\lambda_X$ decomposes into the product of polarizations $\lambda_1=\lambda_X \circ e$ and $\lambda_2=\lambda_X \circ (1-e)$ of $eX$ and $(1-e)(X)$ respectively. Let $\rho_1=e \circ \rho_X$, $\rho_2=(1-e) \circ \rho_X$, the quasi-isogenies of $e(X)$ and $(1-e)X$,  respectively. Then $x$ defines an isomorphism $\overline{Y} \simeq e(X)$ compatible with polarizations, and $((1-e)(X), i_2, \lambda_2,\rho_2)$ gives an element in $\CN^{h}_{E/F}(1,n-2)_{O_{\breve{E}}}(S)$.
	
	Conversely, for an element $(X_2,i_2,\lambda_2,\rho_2) \in  \CN^{h}_{E/F}(1,n-2)_{O_{\breve{E}}}(S)$, we can take $X=\overline{Y} \times X_2$ with $x=\inc_1:\overline{\BY} \rightarrow \BX$, the action $i_X=i_{\overline{Y}} \times i_2$, the polarization $\lambda_{\overline{Y}} \times \lambda_2$ and the quasi-isogeny $\rho_{\overline{Y}} \times \rho_2$. Then this gives an element in $\CZ(x)(S)$. This construction gives the inverse of the previous one up to isomorphism.
	
	(2) For an $O_{\breve{E}}$-scheme $S$, let $(X,i_X,\lambda_X,\rho_X) \in \CY(y)(S)$. Consider 
	\begin{equation*}
		\theta((X,i_X,\lambda_X,\rho_X))=(X^{\vee},\overline{i}_X^{\vee},\lambda_X',(\rho_X^{\vee})^{-1}).
	\end{equation*}
	For $z=\lambda_X \circ y$, let $z^*=\lambda^{-1}_{\overline{Y}} \circ z^{\vee} \circ \lambda_X'$. Then we have
	\begin{equation*}
	\begin{array}{ll}
	z^*\circ z&=\lambda^{-1}_{\overline{Y}} \circ y^{\vee}  \circ \lambda_X^{\vee}  \circ \lambda_X' \circ \lambda_X \circ y\\
	&=\lambda^{-1}_{\overline{Y}} \circ y^{\vee}  \circ (-\lambda_X) \circ \lambda_X' \circ \lambda_X \circ y\\
	&=-\pi h(y,y).
	\end{array}
	\end{equation*}
	Therefore, $val(z^*\circ z)=0$. We can take rescaled $y$ by an element in $O_E^{\times}$ such that $z^* \circ z=1$. Then we have that $e:=z \circ z^*$ is an idempotent in $\End_{O_E}(X^{\vee})$. Now, as in the proof of (1), we have that
	\begin{equation*}
	((1-e)X^{\vee},\overline{i}_X^{\vee},(1-e^{\vee})\lambda_X',(1-e)(\rho_X^{\vee})^{-1}) \in \CN^{n-h}_{E/F}(1,n-2)_{O_{\breve{E}}}(S).
	\end{equation*}
	Therefore, by taking $\theta^{-1}((1-e)X^{\vee},\overline{i}_X^{\vee},(1-e^{\vee})\lambda_X',(1-e)(\rho_X^{\vee})^{-1}))$, we have an element of $\CN^{h-1}_{E/F}(1,n-2)_{O_{\breve{E}}}$.
	
	Now, let $(X_2,i_2,\lambda_2,\rho_2) \in \CN^{h-1}_{E/F}(1,n-2)_{O_{\breve{E}}}$. We will construct the inverse of the above construction. First, consider
	\begin{equation*}
	\theta ((X_2,i_2,\lambda_2,\rho_2)) =(X_2^{\vee}, \overline{i}_2^{\vee}, \lambda_2',(\rho_2^{\vee})^{-1}) \in \CN^{n-h}_{E/F}(1,n-2)_{O_{\breve{E}}}
	\end{equation*}
	Then we define
	\begin{equation*}
	\begin{array}{l}
	X^{\vee}:=\overline{Y} \times X_2^{\vee},\\
	\overline{i}_X^{\vee}:=i_{\overline{Y}} \times \overline{i}_2^{\vee},\\
	\lambda_X':=\lambda_{\overline{Y}} \times \lambda_2',\\
	(\rho_X^{\vee})^{-1}:=\rho_{\overline{Y}} \times (\rho_2^{\vee})^{-1}.
	\end{array}
	\end{equation*}
	This $(X^{\vee},\overline{i}_X^{\vee},\lambda_X',(\rho_X^{\vee})^{-1})$ is an element of $\CN^{n-h}_{E/F}(1,n-1)_{O_{\breve{E}}}$
	
	Now, we define $(X,i_X,\lambda_X,\rho_X)=\theta^{-1}((X^{\vee},\overline{i}_X^{\vee},\lambda_X',(\rho_X^{\vee})^{-1})$, with 
	\begin{equation*}
	\lambda_{\BX} \circ y:=\inc_1:\overline{\BY} \rightarrow \BX^{\vee}.
	\end{equation*}
	Then, this $(X,i_X,\lambda_X,\rho_X)$ gives an element in $\CY(y)$ and this construction inverts the previous one up to isomorphism.
\end{proof}

\begin{proposition}\label{proposition5.5.8}
	Assume that $val(h(x,x))=0, val(h(y,y))=-1$. Assume further that by rescaling as in Proposition \ref{proposition5.6}, $x^*\circ x=1, (\lambda_{\BX} \circ y)^*\circ (\lambda_{\BX} \circ y)=1$. We define $e_x:=x \circ x^*$ and $e_y:=(\lambda_{\BX} \circ y) \circ (\lambda_{\BX} \circ y)^*$. Fix isomorphisms
	\begin{equation*}
	\begin{split}
	\Phi:\CZ(x) \simeq \CN^h_{E/F}(1,n-2)_{O_{\breve{E}}},\\
	\Psi:\CY(y) \simeq \CN^{h-1}_{E/F}(1,n-2)_{O_{\breve{E}}},
	\end{split}
	\end{equation*}
	as in Proposition \ref{proposition5.6}. Then the following statements hold.
	\begin{enumerate}
		\item For $z\in \BV$ such that $h(x,z)=0$, let $z':=(1-e_x)\circ z$. Then, we have $\Phi(\CZ(x) \cap \CZ(z))=\CZ(z')$ in $\CN^h_{E/F}(1,n-2)$ and $h(z',z')=h(z,z)$.
		
		\item For $w\in \BV$ such that $h(x,w)=0$, let $w':=(1-e_x)\circ w$. Then, we have $\Phi(\CZ(x) \cap \CY(w))=\CY(w')$ in $\CN^h_{E/F}(1,n-2)$ and $h(w',w')=h(w,w)$.
		
		\item For $z\in \BV$ such that $h(y,z)=0$, let $z':=(1-e_y^{\vee})\circ z$. Then, we have $\Psi(\CY(y) \cap \CZ(z))=\CZ(z')$ in $\CN^{h-1}_{E/F}(1,n-2)$ and $h(z',z')=h(z,z)$.
		
		\item For $w\in \BV$ such that $h(y,w)=0$, let $w':=(1-e_y^{\vee})\circ w$. Then, we have $\Psi(\CY(y) \cap \CY(w))=\CY(w')$ in $\CN^{h-1}_{E/F}(1,n-2)$ and $h(w',w')=h(w,w)$.
	\end{enumerate}
\end{proposition}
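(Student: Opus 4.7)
The approach is to reduce each intersection to a single cycle condition by exploiting the idempotent decompositions from the proof of Proposition \ref{proposition5.6}, and then to verify that the orthogonality hypothesis forces the second special homomorphism to factor entirely through the ``big'' orthogonal summand.

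For part (1), given $(X, i_X, \lambda_X, \rho_X) \in \CZ(x)(S)$, the idempotent $e_x = x \circ x^* \in \End_{O_E}(X)$ produces a decomposition $X = e_x X \oplus (1-e_x) X$ through which $\Phi$ is defined by sending this tuple to the data on $(1-e_x)X$. Using the identity $x^* \circ z = i_{\overline{Y}}(h(x,z))$ coming from the very definition of $h$, the hypothesis $h(x,z) = 0$ yields $x^* \circ z = 0$, and hence
\begin{equation*}
e_x \circ z \;=\; x \circ x^* \circ z \;=\; 0.
\end{equation*}
Therefore $z = (1-e_x) \circ z = z'$, so $z$ extends to a morphism $\overline{Y} \to X$ if and only if $z'$ extends to a morphism $\overline{Y} \to (1-e_x)X$; this is exactly the condition for $z' \in \CZ(z')(S)$ inside $\CN^h_{E/F}(1,n-2)_{O_{\breve{E}}}$. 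The equality $h(z',z') = h(z,z)$ follows at once: the polarization on $(1-e_x)X$ is the restriction $\lambda_2 = (1-e_x^\vee) \lambda_X (1-e_x)$ of $\lambda_X$, and since $(1-e_x)z = z$, one computes $\lambda_{\overline{Y}}^{-1}(z')^\vee \lambda_2 z' = \lambda_{\overline{Y}}^{-1} z^\vee \lambda_X z$.

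For part (2), the same idempotent $e_x$ applies, but now we analyze the condition defining $\CY(w)$, namely that $\lambda_X \circ w: \overline{Y} \to X^\vee$ extends. Under the induced decomposition $X^\vee = (e_x X)^\vee \oplus ((1-e_x)X)^\vee$ with $\lambda_X = \lambda_1 \oplus \lambda_2$, one writes $\lambda_X \circ w = (\lambda_1 e_x w) \oplus (\lambda_2 (1-e_x)w)$, and the vanishing $e_x w = 0$ (again forced by $h(x,w)=0$) collapses this to $\lambda_2 \circ w'$ with $w' = (1-e_x)w$. Thus the extension condition for $\lambda_X \circ w$ to $X^\vee$ is equivalent to that for $\lambda_2 \circ w'$ to $((1-e_x)X)^\vee$, which is exactly $w' \in \CY(w')$ inside $\CN^h_{E/F}(1,n-2)_{O_{\breve{E}}}$, and the hermitian form is preserved for the same reason as in (1).

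For parts (3) and (4), we apply the completely parallel argument on the dual side: by the construction of Proposition \ref{proposition5.6}(2), the map $\Psi$ is obtained by applying $\theta$, then decomposing $X^\vee$ via the idempotent $e_y = (\lambda_{\BX} y) \circ (\lambda_{\BX} y)^* \in \End_{O_E}(X^\vee)$, then applying $\theta^{-1}$; passing back to $X$ this amounts to decomposing $X$ by $e_y^\vee \in \End_{O_E}(X)$. Translating the conditions $h(y,z)=0$ and $h(y,w)=0$ through the definitions of the hermitian forms on $\Hom_{O_E}(\overline{\BY},\BX)$ and on $\Hom_{O_E}(\overline{\BY}, \BX^\vee)$ yields the vanishings $e_y^\vee \circ z = 0$ and $e_y^\vee \circ w = 0$ respectively, so that $z$ and $w$ already equal their images $z' = (1-e_y^\vee)z$ and $w' = (1-e_y^\vee)w$ in $(1-e_y^\vee)X$. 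The extension conditions reduce precisely to membership in $\CZ(z')$ and $\CY(w')$ inside $\CN^{h-1}_{E/F}(1,n-2)_{O_{\breve{E}}}$, and the hermitian forms are preserved as in parts (1) and (2).

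The main obstacle is not conceptual but rather bookkeeping: one must track the signs and the factor of $\pi$ arising from the relation $\lambda_X' \circ \lambda_X = i_X(\pi)$ in Remark \ref{remark5.2} when transferring the orthogonality of $y$ with $z$ or $w$ on the $X$-side into the corresponding idempotent vanishing on the $X^\vee$-side; once this is handled carefully, parts (3) and (4) become formal consequences of the argument used in (1) and (2).
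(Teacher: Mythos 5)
Your proof is correct and follows the same basic route as the paper's: decompose $X$ (resp.\ $X^\vee$) by the idempotent from Proposition \ref{proposition5.6}, identify the cycle inside the summand, and check the hermitian form. The one genuine difference is that you derive the vanishing $e_x\circ z=0$ (resp.\ $e_y^\vee\circ z=0$) directly from the orthogonality hypothesis via $x^*\circ z = h(z,x)$, so that $z$ and $z'$ literally coincide as maps into the framing object; from this the equality $h(z',z')=h(z,z)$ is immediate. The paper instead keeps $z'$ and $z$ notationally distinct and verifies the hermitian-form identity by expanding $h(z',z') = h(z,z) - \lambda_{\overline{\BY}}^{-1}\circ z^\vee\circ e_y\circ\lambda_\BX\circ z$ and showing the correction term equals $-\pi\,h(y,z)h(z,y)=0$. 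Both ultimately rest on the same orthogonality input, but your version makes the key vanishing explicit at the outset (it is what the paper implicitly uses when taking $z=\inc_2\circ z'$ in the converse construction), and this streamlines the form-preservation step to a tautology. One small slip: you write $x^*\circ z = i_{\overline{Y}}(h(x,z))$, whereas from the definitions $x^*\circ z$ identifies with $h(z,x)$; since $h$ is hermitian the two vanish together, so this does not affect the argument.
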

\begin{proof}
	We will prove (3). Similar arguments work for (1), (2), (4). For an element $(X,i_X,\lambda_X,\rho_X)$ in $\CY(y) \cap \CZ(z)$, we denote by $(X_2,i_{X_2},\lambda_{X_2},\rho_{X_2})$ the element $\Psi((X,i_X,\lambda_X,\rho_X))$ in $\CN^{h-1}_{E/F}(1,n-2)_{O_{\breve{E}}}$. Also, we denote by $(\BX_2, i_{\BX_2}, \lambda_{\BX_2})$ the framing object of $\CN^{h-1}_{E/F}(1,n-2)_{O_{\breve{E}}}$. By definition of $\CY(y)$ and $\CZ(z)$, we have that $e_y$ can be extended to a morphism in $\End(X^{\vee})$, and $z:\overline{\BY} \rightarrow \BX$ can be extended to a morphism $z:\overline{Y} \rightarrow X$. Therefore, $z'=(1-e_y^{\vee}) \circ z$ can be extended to a morphism $\overline{Y} \rightarrow X_2=(1-e_y^{\vee})X$. This proves that $\Psi(\CY(y) \cap \CZ(z))\subset \CZ(z')$.
	
	Conversely, for a given element $(X_2,i_{X_2},\lambda_{X_2},\rho_{X_2})$ in $\CZ(z')$, we can use the construction in Proposition \ref{proposition5.6}, with 
	\begin{equation*}
	z=\inc_2 \circ z':\overline{\BY} \rightarrow \BX_2 \rightarrow \BX=\overline{\BY}^{\vee} \times \BX_2.
	\end{equation*}
	This construction gives an element in $\CY(y) \cap \CZ(z)$, and it is the element $\Psi^{-1}((X_2,i_{X_2},\lambda_{X_2},\rho_{X_2}))$. Therefore, we have $\Psi(\CY(y) \cap \CZ(z))=\CZ(z')$.
	
	Now, it remains to show that $h(z',z')=h(z,z)$. We have
	\begin{equation*}
		\begin{array}{ll}
			h(z',z')&=\lambda_{\overline{\BY}}^{-1} \circ (z')^{\vee} \circ \lambda_{\BX_2} \circ z'\\
			&=\lambda_{\overline{\BY}}^{-1} \circ (z^{\vee} \circ (1-e_y)) \circ ((1-e_y) \circ \lambda_{\BX}) \circ ((1-e_y^{\vee})\circ z)\\
			&=\lambda_{\overline{\BY}}^{-1} \circ z^{\vee} \circ (1-e_y) \circ \lambda_{\BX} \circ z.\\
			&=\lambda_{\overline{\BY}}^{-1} \circ z^{\vee} \circ \lambda_{\BX} \circ z - \lambda_{\overline{\BY}}^{-1} \circ z^{\vee} \circ e_y \circ \lambda_{\BX} \circ z \\
			&=h(z,z)-\lambda_{\overline{\BY}}^{-1} \circ z^{\vee} \circ e_y \circ \lambda_{\BX} \circ z.
		\end{array}
	\end{equation*}
	Here, we used $e_y \circ \lambda_{\BX} = \lambda_{\BX} \circ (e_y^{\vee})$. Now, it remains to show that 
	\begin{equation*}
	\lambda_{\overline{\BY}}^{-1} \circ z^{\vee} \circ e_y \circ \lambda_{\BX} \circ z=0.
	\end{equation*}
	Note that
	\begin{equation*}
	\begin{array}{l}
	e_y=\lambda_{\BX} \circ y \circ \lambda_{\overline{\BY}}^{-1} \circ y^{\vee} \circ \lambda_{\BX}^{\vee} \circ \lambda_{\BX}'\\
	\end{array}.
	\end{equation*}
	Therefore, we have
	\begin{equation*}
	\begin{array}{l}
	\lambda_{\overline{\BY}}^{-1} \circ z^{\vee} \circ e_y \circ \lambda_{\BX} \circ z\\
	=\lambda_{\overline{\BY}}^{-1} \circ z^{\vee} \circ \lambda_{\BX} \circ y \circ \lambda_{\overline{\overline{\BY}}}^{-1} \circ y^{\vee} \circ \lambda_{\BX}^{\vee} \circ \lambda_{\BX}' \circ \lambda_{\BX} \circ z\\
	=-h(y,z) h(z,y) \pi \\
	=0.
	\end{array}
	\end{equation*}
	The last equality follows from our assumption $h(y,z)=0$. This finishes the proof of (3).
\end{proof}

\begin{lemma}
	Assume that $x_1,x_2,y_1,y_2$ are linearly independent special homomorphisms in $\BV$ and 
	\begin{equation*}
	val(h(x_1,x_1))=0, val(h(y_1,y_1))=-1.
	\end{equation*}
	Then we have the following assertions.
	\begin{enumerate}
		\item $O_{\CZ(x_1)} \otimes^{\BL}_{O_{\CN}} O_{\CZ(x_2)}=O_{\CZ(x_1)} \otimes_{O_{\CN}} O_{\CZ(x_2)}$.
		\item $O_{\CZ(x_1)} \otimes^{\BL}_{O_{\CN}} O_{\CY(y_2)}=O_{\CZ(x_1)} \otimes_{O_{\CN}} O_{\CY(y_2)}$.
		\item $O_{\CY(y_1)} \otimes^{\BL}_{O_{\CN}} O_{\CZ(x_2)}=O_{\CY(y_1)} \otimes_{O_{\CN}} O_{\CZ(x_2)}$.
		\item $O_{\CY(y_1)} \otimes^{\BL}_{O_{\CN}} O_{\CY(y_2)}=O_{\CY(y_1)} \otimes_{O_{\CN}} O_{\CY(y_2)}$.
	\end{enumerate}
Here, we write $\otimes^{\BL}$ for the derived tensor product of $O_{\CN}$-modules.
\end{lemma}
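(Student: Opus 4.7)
The plan is to deduce all four assertions from a single template; I would treat (1) in detail, since (2), (3), (4) follow by the same mechanism using the appropriate parts of Propositions \ref{proposition5.6} and \ref{proposition5.5.8}. The strategy is the standard Koszul argument: by Proposition \ref{proposition5.5}, $\CZ(x_2)$ is a relative divisor in $\CN$, so it is locally cut out by a non-zero-divisor $g \in O_{\CN}$, giving a short resolution $0 \to O_{\CN} \xrightarrow{g} O_{\CN} \to O_{\CZ(x_2)} \to 0$. Consequently the identity $O_{\CZ(x_1)} \otimes^{\BL}_{O_{\CN}} O_{\CZ(x_2)} = O_{\CZ(x_1)} \otimes_{O_{\CN}} O_{\CZ(x_2)}$ is equivalent to the statement that $g$ acts as a non-zero-divisor on $O_{\CZ(x_1)}$.

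To verify this, I would first rescale $x_1$ by an element of $O_E^{\times}$ so that $h(x_1,x_1)=1$; this is possible because $E/F$ is unramified and so $N_{E/F}: O_E^{\times} \to O_F^{\times}$ is surjective, and the rescaling does not alter $\CZ(x_1)$. After this normalization, Proposition \ref{proposition5.6}(1) produces an isomorphism $\Phi: \CZ(x_1) \xrightarrow{\sim} \CN^h_{E/F}(1,n-2)_{O_{\breve{E}}}$. The target is regular by Proposition \ref{proposition5.12}, so the associated points of $O_{\CZ(x_1)}$ are exactly the generic points of its irreducible components; hence it suffices to show that $\CZ(x_2)$ contains no irreducible component of $\CZ(x_1)$. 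Now Proposition \ref{proposition5.5.8}(1) identifies $\Phi(\CZ(x_1) \cap \CZ(x_2))$ with the special cycle $\CZ(x_2')$ on the smaller space, where $x_2' = (1-e_{x_1})x_2$. Linear independence of $x_1,x_2$ forces $x_2' \neq 0$, and then Proposition \ref{proposition5.5}, applied inside $\CN^h_{E/F}(1,n-2)_{O_{\breve{E}}}$, shows that $\CZ(x_2')$ is a proper relative divisor and so cannot contain any irreducible component of this smaller regular formal scheme. Transporting back along $\Phi^{-1}$ yields the required non-zero-divisor property and completes case (1).

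For (2), (3), (4) I would run an identical script: rescale $x_1$ (in case (2)) or $y_1$ (in cases (3) and (4)), use Proposition \ref{proposition5.6} to identify the relevant cycle with either $\CN^h_{E/F}(1,n-2)_{O_{\breve{E}}}$ or $\CN^{h-1}_{E/F}(1,n-2)_{O_{\breve{E}}}$, invoke the appropriate part of Proposition \ref{proposition5.5.8} to rewrite the intersection as $\CY(y_2')$, $\CZ(z')$, or $\CY(w')$ on the smaller space, and conclude via Proposition \ref{proposition5.5} together with the linear-independence hypothesis that this cycle is a proper relative divisor. The main obstacle---really the only step with nontrivial content---is the invocation of Proposition \ref{proposition5.5.8}, which produces the scheme-theoretic intersection as a special cycle on the smaller Rapoport--Zink space after ``orthogonal projection'' by $1-e_{x_1}$ or $1-e_{y_1}^{\vee}$; once this identification is granted, regularity of the smaller space together with the divisor property of nonzero special cycles forces the higher Tor's to vanish.
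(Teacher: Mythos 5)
Your proof is correct and takes essentially the same approach as the paper: both reduce, via Propositions~\ref{proposition5.6}, \ref{proposition5.5.8}, and \ref{proposition5.5}, to the observation that the scheme-theoretic intersection is a proper relative divisor inside the smaller regular space $\CN^h_{E/F}(1,n-2)_{O_{\breve{E}}}$ (resp.\ $\CN^{h-1}_{E/F}(1,n-2)_{O_{\breve{E}}}$), so the two cycles share no common component. The only cosmetic difference is that the paper outsources the implication ``no common component $\Rightarrow$ higher Tor vanishing'' to Terstiege's \cite[Lemma 3.1]{Ter}, whereas you make the underlying Koszul/associated-prime argument explicit.
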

\begin{proof}
	(1) By Terstiege's proof in \cite[Lemma 3.1]{Ter}, it suffices to show that $\CZ(x_1)$ and $\CZ(x_2)$ have no common component. By Proposition \ref{proposition5.6}, $\CZ(x_1) \simeq \CN^{h}_{E/F}(1,n-2)_{O_{\breve{E}}}$, and by Proposition \ref{proposition5.5.8}, $\CZ(x_1) \cap \CZ(x_2)=\CZ(x_2')$ in $\CN^{h}_{E/F}(1,n-2)_{O_{\breve{E}}}$. Therefore, by Proposition \ref{proposition5.5}, $\CZ(x_1) \cap \CZ(x_2)$ is a divisor in $\CN^{h}_{E/F}(1,n-2)_{O_{\breve{E}}}$. This implies that $\CZ(x_1) \cap \CZ(x_2)$ has codimension $2$ in $\CN$ and hence, $\CZ(x_1)$ and $\CZ(x_2)$ have no common component.
	
	The proof of (2),(3),(4) are similar.
\end{proof}

\begin{remark}\label{remark5.5.9}
	Let $\lbrace x_1, \dots ,x_{n-h}, y_1, \dots, y_h\rbrace$ be an orthogonal basis of $\BV$. If $val(h(x_1,x_1))=0$, then by above lemma, we have
	\begin{equation*}
	\begin{array}{ll}
	&O_{\CY(y_1)} \otimes^{\BL}_{O_{\CN}} \dots \otimes^{\BL}_{O_{\CN}} O_{\CY(y_{h})} \otimes^{\BL}_{O_{\CN}}O_{\CZ(x_1)} \otimes^{\BL}_{O_{\CN}} \dots \otimes^{\BL}_{O_{\CN}} O_{\CZ(x_{n-h})} \\ \\
	=&(O_{\CZ(x_1)} \otimes^{\BL}_{O_{\CN}} O_{\CY(y_1)}) \otimes^{\BL}_{O_{\CZ(x_1)}} \dots \otimes^{\BL}_{O_{\CZ(x_1)}} (O_{\CZ(x_1)} \otimes^{\BL}_{O_{\CN}} O_{\CZ(x_{n-h})})\\\\
	=&(O_{\CZ(x_1)} \otimes_{O_{\CN}} O_{\CY(y_1)}) \otimes^{\BL}_{O_{\CZ(x_1)}} \dots \otimes^{\BL}_{O_{\CZ(x_1)}} (O_{\CZ(x_1)} \otimes_{O_{\CN}} O_{\CZ(x_{n-h})})\\\\
	=&O_{\CZ(x_1) \cap \CY(y_1)} \otimes^{\BL}_{O_{\CZ(x_1)}} \dots \otimes^{\BL}_{O_{\CZ(x_1)}} O_{\CZ(x_1)\cap \CZ(x_{n-h})}\\\\
	=&O_{\CY(y_1')} \otimes^{\BL}_{O_{\CN^h(1,n-2)}} \dots O_{\CZ(x_2')} \otimes^{\BL}_{O_{\CN^h(1,n-2)}}\dots \otimes^{\BL}_{O_{\CN^h(1,n-2)}}O_{\CZ(x_{n-h}')}.
	\end{array}
	\end{equation*}
	In the last line, we regard the special cycles $\CY(y_1'), \dots \CZ(x_h')$ as the cycles in $\CN^h(1,n-2)$ via the identification $\CZ(x_1)=\CN^h(1,n-2)$ as in Proposition \ref{proposition5.5.8}.
	
	Similarly, we can do the same reduction, when $val(h(y_1,y_1))=-1$. In this case, we have an intersection in $\CN^{h-1}(1,n-2)$
\end{remark}

Let $[\textbf{x},\textbf{y}]:=[ x_1, \dots ,x_{n-h}, y_1, \dots, y_h]$ be an orthogonal basis of $\BV$. We will compute the intersection number 
\begin{equation*}
\chi(O_{\CY(y_1)} \otimes^{\BL}_{O_{\CN}} \dots \otimes^{\BL}_{O_{\CN}} O_{\CY(y_{h})} \otimes^{\BL}_{O_{\CN}}O_{\CZ(x_1)} \otimes^{\BL}_{O_{\CN}} \dots \otimes^{\BL}_{O_{\CN}} O_{\CZ(x_{n-h})}),
\end{equation*}
in some special cases. Here, we write $\chi$ for the Euler-Poincare characteristic (\cite{KR5}, \cite{Zha}). More precisely, for the structure morphism $\omega:\CN \rightarrow \Spf O_{\breve{E}}$ and for a sheaf of $O_{\CN}$-modules $\CH$, we define
\begin{equation*}
\chi(\CH):=\sum_{i} (-1)^i \length_{O_{\breve{E}}}(R^i\omega_*\CH).
\end{equation*}
For a bounded complex of sheaves $\CH^{\bullet}$ of $O_{\CN}$-modules, we define
\begin{equation*}
\chi(\CH^{\bullet}):=\sum_i (-1)^i \chi(\CH^i).
\end{equation*}

\begin{theorem}\label{theorem5.5.12}
		Let $\lbrace x_1, \dots ,x_{n-h}, y_1, \dots, y_h\rbrace$ be an orthogonal basis of $\BV$. Assume that
	\begin{equation*}
	\begin{array}{ll}
	val(h(x_i,x_i))=0  &\text{ for all } 3 \leq i \leq n-h,\\
	val(h(y_j,y_j))=-1 &\text{ for all } 1 \leq j \leq h,\\
	\end{array}
	\end{equation*}
	and write $a:=val(h(x_1,x_1))$, $b:=val(h(x_2,x_2))$. We assume that $a \leq b$ and $a \not\equiv b \mod 2$. Then we have
	\begin{equation*}
	\chi(O_{\CY(y_1)} \otimes^{\BL}_{O_{\CN}} \dots \otimes^{\BL}_{O_{\CN}} O_{\CZ(x_h)})=\dfrac{1}{2} \sum_{l=0}^{a} q^l(a+b+1-2l).
	\end{equation*}
	
	More generally, consider another basis $[\td{\tb{x}},\td{\tb{y}}]:=[ \td{x}_1, \dots ,\tilde{x}_{n-h}, \tilde{y}_1, \dots, \tilde{y}_h]$ of $\BV$ such that $\td{\tb{x}}=\td{x} g_1, \td{\tb{y}}=\td{y} g_2$ for $g_1 \in \tx{GL}_{n-h}(O_E)$ and $g_2 \in \tx{GL}_h(O_E)$. Then we have
	\begin{equation*}
	\chi(O_{\CY(\td{y}_1)} \otimes^{\BL}_{O_{\CN}} \dots \otimes^{\BL}_{O_{\CN}} O_{\CZ(\td{x}_h)})=\dfrac{1}{2} \sum_{l=0}^{a} q^l(a+b+1-2l).
	\end{equation*}
\end{theorem}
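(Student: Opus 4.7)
The plan is to reduce the intersection in $\CN^h_{E/F}(1,n-1)_{O_{\breve{E}}}$ to a two-cycle intersection in the one-dimensional relative Drinfeld $p$-adic half-plane $\CN^0_{E/F}(1,1)_{O_{\breve{E}}}$, then invoke the Kudla--Rapoport / Liu intersection formula.

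\smallskip
\noindent\textbf{Reduction.} Starting from the intersection on $\CN = \CN^h_{E/F}(1,n-1)_{O_{\breve{E}}}$, I use Proposition~\ref{proposition5.6}(1) together with Proposition~\ref{proposition5.5.8}(1)--(2) and Remark~\ref{remark5.5.9} to successively absorb $\CZ(x_{n-h}), \ldots, \CZ(x_3)$ (each with $val(h(x_i,x_i))=0$) into the ambient space. Each absorption replaces $\CN^h_{E/F}(1,m)$ by $\CN^h_{E/F}(1,m-1)$; the remaining cycles are transported with their hermitian norms preserved by Proposition~\ref{proposition5.5.8}. This yields
\begin{equation*}
\chi\bigl(\CO_{\CY(y_1')}\otimes^{\BL}\cdots\otimes^{\BL}\CO_{\CY(y_h')}\otimes^{\BL}\CO_{\CZ(x_1')}\otimes^{\BL}\CO_{\CZ(x_2')}\bigr)
\end{equation*}
computed inside $\CN^h_{E/F}(1,h+1)_{O_{\breve{E}}}$. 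Next, using Proposition~\ref{proposition5.6}(2) and Proposition~\ref{proposition5.5.8}(3)--(4), I absorb $\CY(y_h'), \ldots, \CY(y_1')$; each such step replaces $\CN^{h'}_{E/F}(1,m)$ by $\CN^{h'-1}_{E/F}(1,m-1)$, preserving the norms of the remaining $\CZ$-cycles. After $h$ absorptions the computation reduces to
\begin{equation*}
\chi\bigl(\CO_{\CZ(x_1'')}\otimes^{\BL}_{O_{\CN^0_{E/F}(1,1)_{O_{\breve{E}}}}}\CO_{\CZ(x_2'')}\bigr),
\end{equation*}
with $val(h(x_1'',x_1''))=a$ and $val(h(x_2'',x_2''))=b$.

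\smallskip
\noindent\textbf{Base case.} The moduli space $\CN^0_{E/F}(1,1)_{O_{\breve{E}}}$ is the relative Drinfeld $p$-adic half-plane for $F$ (see \cite{KR1} and \cite{Mih}). The desired formula $\tfrac{1}{2}\sum_{l=0}^{a}q^l(a+b+1-2l)$ for $a\le b$, $a\not\equiv b\bmod 2$ is the main intersection formula of Kudla--Rapoport \cite{KR2} when $F=\BQ_p$, and its extension by Liu \cite{Liu} to arbitrary unramified $F$. Together with the reduction this establishes the first formula.

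\smallskip
\noindent\textbf{General basis.} For the second statement, I argue that the derived intersection number depends only on the pair of $O_E$-sublattices $\sum_i O_E\tilde{x}_i$ and $\sum_j O_E\tilde{y}_j$ of $\BV$ together with the restricted hermitian forms, and hence is invariant under $GL_{n-h}(O_E)\times GL_h(O_E)$. The set-theoretic equalities $\bigcap_i\CZ(\tilde{x}_i)=\bigcap_i\CZ(x_i)$ and $\bigcap_j\CY(\tilde{y}_j)=\bigcap_j\CY(y_j)$ follow from $\tilde{x}_i\in\sum_j O_E x_j$ together with the invertibility of $g_1,g_2$. These lift to quasi-isomorphisms of the underlying Koszul-type complexes via a regular-sequence argument: each $\CZ(x_i)$ and each $\CY(y_j)$ is a Cartier divisor in the regular formal scheme $\CN$ (Proposition~\ref{proposition5.12}), and a change of basis by a unimodular matrix amounts to a change of regular sequence generating the same ideal up to isomorphism, which preserves the associated Tor sheaves.

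\smallskip
\noindent\textbf{Main obstacle.} The hardest input is the base-case computation in $\CN^0_{E/F}(1,1)_{O_{\breve{E}}}$, which is inherited from \cite{KR2} and \cite{Liu} via a careful bookkeeping of the parameters $a,b$ through all reductions. A secondary technical point is verifying the $GL_{n-h}(O_E)\times GL_h(O_E)$-invariance on the derived level, where regularity of $\CN$ is essential.
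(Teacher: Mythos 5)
Your proof is correct and follows essentially the same strategy as the paper: reduce via Remark~\ref{remark5.5.9} (which in turn rests on Propositions~\ref{proposition5.6} and \ref{proposition5.5.8}) to a two-cycle intersection in $\CN^0_{E/F}(1,1)_{O_{\breve{E}}}$, invoke Liu's computation (Theorem 4.13 of \cite{Liu}) for the base case, then establish invariance under $\tx{GL}_{n-h}(O_E) \times \tx{GL}_h(O_E)$. The one place your proof diverges is the basis-change step: the paper notes that the set of $k$-points is unchanged and is a single point, then appeals to the deformation-theoretic computation of \cite[Section 5]{KR2}, asserting without elaboration that the resulting length is invariant under the linear substitution. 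You instead observe that the equality of intersections is not merely set-theoretic but scheme-theoretic (since $\tilde{x}_i \in \sum_j O_E x_j$ and $g_1, g_2$ are unimodular, the extension conditions defining $\bigcap_i \CZ(\tilde{x}_i)$ and $\bigcap_j \CZ(x_j)$ cut out the same closed subfunctor, and likewise for the $\CY$-cycles), and then use regularity of $\CN$ (Proposition~\ref{proposition5.12}) plus the zero-dimensionality of the intersection to conclude that both families of local equations form regular sequences generating the same ideal, hence give identical Koszul resolutions and Tor sheaves. This makes explicit the commutative-algebra content behind the paper's citation of \cite{KR2}, and is arguably a cleaner way to package the invariance; but the underlying mechanism is the same.
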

\begin{proof}
		By applying Remark \ref{remark5.5.9} repeatedly, the problem reduces to the case of $n=2$ and we need to compute the intersection number
	\begin{equation*}
	\chi(O_{\CZ(z_1)} \otimes^{\BL}_{O_{\CN^0(1,1)}} O_{\CZ(z_2)}).
	\end{equation*}
	This intersection number is computed in \cite[Theorem 4.13]{Liu}. Indeed,
		\begin{equation*}
	\chi(O_{\CZ(z_1)} \otimes^{\BL}_{O_{\CN^0(1,1)}} O_{\CZ(z_1)})=\dfrac{1}{2} \sum_{l=0}^{a} q^l(a+b+1-2l).
	\end{equation*}
	
	For the general cases, first we need to show that $(\CY(\td{y}_1)\cap \dots \cap\CZ(\td{x}_h))(k)$ is a single point. By Proposition \ref{proposition5.5.5}, $(\CY(\td{y}_1)\cap \dots \cap\CZ(\td{x}_h))(k)$ is
	\begin{equation}\label{eq5.0.1}
	\left\{\begin{array}{c} 
	O_{\breve{F}}$-lattices $ A \overset{h}{\subset} B \subset N_{k,0}
	\end{array}
	\middle|
	\begin{array}{c}

	\pi B^{\vee} \overset{1}{\subset} A \overset{n-1}{\subset} B^{\vee}\\
	 \pi A^{\vee} \overset{1}{\subset} B \overset{n-1}{\subset} A^{\vee};\\
	\pi B \subset A \subset B;\\
	\td{x}_1(\overline{1}_0), \dots, \td{x}_{n-h}(\overline{1}_0) \in \pi B^{\vee};\\
	\td{y}_1(\overline{1}_0), \dots, \td{y}_h(\overline{1}_0) \in \pi A^{\vee}.
	\end{array}
	\right\}.
	\end{equation}
 It is easy to see that this is the same as $(\CY({y}_1)\cap \dots \cap\CZ(x_h))(k)$, since the above conditions in \eqref{eq5.0.1} are invariant under the linear combination $\td{\tb{x}}=\td{x} g_1, \td{\tb{y}}=\td{y} g_2$. Also, by Remark \ref{remark5.5.9}, we know that this is a single point. Therefore, we can use the length of a deformation ring to compute our intersection number as in \cite[Section 5]{KR2}, and this is invariant under the linear combination $[\td{\tb{x}},\td{\tb{y}}]=[\tb{x} g_1,\tb{y} g_2].$ Therefore, we have
	\begin{equation*}
	\begin{array}{ll}
	\chi(O_{\CY(\td{y}_1)} \otimes^{\BL}_{O_{\CN}} \dots \otimes^{\BL}_{O_{\CN}} O_{\CZ(\td{x}_h)})
	&=\chi(O_{\CY(y_1)} \otimes^{\BL}_{O_{\CN}} \dots \otimes^{\BL}_{O_{\CN}} O_{\CZ(x_h)})\\
	&=\dfrac{1}{2} \sum_{l=0}^{a} q^l(a+b+1-2l).
	\end{array}
	\end{equation*}
\end{proof}

\begin{theorem}\label{theorem5.5.13}
		Let $\lbrace x_1, \dots ,x_{n-h}, y_1, \dots, y_h\rbrace$ be an orthogonal basis of $\BV$. Assume that
	\begin{equation*}
	\begin{array}{l}
	val(h(x_i,x_i))=0  \text{ for all } 1 \leq i \leq n-h,\\
	val(h(y_j,y_j))=-1 \text{ for all } 3 \leq j \leq h,
	\end{array}
	\end{equation*}
	and write $a:=val(h(y_1,y_1))$, $b:=val(h(y_2,y_2))$. We assume that $a \leq b$ and $a \not\equiv b \mod 2$. Then we have,
	\begin{equation*}
	\chi(O_{\CY(y_1)} \otimes^{\BL}_{O_{\CN}} \dots \otimes^{\BL}_{O_{\CN}} O_{\CZ(x_h)})=\dfrac{1}{2} \sum_{l=0}^{a+1} q^l(a+b+3-2l).
	\end{equation*}
	
		More generally, consider another basis $[\td{\tb{x}},\td{\tb{y}}]:=[ \td{x}_1, \dots ,\tilde{x}_{n-h}, \tilde{y}_1, \dots, \tilde{y}_h]$ of $\BV$ such that $\td{\tb{x}}=\td{x} g_1, \td{\tb{y}}=\td{y} g_2$ for $g_1 \in \tx{GL}_{n-h}(O_E)$ and $g_2 \in \tx{GL}_h(O_E)$. Then 
	\begin{equation*}
	\chi(O_{\CY(\td{y}_1)} \otimes^{\BL}_{O_{\CN}} \dots \otimes^{\BL}_{O_{\CN}} O_{\CZ(\td{x}_h)})=\dfrac{1}{2} \sum_{l=0}^{a+1} q^l(a+b+3-2l).
	\end{equation*}
\end{theorem}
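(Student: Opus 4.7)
The strategy mirrors that of Theorem \ref{theorem5.5.12}, with one additional wrinkle at the end: having reduced the computation to an intersection on a two-dimensional Rapoport-Zink space, one swaps $\CY$-cycles for $\CZ$-cycles via the isomorphism $\theta$ of Remark \ref{remark5.2} before invoking Liu's formula.

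First, I would carry out the Proposition \ref{proposition5.6}--\ref{proposition5.5.8} reduction iteratively. Since $val(h(x_i,x_i))=0$ for every $1\leq i\leq n-h$, parts (1) and (2) of Proposition \ref{proposition5.5.8} let us identify $\CZ(x_1)\cong \CN^h_{E/F}(1,n-2)_{O_{\breve{E}}}$ and rewrite the intersections with $\CZ(x_2),\dots,\CZ(x_{n-h})$ and $\CY(y_1),\dots,\CY(y_h)$ as intersections of $\CZ(x_i')$ and $\CY(y_j')$ on this smaller space, preserving the hermitian values. Continuing through all of the $x_i$'s and then through $\CY(y_3),\dots,\CY(y_h)$ (using parts (3) and (4), each of which drops $h$ and $n$ by one), we descend
\[
\CN^h_{E/F}(1,n-1)_{O_{\breve{E}}} \;\leadsto\; \CN^h_{E/F}(1,h-1)_{O_{\breve{E}}} \;\leadsto\; \CN^2_{E/F}(1,1)_{O_{\breve{E}}},
\]
and the problem reduces to computing $\chi\bigl(O_{\CY(y_1')}\otimes^{\BL}_{O_{\CN^2(1,1)}} O_{\CY(y_2')}\bigr)$, where the residual special homomorphisms still satisfy $val(h(y_1',y_1'))=a$ and $val(h(y_2',y_2'))=b$.

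Next, I would apply $\theta:\CN^2_{E/F}(1,1)\overset{\sim}{\to}\CN^0_{E/F}(1,1)$. By Definition \ref{definition5.1.4}(2), $\theta$ carries $\CY(y)$ to $\CZ(\lambda_{\BX}\circ y)$, and the polarization identity $\lambda'_{\BX}\circ \lambda_{\BX}=i_{\BX}(\pi)$ together with $\lambda_{\BX}^{\vee}=-\lambda_{\BX}$ (already used in the proof of Proposition \ref{proposition5.6}) yields
\[
h'(\lambda_{\BX}y,\lambda_{\BX}y) \;=\; \lambda_{\overline{\BY}}^{-1}\circ y^{\vee}\circ \lambda_{\BX}^{\vee}\circ \lambda_{\BX}'\circ \lambda_{\BX}\circ y \;=\; -\pi\,h(y,y).
\]
Thus $z_i:=\lambda_{\BX}\circ y_i'$ has $val(h'(z_i,z_i))=val(h(y_i,y_i))+1$, giving new valuations $a+1\leq b+1$ with $a+1\not\equiv b+1\pmod 2$. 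Applying \cite[Theorem 4.13]{Liu} exactly as at the end of Theorem \ref{theorem5.5.12}, with the parameters $a+1,b+1$ in place of $a,b$, yields
\[
\chi\bigl(O_{\CZ(z_1)}\otimes^{\BL}_{O_{\CN^0(1,1)}}O_{\CZ(z_2)}\bigr)
=\tfrac{1}{2}\sum_{l=0}^{a+1}q^l(a+b+3-2l),
\]
which is the desired formula.

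For the invariance under a change of basis $[\td{\tb{x}},\td{\tb{y}}]=[\tb{x}g_1,\tb{y}g_2]$ with $g_1\in \text{GL}_{n-h}(O_E)$, $g_2\in \text{GL}_h(O_E)$, I would repeat the argument given at the end of Theorem \ref{theorem5.5.12}: the lattice conditions in Proposition \ref{proposition5.5.5} that cut out $\CY(y_j)$ and $\CZ(x_i)$ are closed under $O_E$-linear combinations, so $(\CY(\td y_1)\cap\dots\cap\CZ(\td x_{n-h}))(k)$ coincides with the original intersection set, a single point. The arithmetic intersection multiplicity is therefore the length of the same local deformation ring and is invariant under the substitution. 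The main obstacle is precisely the polarization bookkeeping that establishes $val(h'(\lambda_{\BX}y,\lambda_{\BX}y))=val(h(y,y))+1$ with the correct shift; once this is checked, everything else is a faithful adaptation of Theorem \ref{theorem5.5.12}.
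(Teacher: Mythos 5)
Your proof is correct and follows the same route as the paper: reduce via Remark \ref{remark5.5.9} to $\CN^2_{E/F}(1,1)$, apply $\theta$ to convert the two $\CY$-cycles to $\CZ$-cycles on $\CN^0_{E/F}(1,1)$, and invoke Liu's formula with the shifted parameters $a+1,b+1$. Your explicit derivation of $h'(\lambda_{\BX}y,\lambda_{\BX}y)=-\pi h(y,y)$ is a welcome justification of the valuation shift that the paper merely asserts.
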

\begin{proof}
	By applying Remark \ref{remark5.5.9} repeatedly, the problem reduces to the case of $n=2$ and we need to compute the intersection number
	\begin{equation*}
	\chi(O_{\CY(y_1)} \otimes^{\BL}_{O_{\CN^2(1,1)}} O_{\CY(y_2)}).
	\end{equation*}
	By applying $\theta$, we can change our problem to the problem of computing the intersection number
	\begin{equation*}
	\chi(O_{\CZ(\lambda_{\BX} \circ y_1)} \otimes^{\BL}_{O_{\CN^0(1,1)}} O_{\CZ(\lambda_{\BX} \circ y_2)}).
	\end{equation*}
	Note that $\lambda_{\BX} \circ y_1$, $\lambda_{\BX} \circ y_2$ have orders $a+1$ and $b+1$, respectively. Therefore, by \cite[Theorem 4.13]{Liu}, we have
	\begin{equation*}
	\chi(O_{\CZ(\lambda_{\BX} \circ y_1)} \otimes^{\BL}_{O_{\CN^0(1,1)}} O_{\CZ(\lambda_{\BX} \circ y_2)})=\dfrac{1}{2} \sum_{l=0}^{a+1} q^l(a+b+3-2l).
	\end{equation*}
	The proof of the general case is the same as Theorem \ref{theorem5.5.12}.
\end{proof}

\begin{remark}\label{remark5.5.16}
 Assume that
\begin{equation*}
\begin{array}{ll}
val(h(x_i,x_i))=0  &\text{for all } 1 \leq i \leq n-h-1,\\
val(h(y_j,y_j))=-1 &\text{for all } 1 \leq j \leq h-1.
\end{array}
\end{equation*} In this case, by the above remark, we can reduce the problem to the intersection problem in $\CN^1(1,1)$ that is the Drinfeld upper half-plane. In this case all intersection numbers of special cycles (even in the case of improper intersection) can be computed explicitly (see \cite{San} or \cite{KR5}). We will compute this in forthcoming work.
\end{remark}
\bigskip

\begin{bibdiv}
	\begin{biblist}
		\bib{Ahs}{article}{
			title={$O$-displays and $\pi$-divisible formal O-modules},
			subtitle={},
			author={Ahsendorf, T.},
			journal={Ph.D. thesis, Universit{\"a}t Bielefeld},
			volume={},
			date={2011},
			pages={}
		}
		\bib{ACZ}{article}{
		title={$O$-displays and $\pi$-divisible formal O-modules},
		subtitle={},
		author={Ahsendorf, T.},
		author={Cheng, C.},
		author={Zink, T.},
		journal={Journal of Algebra},

		date={2016},
		pages={129--193}
		review={\MR{3490080}}
	}
		\bib{BR}{article}{
		title={On the irreducibility of Deligne-Lusztig varieties},
		subtitle={},
		author={Bonnafe, C.},
		author={Rouquier, R.},
		journal={C.R. Math. Acad. Sci. Paris},
		volume={343},
		date={2006},
		pages={37--39}
		review={\MR{2241956}}
	}
	\bib{GGP}{article}{
	title={Symplectic local root numbers, central critical L-values, and restriction problems in the representation theory of classical groups},
	subtitle={},
	author={Gan, W.T.},
	author={Gross, B.},
	author={Prasad, D.},
	journal={Ast$\acute{\text{e}}$risque},
	volume={346},
	date={2012},
	pages={1--109}
	review={\MR{3202556}}
}
\bib{Gor2}{article}{
title={On the flatness of models of certain Shimura varieties of PEL-type},
subtitle={},
author={G{\"o}rtz, U.},
author={},
author={},
journal={Math. Ann. },
volume={321},
date={2001},
pages={689--727}
review={\MR{1871975}}
}
	\bib{Gor1}{article}{
	title={On the connectedness of Deligne-Lusztig varieties},
	subtitle={},
	author={G{\"o}rtz, U.},
	author={},
	author={},
	journal={Represent. Theory },
	volume={13},
	date={2009},
	pages={1--7}
	review={\MR{2471197}}
}

\bib{Hoe}{article}{
	title={Stratifications on moduli spaces of abelian varieties and Deligne-Lusztig varieties},
	subtitle={},
	author={Hoeve, M.},
	author={},
	author={},
	journal={Ph.D. thesis, Universiteit van Amsterdam},
	volume={},
	date={2010},
	pages={}
}
\bib{HP}{article}{
	title={On the supersingular locus of the GU(2,2) Shimura variety},
	subtitle={},
	author={Howard, B.},
	author={Pappas, G.},
	author={},
	journal={Algebra Number Theory},
	volume={8},
	date={2014},
	pages={1659--1699}
	review={\MR{3272278}}
}
\bib{KR5}{article}{
title={Height pairings on Shimura curves and $p$-adic uniformization},
subtitle={},
author={Kudla, S.},
author={Rapoport, M.},
author={},
journal={Invent. Math.},
volume={142},
date={2000},
pages={153--222}
	review={\MR{1784798}}
}
\bib{KR2}{article}{
title={Special cycles on unitary Shimura varieties I. unramified local theory},
subtitle={},
author={Kudla, S.},
author={Rapoport, M.},
author={},
journal={Invent. Math.},
volume={184},
date={2011},
pages={629--682}
review={\MR{2800697}}
}
\bib{KR3}{article}{
title={Special cycles on unitary Shimura varieties II. global theory},
subtitle={},
author={Kudla, S.},
author={Rapoport, M.},
author={},
journal={J. Reine Angew. Math.},
volume={697},
date={2014},
pages={91--157}
review={\MR{3281653}}
}
\bib{KR1}{article}{
title={An alternative description of the Drinfeld p-adic half-plane},
subtitle={},
author={Kudla, S.},
author={Rapoport, M.},
author={},
journal={Ann. Inst. Fourier},
volume={64},
date={2014},
pages={1203--1228}
review={\MR{3330168}}
}
\bib{KR4}{article}{
title={Special cycles on unitary Shimura varieties III. inductive aspects},
subtitle={},
author={Kudla, S.},
author={Rapoport, M.},
author={},
journal={unpublished notes},
volume={},
date={},
pages={}
}

\bib{Liu}{article}{
	title={Arithmetic theta lifting and L-derivatives for unitary groups II},
	subtitle={},
	author={Liu, Y.},
	author={},
	author={},
	journal={Algebra Number Theory},
	volume={5},
	date={2011},
	pages={923--1000}
	review={\MR{2928564}}
}
\bib{Mih}{article}{
	title={Relative unitary RZ-spaces and the Arithmetic Fundamental lemma},
	subtitle={},
	author={Mihatsch, A.},
	author={},
	author={},
	journal={Ph.D. thesis, Bonn},
	volume={},
	date={2016},
	pages={}
}
\bib{PRS}{article}{
	title={Local models of Shimura varieties, I. Geometry and combinatorics},
	subtitle={},
	author={Pappas, G.},
	author={Rapoport, M.},
	author={Smithling, B.},
	
	status={},	
	conference={
		title={},
		address={},
		date={}
	},
	book={		
		series={Handbook of Moduli. Vol. III, Adv. Lect. Math. (ALM)},
		volume={\textbf{26}},
		publisher={Int. Press},
		address={Somerville, MA},
		date={2013},
	},
	pages={135--217}
	review={\MR{3135437}}
}

\bib{RTW}{article}{
	title={The supersingular locus of the Shimura variety for $GU(1,n-1)$ over a ramified prime},
	subtitle={},
	author={Rapoport, M.},
	author={Terstiege, U.},
	author={Wilson, S.},
	journal={Math. Z.},
	volume={276},
	date={2014},
	pages={1165--1188}
	review={\MR{3175176}}
}
\bib{RSZ3}{article}{
	title={On the arithmetic transfer conjecture for exotic smooth formal moduli spaces},
	subtitle={},
	author={Rapoport, M.},
	author={Smithling, B.},
	author={Zhang, W.},
	journal={Duke Math. J},
	volume={166},
	date={2017},
	pages={2183--2336},
	status={}
	review={\MR{3694568}}
}
\bib{RSZ1}{article}{
	title={Regular formal moduli spaces and arithmetic transfer conjecture},
	subtitle={},
	author={Rapoport, M.},
	author={Smithling,B.},
	author={Zhang,W.},
	journal={Math. Ann.},
	volume={370},
	date={2018},
	pages={1079--1175}
	review={\MR{3770164}}
}

\bib{RSZ2}{article}{
	title={Arithmetic diagonal cycles on unitary Shimura varieties},
	subtitle={},
	author={Rapoport, M.},
	author={Smithling, B.},
	author={Zhang, W.},
	journal={},
	volume={},
	date={2018},
	pages={},
	status={preprint}
}

\bib{RZ}{article}{
	title={Period Spaces for p-divisible groups},
	subtitle={},
	author={Rapoport, M.},
	author={Zink, Th.}
	author={}
		book={		
		series={Annals of Mathematics Studies},
		volume={\textbf{141}},
		publisher={Princeton University Press},
		address={Princeton, NJ},
		date={1996},
	}
	pages={},
	status={}
	review={\MR{1393439}}
}
\bib{RZ2}{article}{
	title={On the Drinfeld moduli problems of $p$-divisible groups},
	subtitle={},
	author={Rapoport, M.},
	author={Zink, Th.},
	author={},
	journal={Camb. J. Math.},
	volume={5},
	date={2017},
	pages={229--279},
	status={}
	review={\MR{3653061}}
}
\bib{San}{article}{
	title={Improper intersections of Kudla-Rapoport divisors and Eisenstein series},
	subtitle={},
	author={Sankaran, S.},
	author={},
	author={},
	journal={J. Inst. Math. Jussieu.},
	volume={16},
	date={2017},
	pages={899--945},
	status={}
	review={\MR{3709001}}
}
\bib{Ter}{article}{
	title={Intersections of special cycles on the Shimura varieties for $GU(1,2)$},
	subtitle={},
	author={Terstiege, U.}
	author={},
	journal={J. Reine Angew. Math.},
	volume={684},
	date={2013},
	pages={113--164},
	status={}
	review={\MR{3181558}}
}
\bib{Vol}{article}{
	title={The supersingular locus of the Shimura variety for $GU(1,s)$},
	subtitle={},
	author={Vollaard, I.}
	author={},
	journal={Can. J. Math.},
	volume={62},
	date={2010},
	pages={668--720},
	status={}
	review={\MR{2666394}}
}
\bib{VW}{article}{
	title={The supersingular locus of the Shimura variety for $GU(1,s)$ II},
	subtitle={},
	author={Vollaard, I.},
	author={Wedhorn, T.},
	journal={Invent. Math.},
	volume={184},
	date={2011},
	pages={591--627},
	status={}
	review={\MR{2800696}}
}
\bib{Wu}{article}{
	title={The supersingular locus of unitary Shimura varieties with exotic good reduction},
	subtitle={},
	author={Wu, H.},
	author={},
	journal={},
	volume={},
	date={2016},
	pages={},
	status={preprint, \href{https://arxiv.org/abs/1609.08775}{arxiv:1609.08775}
}}
\bib{Zha}{article}{
	title={On arithmetic fundamental lemmas},
	subtitle={},
	author={Zhang, W.},
	author={},
	journal={Invent. Math.},
	volume={188},
	date={2012},
	pages={192--252},
	status={}
	review={\MR{2897697}}
}
			\end{biblist}
\end{bibdiv}

\end{document}